\newcommand{\C}{\ensuremath{\mathbb{C}}}
\newcommand{\R}{\ensuremath{\mathbb{R}}}
\newcommand{\N}{\ensuremath{\mathbb{N}}}
\newcommand{\Z}{\ensuremath{\mathbb{Z}}}
\newcommand{\U}{\ensuremath{\mathrm{U}}}
\newcommand{\SU}{\ensuremath{{\mathrm{SU}(2)}}}
\newcommand{\SO}{\ensuremath{{\mathrm{SO}(2)}}}
\newcommand\diag{\ensuremath{\textrm{diag}}}
\newcommand\sh{\ensuremath{\sinh}}
\theoremstyle{definition}
\newtheorem{thmA}{Theorem}
\newtheorem{thm}{Theorem}[section]
\newtheorem{dfn}[thm]{Definition}
\newtheorem{lemma}[thm]{Lemma}
\newtheorem{prop}[thm]{Proposition}
\newtheorem{corollary}[thm]{Corollary}
\newtheorem{rem}[thm]{Remark}
\newtheorem{notation}[thm]{Notation}
\author{Tim de Laat}
\thanks{TdL is a Postdoctoral Fellow of the Research Foundation -- Flanders (FWO) and was partially supported by the Belgian Interuniversity Attraction Pole P07/18 and the Danish National Research Foundation through the Centre for Symmetry and Deformation (DNRF92). The research of MdlS was partially supported by ANR grants OSQPI and NEUMANN}
\address{Tim de Laat
\newline KU Leuven, Department of Mathematics,
\newline Celestijnenlaan 200B -- Box 2400, B-3001 Leuven, Belgium}
\email{tim.delaat@wis.kuleuven.be}
\author{Mikael de la Salle}
\thanks{2010 \emph{Mathematics Subject Classification}. Primary 22D12; Secondary 22E46, 46B20.}
\address{Mikael de la Salle
\newline CNRS-ENS de Lyon,
\newline UMPA UMR 5669
\newline F-69364 Lyon cedex 7, France
}
\email{mikael.de.la.salle@ens-lyon.fr}
\title{Strong property (T) for higher rank simple Lie groups}
\begin{document}
\DeclareGraphicsRule{*}{mps}{*}{}

\begin{abstract}
  We prove that connected higher rank simple Lie groups have Lafforgue's strong property (T) with respect to a certain class of Banach spaces $\mathcal{E}_{10}$ containing many classical superreflexive spaces and some non-reflexive spaces as well. This generalizes the result of Lafforgue asserting that $\mathrm{SL}(3,\mathbb{R})$ has strong property (T) with respect to Hilbert spaces and the more recent result of the second named author asserting that $\mathrm{SL}(3,\mathbb{R})$ has strong property (T) with respect to a certain larger class of Banach spaces. For the generalization to higher rank groups, it is sufficient to prove strong property (T) for $\mathrm{Sp}(2,\mathbb{R})$ and its universal covering group. As consequences of our main result, it follows that for $X \in \mathcal{E}_{10}$, connected higher rank simple Lie groups and their lattices have property (F$_X$) of Bader, Furman, Gelander and Monod, and that the expanders contructed from a lattice in a connected higher rank simple Lie group do not admit a coarse embedding into $X$.
\end{abstract}

\maketitle

\section{Introduction}
In 1967, Kazhdan introduced property (T) for groups in order to prove that certain groups are finitely generated \cite{kazhdan} (see also \cite{bekkadelaharpevalette}). A locally compact group has property (T) if the trivial representation of the group is isolated in the unitary dual of the group equipped with the Fell topology. This property, which usually plays the role of a rigidity property, has important applications in different areas of mathematics.

Over the years, several strengthenings of property (T) have been studied. In this article, we focus on Lafforgue's strong property (T), which he introduced as an obstruction to a certain approach to the Baum-Connes Conjecture. We work with a slightly more flexible notion of strong property (T), as defined by the second named author in \cite{delasalle1}. Recall that a length function on a locally compact group $G$ is a continuous function $\ell:G \to \R_+$ such that $\ell(g^{-1})= \ell(g)$ and $\ell(g_1 g_2)\leq \ell(g_1)+\ell(g_2)$ for all $g,g_1,g_2 \in G$.
\begin{dfn}\label{def=strongT} 
A locally compact group $G$ has strong property (T) with respect to a class $\mathcal{E}$ of Banach spaces, denoted by (T$^{\mathrm{strong}}_{\mathcal{E}}$), if for every length function $\ell$ on $G$ there is a sequence of compactly supported symmetric Borel measures $m_n$ on $G$ such that for every Banach space $X$ in $\mathcal E$ there is a constant $t>0$ such that the following holds: for every strongly continuous representation $\pi:G \rightarrow B(X)$ satisfying $\|\pi(g)\|_{B(X)} \leq L e^{t \ell(g)}$ for some $L\in\R_+$, the sequence $\pi(m_n)$ converges in the norm topology on $B(X)$ to a projection onto the $\pi(G)$-invariant vectors in $X$.
\end{dfn}
Lafforgue's original definition of strong property (T) in \cite{lafforguestrengthenedpropertyt} corresponds to taking $\mathcal{E}$ to be the class of Hilbert spaces. We will denote this property by (T$^{\mathrm{strong}}_{\mathrm{Hilbert}}$). The Banach space strong property (T) of \cite{lafforguefourier} is denoted by (T$^{\mathrm{strong}}_{\mathrm{Banach}}$), which corresponds to taking $\mathcal{E}$ to be the class of Banach spaces with nontrivial (Rademacher) type. By the work of Lafforgue, it is known that (T$^{\mathrm{strong}}_{\mathcal{E}}$) passes from a group to its cocompact lattices.

In \cite{lafforguestrengthenedpropertyt}, Lafforgue proved that word-hyperbolic groups do not satisfy (T$^{\mathrm{strong}}_{\mathrm{Hilbert}}$). It follows that connected simple Lie groups with real rank $1$ do not have (T$^{\mathrm{strong}}_{\mathrm{Hilbert}}$), since such a group is either locally isomorphic to $\mathrm{SO}(n,1)$ or $\mathrm{SU}(n,1)$ for some $n \geq 2$, in which case it does not have Kazhdan's property (T), or it is locally isomorphic to $\mathrm{Sp}(n,1)$ for some $n \geq 2$ or to $F_{4(-20)}$, in which case it contains a word-hyperbolic cocompact lattice.

Lafforgue also showed that $\mathrm{SL}(3,\mathbb{R})$ has (T$^{\mathrm{strong}}_{\mathrm{Hilbert}}$), with the consequence that any connected almost $\mathbb{R}$-simple algebraic group whose Lie algebra contains a copy of $\mathfrak{sl}(3,\mathbb{R})$ has (T$^{\mathrm{strong}}_{\mathrm{Hilbert}}$). In \cite{delasalle1}, the second named author proved that $\mathrm{SL}(3,\mathbb{R})$ has (T$^{\mathrm{strong}}_{\mathcal{E}_4}$). For $r > 2$, the class $\mathcal{E}_r$ (see Section \ref{subsec=bs} for the precise definition) is a certain class of Banach spaces containing the Hilbert spaces, many classical superreflexive spaces, and some non-reflexive spaces. We point out that if $r_1 \geq r_2 > 2$, then $\mathcal{E}_{r_1} \subset \mathcal{E}_{r_2}$. An open question is whether the classes $\mathcal{E}_r$ contain all spaces of nontrivial type (see Section \ref{subsec=bs} for the definition of type).

The aim of this article is to extend, in a way, these results on $\mathrm{SL}(3,\mathbb{R})$ to all connected higher rank simple Lie groups. To do this, we need to consider strong property (T) for $\mathrm{Sp}(2,\mathbb{R})$ and its universal covering group. We are able to prove that both these groups have (T$^{\mathrm{strong}}_{\mathcal{E}_{10}}$), which implies our main theorem.
\begin{thmA} \label{thm=maintheorem}
Let $G$ be a connected simple Lie group with real rank at least $2$. Then $G$ has strong property (T) with respect to the class $\mathcal{E}_{10}$.
\end{thmA}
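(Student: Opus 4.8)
The plan is to reduce the theorem, by structural considerations, to strong property (T) with respect to $\mathcal{E}_{10}$ for the two rank-two groups $\mathrm{SL}(3,\R)$ and $\Sp$ and their universal covers, and then to establish the latter — the case of $\Sp$ being the genuinely new point — by harmonic analysis on $\Sp/\mathrm{U}(2)$. For the reduction, note first that $G$, being a connected Lie group, is compactly generated, so every length function on $G$ is dominated by a constant multiple of the word length $\ell_0$ attached to a fixed compact generating set; after rescaling the exponent $t$, it suffices to treat $\ell=\ell_0$ in Definition~\ref{def=strongT}. Second, the restricted root system of the Lie algebra $\mathfrak g$ of $G$ is irreducible of rank $\ge 2$, hence contains an irreducible rank-two subsystem, of type $A_2$, $C_2$ or $G_2$; since $G_2$ contains $A_2$, in every case $\mathfrak g$ contains a subsystem of type $A_2$ or $C_2$, and choosing suitable $\mathfrak{sl}_2$-triples along its simple roots yields a subalgebra of $\mathfrak g$ isomorphic to $\mathfrak{sl}(3,\R)$ or to $\mathfrak{sp}(2,\R)$. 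Let $H$ be the corresponding analytic subgroup; it is locally isomorphic to $\mathrm{SL}(3,\R)$ or $\Sp$, hence a quotient of $\widetilde{\mathrm{SL}(3,\R)}$ or of $\widetilde{\Sp}$.

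Next I would run Lafforgue's transfer argument, which involves no Hilbert-space structure and, as in \cite{delasalle1}, applies with $\mathcal{E}_{10}$ in place of the Hilbert spaces. Measures on $\widetilde{H}$ realizing strong property (T) (for the pullback of $\ell_0$) descend to measures on $H$ and hence to compactly supported measures $m_n$ on $G$; for every strongly continuous $\pi\colon G\to B(X)$ satisfying the growth bound with $X\in\mathcal{E}_{10}$, the restriction $\pi|_H$ again satisfies a growth bound, so $\pi(m_n)$ converges in norm to the projection onto the $\pi(H)$-invariant vectors. It remains to see that these coincide with the $\pi(G)$-invariant vectors: this follows because $H$ contains a copy of $\mathrm{SL}(2,\R)\ltimes V$ whose pair $(\mathrm{SL}(2,\R)\ltimes V,V)$ has relative property (T) with respect to every $X\in\mathcal{E}_{10}$ — the action on $V$ admitting no invariant probability measure on its projective space — combined with a Mautner-type contraction argument upgrading $\pi(V)$-invariance to $\pi(G)$-invariance. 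This reduces the theorem to strong property (T) with respect to $\mathcal{E}_{10}$ for $\widetilde{\mathrm{SL}(3,\R)}$ and $\widetilde{\Sp}$.

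For $\mathrm{SL}(3,\R)$ this is \cite{delasalle1} together with the inclusion $\mathcal{E}_{10}\subset\mathcal{E}_4$, and the passage to its universal cover — as well as from $\Sp$ to $\widetilde{\Sp}$ — adds nothing essential: the central subgroup acts by operators commuting with $\pi$ and does not affect the Cartan decomposition (alternatively one takes the approximating measures supported where the covering is injective). For $\Sp$ I would fix a maximal compact subgroup $K\cong\mathrm{U}(2)$, a two-dimensional Cartan subspace $\mathfrak a\subset\mathfrak{sp}(2,\R)$ with closed chamber $\overline{\mathfrak a^+}$, and the Cartan decomposition $\Sp=K(\exp\overline{\mathfrak a^+})K$. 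Writing the growth bound in these coordinates, strong property (T) amounts to producing, for each $n$, an explicit $K$-bi-invariant compactly supported symmetric measure $m_n$ — essentially a smoothing of the normalized sphere $K(\exp r_nH_0)K$ for a fixed regular $H_0\in\mathfrak a^+$ and radii $r_n\to\infty$ — such that $\pi(m_n)$ is norm-Cauchy with limit the projection onto invariant vectors and $t>0$ depending only on $X$.

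By $K$-bi-invariance the operators $\pi(m_n)$ are block-diagonalized along the $K$-isotypic decomposition of $X$, and estimating $\|\pi(m_n)-\pi(m_{n'})\|$ reduces to bounding, between each pair of $K$-types, an operator-valued spherical function of $\Sp/\mathrm{U}(2)$ against the admissible exponential factor $e^{tr}$. Over a Hilbert space Lafforgue combines unitarity with sharp spherical-function asymptotics; over $X\in\mathcal{E}_{10}$ one instead realizes the relevant operators as the values of an analytic family $T_z$ of convolution-type operators acting on $X$-valued functions on $K$ (the analytic continuation in a complementary-series parameter $z$), and the scheme closes provided $T_z\otimes\mathrm{id}_X$ is bounded with $\|T_z\otimes\mathrm{id}_X\|=O(1/|\operatorname{Re}z|)$ for $\operatorname{Re}z$ in a one-sided neighbourhood of $0$ whose width is forced by the $C_2$ root combinatorics of $\Sp$ — and this is precisely the property defining the class $\mathcal{E}_{10}$ (the $\mathrm{SL}(3,\R)$ computation having produced $\mathcal{E}_4$, the value $10$ being what the rank-two symplectic data imposes). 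The hard part is exactly this: carrying out the spherical-function analysis of the rank-two space $\Sp/\mathrm{U}(2)$, and of $\SU\subset K$, for $X$-valued coefficients with the sharp control on $T_z$, thereby simultaneously proving the estimate and isolating the admissible class $\mathcal{E}_{10}$; the rest — the length-function reduction, the root-system lemma, the transfer, and the passage to universal covers — is comparatively routine given Lafforgue's framework and \cite{delasalle1}.
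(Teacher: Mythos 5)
Your overall skeleton --- reduce to an analytic subgroup locally isomorphic to $\mathrm{SL}(3,\R)$ or $\mathrm{Sp}(2,\R)$, then prove the rank-two cases by harmonic analysis --- is indeed the paper's, but the two steps you call ``comparatively routine'' are exactly where the substance lies, and as written they have genuine gaps. First, the upgrade from $\pi(H)$-invariant to $\pi(G)$-invariant vectors: your appeal to relative property (T) of $(\mathrm{SL}(2,\R)\ltimes V,V)$ ``with respect to every $X\in\mathcal{E}_{10}$'', justified by the absence of invariant measures on projective space, is a unitary-representation argument with no known analogue for Banach representations of exponential growth (and it is superfluous anyway, since $H$-invariance already gives $V$-invariance). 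What is actually needed, and what your sketch omits, is quantitative control: in the Mautner contraction $\pi(\exp Y)Px-Px=\pi(a^n)\bigl(\pi(\exp(e^{-\lambda n}Y))Px-Px\bigr)$ the factor $\|\pi(a^n)\|$ grows like $e^{s n\ell(a)}$ and $Px$ need not be a smooth vector, so one must replace $P$ by $\pi(m_{n'})$ and use an exponential rate $\|\pi(m_{n'})-P\|\leq e^{-\mu n'}$ to beat that growth. In other words, strong property (T) as in Definition \ref{def=strongT} for the subgroup is not known to transfer; one needs the quantified version (the paper's property (*)), and establishing that rate is part of the work on $\mathrm{Sp}(2,\R)$ and its cover, not an afterthought.

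Second, the passage to $\widetilde{\mathrm{Sp}}(2,\R)$ does not ``add nothing essential''. The subgroup $R\subset G$ can genuinely be isomorphic to $\widetilde{\mathrm{Sp}}(2,\R)$ (its center is $\Z$, so unlike the $\mathfrak{sl}_3$ case one cannot reduce to a finite extension), $\widetilde K$ is noncompact, and the growth hypothesis along $\widetilde A$ says nothing about $\pi(\widetilde v_t)$, which may grow exponentially in $t$; saying that central elements commute with the image of $\pi$, or supporting the measures where the covering is injective, addresses neither the norm convergence of $\pi(m_n)$ nor the identification of the limit with a projection onto invariant vectors. The paper handles this with the Barge--Ghys quasi-morphism $\Phi$: the decay function becomes $\widetilde\varepsilon(g)e^{\kappa|\Phi(g)|}$, and the quasi-morphism bound \eqref{eq=phi_quasimorphism} is what keeps $\Phi$ bounded along the products used to prove $\pi(m_g)P=P$. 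Finally, your account of where the exponent $10$ comes from is incorrect: $\mathcal{E}_{10}$ is defined by type/cotype conditions ($\frac1p-\frac1q<\frac1{10}$) and closure operations, not by an $O(1/|\mathrm{Re}\,z|)$ bound for an analytic family of operators; the exponent is produced by the Schatten-class H\"older estimates for the $\mathrm U(1)$-averaged translation operators $S_\theta$ on $L^2(\SU)$ (disc/Jacobi polynomial bounds, $q>10$, Appendix \ref{section=p10}), combined with the $T_\theta$ estimate for $p>4$ and \cite[Proposition 3.2]{delasalle1}. Relatedly, the paper's measures are $H$-bi-invariant averages over the circle $\{v_t\}$ rather than $K$-bi-invariant ones --- a choice forced by the noncompactness of $\widetilde K$ --- so a $K$-isotypic block-diagonalization scheme would not carry over to the universal cover.
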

The assertions that $\mathrm{Sp}(2,\mathbb{R})$ and its universal covering group $\widetilde{\mathrm{Sp}}(2,\mathbb{R})$ have (T$^{\mathrm{strong}}_{\mathcal{E}_{10}}$) follow from explicit decay estimates of matrix coefficients of representations with small exponential growth of these groups on Banach spaces in $\mathcal{E}_{10}$. In fact, we will consider Banach spaces satisfying certain technical conditions that are naturally satisfied by the spaces in $\mathcal{E}_{10}$. Parts of the computations in this article rely on the methods used in the work of Haagerup and the first named author on the failure of the Approximation Property for connected higher rank simple Lie groups (see \cite{haagerupdelaat1} and \cite{haagerupdelaat2}), and on the work in \cite{delaat1}.

Apart from its aforementioned relation to the Baum-Connes Conjecture, strong property (T) has two other interesting applications due to Lafforgue (see \cite{lafforguestrengthenedpropertyt} and \cite{lafforguefourier}) that we want to mention. Firstly, let us recall that for second countable locally compact groups, Kazhdan's property (T) is equivalent to Serre's property (FH). A locally compact group $G$ has property (FH) if every continuous affine isometric action of $G$ on a real Hilbert space has a fixed point. Analogues of property (T) and property (FH) in the setting of Banach spaces were introduced by Bader, Furman, Gelander and Monod in \cite{baderfurmangelandermonod}. However, unlike for the case of Hilbert spaces, these properties are not equivalent. For a Banach space $X$, a locally compact group $G$ is said to have property (T$_X$) (resp.~property ($\overline{\mathrm{T}}_X)$) if for every continuous linear isometric representation (resp.~every uniformly equicontinuous linear representation) $\rho:G \rightarrow O(X)$ , the quotient representation $\rho^{\prime}:G \rightarrow O(X / X^{\rho(G)})$ does not have almost invariant vectors. The group $G$ is said to have property (F$_X$) (resp.~property ($\overline{\mathrm{F}}_X$)) if every continuous action of $G$ on $X$ by affine isometries (resp.~every uniformly equicontinuous affine action of $G$ on $X$) has a fixed point. In \cite{baderfurmangelandermonod}, it was proved that for second countable locally compact groups, property (T) implies property (F$_X$) for certain Banach spaces, the most notable examples being $L^p$-spaces. Bader, Furman, Gelander and Monod conjectured that connected semisimple Lie groups with finite center and higher rank simple factors satisfy property ($\overline{\mathrm{F}}_X$) for every superreflexive Banach space $X$. In fact, they conjectured a stronger statement (see \cite[Conjecture 1.6]{baderfurmangelandermonod}). It was proved by Lafforgue that a locally compact group $G$ has ($\overline{\mathrm{F}}_X$) if it has (T$^{\mathrm{strong}}_{X \oplus \C}$). In fact, it has a fixed point property for affine actions with linear part having small exponential growth. In Section \ref{section=fixed_point} we obtain the following corollary of Theorem \ref{thm=maintheorem}, which supports the conjecture of Bader, Furman, Gelander and Monod.
\begin{corollary}\label{coro=fixed_point}
  Let $G$ be a connected simple Lie group with real rank at least $2$. Then $G$ and its lattices have properties (F$_X$) and ($\overline{\mathrm{F}}_X$) for every $X \in \mathcal{E}_{10}$.
\end{corollary}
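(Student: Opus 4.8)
The plan is to deduce everything from Theorem \ref{thm=maintheorem} together with the implication of Lafforgue recalled in the introduction. First I would note that the class $\mathcal{E}_{10}$ is stable under the operation $X \mapsto X \oplus \C$ (say, equipped with the $\ell^2$-direct sum norm): adjoining a one-dimensional Hilbert summand does not affect the interpolation condition defining the class, so $X \oplus \C \in \mathcal{E}_{10}$ whenever $X \in \mathcal{E}_{10}$ (cf.\ the definition in Section \ref{subsec=bs}). By Theorem \ref{thm=maintheorem}, $G$ then has (T$^{\mathrm{strong}}_{X \oplus \C}$) for every $X \in \mathcal{E}_{10}$, hence by Lafforgue's theorem $G$ has a fixed point for every continuous affine action whose linear part has sufficiently small exponential growth. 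Taking the linear part to be isometric (resp.\ uniformly bounded) shows in particular that $G$ has property (F$_X$) (resp.\ ($\overline{\mathrm{F}}_X$)) for every $X \in \mathcal{E}_{10}$.

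It then remains to pass to lattices. If $\Gamma < G$ is a \emph{cocompact} lattice, then by Lafforgue's theorem that (T$^{\mathrm{strong}}_{\mathcal{E}}$) passes to cocompact lattices, $\Gamma$ again has (T$^{\mathrm{strong}}_{X \oplus \C}$) for every $X \in \mathcal{E}_{10}$, and the first paragraph applies verbatim to $\Gamma$. For a general, possibly non-uniform, lattice $\Gamma < G$ this shortcut is not available, and I would instead induce: given a continuous affine isometric (resp.\ uniformly equicontinuous) action of $\Gamma$ on $X \in \mathcal{E}_{10}$, one forms the induced affine action of $G$ on the Banach space $Y$ of $L^2$-sections over the finite-volume space $G/\Gamma$ of the associated $X$-bundle, following the passage-to-lattices technique of \cite{baderfurmangelandermonod} (going back to Shalom). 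The induced linear part is again isometric (resp.\ uniformly bounded), and the crucial point is that $Y \in \mathcal{E}_{10}$: this follows from the interpolation description of the class in Section \ref{subsec=bs} together with the fact that complex interpolation commutes with $L^2(\mu;\,\cdot\,)$ (Calderón's theorem) and that $L^2(\mu;\mathcal H)$ is a Hilbert space for every Hilbert space $\mathcal H$. Since $G$ has (F$_Y$) and ($\overline{\mathrm{F}}_Y$) by the first paragraph, the induced affine $G$-action on $Y$ has a fixed point, i.e.\ a $G$-invariant $L^2$-section; because $G$ acts transitively on $G/\Gamma$, a standard measure-theoretic argument identifies such a section with a vector of $X$ fixed by the original affine $\Gamma$-action. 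Hence $\Gamma$ has (F$_X$) and ($\overline{\mathrm{F}}_X$) for every $X \in \mathcal{E}_{10}$.

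The stability of $\mathcal{E}_{10}$ under $X \mapsto X \oplus \C$ and under $X \mapsto L^2(\mu;X)$, and the boundedness of the induced linear part, are routine once the definition of the class is in hand. I expect the main obstacle to be the passage to \emph{non-uniform} lattices: one must check carefully that the $L^2$-induced affine action is genuinely well defined and continuous over the non-compact (but finite-volume) space $G/\Gamma$ — this is where the finiteness of the covolume enters, and possibly a reduction to cocycles with square-integrable displacement as in \cite{baderfurmangelandermonod} — and one must handle the possibly non-reflexive spaces in $\mathcal{E}_{10}$, for which it is precisely the \emph{strong} property (T) of $G$ (giving (F$_Y$) for the induced space $Y$), rather than property (T) alone, that makes the argument go through. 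The measure-theoretic step identifying a $G$-invariant section with a $\Gamma$-fixed vector is the other place where care is needed.
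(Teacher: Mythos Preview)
Your overall strategy matches the paper's: deduce the result for $G$ from Theorem \ref{thm=maintheorem} via Lafforgue's implication (using stability of $\mathcal{E}_{10}$ under $X \mapsto X \oplus \C$), then pass to lattices by induction of affine actions as in \cite{baderfurmangelandermonod}, using stability of $\mathcal{E}_{10}$ under $X \mapsto L^p(G/\Gamma;X)$.

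There is, however, a genuine gap in your treatment of non-uniform lattices. Finite covolume alone is \emph{not} enough to make the induced affine action on $L^p(G/\Gamma;X)$ well defined: what is needed is that the lattice be \emph{$p$-integrable} in the sense of Shalom and \cite[Section 8]{baderfurmangelandermonod}, i.e., that the word-length cocycle be $L^p$ over a fundamental domain. This is the precise hypothesis in \cite[Proposition 8.8]{baderfurmangelandermonod}, and it does not come for free. The paper supplies exactly this missing ingredient as Proposition \ref{prop=pintegrable}: every lattice in a connected higher rank simple Lie group is $p$-integrable for all $p<\infty$. For finite-center $G$ this is Shalom's theorem; for infinite center (e.g.\ $\widetilde{\mathrm{Sp}}(2,\R)$) an additional argument is required, using that $G$ is a central extension of $G/Z(G)$ by a \emph{bounded} $2$-cocycle (Guichardet--Wigner), so that a section $s\colon G/Z(G)\to G$ satisfying \eqref{eq=bounded2cocycle} lets one transport $p$-integrability from the finite-center quotient. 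Your proposal flags the well-definedness issue but does not isolate $p$-integrability as the crucial hypothesis, nor indicate how to prove it in the infinite-center case; that is the substantive content of the paper's Section \ref{section=fixed_point} beyond what you have written.
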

The second application of strong property (T) that was found by Lafforgue is on embeddings of families of expanders (see \cite{lubotzky} and \cite[Section 3]{pisiermemams}). Let $\Gamma$ be a lattice in a connected higher rank simple Lie group, and assume that there exists a sequence $(\Gamma_i)_{i \in \mathbb{N}}$ of finite-index subgroups of $\Gamma$ such that $|\Gamma / \Gamma_i| \to \infty$ for $i \to \infty$. Note that this assumption is in particular satisfied when $\Gamma$ is residually finite. If $S$ denotes a finite symmetric generating set of $\Gamma$, and if $Y_i=\Gamma / \Gamma_i$ is the corresponding graph with natural metric denoted by $d_i$, then $(Y_i,d_i)_{i \in \mathbb{N}}$ is a sequence of expanders, since $\Gamma$ has Kazhdan's property (T). A sequence of expanders $(Y_i,d_i)$ is said to embed coarsely into a Banach space $X$ if there exist a function $\rho:\mathbb{N} \rightarrow \mathbb{R}_{+}$ such that $\rho(n) \to \infty$ for $n \to \infty$ and $1$-Lipschitz functions $f_i:Y_i \rightarrow X$ such that $\|f_i(y)-f_i(y^{\prime})\|_X \geq \rho(d_i(y,y^{\prime}))$ for all $i \in \mathbb{N}$ and $y,y^{\prime} \in X_i$. By \cite[Section 5]{lafforguestrengthenedpropertyt} (see also \cite[Section 5.2]{lafforguefourier} and \cite{delasalle1}), the following result follows immediately from Theorem \ref{thm=maintheorem}.
\begin{corollary}
  Let $\Gamma$ be a lattice in a connected simple Lie group with real rank at least $2$, and let $(Y_i,d_i)$ be a family of expanders constructed from $\Gamma$ in the way mentioned above. Then $(Y_i,d_i)$ does not admit a coarse embedding in any Banach space in $\mathcal{E}_{10}$.
\end{corollary}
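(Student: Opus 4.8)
This is a consequence of Theorem~\ref{thm=maintheorem} through Lafforgue's argument relating strong property~(T) to the coarse non-embeddability of expanders (\cite[Section~5]{lafforguestrengthenedpropertyt}; see also \cite[Section~5.2]{lafforguefourier} and \cite{delasalle1}); I describe the plan and isolate the one step that is not purely formal. Write $G$ for the ambient connected simple Lie group of real rank at least $2$, fix the finite symmetric generating set $S$ of $\Gamma$, let $X\in\mathcal{E}_{10}$, and suppose for contradiction that the expanders $(Y_i,d_i)$, $Y_i=\Gamma/\Gamma_i$, admit a coarse embedding into $X$ realised by $1$-Lipschitz maps $f_i\colon Y_i\to X$ with $\|f_i(y)-f_i(y')\|_X\ge\rho(d_i(y,y'))$ and $\rho(k)\to\infty$. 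Replacing each $f_i$ by $f_i$ minus its mean, I would first note the two elementary facts that force the $f_i$ to be large: the inequality
\[
\frac{1}{|Y_i|}\sum_{y\in Y_i}\|f_i(y)\|_X^2\ \ge\ \frac{1}{4|Y_i|^2}\sum_{y,y'\in Y_i}\|f_i(y)-f_i(y')\|_X^2 ,
\]
valid in any normed space, together with the observation that, since $Y_i$ has bounded degree and $|Y_i|\to\infty$, a definite proportion of the pairs $(y,y')$ lie at distance at least $c\log|Y_i|$ for a constant $c>0$ depending only on $|S|$; combined with $\|f_i(y)-f_i(y')\|_X\ge\rho(d_i(y,y'))$ these give $\bigl(\frac{1}{|Y_i|}\sum_{y}\|f_i(y)\|_X^2\bigr)^{1/2}\to\infty$. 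So the whole argument reduces to bounding this quantity uniformly in $i$.

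For the uniform bound I would pass from $\Gamma$ to $G$. Each $\Gamma_i$ is again a lattice in $G$, hence there are natural isometric representations $\rho_i$ of $G$ on the Banach spaces $L^2(G/\Gamma_i;X)$, whose only $G$-invariant vectors are the constants; using the stability of the class $\mathcal{E}_{10}$ under closed subspaces and under $X\mapsto L^2(\mu;X)$, the mean-zero subspaces $H_i\subset L^2(G/\Gamma_i;X)$ again lie in $\mathcal{E}_{10}$, and $\rho_i$ has no nonzero invariant vectors on $H_i$. Applying Theorem~\ref{thm=maintheorem} to $G$, say with a word-length function $\ell_G$, produces compactly supported symmetric measures $m_n$ on $G$ such that $\rho_i(m_n)$ tends to the projection onto the constants; since the $\rho_i$ are all isometric and the $H_i$ all lie in the single class $\mathcal{E}_{10}$, this convergence comes with a rate independent of $i$, so I may fix $n$ with $\|\rho_i(m_n)\|_{B(H_i)}\le\tfrac12$ for every $i$. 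I would then transport $f_i$ to a vector $\widetilde{f}_i\in H_i$ using a finite-volume fundamental domain $F$ for $\Gamma$ in $G$ and the resulting identification $G/\Gamma_i\cong F\times Y_i$ of measure spaces; writing $\alpha\colon G\times F\to\Gamma$ for the associated measurable cocycle, one has $\|\widetilde{f}_i\|_{H_i}=\bigl(\frac{1}{|Y_i|}\sum_{y}\|f_i(y)\|_X^2\bigr)^{1/2}$ and, since $\rho_i(g)$ fixes the constants and acts on $\widetilde{f}_i$ by shifting the $Y_i$-coordinate through $\alpha(g^{-1},\cdot)$,
\[
\|\rho_i(g)\widetilde{f}_i-\widetilde{f}_i\|_{H_i}\ \le\ \mathrm{Lip}(f_i)\Bigl(\frac{1}{\mathrm{vol}(F)}\int_F\ell_S\bigl(\alpha(g^{-1},h)\bigr)^2\,dh\Bigr)^{1/2},
\]
using $d_i(\gamma y,y)\le\ell_S(\gamma)$ on $Y_i$. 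Integrating against $m_n$ and combining with $\|\rho_i(m_n)\|_{B(H_i)}\le\tfrac12$ (and with $m_n(G)\to1$) bounds $\|\widetilde{f}_i\|_{H_i}$ by a constant multiple of
\[
K_n:=\int_G\Bigl(\frac{1}{\mathrm{vol}(F)}\int_F\ell_S\bigl(\alpha(g^{-1},h)\bigr)^2\,dh\Bigr)^{1/2}dm_n(g),
\]
which --- provided $K_n<\infty$ --- is the desired uniform-in-$i$ bound and contradicts the divergence found above.

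The main obstacle is Theorem~\ref{thm=maintheorem} itself; within the deduction the only step that is not automatic is the finiteness of $K_n$, i.e.\ a suitable integrability of the cocycle $h\mapsto\ell_S(\alpha(g^{-1},h))$ over $F$ against the $m_n$. When $\Gamma$ is cocompact this is immediate: $F$ may be chosen relatively compact, so that $\ell_S(\alpha(g^{-1},h))\le C\ell_G(g)+C$ uniformly in $h\in F$ and hence $K_n\le C\int_G\ell_G\,dm_n+C\,m_n(G)<\infty$; in fact one can then bypass induction altogether, since (T$^{\mathrm{strong}}_{\mathcal{E}_{10}}$) passes from $G$ to its cocompact lattices and one may argue directly with the mean-zero quasi-regular representations of $\Gamma$ on $\ell^2(Y_i;X)$. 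When $\Gamma$ is non-cocompact the fundamental domain is unbounded and the cocycle is no longer bounded, and the finiteness of $K_n$ must be extracted from the reduction theory and coarse geometry of lattices in higher rank semisimple Lie groups --- this is precisely the point carried out in the works of Lafforgue and of the second named author cited above, and is the step I expect to require the most care.
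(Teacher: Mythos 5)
Your outline is the standard deduction that the paper itself does not write out (it only invokes \cite{lafforguestrengthenedpropertyt}, \cite{lafforguefourier} and \cite{delasalle1}), and its skeleton is right: the lower bound on $\bigl(\frac{1}{|Y_i|}\sum_y\|f_i(y)\|_X^2\bigr)^{1/2}$ from the coarse lower bound together with the fact that in a bounded-degree graph most pairs lie at distance at least $c\log|Y_i|$, and the upper bound by inducing to $G$ and applying Theorem \ref{thm=maintheorem} to the isometric representations on $L^2(G/\Gamma_i;X)$. Two points, however, need repair. First, your justification of the uniformity in $i$ of the spectral bound (``since the $\rho_i$ are all isometric and the $H_i$ all lie in the single class $\mathcal{E}_{10}$'') is not valid as stated: strong property (T), and property (*) of Section \ref{section=strongt}, produce constants depending on the individual Banach space, not uniformly over the class $\mathcal{E}_{10}$, so membership of each $H_i$ in the class by itself gives no common $n$ with $\|\rho_i(m_n)\|_{B(H_i)}\le\tfrac12$. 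The fix is cheap: all the spaces $L^2(G/\Gamma_i;X)$ satisfy \eqref{eq:assumption_on_X_SO2} and \eqref{eq:assumption_on_X_U1} with the same constants as $X$ by Fubini (exactly as used in Section \ref{section=sp2} for $L^2(\R/\pi\Z;X)$), or one may apply Theorem \ref{thm=maintheorem} once to the single isometric representation of $G$ on $\bigl(\bigoplus_i L^2(G/\Gamma_i;X)\bigr)_{\ell^2}$, which is again of the form $L^2(\nu;X)$ and hence lies in $\mathcal{E}_{10}$. Second, the step you leave open, the finiteness of $K_n$, is exactly square-integrability of the word length of the cocycle $h\mapsto\alpha(g^{-1},h)$ over a fundamental domain, i.e.\ $2$-integrability of the (possibly non-cocompact) lattice; you do not need to defer this to the coarse geometry in the cited works, since it is supplied inside this paper by Proposition \ref{prop=pintegrable} ($p$-integrability for all $p<\infty$, via Shalom's theorem \cite{shalom} when $Z(G)$ is finite and the bounded-cocycle central-extension argument otherwise). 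With these two repairs your argument closes, and it is the same induction-plus-integrability route that underlies the references on which the paper's one-line proof rests.
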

In the non-Archimedean setting, a strong analogue of Theorem \ref{thm=maintheorem} is known. Indeed, for a non-Archimedean local field $F$, Lafforgue proved that $\mathrm{SL}(3,F)$ has property (T$^{\mathrm{strong}}_{\mathrm{Banach}}$) (see \cite{lafforguestrengthenedpropertyt} and \cite{lafforguefourier}), and Liao proved that $\mathrm{Sp}(2,F)$ has it as well, implying that any connected almost $F$-simple algebraic group with $F$-split rank at least $2$ has property (T$^{\mathrm{strong}}_{\mathrm{Banach}}$) (see \cite{liao}). Analogues of the corollaries above follow as well.
\begin{rem}
Let us point out that the estimate of Lemma \ref{lemma=estimate_S_theta_in_schatten_classes} and its proof (which is postponed to Appendix \ref{section=p10}) imply an improvement of the results in \cite{delaat1} and \cite{haagerupdelaat2}. In those articles it is proved that for a lattice $\Gamma$ in a connected higher rank simple Lie group and $p\in [1,\frac{12}{11})\cup(12,\infty]$, the noncommutative $L^p$-space $L^p(L(\Gamma))$ does not have the completely bounded approximation property (CBAP) or operator space approximation property (OAP). From our computations, it follows that even for $p \in [1,\frac{10}{9})\cup(10,\infty]$, the space $L^p(L(\Gamma))$ does not have these properties. It is an interesting open question whether this failure of the CBAP and OAP can be extended to all $p \neq 2$.
\end{rem}
This article is organized as follows. We recall some preliminaries in Section \ref{sec:preliminaries}. In Section \ref{section=harmonic_analysis_SU2}, we consider certain aspects of harmonic analysis on $\mathrm{SU}(2)$, which is a subgroup of $\mathrm{Sp}(2,\mathbb{R})$ and $\widetilde{\mathrm{Sp}}(2,\mathbb{R})$, postponing certain computations to Appendix \ref{section=p10}. The explicit decay results for representations of $\mathrm{Sp}(2,\mathbb{R})$ and $\widetilde{\mathrm{Sp}}(2,\mathbb{R})$ with small exponential growth is proved in Sections \ref{section=sp2} and \ref{section=sp2cov}, respectively. Our main theorem is proved in Section \ref{section=strongt}. In Section \ref{section=fixed_point}, we obtain Corollary \ref{coro=fixed_point}.

\subsection*{Acknowledgements} We thank the referee for his very thorough reading and useful comments. We thank Nicolas Monod for allowing us to add his argument for Proposition \ref{prop=pintegrable}. The second named author thanks \'Etienne Ghys for enlightening discussions on quasi-morphisms on $\widetilde{\mathrm{Sp}}(2,\mathbb{R})$ and for communicating the work \cite{BargeGhys}.

\section{Preliminaries} \label{sec:preliminaries}
\subsection{Lie groups} \label{subsec:liegroups}
A universal covering group of a connected Lie group $G$ is a Lie group $\widetilde{G}$ together with a surjective Lie group homomorphism $\sigma:\widetilde{G} \rightarrow G$ such that $(\widetilde{G},\sigma)$ is a simply connected covering space of $G$. Every connected Lie group $G$ has a simply connected covering space $\widetilde G$, which can be made into a universal covering group. Indeed, if $\sigma:\widetilde{G} \rightarrow G$ is the corresponding covering map and $\tilde{1} \in \sigma^{-1}(1)$, there exists a unique multiplication on $\widetilde{G}$ that makes $\widetilde{G}$ into a Lie group in such a way that $\sigma$ is a surjective Lie group homomorphism. Universal covering groups (of connected Lie groups) are unique up to isomorphism. Moreover, they satisfy the exact sequence $1 \rightarrow \pi_1(G) \rightarrow \widetilde{G} \rightarrow G \rightarrow 1$, where $\pi_1(G)$ denotes the fundamental group of $G$. For details, see \cite[Section I.11]{knapp}.

Let $G$ be a connected semisimple Lie group with Lie algebra $\mathfrak{g}$. The KAK decomposition of $G$ is given by $G=KAK$, where $K$ comes from a Cartan decomposition $\mathfrak{g}=\mathfrak{k} + \mathfrak{p}$ ($K$ has Lie algebra $\mathfrak{k}$) and $A$ is an abelian Lie group whose Lie algebra $\mathfrak{a}$ is a maximal abelian subspace of $\mathfrak{p}$. If the center of $G$ is finite, then $K$ is a maximal compact subgroup. The real rank of $G$ is defined as the dimension of $\mathfrak{a}$. Given a KAK decomposition $G=KAK$ and $g \in G$, it is not the case that there exist unique $k_1,k_2 \in K$ and $a \in A$ such that $g=k_1ak_2$. However, by choosing a set of positive roots and restricting to the closure $\overline{A^{+}}$ of the positive Weyl chamber $A^{+}$, we still have the decomposition $G=K\overline{A^{+}}K$. Also, if $g=k_1ak_2$, where $k_1,k_2 \in K$ and $a \in \overline{A^{+}}$, then $a$ is unique. For details, see \cite[Section IX.1]{helgasonlie}.

\subsection{The group $\mathrm{Sp}(2,\R)$} \label{subsec:psp2}
Let $I_2$ denote the $2 \times 2$ identity matrix, and let $J \in M_4(\R)$ be defined by $J=\left( \begin{array}{cc} 0 & I_2 \\ -I_2 & 0 \end{array} \right)$. The Lie group $\mathrm{Sp}(2,\R)$ is defined as the group of linear transformations of $\R^4$ that preserve the standard symplectic form $w(x,y) = \langle Jx,y\rangle$; that is,
\[
  \mathrm{Sp}(2,\R)=\{g \in \mathrm{GL}(4,\mathbb{R}) \mid g^T J g = J\},
\]
where $g^T$ denotes the transpose of $g$. Let us point out that some authors denote this group by $\mathrm{Sp}(4,\R)$. 

Let $K$ denote the maximal compact subgroup of $\mathrm{Sp}(2,\R)$ given by
\[
  K= \bigg\{ \left( \begin{array}{cc} A & -B \\ B & A \end{array} \right) \in \mathrm{M}_{4}(\mathbb{R}) \biggm\vert A+iB \in \mathrm{U}(2) \bigg\}.
\]
This group is isomorphic to $\mathrm{U}(2)$ through the isomorphism $\iota(A+iB) = \begin{pmatrix}A & -B \\ B & A\end{pmatrix}$. This formula defines a ring isomorphism $\iota$ between $M_2(\C)$ and $M_4(\R)$ satisfying $\iota(X^*) = \iota(X)^T$ and $\det(\iota(X)) = |\det(X)|^2$ for $X \in M_2(\C)$. Let $H$ denote the subgroup of $K$ given by $\iota(\mathrm{SU}(2))$. For $\beta,\gamma \in \R$, let $D(\beta,\gamma) = \diag(e^\beta,e^\gamma,e^{-\beta},e^{-\gamma}) \in \mathrm{Sp}(2,\R)$. A KAK decomposition is given by $\mathrm{Sp}(2,\R)=K\overline{A^{+}}K$, where $\overline{A^{+}}=\left\{D(\beta,\gamma) \mid \beta \geq \gamma \geq 0\right\}$.

We will denote the Lie algebra of $\mathrm{Sp}(2,\R)$ by $\mathfrak{sp}_2$, and by $\exp \colon \mathfrak{sp}_2 \to \mathrm{Sp}(2,\R)$ the corresponding exponential map. Let us also denote by $\mathfrak{k}$, $\mathfrak{h}$ and $\mathfrak{a}$ the Lie subalgebras corresponding to the subgroups $K$, $H$ and $A$, respectively.

For $s \in \R$, we define the following element of $K$:
\begin{equation} \nonumber
 v_s = \iota(e^{is} I_2).
\end{equation}
Since $v_\pi = -1$ belongs to the center of $\mathrm{Sp}(2,\R)$, the formula $t \cdot g = v_t g v_{-t}$ defines an action of $\R/\pi \Z$ on $\mathrm{Sp}(2,\R)$.

\subsection{The universal covering group of $\mathrm{Sp}(2,\R)$} \label{subsec:psp2cov}
Let $\widetilde{\mathrm{Sp}}(2,\R)$ be the universal covering group of $\mathrm{Sp}(2,\mathbb{R})$. As explained in Section \ref{subsec:liegroups}, it is a connected simple Lie group that is simply connected as a topological space, and there is a surjective Lie group homomorphism $\sigma:\widetilde{\mathrm{Sp}}(2,\R) \to \mathrm{Sp}(2,\R)$. Let us recall some facts about $\widetilde{\mathrm{Sp}}(2,\mathbb{R})$, in particular its KAK decomposition. For more details, we refer to \cite{haagerupdelaat2}, in which the structure of $\widetilde{\mathrm{Sp}}(2,\R)$ is discussed in more detail. The Lie algebra of $\widetilde{\mathrm{Sp}}(2,\R)$ is $\mathfrak{sp}_2$. Let $\widetilde \exp: \mathfrak{sp}_2 \rightarrow \widetilde{\mathrm{Sp}}(2,\R)$ be the corresponding exponential map. The group $\widetilde A=\widetilde \exp(\mathfrak{a})$ is isomorphic to $A$, the group $\widetilde H = \widetilde \exp(\mathfrak{h})$ is isomorphic to $H$, and $\widetilde \exp( \R J)$ is a $\Z$-covering group of $\exp(\R J) = \{ v_t \mid t \in \R/{2\pi\Z}\}$. For $a \in A$ (resp.~$h \in H$), we denote by $\widetilde a$ (resp.~$\widetilde h$) the corresponding element of $\widetilde A$ (resp.~$\widetilde{H}$), and we denote by $\{ \widetilde v_t = \widetilde \exp(tJ) \mid t \in \R\}$ the elements of $\widetilde \exp(\mathbb{R}J)$. We also denote by $\widetilde \iota \colon \SU \to \widetilde H$ the isomorphism defined by $\widetilde \iota(u) = \widetilde{\iota(u)}$ for $u \in \SU$. The center of $\widetilde{\mathrm{Sp}}(2,\R)$ is $\{ \widetilde v_t \mid t \in \pi \Z\}$. Since $\R J$ is the center of $\mathfrak{k}$, $\widetilde K = \widetilde \exp(\mathfrak{k})$ is a subgroup isomorphic to $H \times \R$ through the identification of $\widetilde v_t \widetilde h$ with $(h,t)$. The KAK decomposition of $\widetilde{\mathrm{Sp}}(2,\R)$ is summarized in the following lemma.
\begin{lemma} Every element $g \in \widetilde{\mathrm{Sp}}(2,\R)$ can be written as $g=\widetilde h_1 \widetilde v_t \widetilde D(\beta,\gamma) \widetilde v_s \widetilde h_2$ with $\widetilde{h}_1,\widetilde{h}_2 \in \widetilde{H}$, $s,t \in \R$ and $\beta \geq \gamma \geq 0$. Moreover, $s$ can be taken in $[0,\pi)$ (or any other interval of size $\pi$).
\end{lemma}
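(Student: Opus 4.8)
The plan is to deduce the statement from the $\mathrm{KAK}$ decomposition $\mathrm{Sp}(2,\R)=K\overline{A^{+}}K$ of Section~\ref{subsec:psp2}: given $g\in\widetilde{\mathrm{Sp}}(2,\R)$, push it forward under $\sigma$, decompose $\sigma(g)$ in $\mathrm{Sp}(2,\R)$, lift the factors back to $\widetilde{\mathrm{Sp}}(2,\R)$, and absorb the resulting error, which lies in $\ker\sigma$. The one preliminary remark needed is a decomposition of $K$: since every unitary in $\mathrm{U}(2)$ equals $e^{i\theta}u$ for some $u\in\mathrm{SU}(2)$ and $\theta\in\R$, the isomorphism $\iota$ shows that every $k\in K$ can be written as $hv_r$ (equivalently $v_rh$, since $v_r$ is central in $K$) with $h\in H$ and $r\in\R$.

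So let $g\in\widetilde{\mathrm{Sp}}(2,\R)$. Write $\sigma(g)=k_1D(\beta,\gamma)k_2$ with $k_1,k_2\in K$ and $\beta\geq\gamma\geq 0$, and decompose $k_1=h_1v_t$, $k_2=v_sh_2$ with $h_1,h_2\in H$ and $s,t\in\R$, so that $\sigma(g)=h_1v_tD(\beta,\gamma)v_sh_2$. Using the lifts recalled in Section~\ref{subsec:psp2cov} --- the homomorphism $\widetilde\iota$ with $\sigma\circ\widetilde\iota=\iota$, the element $\widetilde D(\beta,\gamma)\in\widetilde A$ over $D(\beta,\gamma)$, and elements $\widetilde v_t,\widetilde v_s$ of $\widetilde\exp(\R J)$ over $v_t,v_s$ (these exist because $\sigma$ maps $\widetilde\exp(\R J)$ onto $\exp(\R J)$) --- set $\widetilde x=\widetilde h_1\widetilde v_t\widetilde D(\beta,\gamma)\widetilde v_s\widetilde h_2$. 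Since $\sigma$ is a homomorphism, $\sigma(\widetilde x)=\sigma(g)$, so $z:=\widetilde x^{-1}g$ lies in $\ker\sigma$.

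It remains to absorb $z$. The subgroup $\ker\sigma$ is central in $\widetilde{\mathrm{Sp}}(2,\R)$ (being the kernel of a covering of a connected group) and is contained in $\widetilde K$, and by Section~\ref{subsec:psp2cov} every element of $\widetilde K$ is of the form $\widetilde h\,\widetilde v_r$ with $\widetilde h\in\widetilde H$ and $r\in\R$; in particular $z\widetilde v_t=\widetilde h_0\widetilde v_{t'}$ for some $\widetilde h_0\in\widetilde H$ and $t'\in\R$. Using that $z$ is central,
\[
g=\widetilde x z=\widetilde h_1\,(z\widetilde v_t)\,\widetilde D(\beta,\gamma)\,\widetilde v_s\widetilde h_2=(\widetilde h_1\widetilde h_0)\,\widetilde v_{t'}\,\widetilde D(\beta,\gamma)\,\widetilde v_s\widetilde h_2,
\]
and $\widetilde h_1\widetilde h_0\in\widetilde H$, which gives the asserted form with $s,t'\in\R$. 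For the last assertion, write $s=s_0+n\pi$ with $s_0\in[0,\pi)$ and $n\in\Z$; since $\widetilde v_{n\pi}$ lies in the center $\{\widetilde v_r\mid r\in\pi\Z\}$ of $\widetilde{\mathrm{Sp}}(2,\R)$, it commutes with $\widetilde D(\beta,\gamma)$ and with $\widetilde v_{t'}$, so $\widetilde v_{t'}\widetilde D(\beta,\gamma)\widetilde v_s=\widetilde v_{t'+n\pi}\widetilde D(\beta,\gamma)\widetilde v_{s_0}$, and one may replace $t'$ by $t'+n\pi$ and $s$ by $s_0\in[0,\pi)$; any other interval of length $\pi$ works identically.

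There is no serious obstacle here; the points to watch are that the lifts $\widetilde\iota,\widetilde D(\beta,\gamma),\widetilde v_r$ are genuine group-theoretic lifts (so that $\sigma(\widetilde x)$ really equals the product $\sigma(g)$), that $\ker\sigma$ is central and contained in $\widetilde K$ (so that the error can be swallowed into the two leftmost factors), and that the refinement $s\in[0,\pi)$ is precisely an expression of the centrality of $\widetilde v_\pi$, equivalently of $v_\pi=\iota(-I_2)$ belonging to $H$.
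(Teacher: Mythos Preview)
Your proof is correct. The paper itself does not give a proof of this lemma: it is stated as a summary of the KAK decomposition of $\widetilde{\mathrm{Sp}}(2,\R)$, with a reference to \cite{haagerupdelaat2} for the structure of the group. Your argument---push down via $\sigma$, use the KAK decomposition of $\mathrm{Sp}(2,\R)$ together with $K=Hv_{\R}$, lift the factors, and absorb the central defect $z\in\ker\sigma$---is exactly the natural way to fill in the details.

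One small simplification: since the paper already records that the center of $\widetilde{\mathrm{Sp}}(2,\R)$ is $\{\widetilde v_t\mid t\in\pi\Z\}$ and $\ker\sigma$ is contained in it, the element $z$ is automatically of the form $\widetilde v_{2n\pi}$ for some $n\in\Z$; thus $\widetilde h_0=1$ and $t'=t+2n\pi$, and you can skip the general decomposition of $z$ inside $\widetilde K$. Also, your closing remark that the refinement $s\in[0,\pi)$ expresses ``$v_\pi=\iota(-I_2)$ belonging to $H$'' slightly understates what is used: you need $\widetilde v_{n\pi}$ to be central in all of $\widetilde{\mathrm{Sp}}(2,\R)$ (so it commutes with $\widetilde D(\beta,\gamma)$), which is a stronger statement than $v_\pi\in H$. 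But this is exactly what the paper provides, and you invoke it correctly in the body of the argument.
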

To continue the description of $\widetilde{\mathrm{Sp}}(2,\R)$, we recall that a quasi-morphism is a map $\Phi: \widetilde{\mathrm{Sp}}(2,\R) \rightarrow \R$ such that $\Phi(g_1 g_2) - \Phi(g_1) - \Phi(g_2)$ is bounded on $\widetilde{\mathrm{Sp}}(2,\R) \times \widetilde{\mathrm{Sp}}(2,\R)$. A quasi-morphism is called trivial if it is bounded (it corresponds to $0$ in the bounded cohomology). Quasi-morphisms on $\widetilde{\mathrm{Sp}}(2,\R)$ (and more generally on the universal cover of $\mathrm{Sp}(n,\R)$ for $n \geq 1$) were studied by Barge and Ghys \cite{BargeGhys}. There are several ways to construct nontrivial quasi-morphisms, but Barge and Ghys proved that all these coincide: there is essentially one quasi-morphism on $\widetilde{\mathrm{Sp}}(2,\R)$, in the sense that the space of quasi-morphisms modulo the bounded quasi-morphisms is one-dimensional. Crucial for our analysis (see the proof of Proposition \ref{prop=strong_T_univcover}) is the fact that this quasi-morphism is bounded on $\widetilde A$ and grows linearly on $\{\widetilde v_t \mid t \in \R\}$. More precisely, we use the following result.
\begin{lemma}\label{lemma=phi_quasimorphism} There is a continuous map $\Phi: \widetilde{\mathrm{Sp}}(2,\R) \to \R$ such that whenever $g \in \widetilde{\mathrm{Sp}}(2,\R)$ has KAK decomposition $g = \widetilde h_1 \widetilde v_t \widetilde D(\beta,\gamma) \widetilde v_s \widetilde h_2$, we have $\Phi(g) = t+s$. Moreover,
\begin{equation}\label{eq=phi_quasimorphism} | \Phi(g_1 g_2) - \Phi(g_1) - \Phi(g_2) | < \frac{\pi}{2}\end{equation}
for all $g_1, g_2 \in \widetilde{\mathrm{Sp}}(2,\R)$.  
\end{lemma}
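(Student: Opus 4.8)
The plan is to realize $\Phi$ as $\tfrac12$ times the lift to $\widetilde{\mathrm{Sp}}(2,\R)$ of a ``twisted determinant'' on $\mathrm{Sp}(2,\R)$. Since $\mathrm{Sp}(2,\R)$ is stable under transpose, $g^Tg$ is positive definite and lies in $\mathrm{Sp}(2,\R)$ for every $g\in\mathrm{Sp}(2,\R)$, so its inverse square root lies in $\mathrm{Sp}(2,\R)$, and the Cartan retraction $p(g):=g(g^Tg)^{-1/2}$ takes values in $\mathrm{Sp}(2,\R)\cap\mathrm O(4)=K$. Composing $p$ with $K\cong\mathrm U(2)$ and the determinant of $\mathrm U(2)$ gives a continuous map $\delta\colon\mathrm{Sp}(2,\R)\to\mathrm U(1)$; since $\widetilde{\mathrm{Sp}}(2,\R)$ is simply connected, $\delta\circ\sigma$ lifts uniquely to a continuous map $\widetilde\delta\colon\widetilde{\mathrm{Sp}}(2,\R)\to\R$ with $\widetilde\delta(\widetilde1)=0$, and I put $\Phi:=\tfrac12\widetilde\delta$, so that continuity is automatic.

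The next step is the KAK formula. The key computation is $p(k_1ak_2)=k_1k_2$ for $k_1,k_2\in K$ and $a$ positive definite symmetric: indeed $(k_1ak_2)^T(k_1ak_2)=k_2^{-1}a^2k_2$, so its inverse square root equals $k_2^{-1}a^{-1}k_2$, and the product collapses. Hence $\delta(k_1ak_2)=\det_{\mathrm U(2)}(k_1)\det_{\mathrm U(2)}(k_2)$; in particular $\delta\equiv1$ on $\overline{A^+}$, $\delta(v_s)=e^{2is}$ and $\delta|_H\equiv1$, and more generally $\delta(k_1gk_2)=\det_{\mathrm U(2)}(k_1)\,\delta(g)\,\det_{\mathrm U(2)}(k_2)$. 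From these identities, using repeatedly that two continuous real lifts of one $\mathrm U(1)$-valued map on a connected space which agree at a point agree everywhere, I would deduce: (i) $\widetilde\delta|_{\widetilde K}$ is the homomorphism $\widetilde v_t\widetilde h\mapsto2t$; (ii) $\widetilde\delta$ vanishes on $\widetilde A$; (iii) $\widetilde\delta(\widetilde kX\widetilde k')=\widetilde\delta(\widetilde k)+\widetilde\delta(X)+\widetilde\delta(\widetilde k')$ for $\widetilde k,\widetilde k'\in\widetilde K$ and $X\in\widetilde{\mathrm{Sp}}(2,\R)$. Applying (iii), then (ii) and (i), to $g=\widetilde h_1\widetilde v_t\,\widetilde D(\beta,\gamma)\,\widetilde v_s\widetilde h_2$ gives $\widetilde\delta(g)=2t+0+2s$, i.e.\ $\Phi(g)=t+s$; this simultaneously proves that $t+s$ does not depend on the chosen decomposition.

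For the defect inequality, write KAK decompositions $g_i=\widetilde k_1(g_i)\widetilde a_i\widetilde k_2(g_i)$ and set $\widetilde u:=\widetilde k_2(g_1)\widetilde k_1(g_2)$, so that $g_1g_2=\widetilde k_1(g_1)\,\widetilde a_1\widetilde u\widetilde a_2\,\widetilde k_2(g_2)$. Then (i)--(iii) reduce $\Phi(g_1)+\Phi(g_2)-\Phi(g_1g_2)$ to $\tfrac12\bigl(\widetilde\delta(\widetilde u)-\widetilde\delta(\widetilde a_1\widetilde u\widetilde a_2)\bigr)$, a quantity depending only on $(\widetilde a_1,\widetilde u,\widetilde a_2)\in\widetilde A\times\widetilde K\times\widetilde A$ that moreover factors through $\widetilde A\times(\widetilde K/\langle\widetilde v_\pi\rangle)\times\widetilde A$, the middle factor $\SU\times(\R/\pi\Z)$ being compact. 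It therefore remains to prove $\bigl|\widetilde\delta(\widetilde a_1\widetilde u\widetilde a_2)-\widetilde\delta(\widetilde u)\bigr|<\pi$. This quantity vanishes when $\widetilde a_1=\widetilde a_2=\widetilde1$, its exponential is $\delta(a_1ua_2)\,\delta(u)^{-1}$, and one can let the $\widetilde a_i$ run over a polar compactification of $\widetilde A$ along the rays of which $p(a_1ua_2)$ stabilizes; hence, by a connectedness argument, it suffices to check that $\delta(a_1ua_2)\neq-\delta(u)$, equivalently that the element $p(a_1ua_2)\,u^{-1}\in K$ never has $\mathrm U(2)$-determinant $-1$, so that the continuous lift is confined to the component $(-\pi,\pi)$ of $\R\setminus(\pi+2\pi\Z)$.

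The main obstacle is precisely this last point — the sharp, strict constant $\tfrac\pi2$, together with the bounded-winding fact that legitimizes the compactification. I would settle it in one of two ways. First, by invoking Barge--Ghys \cite{BargeGhys}: $\Phi$ is a fixed multiple of the essentially unique nontrivial quasi-morphism on $\widetilde{\mathrm{Sp}}(2,\R)$, the Maslov quasi-morphism, whose defect is strictly smaller than the bound forced by the Maslov index of a triple of Lagrangians in $\R^4$ taking values in $\{-2,\dots,2\}$; matching this with the normalization $\Phi(\widetilde v_\pi)=\pi$ produces the constant $\tfrac\pi2$. Alternatively, by a hands-on computation: one identifies $(g_1,g_2)\mapsto\Phi(g_1)+\Phi(g_2)-\Phi(g_1g_2)$ with half of the K\"ahler area cocycle $(g_1,g_2)\mapsto\int_\Delta\omega$ over the geodesic triangle $\Delta$ with vertices $iI$, $g_1\cdot iI$, $g_1g_2\cdot iI$ in the Siegel upper half-space $\mathrm{Sp}(2,\R)/K$, and uses that such a triangle with finite vertices has area strictly below the supremum realized only in the ideal limit. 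In either route, the remaining work — verifying that the chosen identification respects the normalization $\Phi(\widetilde v_t\cdots)=t$ and that the compact model above cannot attain $\tfrac\pi2$ — is routine.
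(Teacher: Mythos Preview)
Your construction of $\Phi$ is correct and, once one observes that your map $\delta$ coincides with the paper's map $c$, essentially the same as the paper's. Indeed, for $g=k_1D(\beta,\gamma)k_2$ your polar retraction gives $p(g)=k_1k_2$ and hence $\delta(g)=\det_{\mathrm U(2)}k_1\det_{\mathrm U(2)}k_2$, while the paper defines $c(g)=\det(A+D+iB-iC)/|\det(A+D+iB-iC)|$ and computes $g+(g^T)^{-1}=\iota\bigl(\iota^{-1}(k_1)\,\diag(2\cosh\beta,2\cosh\gamma)\,\iota^{-1}(k_2)\bigr)$, giving the same value. The paper then \emph{uses} $c$ to realize the cover concretely as $\{(g,t):e^{it}=c(g)\}$ with the cocycle multiplication $(g_1,t_1)(g_2,t_2)=(g_1g_2,t_1+t_2+\eta(g_1,g_2))$ and simply sets $\Phi(g,t)=t/2$; your lifting argument via simple connectedness achieves the same thing, and your verification of the KAK formula is equivalent to the paper's computation $c(h_1v_tD(\beta,\gamma)v_sh_2)=e^{2i(t+s)}$.

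The real divergence is in the defect bound. The paper does not argue: it quotes \cite[Lemma~14]{rawnsley}, which gives $|\eta(g_1,g_2)|<\pi$ directly in this explicit model, so $|\Phi(g_1g_2)-\Phi(g_1)-\Phi(g_2)|=\tfrac12|\eta(g_1,g_2)|<\tfrac\pi2$ is immediate. Your route --- reducing to $|\widetilde\delta(\widetilde a_1\widetilde u\widetilde a_2)-\widetilde\delta(\widetilde u)|<\pi$, then invoking Barge--Ghys/Maslov or the K\"ahler area (Domic--Toledo type) bound for geodesic triangles in the Siegel domain --- is a genuinely different and more conceptual argument. It has the merit of explaining \emph{why} the constant is $\tfrac\pi2$ (it is the Gromov norm of the K\"ahler class, attained only by ideal triangles), at the cost of importing heavier machinery and an extra normalization check. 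If you only need the lemma as stated, the paper's shortcut via Rawnsley is cleaner; your approach would be preferable if one wanted the analogous statement for $\widetilde{\mathrm{Sp}}(n,\R)$ uniformly in $n$.
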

\begin{proof}
To our knowledge, the function $\Phi$ was introduced (with a different definition) by Guichardet and Wigner \cite{guichardetwigner} and Dupont and Guichardet \cite{dupontguichardet}, and the fact that it is a quasi-morphism is due to Dupont \cite{dupont}. This was reproved in \cite{rawnsley}, in which the function $\Phi$ is the essential ingredient for the explicit realization of $\widetilde{\mathrm{Sp}}(2,\R)$. We will now briefly recall this realization.

Let $c: \mathrm{Sp}(2,\R) \rightarrow S^1$ be the continuous function given by $c(g) = \frac{\det(A+D + i B - iC)}{|\det(A+D + i B - iC)|}$ for $g= \begin{pmatrix} A & C \\ B & D\end{pmatrix}$. Note that $\iota(A+D+iB-iC) = g - JgJ = g + (g^T)^{-1}$. Let the $2$-cocycle $\eta$ be defined as the smooth function $\eta : \mathrm{Sp}(2,\R) \times \mathrm{Sp}(2,\R) \rightarrow \R$ satisfying $c(g_1 g_2) = c(g_1) c(g_2) e^{i\eta(g_1,g_2)}$ and $\eta(1,1)=0$. In this way, we can realize $\widetilde{\mathrm{Sp}}(2,\R)$ as $\{ (g,t) \in \mathrm{Sp}(2,\R) \times \R \mid e^{it} = c(g)\}$ with the group multiplication $(g_1,t_1) (g_2,t_2) = (g_1 g_2,t_1+t_2 + \eta(g_1,g_2))$. We define $\Phi(g,t) = \frac t 2$. By \cite[Lemma 14]{rawnsley}, $\eta$ satisfies $|\eta(g_1,g_2)|<\pi$, which is exactly \eqref{eq=phi_quasimorphism}. To prove the lemma, it remains to show that an element of the form $\widetilde h_1 \widetilde v_t \widetilde D(\beta,\gamma) \widetilde v_s \widetilde h_2$ in $\widetilde{\mathrm{Sp}}(2,\R)$ corresponds to $(h_1 v_t D(\beta,\gamma) v_s h_2,2t+2s)$. We first claim that $c(g) = e^{2i(t+s)}$ for $g=h_1 v_t D(\beta,\gamma) v_s h_2$. Indeed, writing $h_1 v_t = \iota(k_1)$ and $v_s h_2 = \iota(k_2)$ for $k_i \in \mathrm{U}(2)$, we have $(g^T)^{-1} = \iota(k_1) D(-\beta,-\gamma)  \iota(k_2)$, and, hence, $g - JgJ = \iota( k_1 \textrm{diag}(2 \cosh \beta,2\cosh \gamma) k_2)$, from which the equality $c(g) = \det k_1 \det k_2 = e^{2i(t+s)}$ follows. Hence, the expressions $\widetilde h_1 \widetilde v_t \widetilde D(\beta,\gamma) \widetilde v_s \widetilde h_2$ and $(h_1 v_t D(\beta,\gamma) v_s h_2,2t+2s)$ correspond to elements of $\widetilde{\mathrm{Sp}}(2,\R)$ that have the same image in $\mathrm{Sp}(2,\R)$, and both sides depend continuously on $h_1$, $h_2$, $s$, $t$, $\beta$ and $\gamma$. Since they coincide for the identity, we have equality everywhere. 
\end{proof}

\subsection{Banach spaces} \label{subsec=bs}
Let $(g_i)_{i \in \N}$ be a sequence of independent complex Gaussian $\mathcal N(0,1)$ random variables defined on some probability space $(\Omega,\mathbb P)$.

\begin{dfn}\label{def=type_cotype}
A Banach space $X$ has type $p \geq 1$ if there is a constant $T$ such that for all $n \in \mathbb{N}$ and $x_1,\dots,x_n \in X$, we have $\| \sum_i g_i x_i \|_{L^2(\Omega;X)} \leq T \left(\sum_i \|x_i\|^p\right)^{1/p}$. The best $T$ is denoted by $T_p(X)$. A Banach space $X$ has cotype $q \leq \infty$ if there is a constant $C$ such that for all $n \in \mathbb{N}$ and all $x_1,\dots,x_n \in X$, we have $\left(\sum_i \|x_i\|^q\right)^{1/q} \leq C \| \sum_i g_i x_i\|_{L^2(\Omega;X)}$. The best $C$ is denoted by $C_q(X)$. 
\end{dfn}
Hilbert spaces have type $2$ and cotype $2$, and by a theorem of Kwapie{\'n} this property characterizes the Banach spaces that are isomorphic to a Hilbert space. Superreflexive spaces have nontrivial type (i.e., type $>1$), but the converse is not true. Indeed, there are spaces of nontrivial type that are not even reflexive. Also, for every $q>2$, there are Banach spaces that are not reflexive but have type $2$ and cotype $q$ \cite{pisierxu}. For more details, including equivalent definitions of type and cotype, we refer to \cite{maurey}.

Given a Banach space $X$, the following operations give new Banach spaces: (a) taking a subspace of $X$, (b) taking a quotient of $X$, (c) taking an ultrapower of $X$, and (d) taking the complex interpolation space $[X_0,X_1]_\theta$ for some compatible couple $(X_0,X_1)$ with $X_1$ isomorphic to $X$ and $\theta \in (0,1)$.
\begin{dfn}
For $r>2$, let $\mathcal{E}_r$ be the smallest class of Banach spaces that is closed under the operations (a)--(d) and that contains all spaces with type $p$ and cotype $q$ satisfying $\frac{1}{p}-\frac{1}{q} < \frac{1}{r}$.
\end{dfn}
For arbitrary $r>2$, the class $\mathcal{E}_r$ contains non-reflexive spaces. Also, every Banach space in $\mathcal{E}_r$ has nontrivial type. We do not know if $\mathcal{E}_r$ actually depends on $r$. Possibly, it contains all Banach spaces of nontrivial type. For more details, we refer to \cite{delasalle1}.

\subsection{Representations}
A linear representation $\pi:G \rightarrow B(X)$ of a locally compact group $G$ on a Banach space $X$ is said to be continuous if it is continuous with respect to the strong operator topology on $B(X)$. If $m$ is a compactly supported signed Borel measure on $G$ and $\pi:G \rightarrow B(X)$ is a continuous representation, we will denote by $\pi(m)$ the operator on $X$ defined by $\pi(m) \xi = \int \pi(g)\xi dm(g)$ (Bochner integral) for all $\xi \in X$.

Recall that the contragredient representation ${}^T\pi$ of a representation $\pi$ of $G$ on $X$ is the representation of $G$ on $X^*$ given by $g \mapsto \pi(g^{-1})^*$. It might not be continuous, even if $\pi$ is.

Recall that for a representation $\pi:K \rightarrow B(X)$, where $K$ is compact, a vector $\xi \in X$ is of finite $K$-type if $\mathrm{span}(\pi(K) \xi)$ is finite-dimensional. If $V$ is a unitary irreducible representation of $K$, a vector $\xi \in X$ is called of $K$-type $V$ if for every $\eta \in X^*$, the coefficient $k \mapsto \langle \pi(k) \xi,\eta\rangle$ belongs to the space of coefficients of $V$. We denote by $X_V$ the vector space consisting of vectors of $K$-type $V$. By an easy application of the Hahn-Banach Theorem, a vector of $K$-type $V$ is of finite $K$-type.

Let $X_1 \otimes_{\epsilon} X_2$ denote the injective tensor product of $X_1$ and $X_2$. We recall the following analogue of the Peter-Weyl Theorem (see \cite[Theorem 2.5]{delasalle1}).
\begin{thm}[Peter-Weyl Theorem] \label{thm=peterWeyl} Let $K$ be a compact group, and let $\pi:K \rightarrow B(X)$ be a continuous isometric representation.
\begin{enumerate}
\item For every unitary irreducible representation $V$ of $K$, the space $X_V$ is complemented by a $K$-equivariant projection.
\item \label{item2_peterweyl} There is a Banach space $Y$ and a $K$-equivariant isomorphism $u: X_V \rightarrow Y \otimes_{\epsilon} V$ ($K$ acting trivially on $Y$).
\item The vector space spanned by the subspaces $X_V$, where $V$ goes through the equivalence classes of unitary irreducible representations of $K$, is the space $X_{\mathrm{finite}}$ consisting of vectors of finite $K$-type, and it is dense in $X$.
\item The vector space spanned by the subspaces $X^*_V$, where $V$ goes through the equivalence classes of unitary irreducible representations of $K$, is the space $X^*_{\mathrm{finite}}$, and it is weak-* dense in $X^*$.
\end{enumerate}
\end{thm}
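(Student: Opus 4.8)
The plan is to follow the classical Peter--Weyl argument, with idempotents obtained by averaging $\pi$ against characters playing the role of orthogonal projections. For each unitary irreducible representation $V$ of $K$, with character $\chi_V$ and dimension $d_V$, I would construct a bounded $K$-equivariant idempotent whose range is exactly $X_V$ and then read off the four assertions.

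Concretely, set $P_V = d_V\int_K\overline{\chi_V(k)}\,\pi(k)\,dk$ (a Bochner integral against normalized Haar measure); this is well defined since $\pi$ is strongly continuous, and $\|P_V\| \le d_V$ because $\pi$ is isometric and $\int_K|\chi_V| \le 1$. Since $\chi_V$ is a class function, the substitution $k \mapsto g^{-1}kg$ gives $\pi(g)P_V = P_V\pi(g)$, so $P_V$ is $K$-equivariant. Character orthogonality, in the convolution form $\int_K\overline{\chi_V(k)}\,\overline{\chi_W(k^{-1}m)}\,dk = \delta_{V,W}\,d_V^{-1}\,\overline{\chi_V(m)}$, yields $P_VP_W = \delta_{V,W}P_V$ by Fubini, so each $P_V$ is idempotent. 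Writing $\pi(k)P_V\xi = d_V\int_K\overline{\chi_V(k^{-1}m)}\,\pi(m)\xi\,dm$ and noting that $k \mapsto \overline{\chi_V(k^{-1}m)}$ lies in the span of coefficients of $V$ shows $\mathrm{Ran}(P_V)\subseteq X_V$. Conversely, if $\xi \in X_V$, then by the Hahn--Banach remark preceding the theorem $W_\xi := \mathrm{span}\,\pi(K)\xi$ is finite-dimensional and $P_V$-invariant; on the finite-dimensional representation $W_\xi$ the classical theory identifies $P_V|_{W_\xi}$ with the projection onto the $V$-isotypic component, and $\xi$ lies in that component because all its coefficients are coefficients of $V$, whereas coefficients of inequivalent irreducibles are orthogonal. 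Hence $P_V\xi = \xi$, proving (1). For (3), decomposing each $W_\xi$ into irreducibles gives $X_{\mathrm{finite}} = \mathrm{span}\bigcup_V X_V$; density of $X_{\mathrm{finite}}$ follows because finite linear combinations of matrix coefficients are dense in $C(K)$, hence in $L^1(K)$, so an approximate identity $(e_n)$ at $1 \in K$ is approximated in $L^1$ by such combinations $f_n$, and then $\pi(f_n)\xi \in X_{\mathrm{finite}}$ (left translates of a matrix coefficient span a finite-dimensional space) while $\|\pi(f_n)\xi - \xi\| \le \|\pi(e_n)\xi - \xi\| + \|f_n - e_n\|_{L^1(K)}\|\xi\| \to 0$. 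For (4), dualizing, $P_V^* = d_V\int_K\overline{\chi_{\bar V}(k)}\,{}^T\pi(k)\,dk$ has range inside $X^*_{\mathrm{finite}}$, and weak-$*$ density follows from (3) via $\langle\pi(f_n)\xi,\eta\rangle = \langle\xi,\pi(f_n)^*\eta\rangle$.

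For (2), I would refine $P_V$ using the matrix-coefficient idempotents $\pi_{ij} = d_V\int_K\overline{V_{ij}(k)}\,\pi(k)\,dk$ relative to an orthonormal basis $(e_i)$ of $V$, where $V_{ij}(k) = \langle V(k)e_j,e_i\rangle$. Schur orthogonality gives $\|\pi_{ij}\| \le \sqrt{d_V}$, $\sum_i\pi_{ii} = P_V$, $\pi_{ij}\pi_{k\ell} = \delta_{jk}\pi_{i\ell}$, and $\pi(g)\pi_{j1} = \sum_i V_{ij}(g)\pi_{i1}$. Let $Y := \mathrm{Hom}_K(V,X)$, a closed subspace of $B(V,X)$ with trivial $K$-action; for $y \in \mathrm{Ran}(\pi_{11})$ the last relation makes $T_y\colon e_i \mapsto \pi_{i1}y$ a well-defined element of $Y$, and $y \mapsto T_y$, $T \mapsto T(e_1)$ are mutually inverse bounded identifications $\mathrm{Ran}(\pi_{11}) \cong Y$. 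The evaluation map $\mathrm{ev}\colon Y \otimes V \to X$, $T \otimes v \mapsto T(v)$, is $K$-equivariant for $1_Y \otimes V$; since $\mathrm{ev}(\sum_i T_{\pi_{1i}\xi}\otimes e_i) = \sum_i\pi_{ii}\xi = P_V\xi$, its range is $X_V$, and since $\pi_{1j}\,\mathrm{ev}(\sum_i T_i\otimes e_i) = T_j(e_1)$ and $V$ is irreducible, it is injective with bounded inverse. As $\dim V < \infty$, $Y \otimes_\epsilon V$ is isometrically $B(V^*,Y)$ and the injective and projective tensor norms on $Y \otimes V$ are equivalent, so $\mathrm{ev}$ extends to a $K$-equivariant Banach-space isomorphism $Y \otimes_\epsilon V \to X_V$, whose inverse is the $u$ we want.

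The step I expect to be the main obstacle is item (2): with no inner product available there is no orthogonal complement with which to split off $X_V$, and no automatic normalization of the multiplicity space $Y$, so one must verify by hand that the algebraic identification $X_V \cong Y \otimes_\epsilon V$ is a topological isomorphism and that it is the \emph{injective} tensor norm which appears. Both points come down to the uniform bounds $\|\pi_{ij}\| \le \sqrt{d_V}$ provided by the isometry of $\pi$, together with the finite-dimensionality of $V$. Everything else is a routine Banach-space transcription of the classical argument; the one recurring point of care is to keep $\pi$ isometric, so that each integrated operator $\pi(\phi) = \int_K\phi(k)\pi(k)\,dk$ has norm at most $\|\phi\|_{L^1(K)}$.
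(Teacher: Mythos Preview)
Your proof plan is correct and follows the standard Peter--Weyl strategy: build the isotypic projections $P_V$ by integrating $\pi$ against characters, refine them via the matrix-coefficient idempotents $\pi_{ij}$ to identify the multiplicity space $Y$, and deduce density from approximate identities. The bounds $\|\pi_{ij}\|\le\sqrt{d_V}$ and the finite-dimensionality of $V$ are exactly what make the algebraic identification $X_V\cong Y\otimes V$ into a topological one, and since the statement only asks for an isomorphism (not an isometry), the equivalence of injective and projective norms on $Y\otimes V$ suffices. One small slip: in your formula for $P_V^*$ the character should be $\overline{\chi_V}$, not $\overline{\chi_{\bar V}}$, since $\chi_V(k^{-1})=\overline{\chi_V(k)}$; this does not affect the argument for (4).

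As for the comparison: the paper does not prove this theorem at all. It is stated as a known result and attributed to \cite[Theorem 2.5]{delasalle1}, so there is no in-paper proof to compare against. Your write-up supplies what the paper omits, and the approach you take is the same as the one in the cited reference.
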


\section{Harmonic analysis on $\SU$}\label{section=harmonic_analysis_SU2}
In this section, we consider the group $\SU$ and we discuss certain explicit estimates of the norm of certain operators on $L^2(\SU)$ that we need in Sections \ref{section=sp2} and \ref{section=sp2cov}. This will be important for our approach, as most of the analysis on the representations of $\mathrm{Sp}(2,\mathbb{R})$ and $\widetilde{\mathrm{Sp}}(2,\mathbb{R})$ will be done on the level of their compact subgroups $H$ and $\widetilde H$, which are isomorphic to $\SU$ (see Sections \ref{subsec:psp2} and \ref{subsec:psp2cov}).

The elements of $\SU$ are of the form $\begin{pmatrix} \alpha & \beta \\ -\overline \beta & \overline \alpha\end{pmatrix}$ for $\alpha,\beta \in \C$ satisfying $|\alpha|^2+|\beta|^2 = 1$. For $\theta \in \mathbb{R}$, let
\begin{equation} \nonumber
r_\theta = \begin{pmatrix}\cos \theta & -\sin \theta\\ \sin \theta & \cos \theta\end{pmatrix}, \qquad d_\theta = \begin{pmatrix} e^{i\theta} & 0 \\ 0 & e^{-i\theta}\end{pmatrix}, \qquad u_\theta = \frac{1}{\sqrt 2} \begin{pmatrix} e^{i\theta} & -1 \\ 1 & e^{-i\theta} \end{pmatrix}.
\end{equation}
The set $\{r_\theta\mid \theta \in \mathbb{R}\}$ forms a subgroup of $\mathrm{SU}(2)$ isomorphic to $\SO$, and the set $\{d_\theta\mid \theta \in [0,\frac{\pi}{2}]\}$ forms a family of representatives of the double cosets $\SO\backslash \SU/\SO$. The set $\{d_\theta \mid \theta \in \mathbb{R}\}$ is a subgroup of $\SU$ isomorphic to $\U(1)$. If we denote by $\SU//\U(1)$ the quotient space of $\mathrm{SU}(2)$ with respect to the equivalence relation coming from the conjugation action of $\mathrm{U}(1)$ on $\mathrm{SU}(2)$, then $\{u_\theta\mid \theta \in [0,2\pi)\}$ forms a subfamily of the set of representatives of the corresponding equivalence classes. Let $P = \int_\SO \lambda(r) dr \in B(L^2(\SU))$ denote the orthogonal projection onto the space of $\SO$-invariant functions on $\SU$. For $\theta \in \R$, let $T_\theta =P \lambda(d_\theta) P \in B(L^2(\SU))$, and let $S_\theta \in B(L^2(\SU))$ be the operator given by
\[ S_\theta \xi= \frac{1}{2\pi}\int_0^{2\pi} \lambda(d_\varphi u_\theta d_{-\varphi})\xi d\varphi,\] where $\lambda$ denotes the left-regular representation of $\SU$.
\begin{lemma}\label{lemma=estimate_T_theta_in_schatten_classes}
 For $p>4$, there is a constant $C_p$ such that for $\theta \in [\frac{\pi}{6},\frac{\pi}{3}]$, 
 \[ \| T_{\theta} - T_{\frac{\pi}{4}} \|_{S^p} \leq C_p \left|\theta - \frac{\pi}{4}\right|^{\frac{1}{2} - \frac{2}{p}}.\]
\end{lemma}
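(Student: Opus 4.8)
The plan is to diagonalise $T_\theta$ using the Peter--Weyl decomposition of $L^2(\SU)$ and to reduce the estimate to classical pointwise bounds on Legendre polynomials and their derivatives.

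First I would decompose $L^2(\SU) = \bigoplus_{d\geq 0} V_d \otimes V_d^*$, where $V_d$ is the $(d+1)$-dimensional irreducible representation $\pi_d$ of $\SU$ and $\lambda$ acts on the left factor. Since the circle subgroup $\SO = \{r_\theta\}$ is a maximal torus of $\SU$, the space of $\SO$-fixed vectors in $V_d$ is one-dimensional — spanned by a unit vector $\xi_d$ — when $d$ is even, and zero when $d$ is odd; hence $P = \bigoplus_{d \text{ even}}\bigl(P_{\xi_d}\otimes \mathrm{Id}_{V_d^*}\bigr)$, with $P_{\xi_d}$ the rank-one projection onto $\C\xi_d$. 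As $\lambda(d_\theta)$ acts only on the left factor, on the block $V_d\otimes V_d^*$ ($d$ even) the operator $T_\theta$ becomes $\varphi_d(\theta)\bigl(P_{\xi_d}\otimes \mathrm{Id}_{V_d^*}\bigr)$, where $\varphi_d(\theta) = \langle \pi_d(d_\theta)\xi_d,\xi_d\rangle$ is the spherical function of the Gelfand pair $(\SU,\SO)$ evaluated on the double coset of $d_\theta$; it depends real-analytically on $\theta$, and up to normalisation it equals the Legendre polynomial $P_{d/2}(\cos 2\theta)$. Thus $T_\theta - T_{\pi/4}$ is block-diagonal, vanishes on odd blocks, and on the $d$-th even block equals $(\varphi_d(\theta)-\varphi_d(\tfrac\pi4))(P_{\xi_d}\otimes \mathrm{Id}_{V_d^*})$, an operator of rank $d+1$ whose $d+1$ singular values are all equal to $|\varphi_d(\theta)-\varphi_d(\tfrac\pi4)|$. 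This would give
\[
  \|T_\theta - T_{\pi/4}\|_{S^p}^p = \sum_{d \geq 0,\ d \text{ even}} (d+1)\,\bigl|\varphi_d(\theta) - \varphi_d(\tfrac\pi4)\bigr|^p .
\]

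Next I would use that for $\theta \in [\tfrac\pi6,\tfrac\pi3]$ the argument $\cos 2\theta$ stays in $[-\tfrac12,\tfrac12]$, bounded away from $\pm 1$ (this is precisely why the lemma restricts $\theta$ to this interval), so the classical Laplace--Heine asymptotics for Legendre polynomials yield a constant $C$ with $|\varphi_d(\theta)| \leq C d^{-1/2}$ and $|\varphi_d'(\theta)| \leq C d^{1/2}$ uniformly over this range (with $\varphi_0\equiv 1$). Combining the derivative bound via the mean value theorem with the first bound gives $|\varphi_d(\theta) - \varphi_d(\tfrac\pi4)| \leq C\min\bigl(d^{-1/2},\ |\theta - \tfrac\pi4|\,d^{1/2}\bigr)$. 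I would then set $N = |\theta - \tfrac\pi4|^{-1}$ and split the sum at $d\approx N$: for $d\leq N$ the second term of the minimum gives $\sum_{d\leq N}(d+1)(|\theta-\tfrac\pi4|d^{1/2})^p \lesssim |\theta-\tfrac\pi4|^p N^{2+p/2} = |\theta-\tfrac\pi4|^{p/2-2}$, while for $d>N$ the first term gives $\sum_{d>N}(d+1)d^{-p/2} \lesssim N^{2-p/2} = |\theta-\tfrac\pi4|^{p/2-2}$, where the convergence of $\sum_d d^{1-p/2}$ uses $p>4$. Adding the two contributions and taking $p$-th roots gives $\|T_\theta - T_{\pi/4}\|_{S^p}\leq C_p|\theta-\tfrac\pi4|^{1/2-2/p}$, with $C_p\to\infty$ as $p\to 4^+$, consistent with the hypothesis $p>4$.

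The hard part is the last ingredient: obtaining the two pointwise bounds on $\varphi_d$ and $\varphi_d'$ in a completely explicit and $d$-uniform form on $[\tfrac\pi6,\tfrac\pi3]$ — everything else is bookkeeping with convergent and geometric series. This is what I expect must be done carefully in Appendix~\ref{section=p10}, either from the Laplace--Heine expansion of $P_\ell(\cos\psi)$, from Bernstein's inequality for Legendre polynomials, or directly from the three-term recurrence (or the Legendre ODE); the last route has the advantage of also controlling the $p$-dependence of the constants precisely enough to recover the sharper exponents mentioned in the remark about $L^p(L(\Gamma))$.
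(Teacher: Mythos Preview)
Your argument is correct and is in fact a self-contained version of what the paper does. The paper's proof is much shorter because it delegates all the work: it observes that the restriction of $T_\theta$ to $\mathrm{Ran}(P)$ is unitarily conjugate, via the identification $\SO\backslash\SU \cong \mathbb S^2$ coming from $\SU/\{\pm 1\}\cong\mathrm{SO}(3)$, to the averaging operator $\widetilde T_{\cos 2\theta}$ on $L^2(\mathbb S^2)$, and then quotes \cite[Lemma~5.3]{lafforguedelasalle} for the Schatten estimate on $\widetilde T_\delta - \widetilde T_0$. Unpacking that cited lemma gives precisely your computation: diagonalisation in spherical harmonics (equivalently your Peter--Weyl blocks with $d$ even), the eigenvalues being $P_{d/2}(\cos 2\theta)$, and then the same $\min(d^{-1/2}, |\theta-\tfrac\pi4|\,d^{1/2})$ bound followed by the split sum. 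So the two routes are the same argument at different levels of packaging; yours has the advantage of being self-contained and of making the Legendre-polynomial input explicit.

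One small inaccuracy: Appendix~\ref{section=p10} does \emph{not} treat this lemma. It handles Lemma~\ref{lemma=estimate_S_theta_in_schatten_classes} for the operators $S_\theta$, whose eigenvalues are values of Jacobi (disc) polynomials rather than Legendre polynomials, and where the relevant exponent is $\tfrac14-\tfrac{5}{2p}$ for $p>10$. The estimates you need on $|P_\ell|$ and $|P_\ell'|$ away from $\pm 1$ are classical (Bernstein-type inequalities or the Laplace--Heine asymptotics) and do not require the appendix.
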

\begin{proof}
This is \cite[Lemma 5.3]{lafforguedelasalle} in disguise. To avoid confusion, for $\delta \in [-1,1]$, let $\widetilde T_\delta \in B(L^2(\mathbb S^2))$ denote the operators appearing that lemma. We recall that for a function $f$ on $\mathbb S^2$ and $x \in \mathbb S^2$, $\widetilde T_\delta f(x)$ is the average of $f$ on $\{y \in \mathbb S^2 \mid \langle x,y\rangle = \delta\}$. We claim that there is a unitary $U \colon L^2(\mathbb S^2) \to \mathrm{Ran}(P)$ such that $U^* T_\theta U = \widetilde T_{\cos(2\theta)}$ for all $\theta$. This implies that $\| T_\theta - T_{\frac{\pi}{4}}\|_{S^p} = \| \widetilde T_{\cos 2 \theta} - \widetilde T_0\|_{S^p}$, and the estimate we have to prove follows immediately from the first inequality of \cite[Lemma 5.3]{lafforguedelasalle}.

To construct the unitary $U$, we recall the classical isomorphism between $\mathrm{SO}(3)$ and $\SU/\{-1,1\}$ obtained from the adjoint action of $\SU$ on its Lie algebra $\mathfrak{su}_2$. Explicitly, this isomorphism is given by
\begin{equation}\label{eq=isom_SO3_SU2} \begin{pmatrix} \alpha & \beta \\ -\overline \beta & \overline \alpha\end{pmatrix} \mapsto \begin{pmatrix} \mathrm{Re}(\alpha^2+ \beta^2) & \mathrm{Im}(\beta^2 - \alpha^2) & 2 \mathrm{Im}(\alpha \beta) \\ \mathrm{Im}(\alpha^2+\beta^2)& \mathrm{Re}(\alpha^2 - \beta^2)& -2 \mathrm{Re}(\alpha \beta)\\ 2 \mathrm{Im}(\alpha \overline \beta) & 2\mathrm{Re}(\alpha \overline \beta)& |\alpha|^2 - |\beta|^2 \end{pmatrix},\end{equation}
where $\alpha,\beta \in \C$ satisfy $|\alpha|^2 + |\beta|^2 = 1$.
This isomorphism also induces an isomorphism $\{ r_\theta \mid \theta \in \R\}/\{-1,1\} \cong \begin{pmatrix} 1 & 0 \\ 0 & \mathrm{SO}(2)\end{pmatrix}$, from which we get isomorphisms $L^2(\{ r_\theta \mid \theta \in \R\} \backslash \SU) \cong L^2 \left( \begin{pmatrix} 1 & 0 \\ 0 & \mathrm{SO}(2)\end{pmatrix} \backslash \mathrm{SO}(3) \right) \cong L^2(\mathbb S^2)$. This gives our unitary $U^*$, because the image of $P$ is $L^2(\{ r_\theta \mid \theta \in \R\} \backslash \SU)$. The idenfication $U^* T_\theta U = \widetilde T_{\cos(2\theta)}$ is straightforward by \eqref{eq=isom_SO3_SU2}.
\end{proof}
The following lemma is proved in the appendix.
\begin{lemma}\label{lemma=estimate_S_theta_in_schatten_classes}
 For $p>10$, there is a constant $C'_p$ such that for all $\theta_1,\theta_2 \in \mathbb{R}$,
 \[ \| S_{\theta_1} - S_{\theta_2} \|_{S^p} \leq C'_p |\theta_1 - \theta_2|^{\frac{1}{4} - \frac{5}{2p}}.\]
\end{lemma}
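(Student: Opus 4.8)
The operator $S_\theta$ is, by construction, the average over $\varphi \in [0,2\pi)$ of the conjugates $\lambda(d_\varphi u_\theta d_{-\varphi})$; since $\varphi \mapsto d_\varphi$ is a one-parameter subgroup, this is exactly the bi-$\mathrm{U}(1)$-invariant piece (in the conjugation sense) of $\lambda(u_\theta)$. The plan is to reduce the estimate to a concrete calculation on the level of matrix coefficients by using the Peter–Weyl decomposition $L^2(\SU) = \bigoplus_{n \geq 0} V_n \otimes \overline{V_n}$, where $V_n$ is the irreducible representation of $\SU$ of dimension $n+1$. On each isotypic block, $\lambda(u_\theta)$ acts as $\pi_n(u_\theta) \otimes 1$, and the conjugation action of $\mathrm{U}(1)$ (sitting diagonally as $d_\varphi$) decomposes each $V_n$ into weight spaces; averaging over $\varphi$ projects onto the subspace on which the two copies of $\mathrm{U}(1)$-weights coincide. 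So $S_\theta$ is block-diagonal, with the block on $V_n \otimes \overline{V_n}$ given (in a weight basis) by the matrix $\big(\langle \pi_n(u_\theta) e_j, e_k\rangle\big)$ restricted to a single ``anti-diagonal'' of weight pairs — explicitly an $(n+1) \times (n+1)$ matrix built from the entries of $\pi_n(u_\theta)$, which are classical special functions (essentially Jacobi polynomials / Wigner $d$-functions evaluated with the extra $\frac{1}{\sqrt 2}$ and $e^{\pm i\theta}$ data of $u_\theta$).

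**Key steps.** First I would make the identification above precise: write down the block decomposition of $S_{\theta_1} - S_{\theta_2}$ and observe that $\|S_{\theta_1} - S_{\theta_2}\|_{S^p}^p = \sum_{n \geq 0} \|B_n(\theta_1) - B_n(\theta_2)\|_{S^p}^p$, where $B_n(\theta)$ is the $(n+1) \times (n+1)$ block. Second, I would estimate each block difference in two complementary regimes. For the operator-norm (or Hilbert–Schmidt) bound, a direct Lipschitz-in-$\theta$ estimate gives $\|B_n(\theta_1) - B_n(\theta_2)\| \lesssim n |\theta_1 - \theta_2|$ (differentiating the one-parameter family $u_\theta$ costs a factor of the ``size'' of $u_\theta$ in $\pi_n$, which is $O(n)$), while trivially $\|B_n(\theta)\|_{S^p} \leq (n+1)^{1/p} \|B_n(\theta)\| \leq (n+1)^{1/p}$ since $\lambda(u_\theta)$ is unitary. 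Third — and this is the crucial input — I need a decay estimate on the blocks themselves, of the form $\|B_n(\theta)\|_{S^p} \lesssim (n+1)^{\alpha}$ for some $\alpha < 0$ when $p$ is large; this should come from a stationary-phase / oscillatory-integral analysis of the matrix entries of $\pi_n(u_\theta)$ (the factor $\frac{1}{\sqrt 2}$ makes $u_\theta$ a ``generic'' element, so its matrix coefficients in $\pi_n$ decay like $n^{-1/2}$ pointwise, or better in suitable averaged norms, mirroring the $\mathbb{S}^2$ situation of Lemma \ref{lemma=estimate_T_theta_in_schatten_classes}). Finally, I would interpolate: for each $n$, use $\|B_n(\theta_1) - B_n(\theta_2)\|_{S^p} \leq \min\big(2\|B_n\|_{S^p},\ C n |\theta_1 - \theta_2| \cdot (n+1)^{1/p}\big)$, split the sum over $n$ at the threshold $n \sim |\theta_1 - \theta_2|^{-1}$, and sum the resulting geometric-type series. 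Optimizing the exponents should produce precisely the Hölder exponent $\frac14 - \frac{5}{2p}$ and explain the constraint $p > 10$ (the series converges exactly when the exponent governing the large-$n$ tail, coming from the $S^p$-bound on $\|B_n\|_{S^p}$, is summable, which forces $p$ above the stated threshold).

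**Main obstacle.** The genuine difficulty is Step three: obtaining the sharp $S^p$-bound on the individual blocks $B_n(\theta)$ with the correct power of $n$. This is not a soft argument — it requires either precise asymptotics of Wigner $d$-functions (Jacobi polynomials) of large degree, or a careful oscillatory-integral estimate for the kernel of $S_\theta$ on $L^2(\SU)$ uniform in $\theta$, and then converting an $L^p$-type kernel bound into a Schatten-$S^p$ bound. It is exactly the analogue, one dimension up, of the $\mathbb{S}^2$-estimate imported in Lemma \ref{lemma=estimate_T_theta_in_schatten_classes}, but here the averaging is over a torus action rather than a rotation, so the relevant ``sphere'' is replaced by a three-dimensional orbit space and the decay rate degrades from $\frac12 - \frac2p$ to $\frac14 - \frac{5}{2p}$. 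I expect the bulk of the work — which is why the authors relegate it to Appendix \ref{section=p10} — to be precisely this special-function/stationary-phase analysis, together with the bookkeeping needed to keep all constants independent of $\theta_1, \theta_2$ and to verify that the interpolation-and-summation step yields the advertised exponent.
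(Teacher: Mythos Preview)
Your outline is close to what the paper does, and you correctly identify the heart of the matter as a special-function estimate.  But two points deserve correction, one structural and one quantitative.

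Structurally, the $S_\theta$ are not merely block-diagonal: they are \emph{fully diagonal} in the Peter--Weyl basis $\{f^\ell_{p,p'}\}$ of matrix coefficients, with $S_\theta f^\ell_{p,p'} = e^{2ip\theta} c_p^\ell\, f^\ell_{p,p'}$, where $c_p^\ell$ is (up to normalization) the value at $0$ of a Jacobi polynomial $P_{\ell-|p|}^{(0,2|p|)}$.  So $\|S_{\theta_1}-S_{\theta_2}\|_{S^q}^q = \sum_{\ell}\sum_{p=-\ell}^{\ell}(2\ell+1)\,|e^{2ip\theta_1}-e^{2ip\theta_2}|^q\,|c_p^\ell|^q$, and the whole problem is a double sum of explicit scalars.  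The crucial input is not a block-level $S^q$ bound but the refined \emph{two-parameter} pointwise estimate $|c_p^\ell| \leq C\min\bigl((1+\ell)^{-1/4},\,\bigl||p|-\ell/\sqrt 2\bigr|^{-1/2}\bigr)$ (Proposition~\ref{prop=cpl}), which sharpens the Haagerup--Schlichtkrull bound $|c_p^\ell|\lesssim \ell^{-1/4}$ away from the caustic $|p|\approx \ell/\sqrt 2$; this refinement is exactly what pushes the threshold from $q>12$ down to $q>10$.

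Quantitatively, your interpolation scheme as written does \emph{not} give the claimed exponent.  If on the Lipschitz side you use only the crude bound $\|B_n(\theta_1)-B_n(\theta_2)\|_{S^q}\lesssim n|\theta_1-\theta_2|\cdot(n+1)^{1/q}$ (and note that the multiplicity from the $\overline{V_n}$ factor actually forces $(n+1)^{2/q}$), then splitting at $n\sim |\theta_1-\theta_2|^{-1}$ yields a H\"older exponent of order $(q-10)/(5q-2)$, which tends to $1/5$ rather than $1/4$ as $q\to\infty$.  The point is that the $\theta$-oscillation $|e^{2ip\theta_1}-e^{2ip\theta_2}|$ depends on the weight $p$, not on the representation label $\ell$; the paper therefore reverses the order of summation, first proving $\sum_{\ell\geq |p|}(2\ell+1)|c_p^\ell|^q \lesssim |p|^{3/2-q/4}$ from the two-parameter estimate, and then summing $\sum_p \min(2,|p|\varepsilon)^q\,|p|^{3/2-q/4}$ with the split at $|p|\sim 1/\varepsilon$.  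Equivalently, in your organization you would need to feed the decay of $c_p^\ell$ into the Lipschitz term as well (bounding $\sum_p |p|^q|c_p^\ell|^q$ rather than pulling $|p|^q\leq \ell^q$ outside), after which the numerology does line up with $\tfrac14-\tfrac{5}{2q}$.
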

The estimate for the $T_{\theta}$'s is implicit in \cite{haagerupdelaat1} for $p=\infty$ and in \cite{delaat1} for $p>4$; an analogous estimate for the $S_{\theta}$'s is implicit in \cite{haagerupdelaat2} for $p > 12$.

For the non-invariant coefficients of representations (see Sections \ref{section=sp2} and \ref{section=sp2cov}), we need the following two propositions, which rely on \cite[Proposition 2.8]{delasalle1}.
\begin{prop}\label{prop:harmonic_ana_SO2}
For every integer $m$, there is a contant $C(m)$ (with $C(0)=1$) such that for every Banach space $X$, every isometric representation $\pi:\mathrm{SU}(2) \rightarrow B(X)$ and all unit vectors $\xi \in X$, $\eta \in X^*$ for which the coefficient $c(h) = \langle \pi(h) \xi,\eta\rangle$ satisfies $c\left( r_\theta  h r_{\theta'} \right) = e^{i m \theta} c(h)$ for all $h \in \mathrm{SU}(2)$ and $\theta,\theta' \in \mathbb{R}$, we have
\[ |c(d_\theta) - c(d_{\frac{\pi}{4}})| \leq C(m) \|(T_\theta - T_{\frac{\pi}{4}}) \otimes \mathrm{id}_X\|_{B(L^2(\SU;X))}\]
for all $\theta \in [0,\frac{\pi}{4}]$.
\end{prop}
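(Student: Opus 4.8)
The idea is to realize $c(d_\theta)$ as a matrix coefficient of the left regular representation of $\SU$ on $L^2(\SU;X)$, so that the operators $T_\theta$ — and their higher-weight analogues — appear directly, and then to invoke \cite[Proposition 2.8]{delasalle1} for the weight shift. First I would perform two reductions. Using $c(hr_{\theta'})=c(h)$, replacing $\xi$ by $\xi_0:=\int_{\SO}\pi(r)\xi\,dr$ leaves $c$ unchanged (since $\langle\pi(h)\xi_0,\eta\rangle=\int_{\SO}c(hr)\,dr=c(h)$) and does not increase the norm; likewise, using $c(r_\theta h)=e^{im\theta}c(h)$, replacing $\eta$ by $\eta_0:=\int_{\SO}e^{-im\theta}\,{}^T\pi(r_\theta)\eta\,d\theta$ (a weak-$*$ integral) leaves $c$ unchanged and does not increase the norm. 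Thus we may assume $\xi_0$ is $\SO$-invariant, $\pi(r_\theta)^*\eta_0=e^{im\theta}\eta_0$, $c(h)=\langle\pi(h)\xi_0,\eta_0\rangle$, and $\|\xi_0\|,\|\eta_0\|\le 1$. Moreover $r_\pi=-\mathrm{id}$ is central in $\SU$, so comparing $c(r_\pi h)=(-1)^mc(h)$ with $c(hr_\pi)=c(h)$ forces $c\equiv 0$ when $m$ is odd; hence from now on $m$ is even.

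Next I would set $F(h):=\pi(h^{-1})\xi_0\in X$ and $G(h):=\pi(h)^*\eta_0\in X^*$. These are bounded (hence $L^2$) functions of norm $\le 1$, and since the integrand $\langle\pi(h^{-1}g)\xi_0,\pi(h)^*\eta_0\rangle=\langle\pi(g)\xi_0,\eta_0\rangle$ is constant in $h$, one gets $\langle(\lambda(g)\otimes\mathrm{id}_X)F,G\rangle=c(g)$ for every $g\in\SU$, the bracket being the duality $L^2(\SU;X)\times L^2(\SU;X^*)\to\C$. By construction $F$ is invariant under the left $\SO$-action, so $(P\otimes\mathrm{id}_X)F=F$, and $G$ lies in a single left-$\SO$-isotypic subspace of $L^2(\SU;X^*)$, namely the one of weight $m$, so $(P_m\otimes\mathrm{id}_{X^*})G=G$, where $P_m$ is the corresponding orthogonal projection on $L^2(\SU)$ (with $P_0=P$). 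Inserting these projections on the two sides of the identity above (legitimate since $P$ and $P_m$ are self-adjoint) yields, for $g=d_\theta$,
\[
c(d_\theta)=\big\langle(T^{(m)}_\theta\otimes\mathrm{id}_X)F,G\big\rangle,\qquad T^{(m)}_\theta:=P_{-m}\lambda(d_\theta)P\in B(L^2(\SU)),
\]
(the $-m$ coming from transposing $P_m$), with $T^{(0)}_\theta=T_\theta$. Hence $c(d_\theta)-c(d_{\pi/4})=\big\langle((T^{(m)}_\theta-T^{(m)}_{\pi/4})\otimes\mathrm{id}_X)F,G\big\rangle$, so $|c(d_\theta)-c(d_{\pi/4})|\le\|(T^{(m)}_\theta-T^{(m)}_{\pi/4})\otimes\mathrm{id}_X\|_{B(L^2(\SU;X))}$; for $m=0$ this is exactly the assertion, with $C(0)=1$.

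For even $m\neq 0$ it then remains to prove the purely $\SU$-theoretic operator inequality
\[
\big\|(T^{(m)}_\theta-T^{(m)}_{\pi/4})\otimes\mathrm{id}_X\big\|\le C(m)\,\big\|(T_\theta-T_{\pi/4})\otimes\mathrm{id}_X\big\|,\qquad \theta\in[0,\tfrac\pi4],
\]
with $C(m)$ depending only on $m$; the space $X$ enters here only as the fibre of the tensor product. This is the comparison provided by \cite[Proposition 2.8]{delasalle1}: after a Peter--Weyl block decomposition of $L^2(\SU)$, the matrix entries of $T^{(m)}_\theta$ are associated spherical functions $\langle\rho_n(d_\theta)w_0,w_m\rangle$ while those of $T_\theta$ are the spherical functions $\langle\rho_n(d_\theta)w_0,w_0\rangle$, and one controls the former by the latter uniformly in the block index $n$ and in $\theta\in[0,\tfrac\pi4]$, at the cost of a constant of the order of $|m|$. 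A convenient way to organize the weight shift is to tensor $\pi$ with one fixed finite-dimensional irreducible representation $V$ of $\SU$ whose dimension is controlled by $|m|$, the ensuing loss being a factor $\|\,\cdot\otimes\mathrm{id}_V\|\le(\dim V)\|\,\cdot\,\|$ which again depends only on $m$.

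The part I expect to be the main obstacle is precisely this last inequality. There is no $\theta$-uniform intertwiner converting $T^{(m)}_\theta$ into $T_\theta$ — on a fixed Peter--Weyl block the ratio of the associated spherical function to the spherical function is a genuine, non-constant function of $\theta$ — so one really needs the fine, uniform-in-$n$ estimates on the (associated) Jacobi/spherical functions of $\SU$ that form the substance of \cite[Proposition 2.8]{delasalle1}. Tracking the resulting constant so that it depends on $m$ alone, and not on $\pi$, $X$ or $\theta$ (and checking that the restriction $\theta\in[0,\tfrac\pi4]$, where $T_{\pi/4}$ has a well-understood behaviour via Lemma \ref{lemma=estimate_T_theta_in_schatten_classes}, is what makes the comparison work), is the delicate point.
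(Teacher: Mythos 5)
Your treatment of odd $m$ and of $m=0$ is fine (the insertion-of-projections argument gives exactly the statement with $C(0)=1$, which is the content of \cite[Proposition 2.7]{delasalle1} that the paper invokes). The genuine gap is in the case $m\neq 0$ even, where you reduce everything to the operator inequality
\[
\bigl\|(T^{(m)}_\theta-T^{(m)}_{\frac{\pi}{4}})\otimes \mathrm{id}_X\bigr\|_{B(L^2(\SU;X))}\leq C(m)\,\bigl\|(T_\theta-T_{\frac{\pi}{4}})\otimes \mathrm{id}_X\bigr\|_{B(L^2(\SU;X))},
\]
and assert that this is ``the comparison provided by \cite[Proposition 2.8]{delasalle1}''. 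It is not: that proposition is not an operator-norm comparison between weighted and unweighted compressions of $\lambda(d_\theta)$; it is a statement about the coefficient $c$ itself, valid when the stabilizer of the base point $x_0$ under the two-sided action is killed by the character, and it produces an estimate with an extra additive distance term, namely $|c(d_\theta)-c(d_{\frac{\pi}{4}})|\leq C(m)\bigl(|\theta-\frac{\pi}{4}|+\|(T_\theta-T_{\frac{\pi}{4}})\otimes\mathrm{id}_X\|\bigr)$. Moreover, the route you sketch for your inequality (Peter--Weyl block-by-block comparison of associated spherical functions with spherical functions, plus tensoring with a fixed finite-dimensional representation $V$) cannot work as stated for general Banach $X$: eigenvalue or singular-value estimates on each block control scalar and Schatten norms but not $\|\cdot\otimes\mathrm{id}_X\|_{B(L^2(\SU;X))}$, and tensoring the representation $\pi$ with $V$ changes $\pi$, not the operators $T_\theta$ on $L^2(\SU)$, so it does not produce the weight shift at the level of these operators. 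So the crucial case of the proposition rests on an unproved claim, which you yourself flag as the main obstacle.

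What the paper actually does is shorter: it checks the hypothesis of \cite[Proposition 2.8]{delasalle1} at $x_0=d_{\frac{\pi}{4}}$ for the subgroup $\SO\times\SO$ and the character $\chi(r_\theta,r_{\theta'})=e^{im\theta}$ (the key computation being that $r_\theta d_{\frac{\pi}{4}} r_{\theta'}=d_{\frac{\pi}{4}}$ forces $r_\theta=r_{\theta'}=\pm 1$, and $\pm1$ is in the kernel of $\chi$ because $m$ is even), which yields the estimate with the extra term $|\theta-\frac{\pi}{4}|$; it then absorbs that term using the lower bound \eqref{eq=lower_inequality_T_theta}, $\|T_\theta-T_{\frac{\pi}{4}}\|_{B(L^2(\SU))}\geq|\cos(2\theta)|$, proved by exhibiting the explicit eigenfunction $f=\mathrm{Re}(\alpha^2+\beta^2)$ with $T_\theta f=\cos(2\theta)f$. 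This absorption step is also where the restriction $\theta\in[0,\frac{\pi}{4}]$ is really used (so that $|\cos 2\theta|\gtrsim|\theta-\frac{\pi}{4}|$), not the behaviour of $T_{\frac{\pi}{4}}$ from Lemma \ref{lemma=estimate_T_theta_in_schatten_classes} as you suggest. If you want to salvage your scheme, you should either prove your operator comparison directly (which seems harder than the proposition itself) or apply \cite[Proposition 2.8]{delasalle1} in the form it is stated, to the coefficient $c$, and add the lower-bound argument to remove the distance term.
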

\begin{proof}
If $m$ is odd, this is completely obvious, since the hypothesis $c\left( r_\theta  h r_{\theta'} \right) = e^{i m \theta} c(h)$ and the fact that $r_\pi = -1$ belongs to the center of $\SU$ imply (taking $\theta = \theta'=\pi$) that $c$ is identically $0$. 

If $m=0$, this is the observation of \cite[Proposition 2.7]{delasalle1}. Therefore, let us assume $m\neq 0$ is even.

A simple computation shows that if $r_\theta d_{\frac{\pi}{4}} r_{\theta'} = d_{\frac{\pi}{4}}$, then $r_\theta = r_{\theta'} = \pm 1$. In other words, the action of the subgroup $\mathrm{SO}(2) \times \mathrm{SO}(2) \subset \mathrm{SU}(2) \times \mathrm{SU}(2)$ by left-right multiplication on $\SU$ satisfies the condition of \cite[Proposition 2.8]{delasalle1} for $x_0 = d_{\frac{\pi}{4}}$ and the character $\chi(r_\theta,r_{\theta'}) = e^{im\theta}$. Hence, we get a constant $C(m)$ such that 
\[ |c(d_\theta) - c(d_{\frac{\pi}{4}})| \leq C(m) \left( \left|\theta - \frac \pi 4\right|+\|(T_\theta - T_{\frac{\pi}{4}}) \otimes \mathrm{id}_X\|_{B(L^2(\SU;X))}\right).\]
We claim that 
\begin{equation}\label{eq=lower_inequality_T_theta}\|(T_\theta - T_{\frac{\pi}{4}})\|_{B(L^2(\SU))} \geq |\cos(2\theta)|.\end{equation} This implies that there is a $c \in \R$ such that $|\theta - \frac \pi 4| \leq c \|(T_\theta - T_{\frac{\pi}{4}})\|_{B(L^2(\SU))}$ for all $\theta \in [0,\frac{\pi}{4}]$, which concludes the proof. The inequality \eqref{eq=lower_inequality_T_theta} can be obtained from the analysis in the proofs of Lemma \ref{lemma=estimate_T_theta_in_schatten_classes} and \cite[Lemma 5.3]{lafforguedelasalle}, or can be directly obtained by doing the computation $T_\theta f = \cos(2\theta) f$, where the function $f \in L^2(\SU)$ is given by \[ f \begin{pmatrix} \alpha & \beta \\ -\overline \beta & \overline \alpha\end{pmatrix} = \mathrm{Re}(\alpha^2+\beta^2).\]
\end{proof}
\begin{prop}\label{prop:harmonic_ana_U1}
For every integer $m$, there is a contant $C^{\prime}(m)$ (with $C^{\prime}(0)=1$) such that for every Banach space $X$, every isometric representation $\pi:\mathrm{SU}(2) \rightarrow B(X)$ and all unit vectors $\xi \in X$, $\eta \in X^*$ for which the coefficient $c(h) = \langle \pi(h) \xi,\eta\rangle$ satisfies $c\left(d_\theta h d_{-\theta} \right) = e^{i m \theta} c(h)$ for all $h \in \mathrm{SU}(2)$ and $\theta \in \mathbb{R}$, we have
\[ |c(u_{\theta}) - c(u_{\frac{\pi}{2}})| \leq C'(m) \|(S_{\theta} - S_{\frac \pi 2}) \otimes \mathrm{id}_X\|_{B(L^2(\SU;X))}\]
for all $\theta \in \mathbb{R}$.
\end{prop}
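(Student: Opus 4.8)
The plan is to transcribe the proof of Proposition~\ref{prop:harmonic_ana_SO2}, replacing the left-right action of $\SO\times\SO$ on $\SU$ by the conjugation action of $\U(1)=\{d_\varphi\mid\varphi\in\R\}$ on $\SU$, whose orbit-averaging operators are exactly the $S_\theta$, and splitting into the cases $m$ odd, $m=0$ and $m\neq0$ even. If $m$ is odd the statement is empty: since $d_\pi=-I$ is central, the hypothesis applied with $\theta=\pi$ gives $c(h)=c(d_\pi h d_{-\pi})=e^{im\pi}c(h)=-c(h)$, so $c\equiv0$. If $m=0$ the coefficient $c$ is invariant under the conjugation action, so $c(u_\theta)=\frac1{2\pi}\int_0^{2\pi}c(d_\varphi u_\theta d_{-\varphi})\,d\varphi=\langle\pi(S_\theta)\xi,\eta\rangle$, where $\pi(S_\theta)=\frac1{2\pi}\int_0^{2\pi}\pi(d_\varphi u_\theta d_{-\varphi})\,d\varphi\in B(X)$; the bound with $C'(0)=1$ then follows from $|c(u_\theta)-c(u_{\pi/2})|\leq\|\pi(S_\theta)-\pi(S_{\pi/2})\|_{B(X)}$ together with the transference inequality $\|\pi(S_\theta)-\pi(S_{\pi/2})\|_{B(X)}\leq\|(S_\theta-S_{\pi/2})\otimes\mathrm{id}_X\|_{B(L^2(\SU;X))}$, exactly as in \cite[Proposition~2.7]{delasalle1}.

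For $m\neq0$ even, I would apply \cite[Proposition~2.8]{delasalle1} with base point $x_0=u_{\pi/2}$ and the character $\chi(d_\varphi)=e^{im\varphi}$ of $\U(1)$. The hypothesis to verify is that the stabiliser of $u_{\pi/2}$ under conjugation lies in $\ker\chi$: a one-line computation gives
\[ d_\varphi u_\theta d_{-\varphi}=\frac1{\sqrt2}\begin{pmatrix}e^{i\theta}&-e^{2i\varphi}\\ e^{-2i\varphi}&e^{-i\theta}\end{pmatrix}, \]
so this stabiliser equals $\{d_\varphi\mid e^{2i\varphi}=1\}=\{\pm I\}$, on which $\chi$ is trivial precisely because $m$ is even. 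Hence \cite[Proposition~2.8]{delasalle1} yields a constant $C'(m)$ with
\[ |c(u_\theta)-c(u_{\pi/2})|\leq C'(m)\Big(\delta(\theta)+\|(S_\theta-S_{\pi/2})\otimes\mathrm{id}_X\|_{B(L^2(\SU;X))}\Big)\qquad(\theta\in\R), \]
where $\delta(\theta)$ is a displacement term (the counterpart of the term $|\theta-\tfrac\pi4|$ in Proposition~\ref{prop:harmonic_ana_SO2}) that one checks satisfies $\delta(\theta)\leq C\,|e^{i\theta}-e^{i\pi/2}|$ for an absolute constant $C$, since $\delta(\theta)$ is comparable to the distance between $u_\theta$ and $u_{\pi/2}$ in $\SU$.

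To absorb $\delta(\theta)$ I would, as in that proof, exhibit an explicit eigenfunction: let $f\in L^2(\SU)$ be given by $f\left(\begin{smallmatrix}\alpha&\beta\\-\overline\beta&\overline\alpha\end{smallmatrix}\right)=\alpha$. Computing $\lambda(d_\varphi u_\theta d_{-\varphi})f$ from the displayed formula and integrating over $\varphi$ annihilates the off-diagonal contribution and gives $S_\theta f=\frac{e^{-i\theta}}{\sqrt2}f$, whence
\[ \|(S_\theta-S_{\pi/2})\otimes\mathrm{id}_X\|\geq\|S_\theta-S_{\pi/2}\|_{B(L^2(\SU))}\geq\frac1{\sqrt2}\,|e^{-i\theta}-e^{-i\pi/2}|=\frac1{\sqrt2}\,|e^{i\theta}-e^{i\pi/2}|. \]
Combining this with $\delta(\theta)\leq C|e^{i\theta}-e^{i\pi/2}|$ lets one discard $\delta(\theta)$ at the cost of replacing $C'(m)$ by $(1+C\sqrt2)C'(m)$, which finishes the proof. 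I expect the only nonroutine steps to be this last lower bound on $\|S_\theta-S_{\pi/2}\|_{B(L^2(\SU))}$ — the counterpart of \eqref{eq=lower_inequality_T_theta}, with $f$ playing the role of $\mathrm{Re}(\alpha^2+\beta^2)$ there — and the bookkeeping needed to confirm that the displacement term $\delta(\theta)$ coming out of \cite[Proposition~2.8]{delasalle1} really is dominated by $|e^{i\theta}-e^{i\pi/2}|$ uniformly in $\theta\in\R$; everything else is a faithful copy of the argument for Proposition~\ref{prop:harmonic_ana_SO2}.
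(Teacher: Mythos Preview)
Your proposal is correct and follows essentially the same approach as the paper: the cases $m$ odd, $m=0$, and $m\neq0$ even are handled identically, invoking \cite[Proposition~2.8]{delasalle1} at $x_0=u_{\pi/2}$ and then absorbing the displacement term via the lower bound $\|S_\theta-S_{\pi/2}\|_{B(L^2(\SU))}\geq 2^{-1/2}|e^{i\theta}-e^{i\pi/2}|$. The only cosmetic difference is that you derive this lower bound directly from the eigenfunction $f(g)=\alpha$ with eigenvalue $2^{-1/2}e^{-i\theta}$, whereas the paper quotes it as \eqref{eq=lower_inequality_S_theta}, obtained in the appendix from the matrix coefficient $f^{1/2}_{1/2,1/2}$ (essentially $\overline{\alpha}$, the complex conjugate of your choice).
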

\begin{proof} Again, the case $m$ odd is obvious because $d_\pi = -1$ belongs to the center of $\SU$. The case $m=0$ is \cite[Proposition 2.7]{delasalle1}. For $m \neq 0$ even, consider the subgroup $\{(d_\varphi,d_\varphi) \mid \varphi \in \R/2\pi \Z\} \subset \SU \times \SU$. As above, we can apply \cite[Proposition 2.8]{delasalle1} for $x_0 = u_{\frac{\pi}{2}}$, which yields the required inequality, provided that $\|(S_{\theta} - S_{\frac \pi 2})\|_{B(L^2(\SU))} \geq 2^{-\frac{1}{2}}|e^{i\theta} - e^{i\frac{\pi}{2}}|$. This inequality is \eqref{eq=lower_inequality_S_theta}.
\end{proof}

\section{Decay of representations of $\mathrm{Sp}(2,\mathbb{R})$ on certain Banach spaces}\label{section=sp2}
In this section, let $G=\mathrm{Sp}(2,\mathbb{R})$ (see Section \ref{subsec:psp2}). We prove explicit decay estimates for matrix coefficients of representations of $G$ with small exponential growth on certain Banach spaces.
\subsection{Statement of the result}
From now on, we assume that $X$ is a Banach space for which there exist $s_1,s_2>0$ and $C_1,C_2 \in\R$ such that for $\theta \in \R$,
\begin{align}
 \label{eq:assumption_on_X_SO2} \|\left(T_\theta - T_{\frac{\pi}{4}}\right) \otimes Id_X \|_{B(L^2(\SU;X))} &\leq C_1 \left|\theta - \frac{\pi}{4}\right|^{s_1}, \\
 \label{eq:assumption_on_X_U1} \|\left(S_\theta - S_{\frac{\pi}{2}}\right) \otimes Id_X \|_{B(L^2(\SU;X))} &\leq C_2 \left|\theta - \frac{\pi}{2}\right|^{s_2}.
\end{align}
\begin{rem}\label{rem=s1s2leq1} By \eqref{eq=lower_inequality_T_theta} and \eqref{eq=lower_inequality_S_theta}, $s_1\leq 1$ and $s_2 \leq 1$ if $X$ is nonzero.
\end{rem}
For $s_1,s_2>0$, let $s_-(s_1,s_2)$ be the smallest root of the polynomial $P(x) = x^2 - (2s_1+s_2)x + s_1 s_2$. Explicitly, $s_-(s_1,s_2) = s_1 + \frac{s_2}{2} - \sqrt{s_1^2+\frac{s_2^2}{4}}$. Note that $0<s_-(s_1,s_2)<s_1$ and $0<s_-(s_1,s_2)<s_2$. For fixed values of $s$ satisfying $0 < s < s_-(s_1,s_2)$, we will consider representations $\pi:G \rightarrow B(X)$ for which there exists an $L > 0$ such that for $\beta \geq \gamma \geq 0$,
\begin{equation} \label{eq:assumption_on_pi}
  \| \pi(D(\beta,\gamma)) \|_{B(X)} \leq L e^{s\beta}.
\end{equation}

The following result is the main theorem of this section.
\begin{thm} \label{thm:decay_of_coefficients_Sp2}
Let $X$ be a Banach space satisfying \eqref{eq:assumption_on_X_SO2} and \eqref{eq:assumption_on_X_U1}, and let $s<s_{-}(s_1,s_2)$. There exists $\varepsilon \in C_0(G)$ such that the following holds: for a representation $\pi:G \rightarrow B(X)$ satisfying \eqref{eq:assumption_on_pi}, a unitary irreducible representation $V$ of $\SU$, and $m \in \Z$, there exists a constant $C$ such that for every $\xi \in X^H$ and $\eta \in (X^*)_V$ unit vectors,
\begin{itemize}
 \item if $V$ is the trivial representation and $m=0$, there exists a $c_{\xi,\eta} \in \C$ such that
\begin{equation} \label{eq:equivariant_coefficients_Sp2}
 \bigg\vert\int_{\R/\pi\Z} \langle \pi(v_t g v_{-t}) \xi,\eta\rangle \frac{d t}{\pi} - c_{\xi,\eta}\bigg\vert \leq C \varepsilon(g);
\end{equation}
\item otherwise, 
\begin{equation} \label{eq:nonequivariant_coefficients_Sp2}
 \bigg\vert\int_{\R/\pi\Z} e^{-2imt}\langle \pi(v_t g v_{-t}) \xi,\eta\rangle \frac{d t}{\pi}\bigg\vert \leq C \varepsilon(g). 
\end{equation}
\end{itemize}
Moreover there is a $c>0$ such that $\varepsilon( k D(\beta,\gamma) k') \leq e^{-c \beta}$ for all $\beta \geq \gamma \geq 0$ and $k,k' \in K$.
\end{thm}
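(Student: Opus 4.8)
\emph{Outline.} I follow the scheme of Lafforgue for $\mathrm{SL}(3,\R)$ and its Banach-space refinements: using the compactness of $H$, the Peter--Weyl theorem and the fact that the $v_s$ are central in $K\cong\U(2)$, reduce to a one-parameter decay estimate along $\overline{A^+}$; then prove that estimate by transporting the problem to $H\cong\SU$, where it is governed by the operators $T_\theta$ and $S_\theta$ through Propositions \ref{prop:harmonic_ana_SO2} and \ref{prop:harmonic_ana_U1}, while the remaining ``$A$-part'' is controlled by \eqref{eq:assumption_on_pi}. Replacing $\|\cdot\|_X$ by the equivalent norm $x\mapsto\sup_{k\in K}\|\pi(k)x\|_X$ — a $K$-equivariant renorming, under which \eqref{eq:assumption_on_X_SO2}, \eqref{eq:assumption_on_X_U1} and \eqref{eq:assumption_on_pi} survive up to constants because $\beta(kg)=\beta(g)$ for $k\in K$ — we may assume $\pi|_K$ isometric; here and below $\beta(g)$ denotes the largest exponent in the $\mathrm{KAK}$ decomposition of $g$, equivalently $\log\|g\|_{\mathrm{op}}$, so $g\mapsto\beta(g)$ is continuous and proper.

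\emph{Step 1 (reduction to $\overline{A^+}$).} Write $g=k_1D(\beta,\gamma)k_2$ with $\beta\geq\gamma\geq0$ and, using $K=H\{v_s:s\in\R/\pi\Z\}$ and the centrality of the $v_s$, put $k_1=h_1v_{t_0}$, $k_2=v_{s_0}h_2$ with $h_i\in H$. Then $v_tgv_{-t}=h_1v_{t+t_0}D(\beta,\gamma)v_{s_0-t}h_2$, so, using $\pi(h_2)\xi=\xi$ and writing $\eta':=\pi(h_1)^*\eta$ (a unit vector of $H$-type $V$), the substitution $t\mapsto t-t_0$ gives
\begin{equation*}
 \int_{\R/\pi\Z}e^{-2imt}\langle\pi(v_tgv_{-t})\xi,\eta\rangle\,\frac{dt}{\pi}=e^{2imt_0}\int_{\R/\pi\Z}e^{-2imt}\langle\pi(v_tD(\beta,\gamma)v_{\sigma-t})\xi,\eta'\rangle\,\frac{dt}{\pi},\qquad\sigma:=s_0+t_0.
\end{equation*}
Thus it suffices to produce $c>0$, depending only on $s_1,s_2,s$, so that the modulus of the right-hand side is at most $Ce^{-c\beta}$, uniformly over $\sigma\in\R/\pi\Z$, over $\beta\geq\gamma\geq0$, and over unit vectors $\xi\in X^H$, $\eta'\in(X^*)_V$. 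Then $\varepsilon(g):=e^{-c\beta(g)}$ lies in $C_0(G)$, is continuous, satisfies $\varepsilon(kD(\beta,\gamma)k')=e^{-c\beta}$, and gives \eqref{eq:nonequivariant_coefficients_Sp2}; in the trivial case ($V$ trivial, $m=0$) the vector $\eta$ is invariant under $\{\pi(h)^*:h\in H\}$, so $\eta'=\eta$, the $e^{2imt_0}$ factor is $1$, and the same estimate applied to differences shows the integral is Cauchy as $\beta\to\infty$ with $\sigma$- and $\gamma$-independent limit $c_{\xi,\eta}$, giving \eqref{eq:equivariant_coefficients_Sp2}.

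\emph{Step 2 (decay along $\overline{A^+}$).} The quantity to bound is the $m$-th Fourier coefficient in $t$ of $t\mapsto\langle\pi(v_tD(\beta,\gamma)v_{\sigma-t})\xi,\eta'\rangle$, which is why the constant may depend on $m$ (through the $C(m),C'(m)$ of Propositions \ref{prop:harmonic_ana_SO2} and \ref{prop:harmonic_ana_U1}). Working with the two commuting copies of $\mathrm{SL}(2,\R)$ inside $G$ containing $D(\beta,0)$ and $D(0,\gamma)$, and carrying out the $4\times4$ matrix manipulations via $\iota$, one shows that this coefficient is controlled — up to a residual $A$-factor whose $\pi$-norm is at most $Le^{s\beta'}$ by \eqref{eq:assumption_on_pi} — by coefficients of $\pi|_H$ evaluated at $\iota(d_\theta)$ and $\iota(u_\theta)$, with angular parameters $\theta$ that approach $\tfrac\pi4$, respectively $\tfrac\pi2$, at rates $e^{-(\beta-\gamma)}$ and $e^{-(\beta+\gamma)}$ as one moves towards the walls of $\overline{A^+}$. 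Feeding Propositions \ref{prop:harmonic_ana_SO2} and \ref{prop:harmonic_ana_U1} and then the hypotheses \eqref{eq:assumption_on_X_SO2}, \eqref{eq:assumption_on_X_U1} into this relates the coefficient at $(\beta,\gamma)$ to coefficients at parameters closer to the walls, up to errors governed by exponents $2s_1$ and $s_2$ (a double use of the $T_\theta$-estimate — once per simple root — and a single use of the $S_\theta$-estimate). Iterating this descent with a free step size and optimizing produces a recursion for the quantity $N(\beta)$ equal to the supremum of $|\cdot|$ over all admissible $\sigma,\gamma$ and unit vectors, whose associated quadratic is exactly $P(x)=x^2-(2s_1+s_2)x+s_1s_2$; hence the descent converges geometrically, yielding $N(\beta)\leq Ce^{-c\beta}$, precisely when $s<s_-(s_1,s_2)$, the smallest root of $P$.

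\emph{Main difficulty.} Everything beyond Step 1 is routine once the descent of Step 2 is correctly set up, and that set-up is where the work lies: choosing the $H$-elements conjugating and sandwiching $v_tD(\beta,\gamma)v_{\sigma-t}$ so that the resulting $\SU$-coefficients have exactly the $\SO$- and $\U(1)$-equivariance required by Propositions \ref{prop:harmonic_ana_SO2} and \ref{prop:harmonic_ana_U1}; treating separately the regimes ``$\gamma$ bounded'' and ``$\gamma$ large'' corresponding to the two walls of $\overline{A^+}$; and verifying that the errors accumulate to a convergent series with the rate $c>0$ dictated by $P$ under the hypothesis $s<s_-(s_1,s_2)$. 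Once this is in place, Lemmas \ref{lemma=estimate_T_theta_in_schatten_classes} and \ref{lemma=estimate_S_theta_in_schatten_classes} enter only through \eqref{eq:assumption_on_X_SO2}--\eqref{eq:assumption_on_X_U1} and plug in mechanically.
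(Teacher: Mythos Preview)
Your outline captures the overall architecture (renorming so that $\pi|_K$ is isometric, reducing via $K=H\{v_s\}$, and invoking Propositions \ref{prop:harmonic_ana_SO2}--\ref{prop:harmonic_ana_U1}), but Step 2 as written does not constitute a proof, and two of its key claims are actually wrong as stated.

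\textbf{The descent is not a recursion for $N(\beta)$.} Neither of the two basic moves decreases $\beta$. Sandwiching by $D(\alpha,\alpha)$ (the $T_\theta$/$\SO$ move) sends $(\beta,\gamma)$ to the point $(2\alpha,0)$ on the same ``circle'' $\sinh^2\beta+\sinh^2\gamma=\mathrm{const}$, with $2\alpha\ge\beta$; sandwiching by $D(\alpha,0)$ (the $S_\theta$/$\U(1)$ move) sends $(\beta,\gamma)$ to $(\alpha',\alpha')$ on the same ``hyperbola'' $\sinh\beta\sinh\gamma=\mathrm{const}$, with $\alpha'\approx\tfrac{\beta+\gamma}{2}$. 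From $(2\alpha,0)$ the hyperbola move is vacuous (it goes to the origin), so one cannot simply iterate. The paper instead partitions $\overline{A^+}$ into annuli $\Delta_n$, shows that any two points in $\Delta_n$ can be connected by at most three circle/hyperbola arcs with total error $\lesssim\varepsilon(n)$, and then sums geometrically across $n$. The coefficient $2s_1$ in $P(x)$ does \emph{not} come from ``two uses of $T_\theta$, one per simple root''; it arises from optimizing the splitting line $\beta=\frac{s_1+s_2-s}{s_1}\gamma$ at which the circle error $e^{s\beta-s_1(\beta-\gamma)}$ and the hyperbola error $e^{s\beta-(s_2-s)\gamma}$ coincide. (Your claimed angular rate $e^{-(\beta+\gamma)}$ for the $u_\theta$ move is also off: the relevant rate is $|\theta-\tfrac\pi2|\lesssim e^{-\gamma}$, giving error $e^{s\beta-(s_2-s)\gamma}$.)

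\textbf{Two genuinely missing ingredients.} First, in the invariant case the quantity does not tend to zero; it converges to $c_{\xi,\eta}$, so a Cauchy argument is needed, and for that one must additionally prove that $c(v_tD(2\alpha,0))$ becomes almost independent of $t$ as $\alpha\to\infty$ (Lemma \ref{lemma:dependency_on_t} in the paper). Without this, the circle/hyperbola comparisons only relate values at \emph{shifted} $t$, and the argument does not close. Second, in the non-invariant case the whole point is that the vector-valued coefficient $c(g)\in V$ satisfies $c(v_tD(\alpha',\alpha'))\in V_1:=V^{\SO}$ and $\pi_V(d_\theta)c(v_tD(2\alpha,0))=e^{im\theta}c(v_tD(2\alpha,0))$, so $c(v_tD(2\alpha,0))\in V_2$; since $V_1\cap V_2=\{0\}$ when $V$ is nontrivial (resp.\ $V_2=\{0\}$ when $V$ is trivial and $m\neq0$), two nearby vectors in $V_1$ and $V_2$ must both be small. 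This representation-theoretic step is the mechanism that forces decay to zero rather than merely convergence, and it is entirely absent from your sketch.
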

For a more precise form of $\varepsilon$, see Remark \ref{rem=precise_value_epsilon}.
\begin{rem}
An analogous result holds for the coefficients of $\pi$ with respect to $K$-finite vectors, as for $\mathrm{SL}(3,\mathbb{R})$ and coefficients of representations with respect to $\mathrm{SO}(3)$-finite vectors (see \cite[Proposition 4.3]{delasalle1}). This follows from Theorem \ref{thm:decay_of_coefficients_Sp2}, but could also be proved directly. However, such a result does not extend to the universal covering group of $G$, since $\widetilde{K}$ is not compact. We state the theorem in the way above, since the proof works with almost no change for the universal covering group of $G$.
\end{rem}

The proof of Theorem \ref{thm:decay_of_coefficients_Sp2} relies on a series of technical lemmas, and takes the rest of this section. We start by explaining the general idea of the proof.

\subsection{Outline of the proof} The proof follows the same general strategy as the proof of the fact that $\mathrm{SL}(3,\mathbb{R})$ has (T$^{\mathrm{strong}}_{\mathrm{Hilbert}}$) in \cite{lafforguestrengthenedpropertyt}. It relies heavily on Propositions \ref{prop:harmonic_ana_SO2} and \ref{prop:harmonic_ana_U1}.

In the proof of \eqref{eq:equivariant_coefficients_Sp2}, we set $c(g) = \int_{\R/\pi\Z} \langle \pi(v_t g v_{-t}) \xi,\eta\rangle \frac{d t}{\pi}$. The KAK decomposition of $G$ plays a key role in the study of $c$, as an element $g \in G$ with KAK decomposition $g= h_1v_t D(\beta,\gamma) v_sh_2$ satisfies $c(g) = c(v_{t+s} D(\beta,\gamma))$. This indicates that it will be useful to compute the precise KAK decomposition of certain elements of $G$ (or at least the $A$-part and the value of $t+s$). Now, on the one hand, if $D_1,D_2 \in G$ commute with $\iota(r_\theta)$ for all $\theta$, then $c_{D_1,D_2} \colon \mathrm{SU}(2) \to \mathbb{C},\, h \mapsto c(D_1 \iota(h) D_2)$ is a coefficient of an isometric representation of $\SU$ with respect to vectors of norm dominated by $\|\pi(D_1)\|$ and $\|\pi(D_2)\|$ that satisfies $c_{D_1,D_2}(r_\theta h r_{\theta'}) = c_{D_1,D_2}(h)$ for all $r_\theta,r_{\theta'}$. Proposition \ref{prop:harmonic_ana_SO2} therefore applies and, with \eqref{eq:assumption_on_X_SO2}, gives certain local H\"older continuity estimates for $c$. On the other hand, by considering $c_{D_1,D_2}$ for $D_1,D_2 \in G$ commuting with $\iota(d_\theta)$ for all $\theta$, Proposition \ref{prop:harmonic_ana_U1} and \eqref{eq:assumption_on_X_U1} give other local H\"older continuity estimates for $c$. The idea is to combine such estimates obtained for all suitable choices of $D_1$ and $D_2$ in order to show that $c(g)$ satisfies the Cauchy criterion as $g \to \infty$ and hence has a limit. 

In order to prove \eqref{eq:nonequivariant_coefficients_Sp2}, we proceed as follows. By Theorem \ref{thm=peterWeyl}, we can write $(X^*)_V = Y \otimes V$ for some Banach space $Y$, and we can assume that $\eta = y \otimes v_0$ for some $y \in Y$ and $v_0 \in V$. Define the $K$-equivariant map $q\colon X \to V^*$ by $\langle q(x),v\rangle = \langle x, y \otimes v\rangle$ for all $v \in V$, and consider $c(g) = \int_{\R/\pi\Z} e^{-2imt} q(\pi(v_t g v_{-t}) \xi) \frac{d t}{\pi}$. Then \eqref{eq:nonequivariant_coefficients_Sp2} is equivalent to $\|c(g)\|_{V^*} \leq C \varepsilon(g)$ (for a different constant $C$). Again, $c(g)$ is better understood through the KAK decomposition of $g$, but this time the full KAK decomposition is needed. Then, as for \eqref{eq:equivariant_coefficients_Sp2}, we study $c_{D_1,D_2} \colon \mathrm{SU}(2) \to \mathbb{C},\, h \mapsto c(D_1 \iota(h) D_2)$. If $D_1$ and $D_2$ commute with $\{\iota(r_\theta) \mid \theta \in \R\}$, Proposition \ref{prop:harmonic_ana_SO2} and a decomposition of $V^*$ into characters of the commutative subgroup $\{r_\theta \mid \theta \in \R\}$ give local H\"older continuity estimates for $c$. Similarly, if $D_1$ and $D_2$ commute with $\iota(d_\theta)$ for all $\theta$, we get other estimates. By combining such estimates, one can show that for every $g \in G$, $c(g)$ is simultaneously close to $c(v_t D(\alpha,0))$ and to $c(v_s D(\alpha^{\prime},\alpha^{\prime}))$ for some $\alpha,\alpha',s,t \in \R$. By using that $v_s D(\alpha^{\prime},\alpha^{\prime})$ commutes with every $\iota(r_\theta)$, and similarly by considering the commutation relation of $v_t D(\alpha,0)$ and $\iota(d_\theta)$, we conclude that in order for the vectors $c(v_t D(\alpha,0))$ and $c(v_s D(\alpha^{\prime},\alpha^{\prime}))$ in $V^*$ to be close to each other, they have to be close to $0$. Hence, $c(g)$ has to be close to $0$. 

\subsection{Computations on the KAK decomposition}
An important technical step in the proof sketched above is to obtain precise KAK decompositions of elements of the form $D_1 \iota(h) D_2$. This is the content of the next series of lemmas. Parts of these computations have been obtained in \cite[Lemmas 3.9 and 3.15]{haagerupdelaat1} and \cite[Lemmas 3.18 and 3.23]{haagerupdelaat2}, but here we need more information on the $K$-part. For this, we will use the explicit KAK decomposition of certain elements in $\mathrm{GL}(2,\mathbb{R})$.
\begin{lemma}\label{lemma=polar_decomposition_in_SL2_general}
Let $a,d\geq 0$, and $c \in \R$. Then one can write
 \begin{equation} \nonumber
 \begin{pmatrix} a & -c\\ c & d \end{pmatrix} =  r_{\phi} 
    \begin{pmatrix}
     \lambda & 0 \\ 0 & \mu
    \end{pmatrix} r_{\phi},
 \end{equation}
where $\phi \in [-\frac{\pi}{4},\frac{\pi}{4}]$ and $\lambda,\mu \geq 0$ are characterized by
\begin{equation} \label{eq=polar_SL2}
\begin{split}
  \lambda \mu &= ad+c^2,\\
  \lambda - \mu &= a-d,\\
  \tan(2\phi) &= \frac{2c}{a+d},
\end{split}
\end{equation}
with the convention that, in case $a=d=0$, we have $\phi=\frac{\pi}{4}$ if $c\geq 0$, and $\phi=-\frac{\pi}{4}$ if $c< 0$.
\end{lemma}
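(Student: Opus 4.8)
The plan is to prove the lemma by a direct computation: expand the conjugate $r_\phi\,\diag(\lambda,\mu)\,r_\phi$ for arbitrary parameters, read off and invert the resulting relations, and treat the degenerate case $a=d=0$ separately. Using the double-angle formulas, for any $\phi,\lambda,\mu\in\R$ one gets
\[
 r_\phi\begin{pmatrix}\lambda&0\\0&\mu\end{pmatrix}r_\phi=\begin{pmatrix}\lambda\cos^2\phi-\mu\sin^2\phi & -\tfrac{\lambda+\mu}{2}\sin 2\phi\\[1mm] \tfrac{\lambda+\mu}{2}\sin 2\phi & \mu\cos^2\phi-\lambda\sin^2\phi\end{pmatrix}.
\]
In particular this matrix is always of the form $\begin{pmatrix} a & -c \\ c & d\end{pmatrix}$, and comparing entries gives $a-d=\lambda-\mu$ (using $\cos^2\phi+\sin^2\phi=1$), $a+d=(\lambda+\mu)\cos 2\phi$, and $c=\tfrac{\lambda+\mu}{2}\sin 2\phi$. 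Squaring and adding the last two identities yields $(a+d)^2+4c^2=(\lambda+\mu)^2$, so for $\lambda,\mu\ge 0$ necessarily $\lambda+\mu=\sqrt{(a+d)^2+4c^2}$, and then $4\lambda\mu=(\lambda+\mu)^2-(\lambda-\mu)^2=(a+d)^2+4c^2-(a-d)^2=4(ad+c^2)$. This shows that any decomposition of the stated form satisfies the three relations in \eqref{eq=polar_SL2}, and that these relations together with $\lambda,\mu\ge 0$ determine $\lambda,\mu$ uniquely (and $\phi\in[-\tfrac\pi4,\tfrac\pi4]$ uniquely as well, once $a+d>0$).

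For existence I would run this backwards. Given $a,d\ge 0$ and $c\in\R$, set $\lambda+\mu:=\sqrt{(a+d)^2+4c^2}$ and $\lambda-\mu:=a-d$. Since $a,d\ge 0$ we have $(a+d)^2\ge(a-d)^2$, hence $\lambda+\mu\ge|a-d|=|\lambda-\mu|$, so $\lambda,\mu\ge 0$, and $\lambda\mu=\tfrac14\big((\lambda+\mu)^2-(\lambda-\mu)^2\big)=ad+c^2$ as above. If $(a,d,c)\ne(0,0,0)$, then $\lambda+\mu>0$, and since $a+d\ge 0$ and $(a+d)^2+(2c)^2=(\lambda+\mu)^2$ there is a unique $\phi\in[-\tfrac\pi4,\tfrac\pi4]$ with $\cos 2\phi=\tfrac{a+d}{\lambda+\mu}$ and $\sin 2\phi=\tfrac{2c}{\lambda+\mu}$; this $\phi$ satisfies $\tan 2\phi=\tfrac{2c}{a+d}$ when $a+d>0$, and when $a+d=0$ it equals $\tfrac\pi4$ if $c>0$ and $-\tfrac\pi4$ if $c<0$, in accordance with the stated convention. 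If $(a,d,c)=(0,0,0)$, then $\lambda=\mu=0$ and any $\phi$, in particular $\phi=\tfrac\pi4$, works. Substituting these values into the expansion above recovers $\begin{pmatrix} a & -c \\ c & d\end{pmatrix}$, which finishes the proof.

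There is no genuine obstacle here; the computation is entirely elementary. The only point requiring a little care is the degenerate case $a=d=0$, where the relation $\tan 2\phi=\tfrac{2c}{a+d}$ breaks down and one must appeal to the sign convention, together with the further sub-case $a=d=c=0$ in which $\phi$ is genuinely undetermined. Everything else is bookkeeping with the half-angle identities and the observation that $a,d\ge 0$ forces $\lambda,\mu\ge 0$.
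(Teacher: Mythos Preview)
Your proof is correct and is exactly the kind of direct computation the paper has in mind: the paper's own ``proof'' simply reads ``This lemma follows by computations that we leave to the reader.'' Your expansion of $r_\phi\,\diag(\lambda,\mu)\,r_\phi$, the extraction of the relations, and the handling of the degenerate case $a=d=0$ (including the sub-case $c=0$) are all fine, so there is nothing to add.
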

This lemma follows by computations that we leave to the reader. The following lemma gives a special case of this result.
\begin{lemma}\label{lemma=polar_decomposition_in_SL2}
 Let $\alpha>0$ and $\theta \in [-\frac{\pi}{2},\frac{\pi}{2}]$. We can write
 \begin{equation} \nonumber
 \begin{pmatrix} 
     e^\alpha & 0 \\ 0 & e^{-\alpha} 
    \end{pmatrix} r_\theta \begin{pmatrix}
     e^{\alpha} & 0 \\ 0 & e^{-\alpha} 
    \end{pmatrix} = r_{\phi} 
    \begin{pmatrix}
     e^{\beta} & 0 \\ 0 & e^{-\beta}
    \end{pmatrix} r_{\phi},
 \end{equation}
where $\beta \geq 0$ and $\phi \in [-\frac{\pi}{4},\frac{\pi}{4}]$ are characterized by
\begin{align}
\label{eq=polardecSL2_beta} \sinh \beta &= \sinh(2\alpha) \cos \theta,\\
\label{eq=polardecSL2_phi} \tan(2\phi) &= \frac{\tan \theta}{\cosh(2\alpha)},\end{align}
with the convention that $\tan(\frac{\pi}{2})=\infty$ and $\tan(-\frac{\pi}{2}) =-\infty$.
\end{lemma}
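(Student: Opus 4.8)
The plan is to reduce the statement to Lemma~\ref{lemma=polar_decomposition_in_SL2_general} by a short explicit computation. First I would multiply out the left-hand side:
\[
\begin{pmatrix} e^\alpha & 0 \\ 0 & e^{-\alpha}\end{pmatrix} r_\theta \begin{pmatrix} e^\alpha & 0 \\ 0 & e^{-\alpha}\end{pmatrix} = \begin{pmatrix} e^{2\alpha}\cos\theta & -\sin\theta \\ \sin\theta & e^{-2\alpha}\cos\theta\end{pmatrix}.
\]
Since $\theta \in [-\frac{\pi}{2},\frac{\pi}{2}]$ we have $\cos\theta \geq 0$, so this is a matrix of the form $\begin{pmatrix} a & -c\\ c & d\end{pmatrix}$ with $a = e^{2\alpha}\cos\theta \geq 0$, $d = e^{-2\alpha}\cos\theta \geq 0$ and $c = \sin\theta \in \R$. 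Hence Lemma~\ref{lemma=polar_decomposition_in_SL2_general} applies and produces the claimed shape $r_\phi \diag(\lambda,\mu) r_\phi$ with $\phi \in [-\frac{\pi}{4},\frac{\pi}{4}]$.

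Next I would simply evaluate the three relations \eqref{eq=polar_SL2} for these values of $a,c,d$. The product relation gives $\lambda\mu = ad + c^2 = \cos^2\theta + \sin^2\theta = 1$, so $\lambda$ and $\mu$ are mutually inverse positive numbers and may be written $\lambda = e^{\beta}$, $\mu = e^{-\beta}$; this identifies the middle factor. The difference relation reads $e^{\beta} - e^{-\beta} = a - d = (e^{2\alpha} - e^{-2\alpha})\cos\theta$, i.e.\ $2\sinh\beta = 2\sinh(2\alpha)\cos\theta$, which is \eqref{eq=polardecSL2_beta}; moreover $\alpha>0$ and $\cos\theta \geq 0$ force $\lambda - \mu \geq 0$, hence $\lambda \geq 1 \geq \mu$ and $\beta \geq 0$, as required. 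Finally,
\[
\tan(2\phi) = \frac{2c}{a+d} = \frac{2\sin\theta}{(e^{2\alpha}+e^{-2\alpha})\cos\theta} = \frac{\tan\theta}{\cosh(2\alpha)},
\]
which is \eqref{eq=polardecSL2_phi}.

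It remains to check the degenerate case $\theta = \pm\frac{\pi}{2}$, where $a = d = 0$ and $c = \pm 1$: the convention of Lemma~\ref{lemma=polar_decomposition_in_SL2_general} then sets $\phi = \pm\frac{\pi}{4}$, consistent with the convention $\tan(\pm\frac{\pi}{2}) = \pm\infty$ in \eqref{eq=polardecSL2_phi}, while $\lambda\mu = c^2 = 1$ and $\lambda - \mu = 0$ give $\beta = 0 = \sinh(2\alpha)\cos(\pm\frac{\pi}{2})$. The computation presents no genuine difficulty; the only points requiring a moment of care are verifying that $a,d \geq 0$ on the given range of $\theta$ (so that Lemma~\ref{lemma=polar_decomposition_in_SL2_general} is applicable), using the normalization $\lambda\mu = 1$ to pass to the exponential parametrization, and confirming that the sign constraints $\beta \geq 0$ and $\phi \in [-\frac{\pi}{4},\frac{\pi}{4}]$ come out correctly — all of which are immediate from the signs of $\cos\theta$ and $\alpha$.
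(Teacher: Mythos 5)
Your proof is correct and follows exactly the route the paper intends: the paper presents Lemma \ref{lemma=polar_decomposition_in_SL2} as a special case of Lemma \ref{lemma=polar_decomposition_in_SL2_general} without written detail, and your computation (multiplying out the left-hand side, checking $a,d\geq 0$, and specializing the relations \eqref{eq=polar_SL2} with $\lambda\mu=1$ to obtain \eqref{eq=polardecSL2_beta} and \eqref{eq=polardecSL2_phi}, including the boundary case $\theta=\pm\frac{\pi}{2}$) is precisely the omitted verification.
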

We can now compute certain KAK decompositions in $G$. For $\theta,\theta' \in \R$, we introduce the following element of $K$:
\begin{equation} \nonumber
w_{\theta,\theta'} = \iota\begin{pmatrix} e^{i\theta} & 0 \\ 0 & e^{i \theta'} \end{pmatrix} = v_{\frac{\theta+\theta'}{2}} \iota(d_{\frac{\theta-\theta'}{2}}).
\end{equation}
\begin{prop}\label{prop=structural_SO2}
For all $\alpha > 0$ and $\theta \in [0,\frac{\pi}{2}]$, we have
\begin{align*}D(\alpha, \alpha)w_{\theta,\frac{\pi}{2}-\theta} D(\alpha, \alpha) &= w_{\phi,\phi'} D(\beta, \gamma)w_{\phi,\phi'}, \\ D(\alpha, \alpha)w_{-\theta,\theta-\frac{\pi}{2}} D(\alpha, \alpha) &= w_{-\phi,-\phi'} D(\beta, \gamma)w_{-\phi,-\phi'},\end{align*}
where $\beta, \gamma \geq 0$ and $\phi, \phi' \in [0, \frac{\pi}{4}]$ satisfy
\begin{align}
\label{eq=structural_SO2_D} \sinh\beta &= \sinh(2\alpha) \cos \theta, \quad & \sinh\gamma &= \sinh(2\alpha) \sin \theta,\\
\label{eq=structural_SO2_phi} \tan(2\phi) &= \frac{\tan \theta}{\cosh(2\alpha)}, \quad & \tan(2\phi^{\prime}) &= \frac{1}{\tan \theta \cosh(2\alpha)}.
\end{align}
\end{prop}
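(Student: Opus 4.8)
The plan is to exploit a block structure that every matrix appearing in the statement respects. Writing $e_1,\dots,e_4$ for the standard basis of $\R^4$, one checks directly from the definition of $\iota$ that $D(\alpha,\alpha)$, every $w_{\theta,\theta'}$, and every $D(\beta,\gamma)$ preserve the decomposition $\R^4 = \mathrm{span}(e_1,e_3) \oplus \mathrm{span}(e_2,e_4)$, and that in the ordered bases $(e_1,e_3)$ and $(e_2,e_4)$ they act by the following $2\times 2$ matrices: $D(\alpha,\alpha)$ acts as $\diag(e^\alpha,e^{-\alpha})$ on both summands; $w_{\theta,\theta'}$ acts as $r_\theta$ on the first and $r_{\theta'}$ on the second; $D(\beta,\gamma)$ acts as $\diag(e^\beta,e^{-\beta})$ on the first and $\diag(e^\gamma,e^{-\gamma})$ on the second. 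Consequently each of the two claimed matrix identities is equivalent to a pair of $2\times 2$ identities, one on each summand, and I would reduce everything to Lemma~\ref{lemma=polar_decomposition_in_SL2}.

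Concretely, on $\mathrm{span}(e_1,e_3)$ the left-hand side of the first identity acts as $\diag(e^\alpha,e^{-\alpha})\,r_\theta\,\diag(e^\alpha,e^{-\alpha})$, which by Lemma~\ref{lemma=polar_decomposition_in_SL2} equals $r_\phi\,\diag(e^\beta,e^{-\beta})\,r_\phi$ with $\sinh\beta = \sinh(2\alpha)\cos\theta$ and $\tan(2\phi) = \tan\theta/\cosh(2\alpha)$; this is exactly how $w_{\phi,\phi'}D(\beta,\gamma)w_{\phi,\phi'}$ acts there. On $\mathrm{span}(e_2,e_4)$ the left-hand side acts as $\diag(e^\alpha,e^{-\alpha})\,r_{\pi/2-\theta}\,\diag(e^\alpha,e^{-\alpha})$; applying Lemma~\ref{lemma=polar_decomposition_in_SL2} with angle $\pi/2-\theta$ and using $\cos(\pi/2-\theta)=\sin\theta$, $\tan(\pi/2-\theta)=1/\tan\theta$ (with the endpoint conventions for $\theta\in\{0,\pi/2\}$) gives $r_{\phi'}\,\diag(e^\gamma,e^{-\gamma})\,r_{\phi'}$ with $\sinh\gamma = \sinh(2\alpha)\sin\theta$ and $\tan(2\phi') = 1/(\tan\theta\,\cosh(2\alpha))$, again matching $w_{\phi,\phi'}D(\beta,\gamma)w_{\phi,\phi'}$. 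This settles the first identity. The second one is identical after replacing the angles $\theta$ and $\pi/2-\theta$ by $-\theta$ and $\theta-\pi/2$: since $\cos$ is even, $\beta$ and $\gamma$ are unchanged, while $\tan(2\,\cdot\,)$ changes sign, so the relevant angles become $-\phi$ and $-\phi'$, and the product becomes $w_{-\phi,-\phi'}D(\beta,\gamma)w_{-\phi,-\phi'}$.

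For the stated ranges, Lemma~\ref{lemma=polar_decomposition_in_SL2} already yields $\beta\geq 0$, $\phi\in[-\tfrac\pi4,\tfrac\pi4]$, and similarly for $\gamma,\phi'$; since $\alpha>0$ and $\theta\in[0,\tfrac\pi2]$ we get $\sinh\gamma = \sinh(2\alpha)\sin\theta\geq 0$, hence $\gamma\geq 0$, and $\tan(2\phi),\tan(2\phi')\geq 0$, hence $\phi,\phi'\in[0,\tfrac\pi4]$ (the values $\phi=\tfrac\pi4$ at $\theta=\tfrac\pi2$ and $\phi'=\tfrac\pi4$ at $\theta=0$ coming from the conventions). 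I do not expect a real obstacle here: the whole argument is a bookkeeping exercise once the block decomposition is spotted, and the only points needing genuine care are the half-angle trigonometric identities and the handling of the degenerate angles $\theta\in\{0,\tfrac\pi2\}$ through the conventions of Lemma~\ref{lemma=polar_decomposition_in_SL2}.
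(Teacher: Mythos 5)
Your proposal is correct and follows essentially the same route as the paper: the paper's proof likewise observes that all matrices involved preserve the planes $\R e_1+\R e_3$ and $\R e_2+\R e_4$ and reduces to Lemma~\ref{lemma=polar_decomposition_in_SL2} on each summand. Your write-up merely spells out the $2\times 2$ bookkeeping and the sign/range checks that the paper leaves to the reader.
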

\begin{proof} Let $e_1,e_2,e_3,e_4$ be the standard basis of $\R^4$. Then each matrix in this proposition leaves the planes $\R e_1 + \R e_3$ and $\R e_2 + \R e_4$ invariant. The result follows by restricting to these subspaces and applying Lemma \ref{lemma=polar_decomposition_in_SL2}.
\end{proof}
\begin{prop}\label{prop=structural_U1}
For all $\alpha>0$ and $\theta \in [-\frac{\pi}{2},\frac{\pi}{2}]$, we have
\[D(\alpha,0) \iota(u_\theta) D(\alpha,0) = \iota(r_\phi)w_{\omega_1,\omega_2} D(\beta,\gamma) w_{\omega_1,\omega_2} \iota(r_\phi),\]
where $\beta \geq \gamma \geq 0$ and $\phi ,\omega_1,\omega_2 \in [-\frac{\pi}{4},\frac{\pi}{4}]$ satisfy
\begin{align}
\label{eq=prod_beta_gamma} \sinh \beta \sinh \gamma &= \frac 1 2 \sinh^2(\alpha),\\
\label{eq=diff_beta_gamma} \sinh \beta - \sinh \gamma &= \frac{1}{\sqrt{2}}\sinh(2\alpha) \cos \theta,\\
\label{eq=structU1_phi}\tan(2\phi) &= \frac{1}{\cosh \alpha \cos \theta},\\
\label{eq=structU1omega}\sin(2\omega_1) \cosh \beta &= \frac{1}{\sqrt{2}}\sin \theta = - \sin(2\omega_2) \cosh\gamma.
\end{align}
\end{prop}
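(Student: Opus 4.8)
The plan is to reduce the statement to the two-dimensional ``polar decomposition'' of Lemma~\ref{lemma=polar_decomposition_in_SL2_general} by two preliminary moves. The first is the factorisation $u_\theta = d_{\theta/2}\,r_{\pi/4}\,d_{\theta/2}$ in $\SU$, which is immediate from the definitions; since $\iota$ is a ring homomorphism it gives
\[ D(\alpha,0)\,\iota(u_\theta)\,D(\alpha,0)=\bigl(D(\alpha,0)\iota(d_{\theta/2})\bigr)\,\iota(r_{\pi/4})\,\bigl(\iota(d_{\theta/2})D(\alpha,0)\bigr). \]
The second move is to pass to the ordered basis $(e_1,e_3,e_2,e_4)$ of $\R^4$ and the splitting $\R^4=\R^2\oplus\R^2$ it induces. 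In this block picture $D(\alpha,0)$, $\iota(d_\psi)$, $w_{\omega_1,\omega_2}$ and $D(\beta,\gamma)$ are all block-diagonal ($D(\alpha,0)\leftrightarrow\diag(e^\alpha,e^{-\alpha})\oplus I_2$, $\iota(d_\psi)\leftrightarrow r_\psi\oplus r_{-\psi}$, $w_{\omega_1,\omega_2}\leftrightarrow r_{\omega_1}\oplus r_{\omega_2}$, $D(\beta,\gamma)\leftrightarrow\diag(e^\beta,e^{-\beta})\oplus\diag(e^\gamma,e^{-\gamma})$), whereas $\iota(r_\psi)$ becomes the block rotation $\left(\begin{smallmatrix}\cos\psi\,I_2&-\sin\psi\,I_2\\ \sin\psi\,I_2&\cos\psi\,I_2\end{smallmatrix}\right)$.

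Carrying out the displayed product in the block picture (using $r_{\theta/2}r_{\theta/2}=r_\theta$ and $r_{\theta/2}r_{-\theta/2}=I_2$) yields, with $D_\alpha:=\diag(e^\alpha,e^{-\alpha})$,
\[ D(\alpha,0)\,\iota(u_\theta)\,D(\alpha,0)\ \longleftrightarrow\ \tfrac{1}{\sqrt 2}\begin{pmatrix} D_\alpha r_\theta D_\alpha & -D_\alpha\\ D_\alpha & r_{-\theta}\end{pmatrix}, \]
whereas the claimed right-hand side becomes a block rotation applied on both sides to the block-diagonal matrix $E_1\oplus E_2$, where $E_j=r_{\omega_j}\diag(e^{\beta_j},e^{-\beta_j})r_{\omega_j}$. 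So the task is to find $\phi$ making $\iota(r_{-\phi})D(\alpha,0)\iota(u_\theta)D(\alpha,0)\iota(r_{-\phi})$ block-diagonal, and then to identify its two diagonal blocks. The input that makes this possible is the elementary identity $D_\alpha r_\theta D_\alpha+r_{-\theta}=2\,\ch\alpha\cos\theta\,D_\alpha$: the off-diagonal blocks of the conjugate are then scalar multiples of $D_\alpha$, and they vanish exactly when $\sin2\phi\,\ch\alpha\cos\theta=\cos2\phi$, i.e.\ for the angle $\phi$ of \eqref{eq=structU1_phi}. For that $\phi$ one computes the two diagonal blocks $F_1,F_2$ explicitly; each has the form $\left(\begin{smallmatrix}a&-c\\ c&d\end{smallmatrix}\right)$ with off-diagonal entry $\pm\tfrac{1}{\sqrt2}\sin\theta$, so that $D(\alpha,0)\iota(u_\theta)D(\alpha,0)=\iota(r_\phi)(F_1\oplus F_2)\iota(r_\phi)$.

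The point requiring care --- and the one I expect to be the main obstacle, since it is where the computation could conceivably fail --- is to check that $\det F_1=\det F_2=1$; this comes out after substituting $\ch\alpha\cos\theta=\cot2\phi$ and a short trigonometric/hyperbolic simplification collapsing the resulting expression to $1$. Granting it, Lemma~\ref{lemma=polar_decomposition_in_SL2_general} applies to each $F_j$ and yields $F_j=r_{\omega_j}\diag(e^{\beta_j},e^{-\beta_j})r_{\omega_j}$ with $\omega_j\in[-\tfrac\pi4,\tfrac\pi4]$ and $\beta_j\ge0$, where $2\,\sh\beta_j$ is the difference of the diagonal entries of $F_j$ and $\ch\beta_j\sin2\omega_j$ is, up to sign, its off-diagonal entry $\tfrac1{\sqrt2}\sin\theta$. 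Putting $\beta:=\beta_1$ and $\gamma:=\beta_2$ gives $F_1\oplus F_2\leftrightarrow w_{\omega_1,\omega_2}D(\beta,\gamma)w_{\omega_1,\omega_2}$, hence the claimed decomposition. Finally, reading off the explicit diagonal entries of $F_1,F_2$ (again using $\tan2\phi=(\ch\alpha\cos\theta)^{-1}$ to simplify) gives $\sh\beta-\sh\gamma=\tfrac1{\sqrt2}\sh(2\alpha)\cos\theta$, $\sh\beta\,\sh\gamma=\tfrac12\sh^2\alpha$ and the stated formulas for $\omega_1,\omega_2$, that is \eqref{eq=prod_beta_gamma}--\eqref{eq=structU1omega}; the inequalities $\beta\ge\gamma\ge0$ follow from $\cos\theta\ge0$ on $[-\tfrac\pi2,\tfrac\pi2]$.
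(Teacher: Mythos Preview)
Your argument is correct and takes a genuinely different route from the paper's. The paper does not block-diagonalise $X=D(\alpha,0)\iota(u_\theta)D(\alpha,0)$ directly: instead it computes $\tfrac12(X-(X^{-1})^T)$, which equals $\tfrac{1}{\sqrt2}\left(\begin{smallmatrix}Y&0\\0&-Y\end{smallmatrix}\right)$ in the standard basis with $Y=\left(\begin{smallmatrix}\sh(2\alpha)\cos\theta&-\sh\alpha\\\sh\alpha&0\end{smallmatrix}\right)$; a single application of Lemma~\ref{lemma=polar_decomposition_in_SL2_general} to $Y$ then yields $\phi$ together with $\sh\beta,\sh\gamma$ (hence \eqref{eq=prod_beta_gamma}--\eqref{eq=structU1_phi}) at once, since the same operation applied to the right-hand side produces $k\,\diag(\sh\beta,\sh\gamma,-\sh\beta,-\sh\gamma)\,k'$. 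The shape of the $K$-part is then deduced from uniqueness of the $KAK$ decomposition together with continuity from $\theta=0$, and \eqref{eq=structU1omega} is read off from the lower-left $2\times2$ block of $X$. Your construction in the $(e_1,e_3,e_2,e_4)$ basis is more hands-on --- it produces all the $K$-factors explicitly and avoids the uniqueness/continuity discussion --- at the price of applying Lemma~\ref{lemma=polar_decomposition_in_SL2_general} twice and of a longer computation for \eqref{eq=prod_beta_gamma}. Two remarks: first, the step you flag as delicate, $\det F_j=1$, needs no computation, since $\iota(r_{-\phi})X\iota(r_{-\phi})\in\mathrm{Sp}(2,\R)$ and in your basis the symplectic form is itself block-diagonal, so a block-diagonal symplectic matrix has each block in $\mathrm{SL}(2,\R)$. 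Second, you should also address the hypothesis $a_j,d_j\ge0$ of Lemma~\ref{lemma=polar_decomposition_in_SL2_general}, which you do not mention; it does hold (for $|\theta|<\tfrac\pi2$ one has $a_jd_j=1-\tfrac12\sin^2\theta>0$ and the diagonal sums are positive, with the endpoints handled by continuity), but deserves a word.
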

\begin{proof}
This is just a computation with $4 \times 4$ matrices. We still give a detailed proof, as the computations are slightly more involved than for Proposition \ref{prop=structural_SO2}. We assume that $\theta \in (-\frac{\pi}{2},\frac{\pi}{2})$. The case $|\theta|= \frac{\pi}{2}$ will follow by continuity. Let $X = D(\alpha,0) \iota(u_\theta) D(\alpha,0)$. It follows from the KAK decomposition of $G$ that there exist $\beta \geq \gamma \geq 0$ and $k,k' \in K$ such that $X = k D(\beta,\gamma) k'$. In general, the element $k$ is not unique, but it is determined uniquely up to multiplication on the right by a matrix in $\mathcal D:=\{\iota(\diag(\epsilon_1,\epsilon_2)) \mid \epsilon_i \in \{-1,1\}\}$. We prove that the proposition holds, perhaps with $k$ replaced by a matrix in $k \mathcal D$. Computing $\frac{1}{2}(X - (X^{-1})^T)$ gives
\[  \frac 1 {\sqrt 2} \begin{pmatrix} Y & 0  \\ 0 & -Y \end{pmatrix} = k \diag(\sh \beta,\sh \gamma,-\sh \beta,-\sh \gamma) k', \]
where $Y= \begin{pmatrix} \sh(2\alpha) \cos \theta & -\sh \alpha
  \\ \sh \alpha & 0\\\end{pmatrix}$. By Lemma \ref{lemma=polar_decomposition_in_SL2_general}, we have $Y= r_\phi \diag(\lambda,\mu) r_\phi$, with $\lambda \geq \mu \geq 0$ and $\phi \in (-\frac{\pi}{4},\frac{\pi}{4})$ satisfying  $\lambda \mu = \sinh^2 \alpha$, and $\lambda - \mu = \sinh(2\alpha) \cos \theta$, and $\tan(2\phi) = 2 \sinh \alpha (\sinh(2\alpha) \cos \theta)^{-1}=(\cosh \alpha \cos \theta)^{-1}$. For $k_0:=\iota(r_\phi) \in K$, we obtain
\begin{equation} \label{eq=relation_k0_k} \frac{1}{\sqrt{2}} k_0 \diag(\lambda,\mu,-\lambda,-\mu) k_0 = k \diag(\sh \beta,\sh \gamma,-\sh \beta,-\sh \gamma) k'.\end{equation}
This implies that $\sinh \beta = \frac{\lambda}{\sqrt{2}}$ and $\sinh \gamma=\frac{\mu}{\sqrt{2}}$, so that \eqref{eq=prod_beta_gamma} and \eqref{eq=diff_beta_gamma} hold. 
If $\theta=0$, the matrix $X$ is block-diagonal, in which case we can take $k=k'=k_0$. For other values of $\theta$, we can still assume that $k$ and $k'$ depend continuously on $\theta$, which determines $k$ and $k'$ uniquely. From \eqref{eq=relation_k0_k}, we get that $k_0^{-1} k $ commutes with $\diag(\sh^2 \beta,\sh^2 \gamma,\sh^2 \beta,\sh^2 \gamma)$, which implies (since $\beta > \gamma > 0$ by our assumption that $\cos \theta \neq 0$) that $\iota^{-1}(k_0^{-1} k)$ is a diagonal matrix, i.e., $k = \iota(r_\phi \diag(e^{i\omega_1},e^{i\omega_2})) = \iota(r_\phi)w_{\omega_1,\omega_2}$ for $\omega_1,\omega_2 \in\R$. Substituting this in \eqref{eq=relation_k0_k}, we obtain $k' = w_{\omega_1,\omega_2}\iota(r_\phi)$. By our previous remark, we can assume that $\omega_1,\omega_2$ depend continuously on $\theta$ and take $\omega_1=\omega_2=0$ if $\theta=0$.

Consider the lower-left $2 \times 2$-submatrix in the equality $X = k D(\beta,\gamma) k'$:
\[ \frac{\sin\theta}{\sqrt 2}
\begin{pmatrix} 1 & 0\\0&-1\end{pmatrix}
 = r_\phi 
\begin{pmatrix} \sin(2 \omega_1) \cosh \beta & 0 \\ 
0 & \sin(2\omega_2) \cosh\gamma \end{pmatrix}
 r_\phi.\]
This equality implies that $\sin(2\omega_1) \cosh \beta = \frac{1}{\sqrt{2}}\sin \theta = -\sin(2\omega_2) \cosh\gamma$. In particular, $|\sin 2\omega_j|<1$, and hence $|\omega_j|< \frac{\pi}{4}$ by continuity, since $\omega_1=\omega_2=0$ if $\theta=0$.
\end{proof}

\subsection{Invariant coefficients} \label{subsec=invc}
\begin{notation}
In Sections \ref{subsec=invc} and \ref{subsec=noninvc}, we will use the following notation. For numerical expressions, we write $A \lesssim B$ if there exists a constant $C > 0$ such that $A \leq CB$. The constant $C$ can depend on $X$ (through the constants $C_1,s_1$ and $C_2,s_2$ in \eqref{eq:assumption_on_X_SO2} and \eqref{eq:assumption_on_X_U1}, respectively), $L$, $V$ and $m$.
\end{notation}
We now prove the first part of Theorem \ref{thm:decay_of_coefficients_Sp2}. Let $X$ be a Banach space satisfying \eqref{eq:assumption_on_X_SO2} and \eqref{eq:assumption_on_X_U1}, and let $\pi:G \rightarrow B(X)$ be a continuous representation satisfying \eqref{eq:assumption_on_pi}. Replacing the norm on $X$ by the equivalent norm $ \|x\|' = \int_{K} \|\pi(k) x\| dk$, we may assume that the restriction of $\pi$ to $K$ is isometric. For $H$-invariant unit vectors $\xi \in X$ and $\eta \in X^*$, let
\begin{equation} \nonumber
  c(g) = \int_{\R/\pi\Z} \langle \pi(v_t g v_{-t}) \xi,\eta\rangle \frac{d t}{\pi}.
\end{equation}
It follows that $c(h_1v_tgv_{-t}h_2)=c(g)$ for all $g \in G$, $h_1,h_2 \in H$ and $t \in \mathbb{R}$. Note that $c$ is a coefficient of the representation $1 \otimes \pi$ of $G$ on $L^2(\R/\pi \Z;X)$. Indeed, $c(g) = \langle (1 \otimes \pi)(g) \widetilde \xi,\widetilde \eta \rangle$, where $\widetilde \xi \in L^2(\R/\pi\Z,dt/\pi;X)$ and $\widetilde \eta \in L^2(\R/\pi\Z,dt/\pi;X^*)$ are the vectors $\widetilde \xi(t) = \pi(v_{-t}) \xi$ and $\widetilde \eta(t) = {}^T\pi(v_{-t}) \eta$, which have norm $1$, as $\pi(v_{-t})$ is an isometry. In what follows, we will use that by Fubini's Theorem, \eqref{eq:assumption_on_X_SO2} and \eqref{eq:assumption_on_X_U1} also hold for $X$ replaced by $L^2(\R/\pi\Z;X)$.

With the notation of Propositions \ref{prop=structural_SO2} and \ref{prop=structural_U1}, it follows that for every $t \in \R/\pi\Z$,
\begin{align}
  c(v_{t}D(\alpha,\alpha) w_{-\theta,\theta - \frac{\pi}{2} }D(\alpha,\alpha)) &= c(v_{t+\phi+\phi'}D(\beta,\gamma)), \label{eq=value_of_c_circles_equivariant} \\
  c(v_t D(\alpha,0) \iota(u_\theta) D(\alpha,0)) &= c(v_{t+\omega_1+\omega_2} D(\beta,\gamma)). \label{eq=value_of_c_hyperbolas_equivariant}
\end{align}
\begin{lemma} \label{lemma:eqcir}
Let $c$ be as above, let $\beta \geq \gamma \geq 0$, and let $\alpha$ be the non-negative solution of $\sinh^2(2\alpha)=\sinh^2\beta + \sinh^2\gamma$. Then there exists a $\rho \in \mathbb{R}/2\pi\mathbb{Z}$ such that for all $t \in \mathbb{R}/2\pi\mathbb{Z}$,
\[
  \left|c(v_tD(\beta,\gamma))-c(v_{t+\rho}D(2\alpha,0))\right| \lesssim e^{s\beta - s_1(\beta-\gamma)}.
\]
\end{lemma}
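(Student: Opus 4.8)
The plan is to deduce the estimate from the ``circle'' identity \eqref{eq=value_of_c_circles_equivariant} (that is, Proposition \ref{prop=structural_SO2}), combined with the $\mathrm{SO}(2)$--harmonic analysis of Proposition \ref{prop:harmonic_ana_SO2} and the H\"older bound \eqref{eq:assumption_on_X_SO2}. Since $\sinh\beta\geq\sinh\gamma\geq 0$, I would first fix $\theta_0\in[0,\frac{\pi}{4}]$ with $\cos\theta_0=\sinh\beta/\sinh(2\alpha)$ and $\sin\theta_0=\sinh\gamma/\sinh(2\alpha)$. With the notation of Proposition \ref{prop=structural_SO2} attached to this $\alpha$, the parameter $\theta_0$ produces exactly the pair $(\beta,\gamma)$ via \eqref{eq=structural_SO2_D}, whereas $\theta=0$ produces $(2\alpha,0)$, with $\phi+\phi'=\frac{\pi}{4}$ there. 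Applying \eqref{eq=value_of_c_circles_equivariant} at $\theta_0$ after the substitution $t\mapsto t-\phi(\theta_0)-\phi'(\theta_0)$, and at $\theta=0$ after $t\mapsto t-\frac{\pi}{4}$, and setting $\rho:=\frac{\pi}{4}-\phi(\theta_0)-\phi'(\theta_0)\in\R/2\pi\Z$ and $u:=t-\phi(\theta_0)-\phi'(\theta_0)$, one obtains, for every $t$,
\[
  c(v_tD(\beta,\gamma))-c(v_{t+\rho}D(2\alpha,0))=c(v_uD(\alpha,\alpha)w_{-\theta_0,\theta_0-\frac{\pi}{2}}D(\alpha,\alpha))-c(v_uD(\alpha,\alpha)w_{0,-\frac{\pi}{2}}D(\alpha,\alpha)).
\]
Hence it suffices to bound the right-hand side by $\lesssim e^{s\beta-s_1(\beta-\gamma)}$, uniformly in $u$.

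For this I would use that $w_{-\theta,\theta-\frac{\pi}{2}}=v_{-\frac{\pi}{4}}\iota(d_{\frac{\pi}{4}-\theta})$, so the right-hand side equals $c_{D_1,D_2}(d_{\frac{\pi}{4}-\theta_0})-c_{D_1,D_2}(d_{\frac{\pi}{4}})$, where $D_1=v_uD(\alpha,\alpha)v_{-\frac{\pi}{4}}$, $D_2=D(\alpha,\alpha)$ and $c_{D_1,D_2}(h):=c(D_1\iota(h)D_2)$ for $h\in\SU$. The matrices $D_1,D_2$ are products of factors $v_\bullet$ and $D(\alpha,\alpha)$, each of which commutes with $\iota(r_\psi)$ for all $\psi$; since $c$ is bi-$H$-invariant, it follows that $c_{D_1,D_2}(r_\psi h r_{\psi'})=c_{D_1,D_2}(h)$, so $c_{D_1,D_2}$ is, up to normalization, a coefficient of the $\SU$-representation $h\mapsto(1\otimes\pi)(\iota(h))$ on $L^2(\R/\pi\Z;X)$ (isometric since $\pi|_K$ is isometric) of the type covered by Proposition \ref{prop:harmonic_ana_SO2} with $m=0$. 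The two vectors entering this coefficient have norms at most $\|\pi(D_2)\|=\|\pi(D(\alpha,\alpha))\|\leq Le^{s\alpha}$ and $\|\pi(D_1^{-1})\|\leq\|\pi(D(-\alpha,-\alpha))\|=\|\pi(D(\alpha,\alpha))\|\leq Le^{s\alpha}$, using $D(-\alpha,-\alpha)=v_{\frac{\pi}{2}}D(\alpha,\alpha)v_{-\frac{\pi}{2}}$, the isometry of $\pi|_K$, and \eqref{eq:assumption_on_pi}; crucially, these bounds do not depend on $u$. Therefore Proposition \ref{prop:harmonic_ana_SO2} combined with \eqref{eq:assumption_on_X_SO2} applied to $L^2(\R/\pi\Z;X)$ (valid by Fubini) gives $|c_{D_1,D_2}(d_{\frac{\pi}{4}-\theta_0})-c_{D_1,D_2}(d_{\frac{\pi}{4}})|\lesssim e^{2s\alpha}\theta_0^{s_1}$, uniformly in $u$.

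It then remains to check $e^{2s\alpha}\theta_0^{s_1}\lesssim e^{s\beta-s_1(\beta-\gamma)}$, which is elementary: from $\sinh(2\alpha)=\sqrt{\sinh^2\beta+\sinh^2\gamma}\leq\sqrt{2}\,\sinh\beta\leq\tfrac{1}{\sqrt{2}}e^\beta$ one gets $e^{2\alpha}\lesssim e^\beta$, hence $e^{2s\alpha}\lesssim e^{s\beta}$; and $\theta_0=\arctan(\sinh\gamma/\sinh\beta)\leq\sinh\gamma/\sinh\beta\leq e^{-(\beta-\gamma)}$, the last inequality because $\sinh\gamma\,e^\beta-\sinh\beta\,e^\gamma=-\sinh(\beta-\gamma)\leq 0$. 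Multiplying the two gives the claim. I expect no genuinely deep step here; the one point to watch is the uniformity in $t$ (equivalently in $u$) of the intermediate estimate, which works precisely because $D_1$ varies with $u$ while the norms of the vectors entering $c_{D_1,D_2}$ do not.
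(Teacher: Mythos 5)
Your argument is correct and is essentially the paper's own proof: the same reduction via $c_{t,\alpha}(h)=c(v_tD(\alpha,\alpha)v_{-\frac{\pi}{4}}\iota(h)D(\alpha,\alpha))$, bi-$\mathrm{SO}(2)$-invariance and Proposition \ref{prop:harmonic_ana_SO2} with \eqref{eq:assumption_on_X_SO2} on $L^2(\R/\pi\Z;X)$, the choice $\theta_0=\arctan(\sinh\gamma/\sinh\beta)$, and \eqref{eq=value_of_c_circles_equivariant} to produce $\rho$ (you are merely more explicit about its value). The only cosmetic point is that the dual-side vector is $(1\otimes\pi)(D_1)^*\widetilde\eta$, so the relevant bound is $\|\pi(D_1)\|\leq Le^{s\alpha}$ rather than $\|\pi(D_1^{-1})\|$; both hold here, so nothing is affected.
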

\begin{proof}
  For $t \in \R/2\pi\Z$, let $c_{t,\alpha}:h \mapsto c(v_{t} D(\alpha,\alpha) v_{-\frac{\pi}{4}} \iota(h) D(\alpha,\alpha))$ be defined on $\mathrm{SU}(2)$. Let $\widetilde{\xi}$ and $\widetilde{\eta}$ be as above. For $\xi_{t,\alpha} = (1 \otimes \pi(D(\alpha,\alpha))) \widetilde \xi$ and $\eta_{t,\alpha} = (1 \otimes \pi(v_tD(\alpha,\alpha)v_{-\frac{\pi}{4}}))^* \widetilde \eta$, which are vectors of norm at most $L e^{s\alpha}$ in $L^2(\R/\pi \Z,dt/\pi;X)$ and $L^2(\R/\pi \Z,dt/\pi;X^*)$, respectively, we can write $c_{t,\alpha}=\langle (1\otimes \pi)(\iota(h)) \xi_{t,\alpha},\eta_{t,\alpha}\rangle$. Since $D(\alpha,\alpha)$ and $v_t$ commute with $\iota(r_\theta)$ for all $\theta$, we have $c_{t,\alpha}(r_\theta h r_{\theta'}) = c_{t,\alpha}(h)$ for all $\theta,\theta^{\prime} \in \mathbb{R}$.
  
  For $h=d_{\frac{\pi}{4}-\theta}$ and $h^{\prime}=d_{\frac{\pi}{4}}$, it follows from Proposition \ref{prop:harmonic_ana_SO2} and \eqref{eq:assumption_on_X_SO2} that
\[
  \left|c(v_tD(\alpha,\alpha)w_{-\theta,\theta-\frac{\pi}{2}}D(\alpha,\alpha))-c(v_tD(\alpha,\alpha)w_{0,-\frac{\pi}{2}}D(\alpha,\alpha))\right| \lesssim e^{2s\alpha}|\theta|^{s_1},
\]
provided that $\theta \in [0,\frac{\pi}{4}]$. Take $\theta = \arctan(\sinh \gamma/\sinh \beta)$. Then $|\theta| \leq e^{\gamma - \beta}$ and $2\alpha \leq \beta + 1$, so the right-hand side is $\lesssim e^{s \beta -s_1(\beta- \gamma)}$. By \eqref{eq=value_of_c_circles_equivariant}, we have $c(v_{t}D(\alpha,\alpha)w_{-\theta,\theta-\frac{\pi}{2}}D(\alpha,\alpha)) = c(v_{t+\rho_1} D(\beta,\gamma))$ and $c(v_{t}D(\alpha,\alpha)w_{0,-\frac{\pi}{2}}D(\alpha,\alpha)) = c(v_{t+\rho_2} D(2\alpha,0))$ for some $\rho_1,\rho_2 \in \R/2\pi \Z$, Hence, 
\[
  \left| c( v_{t} D(\beta,\gamma)) - c(v_{t+\rho} D(2\alpha,0))\right| \lesssim e^{\beta s - s_1(\beta-\gamma)}
\]
for $\rho=\rho_2-\rho_1$.
\end{proof}
\begin{lemma} \label{lemma:dependency_on_t}
Let $c$ be as above, and let $\alpha > 0$. Then for all $t \in \mathbb{R}/2\pi\mathbb{Z}$ and $\tau \in [0,\frac{\pi}{4}]$,
\[
  |c(v_tD(2\alpha,0))-c(v_{t+\tau}D(2\alpha,0))| \lesssim e^{-2(s_1-s)\alpha}.
\]
\end{lemma}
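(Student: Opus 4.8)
The plan is to imitate the proof of Lemma~\ref{lemma:eqcir}, but to use \emph{both} identities of Proposition~\ref{prop=structural_SO2} simultaneously and then eliminate the intermediate coefficient they produce. A single identity, as in Lemma~\ref{lemma:eqcir}, always ends up relating $c(v_\bullet D(2\alpha,0))$ to itself with a consistent shift and so is useless here; the two identities, however, produce the \emph{same} coefficient $c(v_\bullet D(\beta,\gamma))$ with shifts of \emph{opposite} sign, and eliminating it leaves a genuine constraint on $c(v_\bullet D(2\alpha,0))$. This uses only the $\mathrm{SO}(2)$-estimate, which matches the fact that the bound only involves $s_1$.

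Concretely, for $t\in\R$ I would introduce the two functions on $\SU$
\[
 f^{+}_{t}(h)=c\big(v_t D(\alpha,\alpha)\,v_{-\frac{\pi}{4}}\,\iota(h)\,D(\alpha,\alpha)\big),\qquad
 f^{-}_{t}(h)=c\big(v_t D(\alpha,\alpha)\,v_{\frac{\pi}{4}}\,\iota(h)\,D(\alpha,\alpha)\big),
\]
$f^{+}_{t}$ being exactly the function $c_{t,\alpha}$ from the proof of Lemma~\ref{lemma:eqcir}. Since $v_t$, $v_{\pm\pi/4}$ and $D(\alpha,\alpha)$ all commute with every $\iota(r_\theta)$ and $\iota(r_\theta)\in H$, both $f^{\pm}_{t}$ are $\mathrm{SO}(2)$-bi-invariant coefficients of an isometric representation of $\SU$ with respect to vectors of norm $\lesssim e^{s\alpha}$; hence Proposition~\ref{prop:harmonic_ana_SO2} (with $m=0$, $C(0)=1$) together with~\eqref{eq:assumption_on_X_SO2} (valid also for $L^2(\R/\pi\Z;X)$) gives
\[
 \big|f^{\pm}_{t}(d_\psi)-f^{\pm}_{t}(d_{\frac{\pi}{4}})\big|\lesssim e^{2s\alpha}\,\big|\psi-\tfrac{\pi}{4}\big|^{s_1},\qquad \psi\in[0,\tfrac{\pi}{4}].
\]

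Next I would evaluate these at $d_{\frac{\pi}{4}-\theta}$ and $d_{\frac{\pi}{4}}$ using Proposition~\ref{prop=structural_SO2}. With $\mu(\theta)=\phi(\theta)+\phi'(\theta)$ ($\phi,\phi'$ as in~\eqref{eq=structural_SO2_phi}) and $\beta(\theta),\gamma(\theta)$ as in~\eqref{eq=structural_SO2_D}, the relation $v_{-\frac{\pi}{4}}\iota(d_{\frac{\pi}{4}-\theta})=w_{-\theta,\theta-\frac{\pi}{2}}$ and~\eqref{eq=value_of_c_circles_equivariant} give $f^{+}_{t}(d_{\frac{\pi}{4}-\theta})=c\big(v_{t+\mu(\theta)}D(\beta(\theta),\gamma(\theta))\big)$; the relation $v_{\frac{\pi}{4}}\iota(d_{\theta-\frac{\pi}{4}})=w_{\theta,\frac{\pi}{2}-\theta}$, the first identity of Proposition~\ref{prop=structural_SO2} (which yields the opposite sign, its $K$-part being $w_{\phi,\phi'}$ rather than $w_{-\phi,-\phi'}$), and the $\mathrm{SO}(2)$-bi-equivalence $d_{-\chi}=r_{-\frac{\pi}{2}}d_\chi r_{\frac{\pi}{2}}$ give $f^{-}_{t}(d_{\frac{\pi}{4}-\theta})=c\big(v_{t-\mu(\theta)}D(\beta(\theta),\gamma(\theta))\big)$. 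Specialising to $\theta=0$ (where $\phi(0)=0$, $\phi'(0)=\frac{\pi}{4}$, $\beta(0)=2\alpha$, $\gamma(0)=0$) yields $f^{\pm}_{t}(d_{\frac{\pi}{4}})=c\big(v_{t\pm\frac{\pi}{4}}D(2\alpha,0)\big)$. Inserting these into the previous display, replacing $t$ by $t-2\mu(\theta)$ in the $f^{+}$ inequality and comparing with the $f^{-}$ inequality to eliminate $c\big(v_\bullet D(\beta(\theta),\gamma(\theta))\big)$, I obtain, for every $u\in\R$ and every $\theta\in[0,\frac{\pi}{4}]$,
\[
 \big|c\big(v_{u+(\frac{\pi}{2}-2\mu(\theta))}D(2\alpha,0)\big)-c\big(v_{u}D(2\alpha,0)\big)\big|\lesssim e^{2s\alpha}\,|\theta|^{s_1}.
\]

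Finally I would choose $\theta$. Fix a constant $C_0>2$; for $\theta\in(0,C_0e^{-2\alpha}]$ the right-hand side is $\lesssim e^{2s\alpha}(C_0e^{-2\alpha})^{s_1}=C_0^{s_1}e^{-2(s_1-s)\alpha}\lesssim e^{-2(s_1-s)\alpha}$, using $s<s_1$ (recall $s<s_-(s_1,s_2)<s_1$). On the other hand, by~\eqref{eq=structural_SO2_phi} the map $\theta\mapsto\frac{\pi}{2}-2\mu(\theta)$ is continuous, vanishes at $\theta=0$, and at $\theta=C_0e^{-2\alpha}$ is close to $\frac{\pi}{2}-\arctan(2/C_0)>\frac{\pi}{4}$ once $\alpha$ is large enough that $C_0e^{-2\alpha}<\frac{\pi}{4}$ (for bounded $\alpha$ the lemma is trivial, since then $e^{-2(s_1-s)\alpha}$ is bounded below while the left-hand side is bounded above); so by the intermediate value theorem it takes every value in $[0,\frac{\pi}{4}]$, and applying the estimate with the $\theta$ for which $\frac{\pi}{2}-2\mu(\theta)=\tau$ gives the claim. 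The main obstacle is precisely the circularity warned about at the start: one must avoid invoking Lemma~\ref{lemma:eqcir} (or a single identity of Proposition~\ref{prop=structural_SO2}) to bring $D(\beta,\gamma)$ back to $D(2\alpha,0)$, and instead exploit that the two identities give the same coefficient with opposite shifts; the remaining work is the bookkeeping ensuring the shifts $\frac{\pi}{2}-2\mu(\theta)$ sweep out all of $[0,\frac{\pi}{4}]$ while $|\theta|\lesssim e^{-2\alpha}$ keeps the error at $\lesssim e^{-2(s_1-s)\alpha}$.
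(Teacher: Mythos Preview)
Your argument is essentially the paper's own proof, reorganized. The paper too uses \emph{both} identities of Proposition~\ref{prop=structural_SO2}, derives
\[
|c(v_{t+\frac{\pi}{4}}D(2\alpha,0))-c(v_{t+\phi+\phi'}D(\beta,\gamma))|\lesssim e^{2s\alpha}\theta^{s_1},\qquad
|c(v_{t-\frac{\pi}{4}}D(2\alpha,0))-c(v_{t-\phi-\phi'}D(\beta,\gamma))|\lesssim e^{2s\alpha}\theta^{s_1},
\]
shifts $t$ in each and combines them to obtain exactly your inequality $|c(v_{t'}D(2\alpha,0))-c(v_{t'+2\phi+2\phi'-\frac{\pi}{2}}D(2\alpha,0))|\lesssim e^{2s\alpha}\theta^{s_1}$, then uses the intermediate value theorem on $\theta\mapsto 2\phi+2\phi'-\frac{\pi}{2}$ over $[0,4e^{-2\alpha}]$ (your $C_0=4$) to cover all of $[-\frac{\pi}{4},0]$.

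One small bookkeeping slip: you have the signs on $f^{+}$ and $f^{-}$ interchanged. The second identity of Proposition~\ref{prop=structural_SO2} (with $K$-part $w_{-\phi,-\phi'}$) is the one feeding into $f^{+}_t(d_{\frac{\pi}{4}-\theta})$, and it gives the shift $-\mu(\theta)$; the first identity, via $f^{-}_t$, gives $+\mu(\theta)$. (The display \eqref{eq=value_of_c_circles_equivariant} in the paper has the opposite sign convention, which may be the source of the confusion.) This is harmless: the two H\"older bounds are identical, the combination is symmetric, and your final inequality and covering argument are unaffected.
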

\begin{proof}
We can assume that $\alpha \geq 1$. Let $\theta \in [0,\frac{\pi}{2}]$, and let $\beta \geq \gamma \geq 0$ and $\phi,\phi^{\prime} \in [0,\frac{\pi}{4}]$ be determined by Proposition \ref{prop=structural_SO2}. In particular, we have
\begin{equation} \nonumber
\begin{split}
 D(\alpha,\alpha)w_{\theta,\frac{\pi}{2}-\theta}D(\alpha,\alpha)&=w_{\phi,\phi^{\prime}}D(\beta,\gamma)w_{\phi,\phi^{\prime}},\\
 D(\alpha,\alpha)w_{-\theta,\theta-\frac{\pi}{2}}D(\alpha,\alpha)&=w_{-\phi,-\phi^{\prime}}D(\beta,\gamma)w_{-\phi,-\phi^{\prime}}.
\end{split}
\end{equation}
As in the proof of Lemma \ref{lemma:eqcir}, we estimate the difference of $c$ evaluated in a generic element of the form of the first equality above and $c$ evaluated in the element for which $\theta=0$, yielding
\[
  |c(v_{t+\frac{\pi}{4}}D(2\alpha,0))-c(v_{t+\phi+\phi^{\prime}}D(\beta,\gamma))| \lesssim e^{2s\alpha} \theta^{s_1}.
\]
Similarly, using the second equality,
\[
  |c(v_{t-\frac{\pi}{4}}D(2\alpha,0))-c(v_{t-\phi-\phi^{\prime}}D(\beta,\gamma))| \lesssim e^{2s\alpha} \theta^{s_1}.
\]
Substituting $t^{\prime}=t+\frac{\pi}{4}$ in the first inequality and $t^{\prime}=t+\frac{\pi}{4}-2\phi-2\phi^\prime$ in the second one, we obtain
\[
  |c(v_{t^{\prime}}D(2\alpha,0))-c(v_{t^{\prime}+2\phi+2\phi^{\prime}-\frac{\pi}{2}}D(2\alpha,0))| \lesssim e^{2s\alpha}\theta^{s_1}.
\]
The right-hand side of this last inequality is $\lesssim e^{-2(s_1-s)\alpha}$ for $\theta \in [0,4 e^{-2\alpha}]$ (by the assumption $\alpha \geq 1$, we indeed have $4 e^{-2\alpha} \leq \frac{\pi}{2}$). In order to prove the lemma, it therefore suffices to show that, for $\alpha$ large enough, $\{ 2\phi+2\phi^{\prime}-\frac{\pi}{2} \mid \theta \in [0,4e^{-2\alpha}]\}$ contains $[-\frac{\pi}{4},0]$. By Proposition \ref{prop=structural_SO2}, it follows that $2\phi+2\phi^{\prime}-\frac{\pi}{2}$ depends continuously on $\theta$ and is equal to $0$ if $\theta = 0$, so that we only need to prove that $2\phi+2\phi^{\prime}-\frac{\pi}{2}<-\frac{\pi}{4}$ for $\theta = 4 e^{-2 \alpha}$. But for $\theta = 4 e^{-2\alpha}$, we have $\tan(2\phi) \sim 8 e^{-4\alpha}$ and $\tan(2 \phi^\prime) \sim \frac 1 2<1$ for $\alpha \to \infty$. In particular, for $\alpha$ sufficiently large, $0 \leq 2\phi+2\phi'\leq \frac{\pi}{4}$, which proves the claim.
\end{proof}
\begin{lemma} \label{lemma:eqhyp}
Let $c$ be as above, let $\beta \geq \gamma \geq 0$, and let $\alpha'$ be the non-negative solution of $\sinh^2(\alpha')=\sinh\beta\sinh\gamma$. Then there exists a $\sigma \in \mathbb{R}/2\pi\mathbb{Z}$ such that for all $t \in \mathbb{R}/2\pi\mathbb{Z}$,
\[
  |c(v_tD(\beta,\gamma))-c(v_{t+\sigma}D(\alpha',\alpha'))| \lesssim e^{s\beta-(s_2-s)\gamma}.
\]
\end{lemma}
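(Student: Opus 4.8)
The plan is to run, \emph{mutatis mutandis}, the argument of Lemma~\ref{lemma:eqcir}, but in the $\U(1)$-picture: Proposition~\ref{prop=structural_U1} and Proposition~\ref{prop:harmonic_ana_U1} take the place of Proposition~\ref{prop=structural_SO2} and Proposition~\ref{prop:harmonic_ana_SO2}. First I would dispose of the range $\gamma \le 1$ by a trivial bound. Since $\sinh^2\alpha' = \sinh\beta\sinh\gamma \le \sinh^2\beta$ we have $\alpha' \le \beta$, and since (after renorming) $\pi$ is isometric on $K$, every element $v_\tau D(\beta,\gamma) v_{\tau'}$ with $\tau,\tau' \in \R$ has $\pi$-norm at most $\|\pi(D(\beta,\gamma))\| \le L e^{s\beta}$ by \eqref{eq:assumption_on_pi}, so $|c(v_t D(\beta,\gamma))|$ and $|c(v_t D(\alpha',\alpha'))|$ are both $\lesssim e^{s\beta}$. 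As $s < s_-(s_1,s_2) < s_2$, for $\gamma \le 1$ this already gives $|c(v_t D(\beta,\gamma)) - c(v_t D(\alpha',\alpha'))| \lesssim e^{s\beta} \lesssim e^{s\beta - (s_2-s)\gamma}$, so $\sigma = 0$ works. From now on $\gamma > 1$.

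For $\gamma > 1$, I would pick $\alpha > 0$ with $\sinh^2\alpha = 2\sinh\beta\sinh\gamma$ and $\theta \in [0,\tfrac\pi2]$ with $\cos\theta = \tfrac{\sqrt 2(\sinh\beta - \sinh\gamma)}{\sinh(2\alpha)}$; the elementary bound $\sinh(2\alpha) = 2\sinh\alpha\cosh\alpha \ge 4\sinh\beta\sinh\gamma$ together with $\sinh\gamma > \sinh 1 > \tfrac{1}{2\sqrt2}$ shows $\cos\theta \in [0,1)$, so this choice is legitimate. By \eqref{eq=prod_beta_gamma}--\eqref{eq=diff_beta_gamma}, the pair produced by Proposition~\ref{prop=structural_U1} from $(\alpha,\theta)$ has the prescribed product and difference of hyperbolic sines, hence equals $(\beta,\gamma)$; at $\theta = \tfrac\pi2$ the same proposition produces instead $D(\alpha',\alpha')$, and \eqref{eq=structU1omega} then forces $\omega_1 = -\omega_2$ there. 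So by \eqref{eq=value_of_c_hyperbolas_equivariant}, $c(v_t D(\alpha,0)\iota(u_\theta) D(\alpha,0)) = c(v_{t+\omega_1+\omega_2} D(\beta,\gamma))$ and $c(v_t D(\alpha,0)\iota(u_{\pi/2}) D(\alpha,0)) = c(v_t D(\alpha',\alpha'))$, with $\omega_1,\omega_2$ depending on $\beta,\gamma$ but not on $t$.

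Next I would set $c_{t,\alpha}(h) = c(v_t D(\alpha,0)\iota(h) D(\alpha,0))$ for $h \in \SU$ and check that it satisfies the hypothesis of Proposition~\ref{prop:harmonic_ana_U1} with $m = 0$. This is the one real deviation from Lemma~\ref{lemma:eqcir}: there $D(\alpha,\alpha)$ genuinely commutes with $\iota(r_\theta)$, but here $D(\alpha,0)$ does \emph{not} commute with $\iota(d_\varphi)$. However, $D(\alpha,0)$ does commute with $\iota(\diag(1,e^{i\psi}))$ for all $\psi$, and since $d_\varphi = e^{i\varphi}\diag(1,e^{-2i\varphi})$, conjugation by $d_\varphi$ on $\SU$ equals conjugation by $\diag(1,e^{-2i\varphi})$; combining this with the $H$-bi-invariance and the $v$-conjugation invariance of $c$ (and $\iota(\diag(1,e^{-2i\varphi})) = v_{-\varphi}\iota(d_\varphi) \in v_{-\varphi}H$, $\iota(d_{\pm\varphi}) \in H$), a short chain of commutations gives $c_{t,\alpha}(d_\varphi h d_{-\varphi}) = c_{t,\alpha}(h)$. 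Writing $c(g) = \langle (1\otimes\pi)(g)\widetilde\xi,\widetilde\eta\rangle$ as in Section~\ref{subsec=invc}, $c_{t,\alpha}$ is a coefficient of the isometric representation $(1\otimes\pi)\circ\iota$ of $\SU$ on $L^2(\R/\pi\Z;X)$ with respect to vectors of norm at most $\|\pi(D(\alpha,0))\| \le L e^{s\alpha}$; so Proposition~\ref{prop:harmonic_ana_U1} (with $C'(0) = 1$) and the fact that \eqref{eq:assumption_on_X_U1} holds with $X$ replaced by $L^2(\R/\pi\Z;X)$ give $|c_{t,\alpha}(u_\theta) - c_{t,\alpha}(u_{\pi/2})| \lesssim e^{2s\alpha}\,|\theta - \tfrac\pi2|^{s_2}$, uniformly in $t$.

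It then remains to estimate the two factors. From $e^{2\alpha} \le 4\cosh^2\alpha = 4 + 8\sinh\beta\sinh\gamma \le 4 + 2e^{\beta+\gamma} \le 6 e^{\beta+\gamma}$ we get $e^{2s\alpha} \lesssim e^{s(\beta+\gamma)}$; and from $\cos\theta \le \tfrac{\sqrt2(\sinh\beta-\sinh\gamma)}{4\sinh\beta\sinh\gamma} \le \tfrac{1}{2\sqrt2\,\sinh\gamma} \lesssim e^{-\gamma}$ (using $\gamma > 1$) together with $\arcsin x \le \tfrac\pi2 x$ for $x \in [0,1]$, we get $|\theta - \tfrac\pi2| = \arcsin(\cos\theta) \lesssim e^{-\gamma}$. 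Hence $|c_{t,\alpha}(u_\theta) - c_{t,\alpha}(u_{\pi/2})| \lesssim e^{s(\beta+\gamma)}e^{-s_2\gamma} = e^{s\beta - (s_2-s)\gamma}$, which by the identifications of the second paragraph is exactly $|c(v_{t+\omega_1+\omega_2} D(\beta,\gamma)) - c(v_t D(\alpha',\alpha'))| \lesssim e^{s\beta - (s_2-s)\gamma}$. Replacing $t$ by $t - \omega_1 - \omega_2$ and setting $\sigma = -(\omega_1+\omega_2)$, a quantity depending on $\beta,\gamma$ but not on $t$, finishes the proof. The main obstacles are organizational rather than computational: one must split off the small-$\gamma$ range, where Proposition~\ref{prop=structural_U1} cannot realize $D(\beta,\gamma)$ because $\cos\theta$ would exceed $1$; and one must obtain the $\U(1)$-equivariance of $c_{t,\alpha}$ through the rescaling $d_\varphi = e^{i\varphi}\diag(1,e^{-2i\varphi})$, since $D(\alpha,0)$ fails to commute with $\iota(d_\varphi)$ directly.
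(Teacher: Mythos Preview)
Your proof is correct and follows essentially the same approach as the paper: dispose of small $\gamma$ by the trivial bound, then for $\gamma>1$ choose $\alpha$ with $\sinh^2\alpha=2\sinh\beta\sinh\gamma$, verify the $\U(1)$-conjugation invariance of $c_{t,\alpha}$, and apply Proposition~\ref{prop:harmonic_ana_U1} together with \eqref{eq:assumption_on_X_U1}. The only cosmetic differences are that the paper establishes the invariance via the identity $\iota(d_\theta)=v_\theta w_{0,-2\theta}$ (equivalent to your $d_\varphi=e^{i\varphi}\diag(1,e^{-2i\varphi})$), and bounds $\cos\theta$ by $\sqrt2\,e^{\beta-2\alpha}$ together with \eqref{eq=alphaalphaprime} rather than your direct estimate $\cos\theta\lesssim e^{-\gamma}$; both routes give the same exponent $s\beta-(s_2-s)\gamma$.
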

\begin{proof}
If $0 \leq \gamma \leq 1$, the inequality holds for all $\sigma$ by the triangle inequality and the inequalities $|c(v_tD(\beta,\gamma))| \lesssim e^{s\beta}$ and $|c(v_{t+\sigma}D(\alpha',\alpha'))| \lesssim e^{s\alpha'} \leq e^{s\beta}$. We can therefore assume that $\gamma\geq 1$.

Let $\alpha$ be the non-negative solution of $\sinh\beta\sinh\gamma=\frac{1}{2}\sinh^2(\alpha)$. Clearly $\alpha' \leq \alpha$. The function $f(t) = \log(\sinh(t))$ is concave. Therefore, for a fixed value of $\beta+\gamma=x$, $\sinh \beta \sinh \gamma$ is maximal for $\beta = \gamma = \frac x 2$ and minimal for $\beta=x-1$, $\gamma=1$. This implies that $\sinh^2\alpha' \geq \sinh 1 \sinh(x-1)\geq \sinh^2\frac{x-1}{2}$ and $\sinh^2 \alpha \leq 2 \sinh^2 \frac x 2\leq \sinh^2( \frac x 2 +1)$, where the second inequalities are again consequences of the concavity of $f$. To summarize,
\begin{equation} \label{eq=alphaalphaprime}
  \frac{1}{2}(\beta+\gamma)\leq \alpha' \leq \alpha \leq \frac{1}{2}(\beta+\gamma)+1.
\end{equation}
For $t \in \R$ consider the map $c'_{t,\alpha}: h \mapsto c(v_{t} D(\alpha,0) \iota(h) D(\alpha,0))$ defined on $\mathrm{SU}(2)$. From the equality $\iota(d_\theta) = v_\theta w_{0,-2\theta}$ and the fact that $w_{0,-2\theta}$ commutes with $D(\alpha,0)$, we deduce that $D(\alpha,0) \iota(d_\theta h d_{-\theta}) D(\alpha,0) = v_{-\theta}\iota(d_\theta) D(\alpha,0) \iota(h) D(\alpha,0) \iota(d_{-\theta}) v_\theta$. Hence, for $h \in \SU$, we have $c'_{t,\alpha}(d_\theta h d_{-\theta})=c'_{t,\alpha}(h)$. Using Proposition \ref{prop:harmonic_ana_U1} and \eqref{eq:assumption_on_X_U1}, we obtain
\[
  |c(v_{t}D(\alpha,0) \iota(u_\theta) D(\alpha,0)) - c(v_{t}D(\alpha,0) \iota(u_{\frac{\pi}{2}}) D(\alpha,0))| \lesssim e^{2s\alpha}\left|\theta-\frac{\pi}{2}\right|^{s_2}.
\]
For $\theta \in [0,\frac{\pi}{2}]$ satisfying \eqref{eq=diff_beta_gamma}, it follows that $\cos \theta \leq \sqrt 2 e^{\beta - 2\alpha}$, so that the right-hand side of this inequality is $\lesssim e^{(2s - 2s_2)\alpha + s_2 \beta} \lesssim e^{s \beta - (s_2 - s)\gamma}$ by \eqref{eq=alphaalphaprime}. Also, by Proposition \ref{prop=structural_U1} and \eqref{eq=value_of_c_hyperbolas_equivariant}, we can rewrite the left-hand side, which also yields $\sigma_1,\sigma_2 \in \R/2 \pi \Z$ such that 
\[ | c(v_{t+\sigma_1}D(\beta,\gamma)) - c(v_{t+\sigma_2}D(\alpha',\alpha'))| \lesssim e^{s\beta-(s_2-s)\gamma}.\]
The result follows with $\sigma=\sigma_2-\sigma_1$.
\end{proof}
\begin{proof}[Proof of \eqref{eq:equivariant_coefficients_Sp2}]
Let $\mathcal{R}=\{(\beta,\gamma) \in \mathbb{R}^2 \mid \beta^2 + \gamma^2 \geq 2 \textrm{ and } \beta \geq \gamma \geq 0\}$. Consider the partition of $\mathcal{R}$ given by the sets $\Delta_{n}=(\mathcal{C}_{n} \cup \mathcal{H}_{n}) \cap \{(\beta,\gamma) \in \mathbb{R}^2 \mid \beta^2 + \gamma^2 \geq 2\}$, where
\[
  \mathcal{C}_{n}= \{ (\beta,\gamma) \mid \beta \geq \frac{s_1+s_2-s}{s_1}\gamma \geq 0 \textrm{ and } \sinh^2\beta+\sinh^2 \gamma \in [\sinh^2(n),\sinh^2(n+1)]\}
\]
and
\begin{align*}
  \mathcal{H}_{n}= \{ (\beta,\gamma) \mid &\frac{s_1+s_2-s}{s_1}\gamma \geq \beta \geq \gamma \geq 0 \\ &\textrm{ and } \sinh\beta\sinh \gamma \in [\sinh\beta^{(n)}\sinh\gamma^{(n)},\sinh\beta^{(n+1)}\sinh\gamma^{(n+1)}]\}.
\end{align*}
Here, $(\beta^{(n)},\gamma^{(n)})$ are the points on the line given by $\beta=\frac{s_1+s_2-s}{s_1}\gamma$ such that $\sinh^2\beta+\sinh^2\gamma=\sinh^2n$. To motivate these definitions, note that the right-hand sides of the inequalities in Lemma \ref{lemma:eqcir} and \ref{lemma:eqhyp} are equal if and only if $\beta = \frac{s_1+s_2-s}{s_1}\gamma$, and in this case they are equal to $e^{-\beta \frac{P(s)}{s_1+s_2 - s}}$. For $n \in \mathbb{N}$, let $\varepsilon(n)=e^{-n \frac{P(s)}{s_1+s_2-s}}$.

For all $(\beta_1,\gamma_1)$ and $(\beta_2,\gamma_2)$ in $\Delta_n$ with $\sinh^2\beta_1+\sinh^2\gamma_1=\sinh^2\beta_2+\sinh^2\gamma_2 \in [\sinh^2(n),\sinh^2(n+1)]$, it follows by applying Lemma \ref{lemma:eqcir} twice that there is a $\rho \in \mathbb{R}/2\pi\mathbb{Z}$ such that for all $t \in \mathbb{R}/2\pi\mathbb{Z}$, we have
\begin{equation} \label{eq=elpathcir}
  |c(v_tD(\beta_1,\gamma_1) - c(v_{t+\rho}D(\beta_2,\gamma_2))| \lesssim e^{\beta_1 s-s_1(\beta_1-\gamma_1)} + e^{\beta_2 s-s_1(\beta_2-\gamma_2)} \lesssim \varepsilon(n).
\end{equation}
The inequality $e^{\beta_i s-s_1(\beta_i-\gamma_i)} \lesssim \varepsilon(n)$ is clear if $(\beta_i,\gamma_i) \in \mathcal C_n$. Otherwise, $\beta_i+\gamma_i = \beta^{(n)}+\gamma^{(n)} + O(1)$, by the requirement that $(\beta_i,\gamma_i) \in \mathcal H_n$, and $\beta_i = n+O(1) = \beta^{(n)} + O(1)$ by $\sinh^2\beta_i+\sinh^2\gamma_i \in [\sinh^2(n),\sinh^2(n+1)]$. Hence, $\beta_i s-s_1(\beta_i-\gamma_i) = \beta^{(n)} s-s_1(\beta^{(n)}-\gamma^{(n)}) + O(1) = -n \frac{P(s)}{s_1+s_2-s} + O(1)$, which proves the inequality.

Similarly, for all $(\beta_1,\gamma_1)$ and $(\beta_2,\gamma_2)$ in $\Delta_n$ with $\sinh\beta_1\sinh\gamma_1=\sinh\beta_2\sinh\gamma_2 \in [\sinh\beta^{(n)}\sinh\gamma^{(n)},\sinh\beta^{(n+1)}\sinh\gamma^{(n+1)}]$, it follows by applying Lemma \ref{lemma:eqhyp} twice that there is a $\sigma \in \mathbb{R}/2\pi\mathbb{Z}$ such that for all $t \in \mathbb{R}/2\pi\mathbb{Z}$, we have
\begin{equation} \label{eq=elpathhyp}
  |c(v_tD(\beta_1,\gamma_1) - c(v_{t+\sigma}D(\beta_2,\gamma_2))| \lesssim e^{\beta_1 s-(s_2-s)\gamma_1} + e^{\beta_2 s-(s_2-s)\gamma_2} \lesssim \varepsilon(n).
\end{equation}
Again, this last inequality is clear if $(\beta_i,\gamma_i) \in \mathcal H_n$, and the same argument shows that otherwise $\beta_i s-(s_2-s)\gamma_i = \beta^{(n)} s-(s_2-s)\gamma^{(n)} + O(1) =-n \frac{P(s)}{s_1+s_2-s} + O(1)$.
\begin{figure}
  \center
  \includegraphics{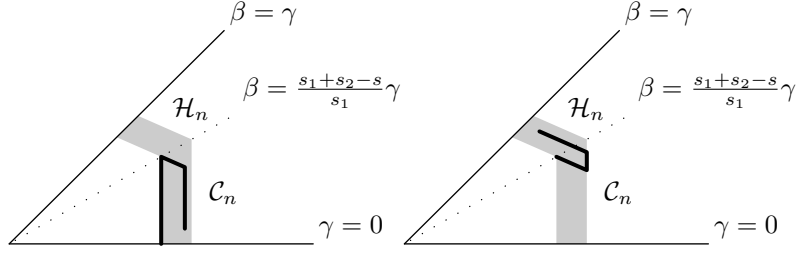}
  \caption{The three elementary paths connecting $(\beta,\gamma) \in \mathcal{C}_n$ to $(n,0)$ (left) and  $(\beta,\gamma) \in \mathcal{H}_n$ to $(\beta^{(n)},\gamma^{(n)})$ (right).}\label{picture=paths}
\end{figure}

By considering three elementary paths, meaning paths of the form of \eqref{eq=elpathcir} and \eqref{eq=elpathhyp} (see Figure \ref{picture=paths}, left), we get that for every $(\beta,\gamma) \in \mathcal{C}_{n}$, there is a $t' \in \R/2\pi \Z$ such that
\[
  |c(v_{t}D(\beta,\gamma))-c(v_{t+t^{\prime}}D(n,0))| \lesssim \varepsilon(n).
\]
Similarly by considering three elementary paths (see Figure \ref{picture=paths}, right), we get that for every $(\beta,\gamma) \in \mathcal{H}_n$, there is a $t' \in \R/2\pi \Z$ such that
\[
  |c(v_{t}D(\beta,\gamma))-c(v_{t+t^{\prime}}D(\beta^{(n)},\gamma^{(n)}))| \lesssim \varepsilon(n).
\]
Since $(\beta^{(n)},\gamma^{(n)})$ belongs also to $\mathcal C_n$, these two inequalities together imply that for every $(\beta,\gamma) \in \Delta_{n}$, there is a $t' \in \R/2\pi \Z$ such that
\[
  |c(v_{t}D(\beta,\gamma))-c(v_{t+t^{\prime}}D(n,0))| \lesssim \varepsilon(n).
\]
From this and the inequality $s_1-s \geq \frac{P(s)}{s_1+s_2-s}$, it follows by applying Lemma \ref{lemma:dependency_on_t} at most four times that for $(\beta_1,\gamma_1),(\beta_2,\gamma_2) \in \Delta_n$ and $t_1,t_2 \in \mathbb{R}/2\pi\mathbb{Z}$,
\[
  |c(v_{t_1}D(\beta_1,\gamma_1))-c(v_{t_2}D(\beta_2,\gamma_2))| \lesssim \varepsilon(n).
\]
Now, let $n,N \in \mathbb{N}$, and let $(\beta_1,\gamma_1,t_1)$ and $(\beta_2,\gamma_2,t_2)$ be two triples such that $(\beta_1,\gamma_1),(\beta_2,\gamma_2) \in \Delta_{n} \cup \ldots \cup \Delta_{n+N}$, and $t_1,t_2 \in \mathbb{R} / 2\pi\mathbb{Z}$. Using that $\frac{P(s)}{s_1+s_2-s}>0$ by our choice of $s$, by a geometric series argument that will only replace the (implicit) constant in front of the estimate, we obtain
\[
  |c(v_{t_1}D(\beta_1,\gamma_1))-c(v_{t_2}D(\beta_2,\gamma_2))| \lesssim \varepsilon(n)+\varepsilon(n+1)+\ldots+\varepsilon(n+N-1)+\varepsilon(n+N) \lesssim \varepsilon(n).
\]
From this, it follows directly that the function $c$ has a limit $c_{\xi,\eta}$ at infinity, and that
\[
  \left|\int_{\R/\pi\Z} \langle \pi(v_t g v_{-t}) \xi,\eta\rangle \frac{d t}{\pi} - c_{\xi,\eta}\right| \lesssim \varepsilon(n)
\]
for all $g \in KD(\beta,\gamma) K$ with $(\beta,\gamma) \in \cup_{l \geq n} \Delta_l$.
\end{proof}

\subsection{Non-invariant coefficients} \label{subsec=noninvc}
We now prove the second part of Theorem \ref{thm:decay_of_coefficients_Sp2}. Let $X$ be a Banach space satisfying \eqref{eq:assumption_on_X_SO2} and \eqref{eq:assumption_on_X_U1}, and let $\pi:G \rightarrow B(X)$ be a representation satisfying \eqref{eq:assumption_on_pi}. We will again assume that the restriction of $\pi$ to $K$ is isometric. Let $\pi_V:H \rightarrow B(V)$ be a unitary irreducible representation of $H$ and $m \in \Z$ with either $V$ nontrivial or $m \neq 0$. For a unit vector $\xi \in X^H$ and an $H$-equivariant linear map $q:X \to V$ with norm $1$, let $c:g \rightarrow V$ be the map defined by 
\begin{equation}
 \label{eq:def_of_nonequivariant_coeffc} c(g) = \int_{\R/\pi\Z} e^{-2imt} q (\pi(v_t g v_{-t}) \xi) \frac{d t}{\pi}.
\end{equation}
By the $H$-equivariance of $q$ it follows that
\begin{equation}\label{eq=equivariance_of_coeff_c} c(h_1v_tgv_{-t}h_2) = e^{2imt} \pi_V(h_1) c(g)\end{equation}
for all $h_1,h_2 \in H$ and $t \in \R$. Equation \eqref{eq:nonequivariant_coefficients_Sp2} will follow from the Peter-Weyl Theorem for actions of compact groups on Banach spaces (see \eqref{item2_peterweyl} in Theorem \ref{thm=peterWeyl}) and
\begin{equation}
 \label{eq=goal_in_boundind_nonequivariant coeffc} \| c(g)\| \lesssim \varepsilon(g).
\end{equation}
\begin{lemma}\label{lemma=strucS02_nonequivariant} With the notation of Proposition \ref{prop=structural_SO2}, for every $t \in \R/2\pi\Z$,
\[c(v_{t}D(\alpha,\alpha) w_{\theta,\frac{\pi}{2} - \theta}D(\alpha,\alpha)) = e^{-im(\phi+\phi')}\pi_V(d_{\frac{\phi-\phi'}{2}}) c(v_{t+\phi+\phi'}D(\beta,\gamma)).\]
\end{lemma}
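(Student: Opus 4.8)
The plan is to obtain this identity as a purely formal consequence of Proposition~\ref{prop=structural_SO2} and the equivariance relation~\eqref{eq=equivariance_of_coeff_c} for $c$, using throughout that a scalar matrix $v_s=\iota(e^{is}I_2)$ commutes with every element of $K$, in particular with $H$.

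First I would apply the first identity of Proposition~\ref{prop=structural_SO2} to rewrite
\[
 v_t\, D(\alpha,\alpha)\, w_{\theta,\frac{\pi}{2}-\theta}\, D(\alpha,\alpha)
 \;=\; v_t\, w_{\phi,\phi'}\, D(\beta,\gamma)\, w_{\phi,\phi'}.
\]
Next, using the factorization $w_{\phi,\phi'}=v_{\psi}\,\iota(d_{\delta})$ with $\psi=\tfrac{\phi+\phi'}{2}$ and $\delta=\tfrac{\phi-\phi'}{2}$ that comes with the definition of $w_{\theta,\theta'}$, I would slide the two copies of $\iota(d_\delta)\in H$ to the outside past the scalar factors $v_s$ and collect the $v$-exponents, which turns the right-hand side into
\[
 \iota(d_\delta)\, v_{t+\psi}\, D(\beta,\gamma)\, v_{\psi}\, \iota(d_\delta)
 \;=\; \iota(d_\delta)\, v_{-\psi}\,\bigl( v_{t+2\psi}\, D(\beta,\gamma)\bigr)\, v_{\psi}\, \iota(d_\delta).
\]
Finally I would apply~\eqref{eq=equivariance_of_coeff_c} with $h_1=h_2=\iota(d_\delta)$, with conjugation parameter $-\psi$, and with $v_{t+2\psi}D(\beta,\gamma)$ in the role of $g$; this yields
\[
 c\bigl( v_t D(\alpha,\alpha)\, w_{\theta,\frac{\pi}{2}-\theta}\, D(\alpha,\alpha)\bigr)
 \;=\; e^{-2im\psi}\,\pi_V\!\bigl(\iota(d_\delta)\bigr)\, c\bigl( v_{t+2\psi}\, D(\beta,\gamma)\bigr),
\]
and substituting $2\psi=\phi+\phi'$, $\delta=\tfrac{\phi-\phi'}{2}$ (together with the standing identification of $H$ with $\SU$ via $\iota$) gives exactly the asserted formula.

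I do not expect any genuine obstacle here: the whole argument is bookkeeping of group elements. The one point demanding attention is that $v_s$ does \emph{not} commute with $D(\beta,\gamma)$ when $\beta\neq\gamma$, so one must not attempt to move $v$-factors through $D(\beta,\gamma)$; instead, the asymmetry of the two $v$-exponents straddling $D(\beta,\gamma)$ is absorbed into the choice of the middle element $g=v_{t+2\psi}D(\beta,\gamma)$ before~\eqref{eq=equivariance_of_coeff_c} is invoked, and one has to verify that the conjugation parameter really is $-\psi$ and the surviving translation really is $2\psi=\phi+\phi'$, with signs as stated. The companion identity built from Proposition~\ref{prop=structural_U1} for the hyperbola family, as well as the scalar analogue~\eqref{eq=value_of_c_circles_equivariant} of Section~\ref{subsec=invc} for the invariant coefficient, are proved by exactly the same manipulation.
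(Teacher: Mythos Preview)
Your proof is correct and follows exactly the approach indicated in the paper, which simply states that the lemma is immediate from Proposition~\ref{prop=structural_SO2}, the definition of $w_{\phi,\phi'}$, and \eqref{eq=equivariance_of_coeff_c}. Your careful bookkeeping of the $v_\psi$ factors and the identification of the conjugation parameter as $-\psi$ (so that the exponent $e^{-2im\psi}=e^{-im(\phi+\phi')}$ comes out with the right sign) is precisely what the paper leaves to the reader.
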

\begin{proof}
This is immediate by Proposition \ref{prop=structural_SO2}, the definition of $w_{\phi,\phi'}$ and \eqref{eq=equivariance_of_coeff_c}.
\end{proof}
\begin{lemma}\label{lemma=strucU1_nonequivariant} There is a constant $C'$, depending on $V$ and $m$, such that with the notation of Proposition \ref{prop=structural_U1}, for every $t \in \R/2\pi\Z$,
\begin{align*} &\| c(v_t D(\alpha,0) \iota(u_\theta) D(\alpha,0)) \|  = \| c(v_{t+\omega_1+\omega_2} D(\beta,\gamma))\|, \\
  &\| c(v_t D(\alpha,0) \iota(u_\theta) D(\alpha,0)) - \pi_V(r_\phi) c(v_{t+\omega_1+\omega_2}D(\beta,\gamma))\| \leq C' L e^{-\gamma+s\beta}.\end{align*}
\end{lemma}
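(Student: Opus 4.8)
The plan is to read off the exact value of $c$ at $v_t D(\alpha,0)\iota(u_\theta) D(\alpha,0)$ from the KAK decomposition of Proposition \ref{prop=structural_U1} together with the equivariance relation \eqref{eq=equivariance_of_coeff_c}, and only then take norms. The first step is to rewrite the $K$-parts occurring in Proposition \ref{prop=structural_U1} as an $H$-element times a central torus element. Writing $\tau = \frac{\omega_1+\omega_2}{2}$ and $\psi = \frac{\omega_1-\omega_2}{2}$, we have $w_{\omega_1,\omega_2} = v_\tau\,\iota(d_\psi)$, and since $v_\tau = \iota(e^{i\tau}I_2)$ is central in $K$ it commutes with $\iota(r_\phi)$. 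Hence, multiplying the identity of Proposition \ref{prop=structural_U1} on the left by $v_t$, we get
\[ v_t D(\alpha,0)\iota(u_\theta)D(\alpha,0) = \iota(r_\phi d_\psi)\; v_{t+\tau}\, D(\beta,\gamma)\, v_\tau\; \iota(d_\psi r_\phi), \]
with $\iota(r_\phi d_\psi), \iota(d_\psi r_\phi) \in H$. Note that $v_\tau$ does \emph{not} commute with $D(\beta,\gamma)$, so one cannot merge the two central factors directly.

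The second step is to record how $c$ transforms under conjugation by the torus $\{v_s\}_{s\in\R}$: the change of variable $u = t+t_1$ in \eqref{eq:def_of_nonequivariant_coeffc}, together with the observation that both $v_t\big(v_{t_1}D(\beta,\gamma)v_{t_2}\big)v_{-t}$ and $v_t\big(v_{t_1+t_2}D(\beta,\gamma)\big)v_{-t}$ equal $v_u D(\beta,\gamma)v_{t_1+t_2-u}$ under the relevant substitutions, gives
\[ c\big(v_{t_1}D(\beta,\gamma)v_{t_2}\big) = e^{-2imt_2}\, c\big(v_{t_1+t_2}D(\beta,\gamma)\big) \]
for all $t_1,t_2\in\R$. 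Applying this with $t_1 = t+\tau$, $t_2 = \tau$ (so that $t_1+t_2 = t+\omega_1+\omega_2$) and combining with \eqref{eq=equivariance_of_coeff_c} for $h_1 = \iota(r_\phi d_\psi)$, $h_2 = \iota(d_\psi r_\phi)$, I obtain
\[ c\big(v_t D(\alpha,0)\iota(u_\theta)D(\alpha,0)\big) = e^{-im(\omega_1+\omega_2)}\,\pi_V(r_\phi)\,\pi_V(d_\psi)\, c\big(v_{t+\omega_1+\omega_2}D(\beta,\gamma)\big), \]
where, as in Lemma \ref{lemma=strucS02_nonequivariant}, $\pi_V$ is understood to be composed with $\iota$. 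Since $\pi_V(r_\phi)$ and $\pi_V(d_\psi)$ are unitary and $|e^{-im(\omega_1+\omega_2)}|=1$, taking norms yields the first displayed equality of the lemma.

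For the second estimate, factoring out the unitary $\pi_V(r_\phi)$ reduces the left-hand side to $\big\|\big(e^{-im(\omega_1+\omega_2)}\pi_V(d_\psi)-1\big)\,c\big(v_{t+\omega_1+\omega_2}D(\beta,\gamma)\big)\big\|$. The operator norm of $e^{-im(\omega_1+\omega_2)}\pi_V(d_\psi)-1$ is at most $|m|\,|\omega_1+\omega_2| + C_V\,|\psi|$, where $C_V$ is a constant depending only on $V$ (a bound for the derivative of $\varphi\mapsto\pi_V(d_\varphi)$ at $0$, which is finite since $V$ is finite-dimensional), hence $\lesssim |\omega_1|+|\omega_2|$. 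From \eqref{eq=structU1omega}, using $\cosh\beta,\cosh\gamma\ge\tfrac12 e^\gamma$ (valid since $\beta\ge\gamma\ge0$) and $|\omega_j|\le\tfrac\pi4$, one gets $|\omega_j| \lesssim |\sin 2\omega_j| \le \sqrt2\,e^{-\gamma}$, so that $|\omega_1|+|\omega_2| \lesssim e^{-\gamma}$. On the other hand, since $\pi|_K$ is isometric and $\pi$ satisfies \eqref{eq:assumption_on_pi}, the defining integral \eqref{eq:def_of_nonequivariant_coeffc} gives $\|c(v_{t'}D(\beta,\gamma))\| \le L e^{s\beta}$ for every $t'\in\R$. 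Multiplying the two bounds produces the required estimate $\le C' L e^{-\gamma+s\beta}$ with $C'$ depending only on $V$ and $m$. I do not expect a genuine obstacle here: the only point that needs care is the bookkeeping with the central torus elements — in particular the non-commutation of $v_\tau$ with $D(\beta,\gamma)$, which is precisely what forces the twisted conjugation identity above and accounts for the phase $e^{-im(\omega_1+\omega_2)}$.
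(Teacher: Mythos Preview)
Your proof is correct and follows the same approach as the paper: both derive the identity $c(v_t D(\alpha,0)\iota(u_\theta)D(\alpha,0)) = e^{-im(\omega_1+\omega_2)}\pi_V(r_\phi)\pi_V(d_\psi)\,c(v_{t+\omega_1+\omega_2}D(\beta,\gamma))$ from Proposition~\ref{prop=structural_U1} and the equivariance \eqref{eq=equivariance_of_coeff_c}, then use the Lipschitz continuity of $(\omega_1,\omega_2)\mapsto e^{-im(\omega_1+\omega_2)}\pi_V(d_{(\omega_1-\omega_2)/2})$, the bound $|\omega_1|+|\omega_2|\lesssim e^{-\gamma}$ from \eqref{eq=structU1omega}, and the a priori bound $\|c(v_{t'}D(\beta,\gamma))\|\leq L e^{s\beta}$. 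You have simply made explicit what the paper summarizes in one line (and your auxiliary identity $c(v_{t_1}Dv_{t_2})=e^{-2imt_2}c(v_{t_1+t_2}D)$ is in fact a special case of \eqref{eq=equivariance_of_coeff_c} with $h_1=h_2=1$ and $t=-t_2$).
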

\begin{proof}
By Proposition \ref{prop=structural_U1}, the definition of $w_{\phi,\phi'}$, equation \eqref{eq=equivariance_of_coeff_c} and the fact that $\pi_V$ is unitary, the first equality is immediate.

To prove the second inequality, we use that $\pi_V$ is a Lipschitz map (it is even $C^\infty$) from $\SU$ to $B(V)$. Hence, $(\omega,\omega')\mapsto e^{-im(\omega+\omega')}\pi_V(d_{\frac{\omega-\omega'}{2}})$ is Lipschitz, say with constant $C''$, which implies that the left-hand side of the inequality in the Lemma is less than
\[ C'' (|\omega_1|+|\omega_2|) \|c(v_{t+\omega_1+\omega_2} D(\beta,\gamma))\|.\]
By \eqref{eq=structU1omega}, $|\sin(2\omega_1)|\leq (\sqrt 2 \cosh \beta)^{-1}$ and $|\sin(2\omega_2)|\leq (\sqrt 2 \cosh \gamma)^{-1}$, which implies that $|\omega_1|+|\omega_2| \lesssim e^{-\gamma}$. The Lemma follows from this inequality by the a priori bound $\|c(v_{t+\omega_1+\omega_2} D(\beta,\gamma))\| \leq \| \pi(v_{t+\omega_1+\omega_2} D(\beta,\gamma))\|_{B(X)} \leq L e^{s\beta}$.
\end{proof}
\begin{lemma}\label{lemma=Hoelder_cont_of_c_nonequivariant}
Let $c$ be as above, let $\beta\geq \gamma\geq 0$, and let $\alpha$ be the non-negative solution of $\sinh^2(2\alpha) = \sinh^2 \beta+ \sinh^2 \gamma$. Then there exist a $\rho \in \R/2\pi\Z$ and $z \in \C$ with $|z|=1$ such that for all $t \in \R/2\pi \Z$,
  \[ \| c( v_t D(\beta,\gamma)) - z c(v_{t+\rho} D(2\alpha,0))\| \lesssim e^{\beta s - s_1(\beta-\gamma)}.\]
 \end{lemma}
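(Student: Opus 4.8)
The plan is to transcribe the proof of Lemma~\ref{lemma:eqcir} into the non-invariant setting, dragging the unavoidable $\pi_V$-twists along and collapsing them to a scalar at the very end; throughout I use, as in Section~\ref{subsec=noninvc}, that $\pi|_K$ is isometric. Fix $\beta\geq\gamma\geq 0$, let $\alpha$ be as in the statement, and put $\theta_0=\arctan(\sinh\gamma/\sinh\beta)\in[0,\frac{\pi}{4}]$, so that $\sinh\beta=\sinh(2\alpha)\cos\theta_0$, $\sinh\gamma=\sinh(2\alpha)\sin\theta_0$, $\theta_0\leq e^{\gamma-\beta}$ and $2\alpha\leq\beta+1$, exactly as in Lemma~\ref{lemma:eqcir}. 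Now set $D_1=v_tD(\alpha,\alpha)v_{-\frac{\pi}{4}}$ and $D_2=D(\alpha,\alpha)$, both of which commute with every $\iota(r_\varphi)$, and consider the $V$-valued function $f(h)=c(D_1\iota(h)D_2)$ on $\SU$; by \eqref{eq=equivariance_of_coeff_c} and the commutation, $f(r_\varphi h r_{\varphi'})=\pi_V(\iota(r_\varphi))f(h)$. Decomposing $V=\bigoplus_j V_j$ into the weight spaces of the torus $\{\iota(r_\varphi)\}$ and writing $f=\sum_j f_j$ accordingly, each $f_j$ satisfies $f_j(r_\varphi h r_{\varphi'})=e^{ij\varphi}f_j(h)$; for a norm-one $\zeta\in V_j^{*}$, the scalar function $h\mapsto\langle f(h),\zeta\rangle$ is (just as in the proof of Lemma~\ref{lemma:eqcir}) a coefficient of the isometric representation $(1\otimes\pi)\circ\iota$ of $\SU$ on $L^2(\R/\pi\Z;X)$ with respect to vectors of norm $\leq\|\pi(D_i)\|\leq Le^{s\alpha}$, transforming under $h\mapsto r_\varphi h r_{\varphi'}$ by the character $e^{ij\varphi}$. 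Hence Proposition~\ref{prop:harmonic_ana_SO2} and \eqref{eq:assumption_on_X_SO2} (applied to $L^2(\R/\pi\Z;X)$ via Fubini, as in Section~\ref{subsec=invc}) give $|\langle f(d_{\theta''})-f(d_{\frac{\pi}{4}}),\zeta\rangle|\lesssim e^{2s\alpha}|\theta''-\frac{\pi}{4}|^{s_1}$ for $\theta''\in[0,\frac{\pi}{4}]$, and summing over the finitely many $j$ and over a spanning family of such $\zeta$ yields $\|f(d_{\theta''})-f(d_{\frac{\pi}{4}})\|\lesssim e^{2s\alpha}|\theta''-\frac{\pi}{4}|^{s_1}$.

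Next I would identify the two endpoints. Taking $\theta''=\frac{\pi}{4}-\theta_0$ gives $v_{-\frac{\pi}{4}}\iota(d_{\theta''})=w_{-\theta_0,\theta_0-\frac{\pi}{2}}$ and $v_{-\frac{\pi}{4}}\iota(d_{\frac{\pi}{4}})=w_{0,-\frac{\pi}{2}}$, so the two values of $f$ are $c(v_tD(\alpha,\alpha)w_{-\theta_0,\theta_0-\frac{\pi}{2}}D(\alpha,\alpha))$ and $c(v_tD(\alpha,\alpha)w_{0,-\frac{\pi}{2}}D(\alpha,\alpha))$. As in Lemma~\ref{lemma=strucS02_nonequivariant} (using Proposition~\ref{prop=structural_SO2} and \eqref{eq=equivariance_of_coeff_c}), the first equals $e^{im(\phi+\phi')}\pi_V(\iota(d_{(\phi'-\phi)/2}))\,c(v_{t-\phi-\phi'}D(\beta,\gamma))$, with $\phi,\phi'$ the angles of \eqref{eq=structural_SO2_phi} at parameter $\theta_0$; for the second, $\theta_0=0$ forces $\phi=0$, $\phi'=\frac{\pi}{4}$, whence $D(\alpha,\alpha)w_{0,-\frac{\pi}{2}}D(\alpha,\alpha)=w_{0,-\frac{\pi}{4}}D(2\alpha,0)w_{0,-\frac{\pi}{4}}=D(2\alpha,0)w_{0,-\frac{\pi}{2}}$ (because $w_{0,\cdot}$ commutes with $D(2\alpha,0)$), and dropping an $\iota(d_{\frac{\pi}{4}})\in H$ and applying \eqref{eq=equivariance_of_coeff_c} one gets $c(v_tD(\alpha,\alpha)w_{0,-\frac{\pi}{2}}D(\alpha,\alpha))=e^{im\frac{\pi}{2}}c(v_{t-\frac{\pi}{4}}D(2\alpha,0))$. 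The step that makes a scalar $z$, rather than a unitary, appear in the statement is the observation that $c(v_{t'}D(2\alpha,0))$ is a weight vector of weight $2m$ for $\pi_V\circ\iota$ on the torus $\{d_\psi\}$, that is, $\pi_V(\iota(d_\psi))\,c(v_{t'}D(2\alpha,0))=e^{2im\psi}\,c(v_{t'}D(2\alpha,0))$ for all $\psi$: using $\iota(d_\psi)=v_\psi w_{0,-2\psi}$, that $w_{0,-2\psi}$ commutes with $D(2\alpha,0)$, and $w_{0,-2\psi}=v_{-\psi}\iota(d_\psi)$, one has $\pi_V(\iota(d_\psi))c(v_{t'}D(2\alpha,0))=c(\iota(d_\psi)v_{t'}D(2\alpha,0))=c\bigl(v_\psi\,v_{t'}D(2\alpha,0)\,v_{-\psi}\,\iota(d_\psi)\bigr)=e^{2im\psi}c(v_{t'}D(2\alpha,0))$ by \eqref{eq=equivariance_of_coeff_c}.

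Finally, I would multiply the inequality $\|f(d_{\theta''})-f(d_{\frac{\pi}{4}})\|\lesssim e^{2s\alpha}\theta_0^{s_1}$ of the first paragraph by the isometry $e^{-im(\phi+\phi')}\pi_V(\iota(d_{-(\phi'-\phi)/2}))$: the first value of $f$ becomes $c(v_{t-\phi-\phi'}D(\beta,\gamma))$, and the second becomes $e^{im(\frac{\pi}{2}-\phi-\phi')}\pi_V(\iota(d_{-(\phi'-\phi)/2}))c(v_{t-\frac{\pi}{4}}D(2\alpha,0))=e^{im(\frac{\pi}{2}-2\phi')}c(v_{t-\frac{\pi}{4}}D(2\alpha,0))$ by the weight-vector property. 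Therefore $\|c(v_{t-\phi-\phi'}D(\beta,\gamma))-e^{im(\frac{\pi}{2}-2\phi')}c(v_{t-\frac{\pi}{4}}D(2\alpha,0))\|\lesssim e^{2s\alpha}\theta_0^{s_1}\lesssim e^{\beta s-s_1(\beta-\gamma)}$, and after the substitution $t\mapsto t+\phi+\phi'$ this is the claimed inequality with $z=e^{im(\frac{\pi}{2}-2\phi')}$ (so $|z|=1$) and $\rho=\phi+\phi'-\frac{\pi}{4}$. The hard part here is not the tedious bookkeeping of the twists but recognising that it can be closed: the reduction to $\SU$ only produces a \emph{unitary} $\pi_V$-twist relating $c(v_tD(\beta,\gamma))$ to $c(v_{t+\rho}D(2\alpha,0))$, and turning it into the scalar $z$ rests entirely on the weight-vector property of $c(v_{t'}D(2\alpha,0))$, which itself reflects the special commutation $[w_{0,\psi},D(2\alpha,0)]=0$ (a one-line matrix check: $D(2\alpha,0)$ is trivial on the plane on which $w_{0,\psi}$ rotates). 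Everything else is the routine non-invariant transcription of Lemma~\ref{lemma:eqcir} via the weight-space decomposition of $V$ under $\SO$.
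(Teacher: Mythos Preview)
Your proof is correct and follows essentially the same approach as the paper's: decompose $V$ into weight spaces of the torus $\{r_\varphi\}$, apply Proposition~\ref{prop:harmonic_ana_SO2} componentwise to obtain the H\"older estimate on $c(v_t D(\alpha,\alpha) w_{-\theta,\theta-\frac{\pi}{2}} D(\alpha,\alpha))$, rewrite the two endpoints via Proposition~\ref{prop=structural_SO2} and \eqref{eq=equivariance_of_coeff_c}, and collapse the residual $\pi_V(\iota(d_\psi))$-twist to a scalar using the weight-vector identity $\pi_V(\iota(d_\psi))c(v_{t'}D(2\alpha,0))=e^{2im\psi}c(v_{t'}D(2\alpha,0))$. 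The only difference is cosmetic: the paper leaves the intermediate constants as abstract $z_1,z_2,d_1,d_2,\rho_1,\rho_2$, whereas you compute $z=e^{im(\frac{\pi}{2}-2\phi')}$ and $\rho=\phi+\phi'-\frac{\pi}{4}$ explicitly.
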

 \begin{proof}
For $t \in \R$, consider the map $c_{t,\alpha}:h \mapsto c(v_{t} D(\alpha,\alpha) v_{-\frac{\pi}{4}} \iota(h) D(\alpha,\alpha))$, where $h \in \mathrm{SU}(2)$. Since $D(\alpha,\alpha)$ and $v_t$ commute with $\iota(r_\theta)$ for all $\theta$, the map $c_{t,\alpha}$ satisfies $c_{t,\alpha}(r_\theta h r_{\theta'}) = \pi_V(r_\theta) c_{t,\alpha}(h)$. The characters of the abelian group $\mathrm{U}(1)$ are indexed by $n \in \Z$ and given by $r_\theta \mapsto e^{i n \theta}$. As a consequence, there is an orthonormal basis $e_1,\dots,e_d$ of $V$ and integers $m_1,\dots,m_d$ such that $\pi_V(r_\theta) e_k = e^{i m_k \theta} e_k$ for all $k$. For each $k \leq d$, the map $h \mapsto \langle c_{t,\alpha}(h),e_k\rangle$ satisfies $\langle c_{t,\alpha}(r_\theta h r_{\theta'}),e_k\rangle = e^{i m_k \theta} \langle c_{t,\alpha}(h),e_k\rangle$, and it can be written as a coefficient of the restriction to $H$ of the representation $1 \otimes \pi$ on $L^2([0,\pi),ds/\pi;X)$. Indeed,
\begin{align*} \langle c_{t,\alpha}(h),e_k\rangle =& \int_{\R/\pi\mathbb{Z}}\langle \pi(\iota(h)) \pi(D(\alpha,\alpha) v_{-t'}) \xi, (q \circ \pi(v_{t'+t} D(\alpha,\alpha) v_{-\frac{\pi}{4}}))^* e_k\rangle \frac{dt'}{\pi}\\
  =& \langle Id \otimes \pi(\iota(h)) \widetilde \xi,\widetilde \eta\rangle_{L^2([0,\pi);X),L^2([0,\pi);X^*)},
\end{align*}
where $\widetilde \xi:t' \mapsto \pi(D(\alpha,\alpha) v_{-t'}) \xi \in X$ and $\widetilde \eta:t' \mapsto (q \circ \pi(v_{t'+t} D(\alpha,\alpha) v_{-\pi/4}))^* e_k$. By \eqref{eq:assumption_on_pi} and the fact that $\|q\|\leq 1$, we have $\|\xi'\|_{L^2([0,\pi);X)} \lesssim e^{s\alpha}$ and $\|\eta'\|_{L^2([0,\pi);X^*)} \lesssim e^{s\alpha}$. For $h = d_{\frac{\pi}{4}-\theta}$ and $h' = d_{\frac{\pi}{4}}$, it follows by Proposition \ref{prop:harmonic_ana_SO2} and \eqref{eq:assumption_on_X_SO2} that
\[\|c(v_{t}D(\alpha,\alpha) w_{-\theta,\theta -\frac{\pi}{2}}D(\alpha,\alpha))-c(v_{t}D(\alpha,\alpha) w_{0,-\frac{\pi}{2}}D(\alpha,\alpha))\| \lesssim e^{2 s  \alpha} |\theta|^{s_1}.\]
For $\theta = \arctan(\sinh \gamma/\sinh \beta)$, it follows in the same way as in Lemma \ref{lemma:eqcir} that the right-hand side is $\lesssim e^{s \beta -s_1(\beta- \gamma)}$.
By Proposition \ref{prop=structural_SO2} and Lemma \ref{lemma=strucS02_nonequivariant}, we can write $c(v_{t}D(\alpha,\alpha) w_{-\theta,\theta - \frac{\pi}{2}}D(\alpha,\alpha)) = z_1 \pi_V(d_1) c(v_{t+\rho_1} D(\beta,\gamma))$ and, similarly, $c(v_{t}D(\alpha,\alpha) w_{0,-\frac{\pi}{2}}D(\alpha,\alpha)) = z_2 \pi_V(d_2) c(v_{t+\rho_2} D(2\alpha,0))$ for some $\rho_1,\rho_2 \in \R/2\pi \Z$, $d_1,d_2\in U(1)$, and $z_1,z_2 \in \C$ of modulus $1$. It follows that
  \[ \| c( v_{t+\rho_1} D(\beta,\gamma)) - \overline{z_1}z_2 \pi_V(d_1^{-1} d_2) c(v_{t+\rho_2} D(2\alpha,0))\| \lesssim e^{\beta s - s_1(\beta-\gamma)}.\]
It remains to notice that by the equality $\iota(d_\theta) D(2\alpha,0) = v_\theta D(2\alpha,0) v_{-\theta}\iota(d_\theta)$ and \eqref{eq=equivariance_of_coeff_c},
\[ \pi_V(d_\theta) c(v_{s}D(2\alpha,0)) = c(v_s \iota(d_\theta) D(2\alpha,0)) = e^{2im\theta}c(v_s D(2\alpha,0)).\]
The lemma now follows in the same way as Lemma \ref{lemma:eqcir}, with $\rho=\rho_2-\rho_1$.
\end{proof}
\begin{lemma}\label{lemma=Hoelder_cont_of_c_nonequivariant2}
  Let $c$ be as above, let $\beta \geq \gamma \geq 1$, and denote by $\alpha'$ the positive solution of $\sinh^2(\alpha') = \sinh\beta \sinh \gamma$.  There exists $\sigma \in \R/2\pi\Z$ such that for $t \in \R/2\pi \Z$,
  \[ \| c( v_t D(\beta,\gamma)) -  c(v_{t+\sigma} D(\alpha',\alpha'))\| \lesssim e^{s\beta - (s_2-s)\gamma}.\]
\end{lemma}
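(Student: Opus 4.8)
The plan is to follow the proof of Lemma~\ref{lemma:eqhyp} almost verbatim, replacing the $\mathrm{SO}(2)$-harmonic analysis by the $\mathrm{U}(1)$-harmonic analysis of Proposition~\ref{prop:harmonic_ana_U1} and carrying along the rotation operators that Lemma~\ref{lemma=strucU1_nonequivariant} produces. As in Lemma~\ref{lemma:eqhyp}, let $\alpha\geq 0$ be defined by $\sinh\beta\sinh\gamma=\tfrac12\sinh^2\alpha$; since $\gamma\geq 1$, the concavity of $t\mapsto\log\sinh t$ gives \eqref{eq=alphaalphaprime}, and by construction $\sinh^2\alpha'=\sinh\beta\sinh\gamma=\tfrac12\sinh^2\alpha$.

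Consider $c'_{t,\alpha}\colon h\mapsto c(v_tD(\alpha,0)\iota(h)D(\alpha,0))$ on $\SU$. Using the identity $\iota(d_\varphi)=v_\varphi w_{0,-2\varphi}$ (as in the proof of Lemma~\ref{lemma:eqhyp}), the fact that $w_{0,\pm2\varphi}$ commutes with $D(\alpha,0)$, and \eqref{eq=equivariance_of_coeff_c}, one checks that $c'_{t,\alpha}(d_\varphi h d_{-\varphi})=e^{-2im\varphi}\pi_V(d_\varphi)\,c'_{t,\alpha}(h)$. I would then diagonalize $\pi_V$ on the torus $\{d_\varphi\}\cong\mathrm{U}(1)$: choosing a basis $e_1,\dots,e_d$ of $V$ with $\pi_V(d_\varphi)e_k=e^{in_k\varphi}e_k$, each scalar function $h\mapsto\langle c'_{t,\alpha}(h),e_k^*\rangle$ transforms under $h\mapsto d_\varphi h d_{-\varphi}$ by the character $e^{i(n_k-2m)\varphi}$ and, exactly as in the displayed computation in the proof of Lemma~\ref{lemma=Hoelder_cont_of_c_nonequivariant}, is a coefficient of the restriction to $H$ of $1\otimes\pi$ on $L^2(\R/\pi\Z;X)$ along vectors of norm $\lesssim e^{s\alpha}$ (by \eqref{eq:assumption_on_pi}). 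Applying Proposition~\ref{prop:harmonic_ana_U1} to each of these (with $m$ replaced by $n_k-2m$), then \eqref{eq:assumption_on_X_U1} for $L^2(\R/\pi\Z;X)$ (valid by Fubini), and summing over $k$, yields
\[
  \bigl\|c(v_tD(\alpha,0)\iota(u_\theta)D(\alpha,0))-c(v_tD(\alpha,0)\iota(u_{\frac{\pi}{2}})D(\alpha,0))\bigr\|\lesssim e^{2s\alpha}\bigl|\theta-\tfrac{\pi}{2}\bigr|^{s_2}
\]
for all $\theta\in\R$.

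Next I would take $\theta\in[0,\tfrac{\pi}{2}]$ to be the solution of \eqref{eq=diff_beta_gamma} attached to $\beta,\gamma,\alpha$ (it exists since $\gamma\geq 1$), so that $\cos\theta\leq\sqrt2\,e^{\beta-2\alpha}$ and hence, by \eqref{eq=alphaalphaprime}, the right-hand side above is $\lesssim e^{s\beta-(s_2-s)\gamma}$, exactly as in Lemma~\ref{lemma:eqhyp}. By Proposition~\ref{prop=structural_U1}, $D(\alpha,0)\iota(u_\theta)D(\alpha,0)$ has $A$-part $D(\beta,\gamma)$, whereas, because $\cos\theta=0$ forces equal $A$-entries with $\sinh^2=\tfrac12\sinh^2\alpha=\sinh^2\alpha'$, the element $D(\alpha,0)\iota(u_{\frac{\pi}{2}})D(\alpha,0)$ has $A$-part $D(\alpha',\alpha')$. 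Lemma~\ref{lemma=strucU1_nonequivariant} then produces $\rho,\sigma_0\in\R/2\pi\Z$ and rotations $r_\phi,r_{\phi_0}$ with $\|c(v_tD(\alpha,0)\iota(u_\theta)D(\alpha,0))-\pi_V(r_\phi)c(v_{t+\rho}D(\beta,\gamma))\|\lesssim e^{-\gamma+s\beta}$ and $\|c(v_tD(\alpha,0)\iota(u_{\frac{\pi}{2}})D(\alpha,0))-\pi_V(r_{\phi_0})c(v_{t+\sigma_0}D(\alpha',\alpha'))\|\lesssim e^{-\alpha'+s\alpha'}$, and both error terms are $\lesssim e^{s\beta-(s_2-s)\gamma}$: the first because $s_2-s\leq 1$ (Remark~\ref{rem=s1s2leq1}), the second because $\alpha'\geq\tfrac12(\beta+\gamma)$ by \eqref{eq=alphaalphaprime}, $\beta\geq\gamma$, and $s_2\leq 1<1+s$.

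Combining the three estimates by the triangle inequality gives $\|\pi_V(r_\phi)c(v_{t+\rho}D(\beta,\gamma))-\pi_V(r_{\phi_0})c(v_{t+\sigma_0}D(\alpha',\alpha'))\|\lesssim e^{s\beta-(s_2-s)\gamma}$. The decisive point---and the reason why, unlike in the circle case of Lemma~\ref{lemma=Hoelder_cont_of_c_nonequivariant}, no residual rotation or phase survives---is that $c(v_sD(\alpha',\alpha'))$ is fixed by $\pi_V(r_\psi)$ for every $\psi$: since $D(\alpha',\alpha')$ commutes with $\iota(r_\psi)$ (both respect the splitting $\R^4=\R^2\oplus\R^2$, with $D(\alpha',\alpha')$ acting by a scalar on each summand and $\iota(r_\psi)$ by $r_\psi$) and $v_s$ is central, $v_sD(\alpha',\alpha')=\iota(r_\psi)\bigl(v_sD(\alpha',\alpha')\bigr)\iota(r_{-\psi})$, and \eqref{eq=equivariance_of_coeff_c} with $t=0$ gives $c(v_sD(\alpha',\alpha'))=\pi_V(r_\psi)c(v_sD(\alpha',\alpha'))$. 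Hence $\pi_V(r_{\phi_0})c(v_{t+\sigma_0}D(\alpha',\alpha'))=c(v_{t+\sigma_0}D(\alpha',\alpha'))$, and applying the isometry $\pi_V(r_\phi)^{-1}$ and this invariance once more yields $\|c(v_{t+\rho}D(\beta,\gamma))-c(v_{t+\sigma_0}D(\alpha',\alpha'))\|\lesssim e^{s\beta-(s_2-s)\gamma}$; reparametrizing $t$ and setting $\sigma=\sigma_0-\rho$ finishes the proof. I expect the main difficulty to be exactly this bookkeeping of twists---tracking which conjugations leave honest unitary factors $\pi_V(r_\phi)$ and which leave only scalar phases $e^{im(\cdot)}$ (the latter harmlessly absorbed into the $O(e^{-\gamma})$ errors by Lipschitzness, as in the proof of Lemma~\ref{lemma=strucU1_nonequivariant})---together with the observation that the $\pi_V(\mathrm{SO}(2))$-invariance of $c(\,\cdot\,D(\alpha',\alpha'))$ is precisely what clears the remaining rotations.
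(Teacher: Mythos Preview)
Your proof is correct and follows essentially the same approach as the paper's: define $c'_{t,\alpha}$, establish its $\mathrm{U}(1)$-equivariance, decompose $V$ into characters of $\{d_\varphi\}$ and apply Proposition~\ref{prop:harmonic_ana_U1} coordinate-wise, then use Lemma~\ref{lemma=strucU1_nonequivariant} to pass to the KAK parameters and finally invoke the $\pi_V(\mathrm{SO}(2))$-invariance of $c(v_{\cdot}D(\alpha',\alpha'))$ to kill the residual rotations. Two minor remarks: when you write ``$v_s$ is central'' you mean central in $K$ (it is not central in $\mathrm{Sp}(2,\R)$ unless $s\in\pi\Z$), which is what you need; and your bound on the second error term via $\alpha'\geq\tfrac12(\beta+\gamma)$ works, though the paper uses the simpler observation $\alpha'\geq\gamma$ (immediate from $\sinh^2\alpha'=\sinh\beta\sinh\gamma\geq\sinh^2\gamma$) to see directly that $e^{(s-1)\alpha'}\leq e^{(s-1)\gamma}\leq e^{s\beta-\gamma}$, i.e., the second error is dominated by the first.
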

\begin{proof}
Let $\alpha$ be the non-negative solution of $\frac{1}{2}\sinh^2(\alpha)=\sinh\gamma\sinh\beta$. As before, \eqref{eq=alphaalphaprime} follows. For $t \in \R$ consider the map $c'_{t,\alpha}: h \mapsto c(v_{t} D(\alpha,0) \iota(h) D(\alpha,0))$ on $\mathrm{SU}(2)$. It is elementary to check that $c'_{t,\alpha}(d_\theta h d_{-\theta}) =  e^{-2 i m \theta}\pi_V(d_\theta) c'_{t,\alpha}(h)$ for all $h \in \mathrm{SU}(2)$ and $\theta \in \mathbb{R}$. Decomposing $V$ as a direct sum of characters of $\{d_\theta \mid \theta \in \R\}$, we obtain by Proposition \ref{prop:harmonic_ana_U1} that
\[\|c(v_{t}D(\alpha,0) \iota(u_\theta) D(\alpha,0)) - c(v_{t}D(\alpha,0) \iota(u_{\frac{\pi}{2}}) D(\alpha,0))\| \lesssim e^{2s \alpha} \left|\theta-\frac{\pi}{2}\right|^{s_2}.\]
As in the proof of Lemma \ref{lemma:eqhyp}, it follows that the right-hand side is $\lesssim e^{s \beta - (s_2 - s)\gamma}$.

By Lemma \ref{lemma=strucU1_nonequivariant} we can replace $c(v_{t}D(\alpha,0) \iota(u_\theta) D(\alpha,0))$ by $\pi_V(r_1) c(v_{t+\sigma_1}D(\beta,\gamma))$ up to an error of order $e^{-\gamma+s \beta}$ for some $r_1 \in \SO$ and $\sigma_1 \in \R/2\pi\Z$. We can also replace $c(v_{t}D(\alpha,0) \iota(u_{\frac{\pi}{2}}) D(\alpha,0))$ by $\pi_V(r_2) c(v_{t+\sigma_2}D(\alpha',\alpha'))$ up to an error of order $e^{(s-1)\alpha'}$ for some $r_2 \in \mathrm{SO}(2)$ and $\sigma_2 \in \mathbb{R}/2\pi\mathbb{Z}$. By \eqref{eq=alphaalphaprime}, we have $\gamma - \frac 1 2 \leq \alpha' \leq \beta+1$, and second error term is dominated by the first. By Remark \ref{rem=s1s2leq1}, $s_2 \leq s_2 - s \leq 1$, and the first error term (and therefore also the second) is dominated by $e^{s \beta - (s_2 - s)\gamma}$. Altogether, we obtain 
\[ \| \pi_V(r_1) c(v_{t+\sigma_1}D(\beta,\gamma)) -\pi_V(r_2) c(v_{t+\sigma_2}D(\alpha',\alpha'))\| \lesssim e^{s \beta - (s_2 - s)\gamma}.\]
Since $D(\alpha',\alpha')$ commutes with elements of $\iota(\SO)$, we obtain \[\pi_V(r_1^{-1} r_2) c(v_{t+\sigma_2}D(\alpha',\alpha'))=c(v_{t+\sigma_2}D(\alpha',\alpha')).\]
This proves the lemma with $\sigma=\sigma_2-\sigma_1$.
\end{proof}
\begin{proof}[Proof of \eqref{eq=goal_in_boundind_nonequivariant coeffc}] Let $\gamma_0\geq 1$, and let $\beta_0 = \frac{s_1+s_2-s}{s_1} \gamma_0$. By this choice, the right-hand sides of the inequalities of Lemmas \ref{lemma=Hoelder_cont_of_c_nonequivariant} and \ref{lemma=Hoelder_cont_of_c_nonequivariant2} with $(\beta,\gamma)=(\beta_0,\gamma_0)$ are equal to $e^{-\beta_0\frac{P(s)}{s_1+s_2-s}}$, which we simplify in this proof to $\varepsilon_0$.
Let us denote 
\[ \mathcal C= \{ (\beta,\gamma)\mid\beta \geq \frac{s_1+s_2-s}{s_1} \gamma \geq 0 \textrm{ and } \sinh^2\beta+\sinh^2 \gamma = \sinh^2\beta_0+\sinh^2 \gamma_0\}\]
and
\[ \mathcal H= \{ (\beta,\gamma)\mid\frac{s_1+s_2-s}{s_1} \gamma \geq \beta \geq \gamma \geq 0 \textrm{ and } \sinh\beta\sinh \gamma = \sinh\beta_0\sinh \gamma_0\}.\]
Denote by $\alpha$ the positive solution of $\sinh^2(2\alpha) = \sinh^2\beta_0+ \sinh^2\gamma_0$ and by $\alpha'$ the positive solution of $\sinh^2(\alpha') = \sinh\beta_0 \sinh \gamma_0$. Our goal is to prove that for every $(\beta,\gamma) \in \mathcal C\cup \mathcal H$ and every $t \in \R/2 \pi \Z$, 
\begin{equation}\label{eq=nonequivariant_coeff_small} \|c(v_t D(\beta,\gamma))\| \lesssim \varepsilon_0.\end{equation}
But Lemma \ref{lemma=Hoelder_cont_of_c_nonequivariant} implies that for $(\beta,\gamma) \in \mathcal C$,
\[ \max_{t \in \R/{2\pi \Z}} \|c(v_t D(\beta,\gamma))\| \lesssim \varepsilon_0 + \max_{t \in \mathbb{R}/2 \pi \mathbb{Z}} \|c(v_t D(2\alpha,0))\|.\]
Similarly, Lemma \ref{lemma=Hoelder_cont_of_c_nonequivariant2} implies that for $(\beta,\gamma) \in \mathcal H$,
\[ \max_{t \in \R/{2\pi \Z}} \|c(v_t D(\beta,\gamma))\| \lesssim \varepsilon_0 + \max_{t \in \mathbb{R}/2 \pi \mathbb{Z}} \|c(v_t D(\alpha',\alpha'))\|.\]
Hence, it suffices to prove \eqref{eq=nonequivariant_coeff_small} for $(\beta,\gamma)=(2\alpha,0)$ and $(\alpha',\alpha')$.

By Lemmas \ref{lemma=Hoelder_cont_of_c_nonequivariant} and \ref{lemma=Hoelder_cont_of_c_nonequivariant2} applied to $(\beta_0,\gamma_0)$, there exist $t',t''$ such that for all $t$,
\[ \|c(v_{t+t'}D(\alpha,\alpha)) - c(v_{t+t''} D(2\alpha',0))\| \lesssim \varepsilon_0.\]
The vector $c(v_{t+t'}D(\alpha,\alpha))$ belongs to the space $V_1$ of vectors invariant under $\pi_V(\SO)$, whereas $c(v_{t+t''} D(2\alpha',0))$ belongs to the space $V_2$ of vectors $x$ satisfying $\pi_V(d_\theta)x = e^{im\theta }x$ for all $\theta$.

At this point we have to distinguish two cases: if $V$ is the trivial representation, then $m\neq 0$. Necessarily, $V_2 = \{0\}$, so that \eqref{eq=nonequivariant_coeff_small} holds for $(\beta,\gamma)= (2\alpha,0)$ or $(\alpha',\alpha')$. If $V$ is not the trivial representation, note that $V_1$ and $V_2$ do not intersect ($V_1 \cap V_2$ is a space of eigenvectors for $\SO$ and $\{ d_\theta, \theta \in \R\}$, and hence of the group that they generate, which is $\SU$; as a nontrivial irreducible representation, $V$ does not have any nonzero eigenvector). Hence, there is a constant $C(V_1,V_2)>0$ such that for every $x_1 \in V_1, x_2 \in V_2$, we have $\|x_1\|+\|x_2\| \leq C(V_1,V_2) \| x_1-x_2\|$. This proves that \eqref{eq=nonequivariant_coeff_small} holds for $(\beta,\gamma)= (2\alpha,0)$ or $(\alpha',\alpha')$ and concludes the proof.
\end{proof}
\begin{rem}\label{rem=precise_value_epsilon} It follows from the proofs that the function $\varepsilon \in C_0(G)$ can be taken as 
 \begin{equation}\label{eq=precise_value_epsilon} \varepsilon(g) = \exp\left(-P(s)\max\left( \frac{\beta}{s_1+s_2-s} ,\frac{\beta+\gamma}{2s_1+s_2-s}\right)\right)\end{equation}
if $\beta \geq \gamma \geq 0$ are so that $g \in KD(\beta,\gamma)K$, and where $P(s) = s^2 - (2s_1 + s_2)s + s_1 s_2$. Indeed, with the notation of the proof of \eqref{eq:equivariant_coefficients_Sp2}, we have that $\varepsilon(g) \leq \varepsilon(n) = e^{-n \frac{P(s)}{s_1+s_2-s}}$ if $g \in K D(\beta,\gamma) K$ with $(\beta,\gamma) \in \Delta_n$. One easily checks that if $(\beta,\gamma) \in \mathcal C_n$, then $\varepsilon(g) = e^{-P(s) \frac{\beta}{s_1+s_2-s}} = e^{-n \frac{P(s)}{s_1+s_2-s} + O(1)}$. Similarly, if $(\beta,\gamma) \in \mathcal H_n$, then $\varepsilon(g) = e^{- P(s) \frac{\beta+\gamma}{2s_1+s_2-s}} = e^{-n\frac{P(s)}{s_1+s_2-s} + O(1)}$. In each case, $\varepsilon(g)$ is of the order of $\varepsilon(n)$.
\end{rem}

\section{Decay of representations of $\widetilde{\mathrm{Sp}}(2,\mathbb{R})$ on certain Banach spaces} \label{section=sp2cov}
In this section, let $\widetilde{G}=\widetilde{\mathrm{Sp}}(2,\mathbb{R})$ be the universal covering group of $G=\mathrm{Sp}(2,\mathbb{R})$ (see Section \ref{subsec:psp2cov}). Recall that $\Phi: \widetilde G \to \R$ denotes the quasi-morphism considered in Lemma \ref{lemma=phi_quasimorphism}.

Let $X$ be a Banach space satisfying \eqref{eq:assumption_on_X_SO2} and \eqref{eq:assumption_on_X_U1}. For fixed values of $s$ satisfying $0 < s < s_-(s_1,s_2)$, we consider continuous representations $\pi:\widetilde{G} \rightarrow B(X)$ for which there exist an $L>0$ such that for $\beta \geq \gamma \geq 0$,
\begin{equation} \label{eq:assumption_on_pi_bis}
  \| \pi(\widetilde D(\beta,\gamma)) \|_{B(X)} \leq L e^{s \beta}.
\end{equation}
This assumption does not say anything about the norm of $\pi(\widetilde v_t)$, but by continuity there exist $\kappa \geq 0$ and $L' \geq 0$ such that 
\begin{equation} \label{eq:assumption_on_pi_ter}
\| \pi(\widetilde v_t) \|_{B(X)} \leq L' e^{\kappa |t|}.\end{equation} The main theorem of this section, which is similar to Theorem \ref{thm:decay_of_coefficients_Sp2}, is the following.
\begin{thm} \label{thm:decay_of_coefficients_Sp2_bis}
Let $X$ be a Banach space satisfying \eqref{eq:assumption_on_X_SO2} and \eqref{eq:assumption_on_X_U1}, and let $s<s_{-}(s_1,s_2)$ and $\kappa \in \R$. There exists $\varepsilon \in C(\widetilde G)$ such that the following holds: for a representation $\pi: \widetilde G \rightarrow B(X)$ satisfying \eqref{eq:assumption_on_pi_bis} and \eqref{eq:assumption_on_pi_ter}, a unitary irreducible representation $V$ of $\SU$, and $m \in \Z$, there exists a constant $C$ such that for every $\xi \in X^H$ and $\eta \in (X^*)_V$ unit vectors,
\begin{itemize}
 \item if $V$ is the trivial representation and $m=0$, there exists $c_{\xi,\eta} \in \C$ such that
\begin{equation} \label{eq:equivariant_coefficients_Sp2_bis}
 \left|\int_{\R/\pi\Z} \langle \pi(\widetilde v_t g \widetilde v_{-t}) \xi,\eta\rangle \frac{d t}{\pi} - c_{\xi,\eta}\right| \leq C \varepsilon(g);
\end{equation}
\item otherwise, 
\begin{equation} \label{eq:nonequivariant_coefficients_Sp2_bis}
 \left|\int_{\R/\pi\Z} e^{-2imt}\langle \pi(\widetilde v_t g \widetilde v_{-t}) \xi,\eta\rangle \frac{d t}{\pi}\right| \leq C \varepsilon(g). 
\end{equation}
Moreover there is a constant $c>0$ such that $\varepsilon(g) \leq e^{ \kappa |\Phi(g)| -c \beta}$ if $g \in \widetilde K \widetilde D(\beta,\gamma) \widetilde K$ with $\beta \geq \gamma \geq 0$.
\end{itemize}
\end{thm}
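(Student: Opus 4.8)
The plan is to mimic the proof of Theorem \ref{thm:decay_of_coefficients_Sp2} almost verbatim, using the KAK decomposition $g = \widetilde h_1 \widetilde v_t \widetilde D(\beta,\gamma) \widetilde v_s \widetilde h_2$ of $\widetilde G$ from the lemma in Section \ref{subsec:psp2cov} in place of the one for $G$, and tracking the extra factors $\pi(\widetilde v_t)$ whose norm is only controlled by \eqref{eq:assumption_on_pi_ter}. The first point is that all the structural computations of Propositions \ref{prop=structural_SO2} and \ref{prop=structural_U1} take place inside $\widetilde K = \widetilde H \rtimes \{\widetilde v_t\}$ and $\exp$ of subalgebras of $\mathfrak{sp}_2$, so they lift verbatim to $\widetilde G$: the identities
\[
\widetilde D(\alpha,\alpha)\,\widetilde w_{-\theta,\theta-\frac{\pi}{2}}\,\widetilde D(\alpha,\alpha) = \widetilde w_{-\phi,-\phi'}\,\widetilde D(\beta,\gamma)\,\widetilde w_{-\phi,-\phi'}, \qquad
\widetilde D(\alpha,0)\,\widetilde\iota(u_\theta)\,\widetilde D(\alpha,0) = \widetilde\iota(r_\phi)\,\widetilde w_{\omega_1,\omega_2}\,\widetilde D(\beta,\gamma)\,\widetilde w_{\omega_1,\omega_2}\,\widetilde\iota(r_\phi)
\]
hold with the \emph{same} $\beta,\gamma,\phi,\phi',\omega_1,\omega_2$ as downstairs, because both sides are continuous in $\alpha,\theta$, agree at the basepoint, and project to the corresponding identity in $G$. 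Here I write $\widetilde w_{\theta,\theta'} = \widetilde v_{(\theta+\theta')/2}\,\widetilde\iota(d_{(\theta-\theta')/2})$. Consequently, Lemmas \ref{lemma:eqcir}, \ref{lemma:dependency_on_t}, \ref{lemma:eqhyp} and their non-invariant counterparts \ref{lemma=Hoelder_cont_of_c_nonequivariant}, \ref{lemma=Hoelder_cont_of_c_nonequivariant2} go through word for word with $c(g) = \int_{\R/\pi\Z} e^{-2imt} q(\pi(\widetilde v_t g \widetilde v_{-t})\xi)\,\frac{dt}{\pi}$, since in each of them the vectors that get fed into Propositions \ref{prop:harmonic_ana_SO2} and \ref{prop:harmonic_ana_U1} are built only from $\pi$ of elements of $\widetilde H$, $\widetilde A$, and of the \emph{fixed} central-direction elements $\widetilde v_{\pi/4}$ appearing in the construction — each such $\widetilde v_t$ contributes only a bounded multiplicative constant $L' e^{\kappa\pi/4}$ absorbed into $\lesssim$.

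The one genuinely new ingredient, and what I expect to be the main obstacle, is controlling the $\widetilde v_t$-translates: in the non-covering case one has $t \in \R/2\pi\Z$ so $\pi(v_t)$ is uniformly bounded, but here $t$ ranges over all of $\R$ and $\|\pi(\widetilde v_t)\| \leq L' e^{\kappa|t|}$ genuinely grows. The point is that the quasi-morphism $\Phi$ of Lemma \ref{lemma=phi_quasimorphism} exactly measures this drift: if $g$ has KAK decomposition $g = \widetilde h_1 \widetilde v_t \widetilde D(\beta,\gamma)\widetilde v_s\widetilde h_2$ then $\Phi(g) = t+s$, and $c$ transforms by $c(\widetilde h_1\widetilde v_\tau g\widetilde v_{-\tau}\widetilde h_2) = e^{2im\tau}\pi_V(h_1)c(g)$, so that the analysis reduces to bounding $\|c(\widetilde v_t \widetilde D(\beta,\gamma))\|$ in terms of $\beta,\gamma$ and $t$. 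Running the argument of Section \ref{subsec=invc}/\ref{subsec=noninvc}, one obtains for $(\beta,\gamma)$ in an annulus $\Delta_n$ that $\|c(\widetilde v_t\widetilde D(\beta,\gamma))\|$ is, up to $\lesssim \varepsilon(n)$, independent of which $(\beta,\gamma)\in\Delta_n$ and of $t$ — the Lipschitz/continuity arguments that connect different $t$'s (the analogue of Lemma \ref{lemma:dependency_on_t} and the final $t$-independence step) only move $t$ through compact pieces, so the factor $e^{\kappa|t|}$ stays bounded during these moves. The telescoping then shows $c$ has a limit at infinity in the equivariant case, resp.\ is forced to $0$ in the non-equivariant case, exactly as before. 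Finally, tracing constants: when $g = \widetilde h_1\widetilde v_t\widetilde D(\beta,\gamma)\widetilde v_s\widetilde h_2$, each application of the structural lemmas costs a bounded power of $L'e^{\kappa\cdot(\text{const})}$, and the only unbounded $\widetilde v$-factors are $\widetilde v_t$ and $\widetilde v_s$, whose combined contribution is $\lesssim e^{\kappa|t+s|} = e^{\kappa|\Phi(g)|}$ by Lemma \ref{lemma=phi_quasimorphism}; combined with the decay $e^{-c\beta}$ coming from $\varepsilon\in C_0(G)$ of Remark \ref{rem=precise_value_epsilon} (with $c = P(s)/(2s_1+s_2-s)>0$), this gives $\varepsilon(g)\leq e^{\kappa|\Phi(g)|-c\beta}$, which is the asserted bound and also shows $\varepsilon \in C(\widetilde G)$.

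In more detail, for the equivariant part \eqref{eq:equivariant_coefficients_Sp2_bis} I would set $c(g) = \int_{\R/\pi\Z}\langle\pi(\widetilde v_t g\widetilde v_{-t})\xi,\eta\rangle\frac{dt}{\pi}$ with $\xi,\eta$ being $H$-invariant, renorm $X$ so that $\pi|_{\widetilde H}$ is isometric (but not $\pi|_{\widetilde K}$, since $\widetilde K$ is noncompact — this is the reason the theorem is stated with the $\R/\pi\Z$-average rather than $\widetilde K$-finite vectors), and observe $c(\widetilde h_1\widetilde v_\tau g\widetilde v_{-\tau}\widetilde h_2) = c(g)$. Replacing every occurrence of $v_\bullet$, $D(\bullet,\bullet)$, $\iota(\bullet)$, $w_{\bullet,\bullet}$ in Lemmas \ref{lemma:eqcir}--\ref{lemma:eqhyp} and their proofs by their tildes, and noting that the auxiliary bound $\|c(\widetilde v_s\widetilde D(\beta,\gamma))\|\leq L e^{s\beta}$ used as the a priori estimate must be replaced by $\|c(\widetilde v_s\widetilde D(\beta,\gamma))\|\leq \|\pi(\widetilde v_s)\|\,\|\pi(\widetilde D(\beta,\gamma))\|\leq L' L e^{\kappa|s|+s\beta}$ — and that in all the places this a priori bound is invoked, $s$ stays in a bounded set — the same partition of the Weyl chamber into the regions $\Delta_n$ and the same ``three elementary paths'' argument (Figure \ref{picture=paths}) yield the Cauchy estimate. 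For the non-equivariant part \eqref{eq:nonequivariant_coefficients_Sp2_bis} I would use Theorem \ref{thm=peterWeyl} to write $\eta = y\otimes v_0$, define $q: X\to V^*$, set $c(g) = \int e^{-2imt}q(\pi(\widetilde v_t g\widetilde v_{-t})\xi)\frac{dt}{\pi}$, and repeat Lemmas \ref{lemma=Hoelder_cont_of_c_nonequivariant}, \ref{lemma=Hoelder_cont_of_c_nonequivariant2} and the final $V_1\cap V_2 = \{0\}$ argument verbatim; the key observation making this legitimate is that the decomposition of $V$ into characters of $\SO$, resp.\ of $\{d_\theta\}$, happens inside $\widetilde H\cong\SU$, which is unchanged by passing to the cover.
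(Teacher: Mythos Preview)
Your proposal is correct and follows the paper's strategy: lift Propositions \ref{prop=structural_SO2} and \ref{prop=structural_U1} to $\widetilde G$ by the continuity argument you describe (these appear in the paper as Propositions \ref{prop=structural_SO2_bis} and \ref{prop=structural_U1_bis}), then rerun Section \ref{section=sp2}. The paper handles the unbounded $\widetilde v_t$-direction by a slightly cleaner device than your proposed tracking of $e^{\kappa|t|}$ factors through each lemma: it first restricts to $g = \widetilde v_t\widetilde D(\beta,\gamma)$ with $t\in[-\pi,\pi]$, where $\|\pi(\widetilde v_t)\|$ is uniformly bounded and the $\mathrm{Sp}(2,\R)$ proof transfers verbatim, and then extends to arbitrary $t$ by observing that $\xi^{t_0}:=\pi(\widetilde v_{t_0})\xi$ is again $\widetilde H$-invariant (since $\widetilde v_{t_0}$ commutes with $\widetilde H$) with norm $\leq L'e^{\kappa|t_0|}$, and that the coefficient built from $\xi^{t_0}$ is $g\mapsto c(\widetilde v_{t_0}g)$. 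Applying the bounded-$t$ case to the unit vector $\xi^{t_0}/\|\xi^{t_0}\|$ then yields the estimate on $[t_0-\pi,t_0+\pi]$ with the factor $e^{\kappa|t_0|}$, and the limits $c_{\xi^{t_0},\eta}$ agree on overlaps. Your approach also works, but note that your sentence ``in all the places this a priori bound is invoked, $s$ stays in a bounded set'' is not accurate: in the analogue of Lemma \ref{lemma:eqcir} the vector $\eta_{t,\alpha}$ involves $\pi(\widetilde v_t)$ with $t=\Phi(g)$ unbounded, and it is precisely this that produces the $e^{\kappa|\Phi(g)|}$ factor---so the factor comes from inside the lemmas, not just from the outer $\widetilde v_t,\widetilde v_s$ as your final paragraph suggests.
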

Theorem \ref{thm:decay_of_coefficients_Sp2_bis} is proved in the same way as Theorem \ref{thm:decay_of_coefficients_Sp2}, with Proposition \ref{prop=structural_SO2} and \ref{prop=structural_U1} replaced by the following analogues. For $\theta,\theta' \in \R$, we define $\widetilde w_{\theta,\theta'} \in \widetilde K$ by
\begin{equation}
\widetilde w_{\theta,\theta'} = \widetilde \exp \left(\iota\begin{pmatrix}
              i\theta & 0 \\ 0 & i \theta'
             \end{pmatrix}\right) = 
  \widetilde v_{\frac{\theta+\theta'}{2}} \widetilde\iota(d_{\frac{\theta-\theta'}{2}}).
 \end{equation}
\begin{prop}\label{prop=structural_SO2_bis}
For all $\alpha > 0$ and $\theta \in [0,\frac{\pi}{2}]$, we have
\begin{align*}\widetilde D(\alpha, \alpha)\widetilde w_{\theta,\frac{\pi}{2}-\theta} \widetilde D(\alpha, \alpha) =&\widetilde w_{\phi,\phi'} \widetilde D(\beta, \gamma)\widetilde w_{\phi,\phi'}, \\ \widetilde D(\alpha, \alpha)\widetilde w_{-\theta,\theta-\frac{\pi}{2}} \widetilde D(\alpha, \alpha) =& \widetilde w_{-\phi,-\phi'} \widetilde D(\beta, \gamma)\widetilde w_{-\phi,-\phi'},\end{align*}
where $\beta, \gamma \in \R^+$, $\phi, \phi' \in [0,\frac{\pi}{4}]$ satisfy \eqref{eq=structural_SO2_D} and \eqref{eq=structural_SO2_phi}.
\end{prop}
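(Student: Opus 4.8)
The plan is to reduce both identities to their already-established counterparts in $G$ (Proposition~\ref{prop=structural_SO2}) by pushing forward through the covering homomorphism $\sigma\colon\widetilde G\to G$, and then to pin down the resulting ambiguity by an element of $\ker\sigma$ using a connectedness argument, in the spirit of the final paragraph of the proof of Lemma~\ref{lemma=phi_quasimorphism}.

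First I would record that, as $(\alpha,\theta)$ ranges over the connected set $[0,\infty)\times[0,\frac{\pi}{2}]$, the quantities $\beta,\gamma\ge 0$ and $\phi,\phi'\in[0,\frac{\pi}{4}]$ determined by~\eqref{eq=structural_SO2_D} and~\eqref{eq=structural_SO2_phi} depend continuously on $(\alpha,\theta)$ (with the $\tan$-conventions of Lemma~\ref{lemma=polar_decomposition_in_SL2}), and that at $\alpha=0$ one has $\beta=\gamma=0$, $2\phi=\theta$ and $2\phi'=\frac{\pi}{2}-\theta$. Since $\widetilde\exp$, $\widetilde D$ and $\widetilde w$ are continuous and $\sigma\circ\widetilde\exp=\exp$ — so that $\sigma(\widetilde D(\beta,\gamma))=D(\beta,\gamma)$ and $\sigma(\widetilde w_{\theta,\theta'})=w_{\theta,\theta'}$ — the two maps
\[
F_1(\alpha,\theta)=\widetilde D(\alpha,\alpha)\,\widetilde w_{\theta,\frac{\pi}{2}-\theta}\,\widetilde D(\alpha,\alpha),\qquad
F_2(\alpha,\theta)=\widetilde w_{\phi,\phi'}\,\widetilde D(\beta,\gamma)\,\widetilde w_{\phi,\phi'}
\]
are continuous on $[0,\infty)\times[0,\frac{\pi}{2}]$, and, applying the group homomorphism $\sigma$, Proposition~\ref{prop=structural_SO2} gives $\sigma\circ F_1=\sigma\circ F_2$ on $(0,\infty)\times[0,\frac{\pi}{2}]$, hence on all of $[0,\infty)\times[0,\frac{\pi}{2}]$ by continuity. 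Therefore $(\alpha,\theta)\mapsto F_1(\alpha,\theta)\,F_2(\alpha,\theta)^{-1}$ is a continuous map from a connected space into the discrete subgroup $\ker\sigma$, so it is constant.

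To identify this constant I would evaluate at $\alpha=0$, where $\widetilde D(0,0)=\widetilde D(\beta,\gamma)=e$: then $F_1(0,\theta)=\widetilde w_{\theta,\frac{\pi}{2}-\theta}$, while $F_2(0,\theta)=\widetilde w_{\phi,\phi'}^2=\widetilde w_{2\phi,2\phi'}$, the last equality because $s\mapsto\widetilde w_{s\phi,s\phi'}=\widetilde\exp(s\,\iota(\diag(i\phi,i\phi')))$ is a one-parameter subgroup of $\widetilde G$. Since $2\phi=\theta$ and $2\phi'=\frac{\pi}{2}-\theta$ at $\alpha=0$, we get $F_1(0,\theta)=F_2(0,\theta)$, so the constant is $e$ and $F_1=F_2$ throughout, which is the first identity. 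The second identity follows by repeating the argument with $\theta,\phi,\phi'$ replaced by $-\theta,-\phi,-\phi'$ (the equations~\eqref{eq=structural_SO2_D}--\eqref{eq=structural_SO2_phi} are unchanged and the agreement at $\alpha=0$ persists), or, more slickly, by applying to the first identity the automorphism of $\widetilde G$ that lifts conjugation by $\diag(I_2,-I_2)\in\mathrm{GL}(4,\R)$: this element normalizes $\mathrm{Sp}(2,\R)$, fixes every $\widetilde D(\beta,\gamma)$, and sends $\widetilde w_{\theta,\theta'}$ to $\widetilde w_{-\theta,-\theta'}$.

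I expect the only genuinely fiddly point to be the continuity of the parametrization $(\alpha,\theta)\mapsto(\beta,\gamma,\phi,\phi')$ up to the boundary locus $\alpha=0$, which is exactly what the $\tan$-conventions already used in Lemma~\ref{lemma=polar_decomposition_in_SL2} and Proposition~\ref{prop=structural_SO2} are meant to handle; everything else — that $\ker\sigma$ is a discrete subgroup, that $\sigma$ intertwines the two exponential maps (cf. Section~\ref{subsec:liegroups}), that $\widetilde w_{\phi,\phi'}^2=\widetilde w_{2\phi,2\phi'}$, and that $\diag(I_2,-I_2)$ normalizes $\mathrm{Sp}(2,\R)$ — is standard Lie-theoretic bookkeeping requiring no real computation.
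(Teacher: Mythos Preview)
Your proof is correct and follows essentially the same approach as the paper: lift Proposition~\ref{prop=structural_SO2} through $\sigma$, observe that the discrepancy $F_1 F_2^{-1}$ is a continuous map into the discrete $\ker\sigma$, and evaluate at $\alpha=0$ where $2\phi=\theta$ and $2\phi'=\frac{\pi}{2}-\theta$. The paper's argument is slightly leaner (it fixes $\theta$ and varies only $\alpha$, avoiding your discussion of joint continuity), and it handles the second identity simply by repeating the argument rather than via your automorphism, but these are cosmetic differences.
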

\begin{proof} Fix $\theta \in [0,\frac{\pi}{2}]$. By Proposition \ref{prop=structural_SO2}, both equalites hold when projected onto $G$, and all terms depend continuously on $\alpha$. It is therefore enough to prove the proposition when $\alpha = 0$. In this case, the first equality therefore reduces to $\widetilde w_{\theta,\frac{\pi}{2}-\theta} = \widetilde w_{2\phi,2\phi'}$, which holds because $2\phi=\theta$ and $2\phi' = \frac{\pi}{2} - \theta$. The second equality is proved in the same way.
\end{proof}
With the same proof, we obtain the following result from Proposition \ref{prop=structural_U1}.
\begin{prop}\label{prop=structural_U1_bis}
For all $\alpha > 0$ and $\theta \in [-\frac{\pi}{2},\frac{\pi}{2}]$, we have
\[\widetilde D(\alpha,0) \widetilde \iota(u_\theta) \widetilde D(\alpha,0) = \widetilde \iota(r_\phi)\widetilde w_{\omega_1,\omega_2} \widetilde D(\beta,\gamma) \widetilde w_{\omega_1,\omega_2} \widetilde \iota(r_\phi),\]
where $\beta \geq \gamma \geq 0$, $\phi ,\omega_1,\omega_2 \in [-\frac{\pi}{4},\frac{\pi}{4}]$ are characterized by \eqref{eq=prod_beta_gamma}, \eqref{eq=diff_beta_gamma}, \eqref{eq=structU1_phi} and \eqref{eq=structU1omega}.
\end{prop}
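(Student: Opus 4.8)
The plan is to imitate the proof of Proposition~\ref{prop=structural_SO2_bis} almost verbatim, deducing the identity in $\widetilde G$ from its counterpart in $G$ (Proposition~\ref{prop=structural_U1}) together with a path-lifting argument in the parameter $\alpha$.

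First I would fix $\theta \in [-\frac\pi2,\frac\pi2]$ and regard the two sides of the asserted equality as maps $F_1, F_2 \colon [0,\infty) \to \widetilde G$, where $F_1(\alpha) = \widetilde D(\alpha,0)\widetilde\iota(u_\theta)\widetilde D(\alpha,0)$ and $F_2(\alpha)$ is the right-hand side, assembled from the scalars $\beta,\gamma,\phi,\omega_1,\omega_2$ determined by \eqref{eq=prod_beta_gamma}, \eqref{eq=diff_beta_gamma}, \eqref{eq=structU1_phi} and \eqref{eq=structU1omega}. As noted in the proof of Proposition~\ref{prop=structural_U1}, these scalars depend continuously on $\alpha$, so $F_1$ and $F_2$ are continuous paths in $\widetilde G$. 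Applying the covering homomorphism $\sigma \colon \widetilde G \to G$, Proposition~\ref{prop=structural_U1} gives $\sigma \circ F_1 = \sigma \circ F_2$; thus $F_1$ and $F_2$ are two lifts of one and the same path in $G$, and by uniqueness of path lifting for $\sigma$ it suffices to check that $F_1(0) = F_2(0)$.

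Next I would specialize to $\alpha = 0$. Then $\widetilde D(\alpha,0) = 1$, and \eqref{eq=prod_beta_gamma}--\eqref{eq=diff_beta_gamma} force $\sinh\beta = \sinh\gamma$ and $\sinh\beta\sinh\gamma = 0$, hence $\beta = \gamma = 0$; similarly \eqref{eq=structU1omega}, using $\cosh 0 = 1$ and $2\omega_j \in [-\frac\pi2,\frac\pi2]$, forces $\omega_2 = -\omega_1$. Hence $\widetilde D(\beta,\gamma) = 1$ and $\widetilde w_{\omega_1,\omega_2}^2 = \widetilde w_{2\omega_1,-2\omega_1} = \widetilde\iota(d_{2\omega_1})$ (the central factor $\widetilde v_{\omega_1+\omega_2}$ drops out), so the claimed identity becomes $\widetilde\iota(u_\theta) = \widetilde\iota(r_\phi)\widetilde\iota(d_{2\omega_1})\widetilde\iota(r_\phi) = \widetilde\iota(r_\phi d_{2\omega_1} r_\phi)$. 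Since $\widetilde\iota \colon \SU \to \widetilde H$ is an isomorphism, this amounts to the identity $u_\theta = r_\phi d_{2\omega_1} r_\phi$ in $\SU$, which (via the injective map $\iota$) is exactly the $\alpha = 0$ case of Proposition~\ref{prop=structural_U1} --- itself obtained from that proposition by letting $\alpha \to 0^+$ and using the continuity of all its terms --- and can in any case be checked by a direct $2 \times 2$ matrix computation.

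I do not anticipate a genuine obstacle; the argument is a straightforward transcription of the one for Proposition~\ref{prop=structural_SO2_bis}. The one point that requires care is the $\alpha = 0$ bookkeeping: verifying that the range constraint $\omega_1,\omega_2 \in [-\frac\pi4,\frac\pi4]$ together with \eqref{eq=structU1omega} really pins down $\omega_2 = -\omega_1$ there, so that $\widetilde w_{\omega_1,\omega_2}^2$ lies in $\widetilde H$ and the whole identity collapses into $\widetilde H \cong \SU$, where it reduces to the already-established statement in $G$.
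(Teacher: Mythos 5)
Your argument is correct and is essentially the paper's own proof: the paper deduces Proposition \ref{prop=structural_U1_bis} from Proposition \ref{prop=structural_U1} exactly as in Proposition \ref{prop=structural_SO2_bis}, by fixing $\theta$, noting that both sides project to the same element of $G$ and depend continuously on $\alpha$, and reducing to the case $\alpha=0$. Your $\alpha=0$ bookkeeping (forcing $\beta=\gamma=0$, $\omega_2=-\omega_1$, and reducing to $u_\theta=r_\phi d_{2\omega_1}r_\phi$ in $\SU$) is just a more explicit version of the same reduction and is correct.
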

What allows to use for $\widetilde G$ essentially the same proof as for $G$ is that in the above propositions, the value of $\Phi(g(\theta,\alpha))$ (where $g(\theta,\alpha)$ is the element of $\widetilde G$ analyzed in each proposition) remains bounded as $\alpha>0$ and $|\theta|\leq \frac{\pi}{2}$. This reflects that $\Phi$ is a quasi-morphism.
\begin{proof}[Sketch of proof of Theorem \ref{thm:decay_of_coefficients_Sp2_bis}] As for $G$, we can assume that the restriction of $\pi$ to $\widetilde H$ is isometric. For the proof of \eqref{eq:equivariant_coefficients_Sp2_bis}, denote 
\[c(g) = \int_{\R/\pi \Z} \langle \pi(\widetilde v_t g \widetilde v_{-t}) \xi, \eta\rangle \frac{dt}{\pi},\] so that $c(\widetilde h g \widetilde h' \widetilde v_t) = c(\widetilde v_t g)$ for all $t \in \R$ and $\widetilde h,\widetilde h' \in \widetilde H$. By the KAK decomposition, it is therefore enough to show \eqref{eq:equivariant_coefficients_Sp2_bis} for $g = \widetilde v_t \widetilde D(\beta,\gamma)$ for $t \in \R$ and $\beta \geq \gamma \geq 0$. Let
\[
  \widetilde{\varepsilon}(g)=\exp\left(-P(s)\max\left( \frac{\beta}{s_1+s_2-s} ,\frac{\beta+\gamma}{2s_1+s_2-s}\right)\right)
\]
for $\beta \geq \gamma \geq 0$ such that $g \in \widetilde{K}\widetilde{D}(\beta,\gamma)\widetilde{K}$ and $P(s) = s^2 - (2s_1 + s_2)s + s_1 s_2$. The same proof as for Theorem \ref{thm:decay_of_coefficients_Sp2} shows that there is a limit $c_{\xi,\eta}$ such that 
\[ |c(g) -c_{\xi,\eta}| \leq C \widetilde{\varepsilon}(g)\]
for all $g$ of the form $\widetilde v_t \widetilde D(\beta,\gamma)$ with $t \in [-\pi,\pi]$. For $t_0 \in \R$, consider $\xi^{t_0}= \pi(\widetilde v_{t_0})\xi$. It is an $H$-invariant vector of norm less that $L' e^{\kappa|t_0|}$. If we apply the preceding with $\xi$ replaced by the $H$-invariant unit vector $\xi^{t_0}/\|\xi^{t_0}\|$ we get that
\[ |c(g) -c_{\xi^{t_0},\eta}| \leq C \widetilde{\varepsilon}(g) L' e^{\kappa t_0}\]
for all $g$ of the form $\widetilde v_t \widetilde D(\beta,\gamma)$ with $t \in [t_0-\pi,t_0+\pi]$. To prove \eqref{eq:equivariant_coefficients_Sp2_bis} it remains to notice that $c_{\xi^{t_0},\eta}$ does not depend on $t_0$. Indeed if $|t_0 - t_0'|<2\pi$ the intervals $[t_0 - \pi,t_0+\pi]$ and $[t_0' - \pi,t_0'+\pi]$ intersect (say at $t$), and we get $c_{\xi^{t_0},\eta} =\lim_{\beta \to \infty} c(\widetilde v_t \widetilde D(\beta,0)) = c_{\xi^{t_0'},\eta}$. This proves  \eqref{eq:equivariant_coefficients_Sp2_bis} with $\varepsilon(g) = \widetilde \varepsilon(g) e^{\kappa |\Phi(g)|}$.

Similarly, as for \eqref{eq:nonequivariant_coefficients_Sp2} we prove \eqref{eq:nonequivariant_coefficients_Sp2_bis} for $g$ of the form $\widetilde D(\beta,\gamma)$. Replacing $\xi$ by $\pi(\widetilde v_t) \xi/\|\pi(\widetilde v_t)\xi\|$ and using the KAK decomposition, we get \eqref{eq:nonequivariant_coefficients_Sp2_bis} for arbitrary $g$, still with $\varepsilon(g) = \widetilde \varepsilon(g) e^{\kappa |\Phi(g)|}$.
\end{proof}

\section{Strong property (T)} \label{section=strongt}
In \cite{lafforguestrengthenedpropertyt}, Lafforgue proved that any connected almost $\mathbb{R}$-simple algebraic group whose Lie algebra contains a Lie subalgebra isomorphic to $\mathfrak{sl}(3,\mathbb{R})$ has property (T$^{\mathrm{strong}}_{\mathrm{Hilbert}}$). Our results allow us to generalize this to all connected higher rank simple Lie groups and to the Banach spaces in the class $\mathcal{E}_{10}$ (see Section \ref{subsec=bs} for the definition of $\mathcal{E}_{10}$), as stated in Theorem \ref{thm=maintheorem}. This section is devoted to the proof of this theorem.

It is not immediate that strong property (T) as defined in Definition \ref{def=strongT} extends from $\mathrm{SL}(3,\R)$ and $\mathrm{Sp}(2,\R)$ and its universal covering group to all connected higher rank simple Lie groups. As in \cite{lafforguestrengthenedpropertyt}, we actually show that the property (*) that we define below, which is strong property (T) with a control on the speed of convergence, holds for $\mathrm{SL}(3,\R)$ and $\mathrm{Sp}(2,\R)$ and its universal covering group. This property extends to all connected higher rank simple Lie groups.

Let $G$ be a locally compact group with a length function $\ell$. The pair ($G$,$\ell$) is said to have property (*) if there is a sequence $(m_n)$ of compactly supported symmetric measures with support contained in $\{g \in G \mid \ell(g) \leq n\}$ such that the following holds. For every Banach space $X \in \mathcal E_{10}$, there exist $\alpha,\mu>0$ such that for every continuous representation $\pi$ on $X$ satisfying $\sup_g e^{-\alpha \ell(g)}\|\pi(g)\|_{B(X)} <\infty$, there is a projection $P \in B(X)$ on the subspace $X^G$ of $G$-invariant vectors in $X$ such that
\[ \|\pi(m_n) - P\|_{B(X)} \leq e^{-\mu n}\textrm{ for $n$ large enough.}\]

We will use the following easy observation, which we state without proof.
\begin{lemma}\label{lemma=str_T_not_depend_onl} Let $G$ be a locally compact group with two length functions $\ell, \ell'$ satisfying $\ell' \leq a \ell+b$ for some $a,b > 0$, and suppose that (*) holds for $(G,\ell)$. Then (*) also holds for $(G,\ell')$.
\end{lemma}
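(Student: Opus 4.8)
The plan is to reuse the very sequence $(m_n)$ witnessing property~(*) for $(G,\ell)$, after a reindexing that absorbs the comparison $\ell' \leq a\ell + b$. Concretely, for $n$ large enough (say $n \geq a+b$) set $k(n) = \lfloor (n-b)/a\rfloor$ and define $m'_n = m_{k(n)}$; for the remaining finitely many small values of $n$, put $m'_n = 0$ (the zero measure is symmetric, compactly supported, and its empty support trivially satisfies the support condition). Since property~(*) is an asymptotic statement, this choice on finitely many terms is irrelevant. The first thing to verify is the support condition: if $g \in \mathrm{supp}(m_{k(n)})$ then $\ell(g) \leq k(n) \leq (n-b)/a$, hence $\ell'(g) \leq a\ell(g) + b \leq n$, so $\mathrm{supp}(m'_n) \subseteq \{g \in G \mid \ell'(g) \leq n\}$ as required.

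Next, fix $X \in \mathcal{E}_{10}$ and let $\alpha,\mu>0$ be the constants provided by property~(*) for $(G,\ell)$ and this $X$. I would set $\alpha' = \alpha/a$ and $\mu' = \mu/(2a)$. Given a continuous representation $\pi$ on $X$ with $M := \sup_g e^{-\alpha'\ell'(g)}\|\pi(g)\|_{B(X)} < \infty$, the inequality $\ell' \leq a\ell + b$ gives $\|\pi(g)\|_{B(X)} \leq M e^{\alpha'\ell'(g)} \leq M e^{\alpha' b}\, e^{\alpha' a\,\ell(g)} = (M e^{\alpha b/a})\, e^{\alpha\ell(g)}$, so $\pi$ also satisfies the growth hypothesis of property~(*) for $(G,\ell)$ with constant $\alpha$. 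Hence there is a projection $P \in B(X)$ onto $X^G$ (the same subspace, independent of the length function) with $\|\pi(m_n) - P\|_{B(X)} \leq e^{-\mu n}$ for all $n \geq n_0(\pi,X)$. Taking the same $P$ for $(G,\ell')$, for $n$ large enough that $k(n) \geq n_0(\pi,X)$ we get $\|\pi(m'_n) - P\|_{B(X)} = \|\pi(m_{k(n)}) - P\|_{B(X)} \leq e^{-\mu k(n)}$, and since $k(n) \geq (n-b)/a - 1$ this is at most $e^{\mu(a+b)/a}\, e^{-(\mu/a)n}$, which is $\leq e^{-\mu' n} = e^{-(\mu/(2a))n}$ as soon as $n \geq 2(a+b)$. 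This establishes property~(*) for $(G,\ell')$ with witnessing sequence $(m'_n)$ and constants $\alpha',\mu'$ depending only on $a$, $b$, and $X$.

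There is no genuine obstacle in this argument; the only point requiring a little care is the bookkeeping of the various thresholds — that $m'_n$ coincides with $m_{k(n)}$ only for $n$ large, and that $k(n)$ must itself exceed $n_0(\pi,X)$ before the $(G,\ell)$-estimate applies. Since the definition of property~(*) already allows the ``$n$ large enough'' threshold to depend on $\pi$ and $X$, composing these thresholds causes no difficulty, and the uniformity of $\alpha'$ and $\mu'$ over all admissible $\pi$ (for the given $X$) is preserved because $\alpha' = \alpha/a$ and $\mu' = \mu/(2a)$ only rescale the constants coming from $X$.
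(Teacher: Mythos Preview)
Your argument is correct. The paper itself does not give a proof of this lemma, calling it an ``easy observation, which we state without proof,'' and your reindexing argument is precisely the natural way to verify it: pull back the support condition via $\ell' \leq a\ell + b$, push forward the growth condition on $\pi$ via the same inequality, and absorb the resulting constants into the ``for $n$ large enough'' clause.
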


\subsection{Case of $\mathrm{Sp}(2,\R)$ and its universal cover}
The case of $\mathrm{Sp}(2,\R)$ is a consequence of Theorem \ref{thm:decay_of_coefficients_Sp2} and of the results from \cite{delasalle1}. Indeed, Proposition 3.2 in \cite{delasalle1} and Lemma \ref{lemma=estimate_T_theta_in_schatten_classes} and \ref{lemma=estimate_S_theta_in_schatten_classes} imply that for every Banach space $X$ in $\mathcal E_{10}$, there exist constants $C_1>0,C_2>0,s_1>0,s_2>0$ such that \eqref{eq:assumption_on_X_SO2} and \eqref{eq:assumption_on_X_U1} hold. 
\begin{prop}\label{prop=strong_T_Sp2} The group $\mathrm{Sp}(2,\R)$ satisfies (*) for every length function $\ell$.
\end{prop}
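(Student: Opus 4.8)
The plan is to deduce property (*) for $(\mathrm{Sp}(2,\R),\ell)$ from the decay estimate of Theorem \ref{thm:decay_of_coefficients_Sp2}, following the same scheme that Lafforgue used for $\mathrm{SL}(3,\R)$. First I would reduce to a convenient length function: by Lemma \ref{lemma=str_T_not_depend_onl} it is enough to treat a single $\ell$ up to the equivalence $\ell'\le a\ell+b$, so I would take $\ell$ to be (comparable to) the function $g\mapsto\beta$ where $g\in KD(\beta,\gamma)K$, i.e. the Cartan projection onto the first coordinate; any length function on $\mathrm{Sp}(2,\R)$ is dominated by an affine function of this one (the group is generated by a compact neighbourhood of $K$ together with a one-parameter diagonal subgroup, and $\ell(kD(\beta,\gamma)k')\lesssim\beta+1$). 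With this choice the growth hypothesis $\sup_g e^{-\alpha\ell(g)}\|\pi(g)\|<\infty$ becomes exactly a bound of the form \eqref{eq:assumption_on_pi} (with $s$ proportional to $\alpha$), provided $\alpha$ is chosen small enough that $s<s_-(s_1,s_2)$; here the constants $s_1,s_2,C_1,C_2$ come, for $X\in\mathcal E_{10}$, from Lemma \ref{lemma=estimate_T_theta_in_schatten_classes}, Lemma \ref{lemma=estimate_S_theta_in_schatten_classes} and \cite[Proposition 3.2]{delasalle1}, so that \eqref{eq:assumption_on_X_SO2} and \eqref{eq:assumption_on_X_U1} hold.

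Next I would assemble the measures $m_n$. Using the averaging operator over $K$ we may assume the restriction of $\pi$ to $K$ is isometric. Decomposing a vector $\xi\in X$ and a functional $\eta\in X^*$ into $K$-types via the Peter--Weyl Theorem \ref{thm=peterWeyl}, a matrix coefficient $g\mapsto\langle\pi(g)\xi,\eta\rangle$ is, after applying the averaging $\int_{\R/\pi\Z}e^{-2imt}\langle\pi(v_tgv_{-t})\cdot,\cdot\rangle\,dt/\pi$ over the circle action $t\cdot g=v_tgv_{-t}$, controlled by $\varepsilon(g)$ with $\varepsilon\in C_0(G)$ satisfying $\varepsilon(kD(\beta,\gamma)k')\le e^{-c\beta}$, for the nontrivial $K$-types, and converges to a scalar $c_{\xi,\eta}$ at rate $\varepsilon(g)$ for the trivial $K$-type with $m=0$. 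The map $(\xi,\eta)\mapsto c_{\xi,\eta}$ is bilinear and bounded, hence defines a rank-controlled operator $P\in B(X)$; one checks in the standard way that $P$ is $G$-equivariant, idempotent, and has range $X^G$ (the $\mathrm{SU}(2)$-invariance together with the circle averaging kills everything except genuine $G$-invariant vectors — this is where one uses that the coefficient estimate \eqref{eq:nonequivariant_coefficients_Sp2} forces the "off-diagonal" pieces to vanish in the limit). I would then take $m_n$ to be, e.g., the normalized $K$-bi-invariant average over the ``ball'' $KD(\beta,\gamma)K$ with $\beta\le n$ together with an averaging over the circle action; the point is that $\pi(m_n)\xi-P\xi$ is expressed as an integral of coefficients over the region $\{\beta>n\}$ (or a tail of $\Delta_l$'s), which by the bound $\varepsilon(kD(\beta,\gamma)k')\le e^{-c\beta}$ is $\lesssim e^{-cn}$ once $c$ is positive; this yields the exponential convergence $\|\pi(m_n)-P\|\le e^{-\mu n}$ with $\mu$ a fixed multiple of $c$, hence of $\alpha$ and of $P(s)/(s_1+s_2-s)$.

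The main obstacle, as usual in this circle of arguments, is not the decay estimate itself (which is Theorem \ref{thm:decay_of_coefficients_Sp2}) but the bookkeeping needed to pass from a pointwise bound on averaged matrix coefficients to a norm bound on the operators $\pi(m_n)-P$ on the whole Banach space $X$: one must choose $m_n$ (a combination of a Cartan radial average and the $\R/\pi\Z$-circle average coming from the action $t\cdot g=v_tgv_{-t}$) so that $\pi(m_n)$ really does only ``see'' coefficients between vectors of definite $K$-type, handle the infinitely many $K$-types uniformly — here one uses that $\varepsilon$ in Theorem \ref{thm:decay_of_coefficients_Sp2} does not depend on $V$ or $m$, together with the fact that $\pi(m_n)$ converges on the dense subspace $X_{\mathrm{finite}}$ and is uniformly bounded — and verify that the limit operator $P$ is genuinely a projection onto $X^G$ rather than onto some larger $K$-invariant subspace, which again uses the vanishing in \eqref{eq:nonequivariant_coefficients_Sp2}. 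I expect that, as in \cite{delasalle1}, this packaging is where the real work lies, while the novelty of the present section is entirely contained in the input decay estimate for $\mathrm{Sp}(2,\R)$.
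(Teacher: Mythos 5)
Your overall strategy is the paper's: reduce via Lemma \ref{lemma=str_T_not_depend_onl} to the length function $\ell(kD(\beta,\gamma)k')=\beta$, obtain \eqref{eq:assumption_on_X_SO2} and \eqref{eq:assumption_on_X_U1} for $X\in\mathcal E_{10}$ from Lemmas \ref{lemma=estimate_T_theta_in_schatten_classes} and \ref{lemma=estimate_S_theta_in_schatten_classes} together with \cite[Proposition 3.2]{delasalle1}, and feed Theorem \ref{thm:decay_of_coefficients_Sp2} into $H$-bi-invariant, circle-averaged measures. But the steps you dismiss as ``standard bookkeeping'' are precisely the content of the proof, and your concrete suggestions for them do not work as stated. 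First, with $m_n$ a normalized \emph{ball} average over $\{\beta\le n\}$, the difference $\pi(m_n)-P$ is an integral over the ball, not over $\{\beta>n\}$: the integrand is only $O(e^{-c\beta})$, which is not $O(e^{-cn})$ on most of the ball, so your claimed rate needs an additional Haar-volume concentration argument you do not supply. The paper instead uses the ``sphere'' measure $m_n(f)=\int_{H\times H\times\R/\pi\Z}f(hv_tD(n,0)v_{-t}h')\,dh\,dh'\,\frac{dt}{\pi}$, for which $\|\pi(m_n)-P\|\lesssim\varepsilon(D(n,0))$ follows directly from \eqref{eq:equivariant_coefficients_Sp2} and Remark \ref{rem=precise_value_epsilon}, using that the constant $C$ in Theorem \ref{thm:decay_of_coefficients_Sp2} is uniform over unit vectors $\xi\in X^H$, $\eta\in(X^*)_V$; note also that property (*) requires $m_n$ to be symmetric, which for this sphere measure is checked via $D(n,0)^{-1}=\iota(h_0)D(n,0)\iota(h_0)^{-1}$ with $h_0=\mathrm{diag}(i,-i)\in\SU$. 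Second, a bounded bilinear form $(\xi,\eta)\mapsto c_{\xi,\eta}$ on $X\times X^*$ only defines an operator into $X^{**}$; since $\mathcal E_{10}$ contains non-reflexive spaces, $P$ must be obtained as the \emph{norm} limit of the $\pi(m_n)$ (uniform Cauchy in $B(X)$), which again rests on that uniformity in $\xi,\eta$.

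The genuine gap is the identification of the range of $P$ with $X^G$ and the idempotence of $P$: there is no ``standard way'' here, because $\pi$ is neither unitary nor uniformly bounded, so none of the usual adjoint or orthogonality tricks apply. The paper's argument has three steps you omit: (i) the convolution identity $m_g\ast m_{g'}(f)=\int_{H\times\R/\pi\Z}m_{g h v_t g' v_{-t}}(f)\,dh\,\frac{dt}{\pi}$, combined with the decay, gives $\pi(m_g)P=P$ for every $g$, and letting $g\to\infty$ yields $P^2=P$; (ii) for $\xi=P\xi$ and any irreducible $V$ and $m\in\Z$ with $(V,m)$ not trivial, \eqref{eq:nonequivariant_coefficients_Sp2} forces $\int_{\R/\pi\Z}e^{-2imt}\langle\pi(v_tgv_{-t})\xi,\eta\rangle\,\frac{dt}{\pi}=0$, so Fourier analysis in $t$ together with the Peter--Weyl Theorem \ref{thm=peterWeyl} shows that $\pi(v_tgv_{-t})\xi$ is $H$-invariant and independent of $t$, whence $\pi(g)\xi=\pi(m_g)\xi$; (iii) combining with $\pi(m_g)P=P$ gives $\pi(g)\xi=\xi$, i.e.\ $\mathrm{Ran}(P)\subset X^G$ (the reverse inclusion being clear). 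You correctly name \eqref{eq:nonequivariant_coefficients_Sp2} as the relevant estimate, but without the convolution identity (needed both for idempotence and for step (iii)) and the Fourier/Peter--Weyl step, the conclusion that $P$ is a projection onto $X^G$ is not reached; as written, your argument would not exclude that $P$ projects onto a strictly larger subspace of $H$-invariant vectors.
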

\begin{proof}
Let $\ell\colon\mathrm{Sp}(2,\mathbb{R}) \rightarrow \mathbb{R}_{+}$ be the length function given by $\ell(kD(\beta,\gamma)k')=\beta$, where $\beta \geq \gamma \geq 0$ and $k,k' \in K$. Equivalently, $\ell(g) = \log \|g\|$, where the norm is the usual norm of a linear map acting on Euclidean $\R^4$. By Lemma \ref{lemma=str_T_not_depend_onl}, we only have to prove (*) for this length function because any other length function $\ell'$ satisfies $\ell' \leq a\ell +b$ for some $a,b$. We will prove (*) with 
\[ m_n(f) = \int_{H \times H \times \R/\pi \Z} f(hv_t D(n,0) v_{-t}h') dh dh' \frac{dt}{\pi}.\]
We can write $D(n,0)^{-1} = D(-n,0) = \iota(h) D(n,0) \iota(h^{-1})$ with $h = \begin{pmatrix} i & 0 \\ 0 & -i \end{pmatrix} \in \SU$. As a consequence, $H D(n,0) H = H D(n,0)^{-1} H$, and, hence, $m_n$ is a symmetric measure. 

Let $X \in \mathcal E_{10}$. As explained, there exist $C_1>0,C_2>0,s_1,s_2>0$ such that \eqref{eq:assumption_on_X_SO2} and \eqref{eq:assumption_on_X_U1} hold. With the notation of Section \ref{section=sp2}, let $\alpha<s_-(s_1,s_2)$, and let $\pi$ be a continuous representation of ${\mathrm{Sp}}(2,\R)$ on $X$ satisfying $\sup_g e^{-\alpha \ell(g)}\|\pi(g)\|_{B(X)} <\infty$. From \eqref{eq:equivariant_coefficients_Sp2} in Theorem \ref{thm:decay_of_coefficients_Sp2} and from Remark \ref{rem=precise_value_epsilon}, we obtain that $\pi(m_n)$ converges in the norm of $B(X)$ to an operator $P \in B(X)$, and 
\[\|\pi(m_n) - P\| \lesssim \exp\left( -\frac{P_{s_1,s_2}(\alpha)}{s_1+s_2-\alpha} n \right),\]
which is less than $e^{-\mu n}$ for $n$ large enough, provided that $\mu<\frac{P_{s_1,s_2}(\alpha)}{s_1+s_2-\alpha}$. It remains to show that $P$ is a projection on $X^{\mathrm{Sp}(2,\mathbb{R})}$. For this we will use \eqref{eq:nonequivariant_coefficients_Sp2}. Firstly, note that $P$ is the limit of $ \pi(m_g)$ for $g \to \infty$, where 
\[ m_g(f) = \int_{H \times H \times \R/\pi \Z} f(v_t h g h' v_{-t}) dh dh' \frac{dt}{\pi}.\]
Writing 
\[ m_{g} \ast m_{g'}(f) = \int_{H \times \R/\pi \Z} m_{g h v_t g' v_{-t}}(f) dh \frac{dt}{\pi},\]
we get that
\begin{align*} \pi(m_g) P &= \lim_{g' \to \infty} \pi(m_g \ast m_{g'}) \\ &= \lim_{g' \to \infty}\int_{H \times \R/\pi \Z} \pi(m_{g h v_u g' v_{-u}}) dh \frac{dt}{\pi} =P.\end{align*}
\[ \pi(m_g) P = \lim_{g' \to \infty} \pi(m_g \ast m_{g'}) = \lim_{g' \to \infty}\int_{H \times \R/\pi \Z} \pi(m_{g h v_u g' v_{-u}}) dh \frac{dt}{\pi} =P.\]

In particular, taking $g \to \infty$, we see that $P$ is a projection. It is clear that $X^{\mathrm{Sp}(2,\mathbb{R})} \subset P(X)$. Let us prove the converse. Let $\xi=P \xi$ be in the range of $P$ and $g \in \mathrm{Sp}(2,\mathbb{R})$. Let $V$ be an irreducible unitary representation of $H$, let $\eta \in (X^*)_V$, and let $m \in \Z$. If $m \neq 0$ or $V$ is nontrivial, we have, by  \eqref{eq:nonequivariant_coefficients_Sp2},
\begin{align*} \int_{\R/\pi\Z} &e^{-imt}\langle \pi(v_tg v_{-t})\xi,\eta\rangle \frac{dt}{\pi} \\ &= \lim_{g' \to \infty} \int_{\R/\pi \Z \times H} \int_{\R/\pi \Z} e^{-imt}  \langle \pi(v_t(g v_u h g' v_{-u}) v_{-t})\xi,\eta\rangle \frac{dt}{\pi} dh \frac{du}{\pi} = 0.\end{align*}
If $V$ is nontrivial, then this implies that all Fourier coefficients of the continuous function $t\in \R/\pi \Z \mapsto\langle \pi(v_t g v_{-t}) \xi,\eta\rangle$ vanish, meaning that $\langle \pi(v_t g v_{-t}) \xi,\eta\rangle = 0$. If $V$ is trivial, $\langle \pi(v_t g v_{-t}) \xi,\eta\rangle$ does not depend on $t$. By Theorem \ref{thm=peterWeyl}, this implies that $\pi(v_t g v_{-t}) \xi$ is an $H$-invariant vector not depending on $t$. By integrating over $H$ and $\R/\pi \Z$, we obtain
\[ \pi(g) \xi = \int_{H \times \R/\pi \Z} \pi(v_t h g v_{-t}) \xi \frac{dt}{\pi} dh = \pi(m_g) \xi.\]
We conclude that $\pi(g) \xi = \xi$ from the already proved equation $\pi(m_g) P=P$. This proves that $\xi \in X^{\mathrm{Sp}(2,\mathbb{R})}$, which finishes the proof.
\end{proof}
\begin{prop}\label{prop=strong_T_univcover} The group $\widetilde{\mathrm{Sp}}(2,\R)$ satisfies (*) for every length function $\ell$.
\end{prop}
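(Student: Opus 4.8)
The plan is to run the proof of Proposition~\ref{prop=strong_T_Sp2} essentially word for word, with Theorem~\ref{thm:decay_of_coefficients_Sp2_bis} playing the role of Theorem~\ref{thm:decay_of_coefficients_Sp2}; the one structural novelty is that $\widetilde K$ is noncompact, so that the function $\varepsilon$ provided by Theorem~\ref{thm:decay_of_coefficients_Sp2_bis} only lies in $C(\widetilde G)$ (it grows like $e^{\kappa|\Phi(g)|}$ along $\{\widetilde v_t\}$) rather than in $C_0(\widetilde G)$. By Lemma~\ref{lemma=str_T_not_depend_onl} it suffices to prove (*) for the length function $\ell(g)=\log\|\sigma(g)\|+|\Phi(g)|+c_0$, where $c_0$ is a constant making $\ell$ subadditive: indeed $g\mapsto\log\|\sigma(g)\|$ is subadditive and symmetric (the singular values of a symplectic matrix come in reciprocal pairs), $\Phi$ is odd (clear from the Rawnsley realization recalled in the proof of Lemma~\ref{lemma=phi_quasimorphism}), and the quasi-morphism inequality \eqref{eq=phi_quasimorphism} absorbs the defect of $|\Phi|$. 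Every other length function $\ell'$ on $\widetilde G$ is dominated by $a\ell+b$, since $\ell'$ is bounded on the compact group $\widetilde H$, grows at most linearly on $\{\widetilde v_t\}$ and on $\widetilde A$, and $|t|,|s|\le|\Phi(g)|+\pi$ in the KAK decomposition of $\widetilde G$. Note that $\ell(\widetilde D(\beta,\gamma))=\beta+O(1)$ and $\ell(\widetilde v_t)=|t|+O(1)$, because $\Phi$ vanishes on $\widetilde D(\beta,\gamma)$ (for $\beta\ge\gamma\ge 0$) and $\Phi(\widetilde v_t)=t$, while $\widetilde H$ and $\{v_t\}$ map into $K\subset O(4)$.

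For the measures I take
\[
 m_n(f)=\int_{\widetilde H\times\widetilde H\times\R/\pi\Z} f\big(\widetilde h_1\widetilde v_t\widetilde D(n',0)\widetilde v_{-t}\widetilde h_2\big)\,d\widetilde h_1\,d\widetilde h_2\,\frac{dt}{\pi},
\]
with $n'=n-C$ for a constant $C$ depending only on $c_0$ and the bound in \eqref{eq=phi_quasimorphism}. Since $\Phi$ vanishes on $\widetilde H$ and on $\widetilde D(n',0)$ and $\Phi(\widetilde v_t)=t$, iterating \eqref{eq=phi_quasimorphism} gives $|\Phi|=O(1)$ on the support of $m_n$, whose $\log\|\sigma(\cdot)\|$-coordinate is identically $n'$; hence $\operatorname{supp} m_n\subset\{\ell\le n\}$. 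Symmetry of $m_n$ follows, as in Proposition~\ref{prop=strong_T_Sp2}, from the identity $\widetilde D(n',0)^{-1}=\widetilde\iota(h_0)\widetilde D(n',0)\widetilde\iota(h_0^{-1})$ with $h_0=\diag(i,-i)\in\SU$, which lifts from $\mathrm{Sp}(2,\R)$ by continuity in $n'$. Given $X\in\mathcal E_{10}$, Proposition~3.2 of~\cite{delasalle1} together with Lemmas~\ref{lemma=estimate_T_theta_in_schatten_classes} and~\ref{lemma=estimate_S_theta_in_schatten_classes} yields $s_1,s_2>0$ so that \eqref{eq:assumption_on_X_SO2} and \eqref{eq:assumption_on_X_U1} hold; fix $\alpha<s_-(s_1,s_2)$. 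If $\sup_g e^{-\alpha\ell(g)}\|\pi(g)\|<\infty$, then the estimates $\ell(\widetilde D(\beta,\gamma))=\beta+O(1)$ and $\ell(\widetilde v_t)=|t|+O(1)$ show that $\pi$ satisfies \eqref{eq:assumption_on_pi_bis} with $s=\alpha$ and \eqref{eq:assumption_on_pi_ter} with $\kappa=\alpha$, and after replacing the norm of $X$ by $\int_{\widetilde H}\|\pi(\widetilde h)\cdot\|\,d\widetilde h$ we may apply Theorem~\ref{thm:decay_of_coefficients_Sp2_bis}.

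Because $m_n$ is bi-$\widetilde H$-invariant, $\pi(m_n)=P_{\widetilde H}\pi(m_n)P_{\widetilde H}$ with $P_{\widetilde H}=\int_{\widetilde H}\pi(\widetilde h)\,d\widetilde h$ a norm-one projection onto $X^{\widetilde H}$; hence $\|\pi(m_n)-\pi(m_{n''})\|$ is controlled by the numbers $\langle\pi(m_n)\xi,\eta\rangle$ with $\xi$ in the unit ball of $X^{\widetilde H}$ and $\eta$ in that of the range of $P_{\widetilde H}^{*}$, i.e.\ $\eta\in(X^*)_V$ with $V$ trivial. For such $\xi,\eta$, using that $\widetilde v_t$ commutes with $\widetilde H$ one has $\langle\pi(m_n)\xi,\eta\rangle=\int_{\R/\pi\Z}\langle\pi(\widetilde v_t\widetilde D(n',0)\widetilde v_{-t})\xi,\eta\rangle\frac{dt}{\pi}$, which by \eqref{eq:equivariant_coefficients_Sp2_bis} lies within $C\varepsilon(\widetilde D(n',0))\le Ce^{-cn'}$ of some $c_{\xi,\eta}\in\C$ (the constant $C$ being independent of $\xi,\eta$). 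Thus $\pi(m_n)$ converges in norm to an operator $P$ with $\|\pi(m_n)-P\|\lesssim e^{-cn'}\le e^{-\mu n}$ for $n$ large and any $\mu<c$. It remains to see that $P$ projects onto $X^{\widetilde G}$. Put $m_g(f)=\int_{\widetilde H^2\times\R/\pi\Z}f(\widetilde v_t\widetilde h_1 g\widetilde h_2\widetilde v_{-t})\,d\widetilde h_1\,d\widetilde h_2\,\frac{dt}{\pi}$, so that $m_n=m_{\widetilde D(n',0)}$, and the same bookkeeping as in Proposition~\ref{prop=strong_T_Sp2} (again using that $\widetilde v_s$ commutes with $\widetilde H$) gives $m_g\ast m_{g'}=\int_{\widetilde H\times\R/\pi\Z}m_{g\widetilde h\widetilde v_u g'\widetilde v_{-u}}\,d\widetilde h\,\frac{du}{\pi}$. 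This is exactly where the noncompactness of $\widetilde K$ bites, and it is the step I expect to be the only real obstacle: one cannot let $g'\to\infty$ freely, since $\varepsilon\notin C_0(\widetilde G)$. Instead I specialise $g'=\widetilde D(N,0)$ and let $N\to\infty$; then \eqref{eq=phi_quasimorphism} gives $\Phi(g\widetilde h\widetilde v_u\widetilde D(N,0)\widetilde v_{-u})=\Phi(g)+O(1)$ uniformly in $\widetilde h,u,N$, while $\log\|\sigma(\cdot)\|\ge N-\log\|\sigma(g)\|-O(1)$, so $\varepsilon(g\widetilde h\widetilde v_u\widetilde D(N,0)\widetilde v_{-u})\le e^{\kappa(|\Phi(g)|+O(1))-c(N-O(1))}\to 0$ uniformly. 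Integrating the norm estimate $\|\pi(m_{g\widetilde h\widetilde v_u\widetilde D(N,0)\widetilde v_{-u}})-P\|\lesssim\varepsilon(\cdots)$ over $(\widetilde h,u)$ yields $\pi(m_g)P=\lim_N\pi(m_g\ast m_{\widetilde D(N,0)})=P$; in particular $P^2=P$, and $X^{\widetilde G}\subset P(X)$ because every $m_n$ is a probability measure.

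For the reverse inclusion, let $\xi=P\xi$; then $\xi\in X^{\widetilde H}$ (as $P=P_{\widetilde H}PP_{\widetilde H}$) and $\pi(m_g)\xi=\pi(m_g)P\xi=\xi$ for every $g$. Fix $g\in\widetilde G$, an irreducible unitary $V$ of $\widetilde H\cong\SU$, $\eta\in(X^*)_V$ and $m\in\Z$ with $V$ nontrivial or $m\ne 0$. Substituting $\xi=\pi(m_{\widetilde D(N,0)})\xi$ and unwinding the integral (once more using that $\widetilde v_s$ commutes with $\widetilde H$), $\int_{\R/\pi\Z}e^{-2imt}\langle\pi(\widetilde v_t g\widetilde v_{-t})\xi,\eta\rangle\frac{dt}{\pi}$ becomes an average over $(\widetilde h,u)\in\widetilde H\times\R/\pi\Z$ of the left-hand side of \eqref{eq:nonequivariant_coefficients_Sp2_bis} evaluated at $g\widetilde h\widetilde v_u\widetilde D(N,0)\widetilde v_{-u}$, hence is $O(e^{-cN})$ by the same $\varepsilon$-bound as above and therefore vanishes. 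By the Peter--Weyl Theorem~\ref{thm=peterWeyl} this forces the $\pi$-periodic map $t\mapsto\pi(\widetilde v_t g\widetilde v_{-t})\xi$ to be $X^{\widetilde H}$-valued and constant, hence equal to $\pi(g)\xi$; combining this with $\pi(m_g)\xi=\int_{\R/\pi\Z}P_{\widetilde H}\pi(\widetilde v_t g\widetilde v_{-t})\xi\,\frac{dt}{\pi}=\pi(g)\xi$ and with $\pi(m_g)\xi=\xi$ gives $\pi(g)\xi=\xi$. As $g$ was arbitrary, $\xi\in X^{\widetilde G}$, which completes the proof. In short, every passage to infinity must be steered along the $\widetilde A$-direction $\widetilde D(N,0)$, where $\Phi$ stays bounded under the relevant products; everything else is a transcription of the $\mathrm{Sp}(2,\R)$ argument.
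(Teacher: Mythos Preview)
Your proof is correct and follows essentially the same approach as the paper's: the key point in both is that one must send $g'\to\infty$ along $\widetilde D(N,0)$, where the quasi-morphism inequality \eqref{eq=phi_quasimorphism} keeps $|\Phi|$ bounded on the products $g\widetilde h\widetilde v_u\widetilde D(N,0)\widetilde v_{-u}$, so that the bound $\varepsilon(g)\le e^{\kappa|\Phi(g)|-c\beta}$ still tends to zero. The only difference is organizational: the paper works directly with an arbitrary length function and finds a scaling constant $\delta$ so that $\operatorname{supp}m_n\subset\{\ell\le n\}$, whereas you first reduce via Lemma~\ref{lemma=str_T_not_depend_onl} to the explicit length $\ell(g)=\log\|\sigma(g)\|+|\Phi(g)|+c_0$ and take $n'=n-C$. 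One small inaccuracy: $\Phi$ is not literally odd in the Rawnsley model (the inverse involves $\eta(g,g^{-1})$), but the quasi-morphism bound gives $|\Phi(g)+\Phi(g^{-1})|<\pi/2$, which is all you need and is absorbed by your constant $c_0$.
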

\begin{proof}
A minor difference with the case of $\mathrm{Sp}(2,\R)$ is that we do not have a favorite length function on $\widetilde{\mathrm{Sp}}(2,\R)$, so we consider an arbitrary length function $\ell$. Then there exists a $\delta>0$ such that $\ell(\widetilde h\widetilde{v}_t\widetilde D(\lfloor \delta n \rfloor,0)\widetilde{v}_{-t}\widetilde{h}')\leq n$ for all $n$ large enough, all $\widetilde h,\widetilde h' \in \widetilde H$ and $t \in \R/\pi\Z$. Consider the measure $m_n$ on $\widetilde{\mathrm{Sp}}(2,\mathbb{R})$ given by
\[ m_n(f) = \int_{\widetilde H \times \widetilde H \times \R/\pi \Z} f(\widetilde h\widetilde v_t \widetilde D(\lfloor  \delta n \rfloor,0) \widetilde v_{-t}\widetilde h') dh dh' \frac{dt}{\pi}.\]
This measure $m_n$ is symmetric. By our choice of $\delta$, the measure $m_n$ has support in $\{g \mid \ell(g) \leq n\}$ for $n$ large enough. Using Theorem \ref{thm:decay_of_coefficients_Sp2_bis}, the proof is now essentially the same as for $\mathrm{Sp}(2,\mathbb{R})$, except that we have to be careful because the function $\varepsilon$ appearing in Theorem \ref{thm:decay_of_coefficients_Sp2_bis} does not belong to $C_0(\widetilde{\mathrm{Sp}}(2,\mathbb{R}))$. However, it will be sufficient that for each $C>0$,
\begin{equation}\label{eq=epsilon_enough_C0}
\varepsilon \in C_0\left(\{g \in \widetilde{\mathrm{Sp}}(2,\mathbb{R}) \mid |\Phi(g)|\leq C\}\right).\end{equation} Let $X \in \mathcal E_{10}$. As for $\mathrm{Sp}(2,\mathbb{R})$, there exist $\alpha, \mu>0$ such that if $\pi$ is a representation on $X$ satisfying $\sup_g e^{-\alpha \ell(g)} \|\pi(g)\|<\infty$, then $\pi(m_n)$ converges in norm to some operator $P \in B(X)$ and $\|\pi(m_n) - P\| \leq e^{-\mu n}$ for all $n$ large enough. This is because $\Phi(g) = 0$ on the support of $m_n$. More generally, by \eqref{eq=epsilon_enough_C0}, we have $P = \lim_{n \to \infty} \pi(m_{g_n})$ for every sequence $g_n$ going to infinity and such that $\sup_n \Phi(g_n)<\infty$, where
\[ m_g(f) = \int_{\widetilde H \times \widetilde H \times \R/\pi \Z} f(\widetilde{h}\widetilde{v}_t g \widetilde{v}_{-t}\widetilde{h}') d\widetilde{h} d\widetilde{h}' \frac{dt}{\pi}.\]
We show that $P$ is a projection on the invariant vectors. Using the formula
\[ m_{g} \ast m_{g'}(f) = \int_{\widetilde H \times \R/\pi \Z} m_{g \widetilde h \widetilde v_t g' \widetilde v_{-t}}(f) d\widetilde{h} \frac{du}{\pi},\]
we get, for $g'=D(\lfloor cn\rfloor,0)$ and $n \to \infty$, that $\pi(m_g) P = P$. To justify this, by \eqref{eq=epsilon_enough_C0} we have to show that for a fixed $g \in \widetilde{\mathrm{Sp}}(2,\mathbb{R})$,
\[\sup_n \sup_{\widetilde h \in \widetilde H, t \in [0,\pi]} |\Phi( g \widetilde h \widetilde v_t \widetilde D (\lfloor cn\rfloor,0) \widetilde v_{-t})|<\infty .\] This follows from \eqref{eq=phi_quasimorphism}, which implies that $|\Phi( g \widetilde h \widetilde v_t \widetilde D (\lfloor cn\rfloor,0) \widetilde v_{-t})| \leq \frac{\pi}{2} + |\Phi( g)| + |\Phi(\widetilde h \widetilde v_t \widetilde D (\lfloor cn\rfloor,0) \widetilde v_{-t})| = \frac{\pi}{2} + |\Phi( g)|$. This shows that $P$ is a projection, and the same proof as for $\textrm{Sp}(2,\R)$ shows that its range is exactly the space of invariant vectors. 
\end{proof}

\subsection{Proof of Theorem \ref{thm=maintheorem}}
To prove Theorem \ref{thm=maintheorem}, the first step is to consider the case of connected simple Lie groups locally isomorphic to $\mathrm{SL}(3,\R)$ or $\mathrm{Sp}(2,\R)$. For this we need the following lemma.
\begin{lemma}\label{lemma=str_T_passes_to_finite_extensions} Let $G$ and $H$ be locally compact groups such that $G$ is a finite extension of $H$. Then $G$ has (*) for every length function $\ell$ if and only if $H$ has (*) for every length function $\ell'$.
\end{lemma}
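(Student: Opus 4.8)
The plan is the following. Throughout, I read ``$G$ is a finite extension of $H$'' as: $G$ has a finite normal subgroup $F$ with $H=G/F$. Write $q\colon G\to H$ for the quotient map; since $F$ is finite, $q$ is a finite covering map, hence proper, open, and admits a Borel section. Set $\lambda_F=\frac1{|F|}\sum_{f\in F}\delta_f$, and for a given length function $\ell$ on $G$ put $D=\max_{f\in F}\ell(f)$.

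First I would dispatch the easy implication, ``$G$ has (*) $\Rightarrow$ $H$ has (*)''. Given a length function $\ell'$ on $H$, apply the hypothesis to the length function $\ell=\ell'\circ q$ on $G$, obtaining measures $(m_n)$, and set $\bar m_n=q_\ast m_n$. These are symmetric, compactly supported, and supported in $\{\ell'\le n\}$ because $\mathrm{supp}\,m_n\subseteq\{\ell'\circ q\le n\}$. For $X\in\mathcal E_{10}$ and a representation $\rho\colon H\to B(X)$ of small enough exponential growth with respect to $\ell'$, the pullback $\pi=\rho\circ q$ has the same growth with respect to $\ell$, satisfies $\pi(m_n)=\rho(\bar m_n)$ and $X^{\pi(G)}=X^{\rho(H)}$, so applying (*) for $(G,\ell)$ to $\pi$ gives (*) for $(H,\ell')$ applied to $\rho$, with identical constants.

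For the converse, fix a length function $\ell$ on $G$ and set $\ell_H(h)=\min\{\ell(g):q(g)=h\}$; using minimising lifts, $\ell_H$ is a length function on $H$ and every $\tilde h\in q^{-1}(h)$ has $\ell(\tilde h)\le\ell_H(h)+D$. Let $(\bar m_n)$ be the measures given by (*) for $(H,\ell_H)$. On $G$ I would take the $F$-averaged lift $m_n(\phi)=\int_H\frac1{|F|}\sum_{g\in q^{-1}(h)}\phi(g)\,d\bar m_n(h)$, equivalently $m_n=\lambda_F\ast\sigma_\ast\bar m_n$ for a Borel section $\sigma$ of $q$. Then $m_n$ is symmetric (since $g\in q^{-1}(h)\Leftrightarrow g^{-1}\in q^{-1}(h^{-1})$ and $\bar m_n$ is symmetric), compactly supported (as $q$ is proper), and supported in $q^{-1}(\mathrm{supp}\,\bar m_n)\subseteq\{\ell\le n+D\}$; reindexing $n\mapsto n-\lceil D\rceil$ puts the support in $\{\ell\le n\}$ without spoiling exponential convergence. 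The crucial step is then to run the argument, for a representation $\pi\colon G\to B(X)$ with $X\in\mathcal E_{10}$ and $\|\pi(g)\|\le Le^{\alpha\ell(g)}$, on the fixed space $X$: set $E=\pi(\lambda_F)=\frac1{|F|}\sum_{f\in F}\pi(f)$, a bounded idempotent which commutes with $\pi(G)$ because $F$ is normal, and define $\rho\colon H\to B(X)$ by $\rho(h)=\pi(\tilde h)E+(1-E)$ for any $\tilde h\in q^{-1}(h)$. The relations $\pi(f)E=E=E\pi(f)$ make this independent of $\tilde h$, and (using $E^2=E$ and $E\pi(g)=\pi(g)E$) one checks that $\rho$ is a representation, continuous because $\rho\circ q$ is and $q$ is a quotient map, with $\|\rho(h)\|\lesssim e^{\alpha\ell_H(h)}$, that $X^{\rho(H)}=X^{\pi(G)}\oplus\ker E$, and that $\rho(\bar m_n)=\pi(m_n)+\bar m_n(H)(1-E)$. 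Applying (*) for $(H,\ell_H)$ to $\rho$ on $X$ yields constants depending only on $X$ and a projection $\bar P$ onto $X^{\rho(H)}$ with $\|\rho(\bar m_n)-\bar P\|\le e^{-\bar\mu n}$, while applying (*) to the trivial representation on $\C$ gives $|\bar m_n(H)-1|\le e^{-\mu'n}$. Since $E$ commutes with $\bar P$ and $X^{\pi(G)}\subseteq\mathrm{Ran}(E)$, the operator $P=\bar P-(1-E)$ is a projection onto $X^{\pi(G)}$, and $\|\pi(m_n)-P\|\le e^{-\bar\mu n}+(1+\|E\|)e^{-\mu'n}\le e^{-\mu n}$ for $n$ large ($\mu<\min(\bar\mu,\mu')$); this is (*) for $(G,\ell)$.

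The hard part, or rather the reason the converse must be set up this way, is the uniformity built into property (*): the constants $\alpha,\mu$ are required to depend on the Banach space alone, not on the representation. The naive reduction — passing to the $F$-fixed subspace $\mathrm{Ran}(E)$ and the honest quotient representation of $H$ on it — fails this, because $\mathrm{Ran}(E)$ varies with $\pi$. Extending the quotient representation by the identity on $\ker E$, so that it is carried by the fixed space $X$, repairs this, and the rest is elementary bookkeeping with the commuting idempotent $E$ and with push-forwards and lifts of measures.
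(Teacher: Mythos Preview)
Your proof is correct and follows essentially the same approach as the paper: push forward measures and pull back representations for the easy direction, and for the converse lift the measures by fiber-averaging while extending the quotient representation of $H$ by the identity on $\ker E$ so that it lives on the fixed space $X$ (exactly the paper's $\widetilde\pi$). Two minor points of comparison: (i) the paper takes $\ell'(h)=\max_{g\in q^{-1}(h)}\ell(g)$ rather than your $\min$, which avoids the $+D$ shift on supports but is otherwise immaterial; (ii) your detour through the trivial representation to control $\bar m_n(H)-1$ is unnecessary, since from $\rho(\bar m_n)=\pi(m_n)+\bar m_n(H)(1-E)$ one gets directly $\pi(m_n)=\rho(\bar m_n)E$ (using $\pi(m_n)E=\pi(m_n)$), whence $\|\pi(m_n)-\bar P E\|\le\|E\|\,\|\rho(\bar m_n)-\bar P\|$ and $\bar P E$ is already the desired projection onto $X^{\pi(G)}$.
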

\begin{proof}
Let $q : G \to H$ be a surjective continuous homomorphism with finite kernel $N = \ker q$. Assume that $G$ has (*) for all $\ell$, and let $\ell'$ be a length function on $H$. Then $\ell' \circ q$ is a length function on $G$. Let $(m_n)$ be a sequence of compactly supported symmetric measures on $G$ given by property (*), and let $\widetilde m_n$ be the image of $m_n$ under $q$. It is immediate that $(\widetilde m_n)$ is a sequence of measures establishing property (*) for $(H,\ell')$. 

Conversely, assume that $H$ has property (*) for all $\ell'$. For every $f \in C_c(G)$, the function $\widetilde f(g) = \frac{1}{|N|} \sum_{n \in N} f(gn)$ is in $C_c(G/N) = C_c(H)$. If $\ell$ is a length function on $G$, the function $\ell'(h) = \max_{g \in q^{-1}(h)} \ell(g)$ is a length function on $H$, so that property (*) gives a sequence $(\widetilde m_n)$ of measures on $H$ with support contained in $\{h \mid \ell'(h) \leq n\}$. Define a measure $m_n$ on $G$ by $\int f d m_n = \int \widetilde f d \widetilde m_n$. It is symmetric and its support is contained in $\{g \mid \ell(g) \leq n\}$. If $\pi$ is a continuous representation of $G$ on a Banach space $X$, then $\pi(N) = \frac{1}{|N|}\sum_{n \in N} \pi(n)$ is a projection on $X^N$, and $\pi$ induces a representation $\widetilde \pi$ of $H$ on $X^N$, which can be extended to a continuous representation on $X$ by putting $\widetilde \pi(h) (1-\pi(N))=1-\pi(N)$. Moreover, by the definition of $m_n$, we have $\pi(m_n) = \widetilde \pi(\widetilde m_n)Q = Q \widetilde \pi(\widetilde m_n)$. From this, property (*) follows for $(G,\ell)$.
\end{proof}
\begin{lemma}\label{lemma=covering_of_sp2_have_Trenforce}
Let $R$ be a connected real Lie group with Lie algebra isomorphic to $\mathfrak{sp}_2$ or $\mathfrak{sl}_3$. Then $R$ has property (*) for every length function $\ell$.
\end{lemma}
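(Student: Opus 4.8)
\emph{Plan.} The plan is to reduce the statement to the three groups for which property (*) is already available, namely $\mathrm{SL}(3,\R)$ (from the arguments of \cite{lafforguestrengthenedpropertyt} and \cite{delasalle1}), $\mathrm{Sp}(2,\R)$ (Proposition~\ref{prop=strong_T_Sp2}) and $\widetilde{\mathrm{Sp}}(2,\R)$ (Proposition~\ref{prop=strong_T_univcover}), by repeated use of Lemma~\ref{lemma=str_T_passes_to_finite_extensions}. The first step is to recall the classification of connected Lie groups with a fixed semisimple Lie algebra $\mathfrak g$: each such group is isomorphic to $\widetilde{\mathcal G}/\Lambda$, where $\widetilde{\mathcal G}$ is the simply connected Lie group with Lie algebra $\mathfrak g$ and $\Lambda$ is a discrete subgroup of $Z(\widetilde{\mathcal G})$ (a discrete normal subgroup of a connected group is central, and $\Lambda \cong \pi_1(R)$). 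So I would first make the relevant groups $R$ explicit, and then apply the two lemmas to each.

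For $\mathfrak g = \mathfrak{sl}_3$ this is quick: since $\pi_1(\mathrm{SL}(3,\R)) \cong \pi_1(\mathrm{SO}(3)) = \Z/2\Z$ and $Z(\mathrm{SL}(3,\R))$ is trivial, the center of $\widetilde{\mathrm{SL}}(3,\R)$ is $\Z/2\Z$, so the only connected Lie groups with Lie algebra $\mathfrak{sl}_3$ are $\mathrm{SL}(3,\R)$ and its universal cover $\widetilde{\mathrm{SL}}(3,\R)$, and both are finite extensions of $\mathrm{SL}(3,\R)$. Recalling that $\mathrm{SL}(3,\R)$ has property (*) for every length function, Lemma~\ref{lemma=str_T_passes_to_finite_extensions} then finishes this case.

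For $\mathfrak g = \mathfrak{sp}_2$, recall from Section~\ref{subsec:psp2cov} that $Z(\widetilde{\mathrm{Sp}}(2,\R)) = \{\widetilde v_t \mid t \in \pi\Z\} \cong \Z$, so there are countably many such groups $R$. If $R = \widetilde{\mathrm{Sp}}(2,\R)$ we are done by Proposition~\ref{prop=strong_T_univcover}. Otherwise $R = \widetilde{\mathrm{Sp}}(2,\R)/\Lambda$ with $\Lambda$ a nontrivial subgroup of $Z(\widetilde{\mathrm{Sp}}(2,\R)) \cong \Z$; then $\Lambda$ has finite index, so $Z(\widetilde{\mathrm{Sp}}(2,\R))/\Lambda$ is a finite central subgroup of $R$ whose quotient is $\widetilde{\mathrm{Sp}}(2,\R)/Z(\widetilde{\mathrm{Sp}}(2,\R)) = \mathrm{Sp}(2,\R)/\{\pm 1\}$, whence $R$ is a finite extension of $\mathrm{Sp}(2,\R)/\{\pm 1\}$. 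Since $\mathrm{Sp}(2,\R)$ is a two-fold extension of $\mathrm{Sp}(2,\R)/\{\pm 1\}$ and has (*) by Proposition~\ref{prop=strong_T_Sp2}, Lemma~\ref{lemma=str_T_passes_to_finite_extensions} shows $\mathrm{Sp}(2,\R)/\{\pm 1\}$ has (*), and one more application of the lemma gives (*) for $R$.

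I do not expect a real obstacle; the only point requiring care is the bookkeeping in the symplectic case. The universal cover $\widetilde{\mathrm{Sp}}(2,\R)$ has infinite center, so it is not a finite extension of any linear group and genuinely needs Proposition~\ref{prop=strong_T_univcover}, whereas every other connected group with Lie algebra $\mathfrak{sp}_2$ has finite center but, for a quotient by $n\Z$ with $n$ odd and $\geq 3$, is not related to $\mathrm{Sp}(2,\R)$ by a homomorphism with finite kernel in either direction; this is why the reduction has to be routed through the adjoint group $\mathrm{Sp}(2,\R)/\{\pm 1\}$ rather than through $\mathrm{Sp}(2,\R)$ itself. Everything else is immediate from the two lemmas already proved together with the three known cases.
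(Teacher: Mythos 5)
Your argument is correct and follows essentially the same route as the paper: classify $R$ as a quotient of the simply connected group by a central subgroup, invoke Proposition~\ref{prop=strong_T_univcover} for $\widetilde{\mathrm{Sp}}(2,\R)$ itself, and in all other cases pass through finite extensions via Lemma~\ref{lemma=str_T_passes_to_finite_extensions}, using the adjoint group $\widetilde{\mathrm{Sp}}(2,\R)/Z$ as the intermediary between $R$ and $\mathrm{Sp}(2,\R)$ (and $\mathrm{SL}(3,\R)$ directly in the $\mathfrak{sl}_3$ case). Your closing remark explaining why the reduction must go through the adjoint group rather than $\mathrm{Sp}(2,\R)$ itself is a nice clarification, but the substance matches the paper's proof.
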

\begin{proof}
If $R$ is isomorphic to ${\mathrm{Sp}}(2,\R)$ or $\widetilde{\mathrm{Sp}}(2,\R)$, this is Proposition \ref{prop=strong_T_Sp2} or \ref{prop=strong_T_univcover}. If $R$ is isomorphic to ${\mathrm{SL}}(3,\R)$ the lemma follows from \cite[Theorems 1.6 and 4.1]{delasalle1}.

Consider the case when the Lie algebra of $R$ is isomorphic to $\mathfrak{sp}_2$. By \cite[Propositions I.1.100 and I.1.101]{knapp}, $R$ is isomorphic to a quotient of $\widetilde{\mathrm{Sp}}(2,\R)$ by a subgroup of its center, which is $Z=\{\widetilde v_t \mid t\in \pi \Z\}$. Since all the nontrivial subgroups of $Z$ have finite index in $Z$, this implies that $R$ is either isomorphic to $\widetilde{\mathrm{Sp}}(2,\R)$, or it is, as $\mathrm{Sp}(2,\R)$, a finite extension of $\widetilde{\mathrm{Sp}}(2,\R)/Z$. In each case, the result follows, using Lemma \ref{lemma=str_T_passes_to_finite_extensions}. By the same argument and using additionally that the universal covering group of $\mathrm{SL(3,\R)}$ has finite center (of order $2$), we get in the second case that $R$ is isomorphic to a finite extension of $\mathrm{SL}(3,\R)$. This proves the lemma.
\end{proof}
\begin{proof}[Proof of Theorem \ref{thm=maintheorem}]
The proof is the same as the proof of \cite[Corollaire 4.1]{lafforguestrengthenedpropertyt}. Let $G$ be a connected simple Lie group with real rank at least two. Then $G$ has an analytic subgroup $R$ locally isomorphic to $\mathrm{SL}(3,\mathbb{R})$ or $\mathrm{Sp}(2,\mathbb{R})$ (see, e.g., \cite[Theorem 7.2]{boreltits} or \cite[Proposition I.1.6.2]{margulis}). Such a subgroup is closed, as follows from a result of Mostow (see \cite[Corollary 1]{dorofaeff}). Let $\ell$ be a length function on $G$. We will prove property (*) for $(G,\ell)$, using that $R$ has property (*) for the length function $\ell$ restricted to $R$ by Lemma \ref{lemma=covering_of_sp2_have_Trenforce}. Let $X \in \mathcal E_{10}$. Denote by $\mathfrak{g}$ the Lie algebra of $G$ (that we equip with some norm), $\mathrm{Ad}:G \rightarrow \mathrm{Aut}(\mathfrak{g})$ the adjoint representation and $\exp: \mathfrak{g} \to G$ the exponential map. Replacing $\ell(g)$ by $\ell(g) + \log \|\mathrm{Ad}(g)\|$, we can assume that $\|\mathrm{Ad}(g)\| \leq e^{\ell(g)}$ for all $g \in G$.

By Lemma \ref{lemma=covering_of_sp2_have_Trenforce}, we can find $(m_n)$, $\alpha,\mu>0$ establishing property (*) for $R$. Let $s>0$, and let $\pi$ be a continuous representation of $G$ on $X$ satisfying $\|\pi(g)\| \leq L e^{s \ell(g)}$. By Lemma \ref{lemma=covering_of_sp2_have_Trenforce}, if $s<\alpha$, then $\pi(m_n)$ converges in the norm topology of $B(X)$ to a projection $P$ onto the $R$-invariant vectors. We claim that for $s$ small enough, $P$ is a projection on the $G$-invariant vectors. For this we have to show that
\begin{equation}\label{eq=P_projection_on_XG} \pi(g) P x = P x\textrm{ for all }g \in G, x \in X.
\end{equation}
It is sufficient to show \eqref{eq=P_projection_on_XG} for all $x$ in the dense subspace of $X$ consisting of $C^\infty$ vectors, i.e., such that $g \mapsto \pi(g) x$ is $C^\infty$. In particular there is a constant $C_0$ such that $\| \pi(\exp(Z))x-x\| \leq C_0 \|Z\|_{\mathfrak{g}}$ for all $Z$ in the unit ball of $\mathfrak{g}$.

The proof relies on a variant of Mautner's Lemma. Let $a = \exp(A) \in R$ be a semisimple element such that $a \neq 1$, and decompose $\mathfrak{g} = \oplus_{\lambda \in \R} \mathfrak{g^\lambda}$ as eigenspaces of $\mathrm{ad}A$. The Lie algebra generated by $\oplus_{\lambda \neq 0}\mathfrak{g^\lambda}$ is a nonzero ideal of $\mathfrak{g}$, which is $\mathfrak{g}$ by the simplicity assumption. It is therefore sufficient to show \eqref{eq=P_projection_on_XG} when $g = \exp(Y)$ for $Y \in \mathfrak{g}^\lambda$ for some $\lambda \neq 0$. We only prove the case $\lambda>0$, the other being similar.

With these two reductions, we can now prove \eqref{eq=P_projection_on_XG}. Since $Px$ is $R$-invariant, we have for all $n \in \N$,
\begin{eqnarray*} \pi(\exp Y) Px - Px &=& \pi(a^n) \left(\pi(a^{-n} \exp Y a^n) Px - Px\right) \\&=& \pi(a^n)\left(\pi(\exp(e^{-\lambda n} Y))Px-Px \right).\end{eqnarray*}
Take $n' =\lfloor \varepsilon n\rfloor\in \N$ for some fixed $0<\varepsilon< \lambda$, and write $Px = \pi(m_{n'}) x + (P - \pi(m_{n'})) x$. By Lemma \ref{lemma=covering_of_sp2_have_Trenforce}, we have $\|(P-\pi(m_{n'}))x\| \leq e^{-\mu n'}\|x\|$ for $n$ large enough. Hence,
 \begin{multline*} \| \pi(\exp Y) Px - Px\| \leq \|\pi(a^n)\left(\pi(\exp(e^{-\lambda n} Y))\pi(m_{n'})x-\pi(m_{n'})x\right)\|\\ + 2 L e^{-\mu n' + s (n \ell(a) +1)} \|x\|\end{multline*}
for all $n$ large enough. Expanding $\pi(m_{n'})x = \int \pi(g)x d m_{n'}(g)$, we can dominate
\begin{multline*} \|\pi(a^n)\left(\pi(\exp(e^{-\lambda n} Y))\pi(m_{n'})x-\pi(m_{n'})x\right)\| \\\leq \int \| \pi(a^n g) (\pi(\exp(e^{-\lambda n} Ad(g^{-1}) Y)) - 1)x\|dm_{n'}(g).\end{multline*}
But by our assumption on $\ell$, $\|e^{-\lambda n} Ad(g^{-1}) Y\| \leq e^{-\lambda n + \ell(g^{-1})} \|Y\| \leq e^{(-\lambda + \varepsilon) n} \|Y\|$ if $g$ belongs to the support of $m_{n'}$. Since $\varepsilon<\lambda$, this is smaller than $1$ for $n$ large enough, and hence for all $g$ in the support of $m_{n'}$ we can apply the assumption that $x$ is a $C^\infty$ vector and get
\begin{eqnarray*} \| \pi(a^n g) (\pi(\exp(e^{-\lambda n} Ad(g^{-1}) Y)) - 1)x\| &\leq& C_0 \|\pi(a^n g)\| e^{(-\lambda + \varepsilon) n} \|Y\|\\ &\leq& C_0 L e^{(-\lambda + \varepsilon + s(\ell(a) +\varepsilon))n}.\end{eqnarray*} 
This implies that
\[ \| \pi(\exp Y) Px - Px\| \leq C\left(e^{(-\lambda + \varepsilon + s(\ell(a) +\varepsilon))n} + e^{(-\mu \varepsilon + s)n}\right),\]
which goes to zero as $n$ goes to infinity if $s$ is small enough. This proves the theorem.
\end{proof}

\section{Fixed point property} \label{section=fixed_point}
This section is devoted to the proof of Corollary \ref{coro=fixed_point}. Let $X$ be a Banach space, and let $\mathcal{E}$ be a class of Banach spaces containing $X \oplus \mathbb{C}$. As mentioned in the introduction, it was proved by Lafforgue that if a locally compact group $G$ has (T$^{\mathrm{strong}}_{\mathcal{E}}$), then $G$ has property ($\overline{\mathrm{F}}_X$) (and hence property (F$_X$)) \cite{lafforguestrengthenedpropertyt}.

Let $G$ be a connected simple Lie group with real rank at least $2$. Since the class $\mathcal{E}_{10}$ is stable under $X \mapsto X \oplus \C$, Corollary \ref{coro=fixed_point} for $G$ is an immediate consequence of Theorem \ref{thm=maintheorem}. However, strong property (T) is not known to pass to lattices (only to cocompact ones), and an additional argument is needed for non-cocompact lattices. This argument is based on the property of $p$-integrability, which is satisfied by the lattices under consideration. Recall from \cite{shalom} or \cite{baderfurmangelandermonod} that if $0<p<\infty$, a lattice $\Gamma$ in $G$ is $p$-integrable if it is either cocompact or for some (or equivalently any) finite generating set $S$ of $\Gamma$, there is a Borel fundamental domain $\Omega \subset G$ such that 
\[
  \int_\Omega |\chi(g^{-1}h)|_S^p dh < \infty \ \ \ \forall g \in G,
\]
where $|\cdot|_S$ is the word-length associated with $S$ and $\chi\colon G \to \Gamma$ is defined by $\chi^{-1}(1) = \Omega$ and $\chi(g\gamma^{-1})=\gamma\chi(g)$.

It is known that (F$_X$) for $\Gamma$ follows from (F$_{L^p(G/\Gamma;X)}$) for $G$ provided that $p >1$ is such that $\Gamma$ is a $p$-integrable lattice in $G$ \cite[Proposition 8.8]{baderfurmangelandermonod}. Since $\mathcal E_{10}$ is stable under $X \mapsto L^p(G/\Gamma;X)$ for any $1<p<\infty$, Corollary \ref{coro=fixed_point} follows from the following result, which Nicolas Monod kindly explained to us. A similar statement for lattices in $\widetilde{\mathrm{SL}}(2,\mathbb{R})$ that are pull-backs of lattices in $\mathrm{SL}(2,\mathbb{R})$ can be found in \cite{dastessera}. 
\begin{prop} \label{prop=pintegrable}
  Let $G$ be a connected simple Lie group with real rank at least $2$, and let $\Gamma$ be a lattice in $G$. Then $\Gamma$ is $p$-integrable for all $p < \infty$.
\end{prop}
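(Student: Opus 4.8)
The plan is to deduce the statement from two classical facts about lattices in higher rank groups: the Lubotzky--Mozes--Raghunathan theorem, which gives that the inclusion $\Gamma \hookrightarrow G$ is undistorted, and the consequence of reduction theory that one can choose a fundamental domain $\Omega$ whose cuspidal part has exponentially small volume, measured against a fixed left-invariant Riemannian distance $d$ on $G$. First I would record a few reductions. Since $G$ has higher rank it has Kazhdan's property (T), so $\Gamma$ is finitely generated; fix a finite symmetric generating set $S$. If $\Gamma$ is cocompact there is nothing to prove, so assume it is not; then $\Gamma$ is arithmetic by Margulis' theorem. One may moreover assume that $G$ has finite center, since a lattice meets an infinite discrete central subgroup in a subgroup of finite index (otherwise $G/\Gamma$ would have infinite volume), and passing to the corresponding quotient leaves $G/\Gamma$ unchanged and distorts the word metric on $\Gamma$ only in a controlled way --- a central copy of $\Z$ in $\Gamma$ being undistorted, as one sees from the quasi-morphism $\Phi$ of Lemma \ref{lemma=phi_quasimorphism}, which grows linearly in the central direction.

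Next I would fix $\Omega$ (for instance a Dirichlet fundamental domain, or a finite union of Siegel sets), so that reduction theory supplies constants $C, \delta > 0$ with
\[ \mathrm{vol}\{ h \in \Omega : d(e,h) \geq R \} \leq C e^{-\delta R} \quad (R \geq 0), \qquad \text{whence} \qquad \int_\Omega d(e,h)^p \, dh < \infty \text{ for every } p < \infty. \]
Let $\chi : G \to \Gamma$ and $\Omega$ be as in the definition of $p$-integrability ($\chi^{-1}(1) = \Omega$), and let $r : G \to \Omega$ be the reduction map $r(x) = x\chi(x)$. The observation that makes everything work is that, for each fixed $g$, the map $h \mapsto r(g^{-1}h)$ is a measure-preserving self-map of $\Omega$: it factors as the Haar-measure-preserving bijection $h \mapsto g^{-1}h$ from $\Omega$ onto the fundamental domain $g^{-1}\Omega$, followed by the restriction of $r$ to $g^{-1}\Omega$, which is a bijection onto $\Omega$ modulo null sets and which, on each piece $g^{-1}\Omega \cap \Omega\gamma^{-1}$, equals right translation by $\gamma$.

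The main computation would then run as follows. Fix $g \in G$ and $p < \infty$. By the Lubotzky--Mozes--Raghunathan theorem there is $A > 0$ with $|\gamma|_S \leq A\, d(e,\gamma) + A$ for all $\gamma \in \Gamma$. For $h \in \Omega$, put $\gamma = \chi(g^{-1}h)$; then $r(g^{-1}h) = g^{-1}h\gamma \in \Omega$, so $\gamma = h^{-1} g\, r(g^{-1}h)$, and the left-invariance and symmetry of $d$ give
\[ |\chi(g^{-1}h)|_S \leq A\bigl( d(e,g) + d(e,h) + d(e, r(g^{-1}h)) \bigr) + A. \]
Raising to the $p$-th power, integrating over $\Omega$, and using convexity, one obtains
\[ \int_\Omega |\chi(g^{-1}h)|_S^p \, dh \lesssim_p \bigl( 1 + d(e,g)^p \bigr)\mathrm{vol}(\Omega) + \int_\Omega d(e,h)^p \, dh + \int_\Omega d(e, r(g^{-1}h))^p \, dh. \]
Here $\mathrm{vol}(\Omega)$ is the covolume of $\Gamma$, hence finite; the second integral is finite by the reduction-theoretic bound; and the third equals $\int_\Omega d(e,h)^p\, dh$ by the measure-preservation of $h \mapsto r(g^{-1}h)$, hence is finite too. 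This shows $\int_\Omega |\chi(g^{-1}h)|_S^p\, dh < \infty$ for all $g$ and all $p < \infty$, that is, $\Gamma$ is $p$-integrable for every $p < \infty$.

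The crux of the argument --- the only step that is not pure bookkeeping --- is the reduction-theoretic decay estimate for $\mathrm{vol}\{ h \in \Omega : d(e,h) \geq R \}$: this is where arithmeticity and the geometry of Siegel sets enter, and it is also where the infinite-center case requires care, since one must lift a fundamental domain of the finite-center quotient in a way compatible with the distance estimates, using that the covering direction is, up to bounded distortion, the compact ($\widetilde K$-) direction. Everything else reduces to the triangle inequality together with the two measure-preservation statements above.
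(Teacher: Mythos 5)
Your treatment of the finite-centre case is, in outline, a re-derivation of the result that the paper simply cites: for $G$ with finite centre the statement is Shalom's theorem \cite{shalom} (extended to all $p<\infty$ in \cite{baderfurmangelandermonod}), and your route — Lubotzky--Mozes--Raghunathan undistortedness combined with exponential volume decay of a Siegel fundamental domain, plus the correct and rather elegant observation that $h \mapsto r(g^{-1}h)$ is a measure-preserving self-map of $\Omega$ — is essentially how that cited result is proved. The one caveat is that the estimate you yourself identify as the crux, the exponential decay of $\mathrm{vol}\{h \in \Omega : d(e,h)\geq R\}$, is asserted rather than proved or precisely referenced; since this is exactly the content of the theorem being reproved, citing \cite{shalom} (as the paper does) would be both shorter and safer.

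The genuine gap is the infinite-centre case, which is the part the paper's proof (due to Monod) actually carries out and which you dismiss in two sentences. Knowing that a central copy of $\Z$ is undistorted in $\Gamma$ is not by itself a reduction to the finite-centre case: $p$-integrability requires a Borel fundamental domain for $\Gamma$ in $G$ together with an integrable bound on $|\chi(g^{-1}h)|_S$, and to transfer this from $\Gamma/Z(G) \leq G/Z(G)$ one needs a section $s\colon G/Z(G) \to G$ that is quasi-multiplicative, so that $|s(\gamma)|_{S'} \leq 2|\gamma|_S$ and then $|\chi((z\,s(g))^{-1}s(h))|_{S'} \leq 2|\chi(g^{-1}h)|_S + |z|_S + 2$ for the lifted domain $s(\Omega)$. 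The paper obtains such a section from Guichardet--Wigner's theorem that $G$ is a central extension of $G/Z(G)$ by a bounded $2$-cocycle \cite{guichardetwigner}; none of this is in your sketch, and the quasi-morphism $\Phi$ of Lemma \ref{lemma=phi_quasimorphism} that you invoke is defined only for $\widetilde{\mathrm{Sp}}(2,\R)$, not for a general higher rank simple $G$ with infinite centre (also, the covering direction sits in $\widetilde K$, which is not compact, so ``compact direction'' is not an accurate description of what controls it). This reduction cannot be waved away, because the tools of your main computation (LMR, arithmeticity, Siegel sets) are statements about the linear, finite-centre quotient. Finally, your one-line justification that $\Gamma$ meets $Z(G)$ in a finite-index subgroup (``otherwise $G/\Gamma$ would have infinite volume'') is not an argument as it stands; the paper deduces it from the discreteness of $\Gamma Z(G)$, i.e.\ \cite[Corollary 5.17]{Raghu}, after which one replaces $\Gamma$ by $\Gamma Z(G)$ so as to assume $Z(G) \subset \Gamma$.
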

\begin{proof} The case of $G$ having finite center coincides with the case of real linear algebraic groups, which was proved by Shalom \cite{shalom}.

If the center $Z(G)$ of $G$ is infinite, then $Z(G)$ is isomorphic to $\Z$, and we can reduce to the finite center case by using that $G$ is a central extension of $G/Z(G)$ given by a bounded $2$-cocycle \cite{guichardetwigner}. Equivalently, there exists a section $s \colon G/Z(G)  \to G$ and a finite set $A \subset Z(G)$ such that 
\begin{equation} \label{eq=bounded2cocycle}
  s(gh) s(g)^{-1}s(h)^{-1} \in A \textrm{ for all } g,h \in G/Z(G).
\end{equation}
If $\Gamma$ is a lattice in $G$, then $\Gamma Z(G)$ is discrete by \cite[Corollary 5.17]{Raghu}, and hence it is a lattice in $G$. It follows that $\Gamma$ has finite index in $\Gamma Z(G)$, and by replacing $\Gamma$ by $\Gamma Z(G)$, we can assume that $Z(G) \subset \Gamma$. Then $\Gamma/Z(G) \subset G/Z(G)$ is a lattice, so by \cite{shalom}, it is $p$-integrable for all $p<\infty$. Let $p<\infty$, and let $\Omega \subset G/Z(G)$ be a fundamental domain for $\Gamma/Z(G)$ as in the definition of $p$-integrability. For any Borel section $s \colon G/Z(G) \to G$, $s(\Omega)$ is a fundamental domain for $\Gamma$, and we claim that this fundamental domain witnesses the $p$-integrability of $\Gamma$ if $s$ satisfies \eqref{eq=bounded2cocycle}. Indeed, let $S$ be a finite symmetric generating set of $\Gamma$, and let $S'=s(S) \cup A$, where $A \subset Z(G)$ is a finite symmetric generating set of $Z(G)$ satisfying \eqref{eq=bounded2cocycle}. Then $S'$ is a finite generating set of $\Gamma$, and by \eqref{eq=bounded2cocycle} we see that $|s(\gamma_1 \gamma_2)|_{S'} \leq |s(\gamma_1)|_{S'} + |s(\gamma_2)|_{S'} +1$ for all $\gamma_1,\gamma_2 \in \Gamma/Z(G)$. This implies that $|s(\gamma)|_{S'} \leq 2 |\gamma|_{S}$ for all $\gamma \in \Gamma/Z(G)$. Also, for all $g \in G/Z(G)$, we have $s(g) s(\chi(g)) \in s(\Omega) A$, which shows that $\chi(s(g)) \in s(\chi(g)) A^{-1}$. Therefore, if $z \in Z(G)$ and $h \in \Omega$ is arbitrary, we have $(zs(g))^{-1}s(h) \in z^{-1} s(g^{-1} h) A$, and $\chi((z s(g))^{-1}s(h)) \in s(\chi(g^{-1}h)) z^{-1} A A^{-1}$, which implies that
\[
  |\chi(z s(g))^{-1}s(h)|_{S'} \leq 2 |\chi(g^{-1}h)|_S + |z|_S + 2.
\]
This implies that $\int_{\Omega} |\chi(z s(g))^{-1}s(h)|_{S'}^p dh <\infty$ for all $z \in Z(G)$ and $g \in G/Z(G)$. This concludes the proof because every element of $G$ can be written in this way.
\end{proof}

\appendix

\section{On an inequality for Jacobi polynomials} \label{section=p10}
In this appendix, we prove, using the notation of Section \ref{section=harmonic_analysis_SU2}, the following theorem.
\begin{thm}\label{thm=Hoelder_continuity_in_Schatten_classes}
 For $q>10$, there is a constant $C_q$ such that for all $\theta_1,\theta_2 \in \R$
 \[ \| S_{\theta_1} - S_{\theta_2} \|_{S^q} \leq C_q |\theta_1 - \theta_2|^{\frac{1}{4} - \frac{5}{2q}}.\]
Moreover $C_q \leq C(q-10)^{-\frac{1}{q}}$ for some universal constant $C$.
\end{thm}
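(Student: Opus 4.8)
The plan is to diagonalize $S_\theta$ via the Peter--Weyl decomposition and thereby reduce the statement to a sharp pointwise inequality for Jacobi polynomials. Decompose $L^2(\SU) = \bigoplus_{n\ge 0} V_n\otimes V_n^*$, where $V_n$ is the $(n+1)$-dimensional irreducible representation of $\SU$ and the left regular representation $\lambda$ acts as $\pi_n\otimes\mathrm{id}$ on the $n$-th block. Averaging $\lambda(d_\varphi g d_{-\varphi})$ over $\varphi\in[0,2\pi)$ annihilates every off-diagonal matrix entry in the weight basis $e_0,\dots,e_n$ of $V_n$, so $S_\theta$ acts on $V_n\otimes V_n^*$ as $A_n(\theta)\otimes\mathrm{id}$ with $A_n(\theta)=\mathrm{diag}\bigl(a_{n,k}(\theta)\bigr)_{k=0}^{n}$, $a_{n,k}(\theta) = \langle \pi_n(u_\theta)e_k,e_k\rangle$. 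Realizing $V_n$ as the homogeneous polynomials of degree $n$ in two variables and expanding, one finds $a_{n,k}(\theta) = 2^{-n/2}e^{i(n-2k)\theta}\,d_{n,k}$, with $d_{n,k} = \sum_j(-1)^j\binom{n-k}{j}\binom{k}{j} = [x^k]\bigl((1-x)^{n-k}(1+x)^k\bigr) = 2^{\min(k,n-k)}P^{(|n-2k|,0)}_{\min(k,n-k)}(0)$ independent of $\theta$. Hence, setting $c_{n,k}=2^{-n/2}|d_{n,k}|$ and $\delta=|\theta_1-\theta_2|$,
\[
  \|S_{\theta_1}-S_{\theta_2}\|_{S^q}^{q} = \sum_{n\ge 0}(n+1)\sum_{k=0}^{n} c_{n,k}^{\,q}\,\bigl|e^{i(n-2k)\theta_1}-e^{i(n-2k)\theta_2}\bigr|^{q},
\]
which, by $|e^{ia}-e^{ib}|\le\min(2,|a-b|)$, is at most $\sum_{n\ge 0}(n+1)\sum_{k=0}^{n} c_{n,k}^{\,q}\min(2,|n-2k|\delta)^{q}$. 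Everything is thus reduced to controlling the numbers $c_{n,k}$.

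The analytic heart is the inequality for Jacobi polynomials of the title: a uniform bound on $c_{n,k}=2^{-|n-2k|/2}|P^{(|n-2k|,0)}_{\min(k,n-k)}(0)|$. I would derive it from the oscillatory integral
\[
  d_{n,k} = \frac{2^{n}(-i)^{n-k}}{\pi}\int_0^{\pi}\sin^{n-k}(t)\cos^{k}(t)\,e^{i(n-2k)t}\,dt
\]
by a steepest-descent/uniform stationary-phase analysis of the complex phase $(1-\rho)\log\sin t+\rho\log\cos t+i(1-2\rho)t$, $\rho=k/n$: after pushing the contour through the relevant saddle the amplitude becomes non-concentrated and a van der Corput-type estimate applies. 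This should give $c_{n,k}\lesssim (1+n)^{-1/2}$ for $\rho$ in the ``allowed'' range, exponential decay in $n$ for $\rho$ in the ``forbidden'' range near $0$ or $1$, and --- crucially --- the weaker bound $c_{n,k}\lesssim (1+n)^{-1/4}$ near the caustic value $\rho_\ast=(2-\sqrt2)/4$ and its mirror $1-\rho_\ast$, where the saddle degenerates to higher order, holding on a window of $k$ of width $\asymp (1+n)^{1/2}$ with matching square-root decay on its allowed side. Packaged together, these give $\sum_{k=0}^{n}c_{n,k}^{\,q}\min(2,|n-2k|\delta)^{q}\lesssim (1+n)^{1/2-q/4}\min(2,n\delta)^{q}$ for $q$ large, the dominant contribution being the caustic window and its tail, on which $|n-2k|\asymp n$ since $\rho_\ast\notin\{0,\tfrac12\}$; the allowed and forbidden parts contribute strictly smaller powers of $\delta$ and already converge for $q>6$.

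Feeding this into the sum, the $n$-th term is $\lesssim (1+n)^{3/2-q/4}\min(2,n\delta)^{q}$. Splitting at $n\approx 1/\delta$: for $n\lesssim 1/\delta$ bound $\min(2,n\delta)^{q}\le(n\delta)^{q}$ and sum $\delta^{q}\sum_{n\le 1/\delta}n^{3/2+3q/4}\lesssim\delta^{q/4-5/2}$; for $n\gtrsim 1/\delta$ bound $\min(2,n\delta)^{q}\le 2^{q}$ and sum $\sum_{n>1/\delta}n^{3/2-q/4}$, which converges exactly when $q>10$ and equals $\lesssim (q-10)^{-1}\delta^{q/4-5/2}$. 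Taking $q$-th roots yields $\|S_{\theta_1}-S_{\theta_2}\|_{S^q}\le C(q-10)^{-1/q}\delta^{1/4-5/(2q)}$, which is the assertion, including the stated dependence of the constant on $q-10$.

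The step I expect to be the real obstacle is the steepest-descent analysis near the caustic $\rho_\ast$: one must pin down that the degeneracy there produces exactly the rate $(1+n)^{-1/4}$ (rather than an Airy-type $(1+n)^{-1/3}$) over a window of width exactly $\asymp (1+n)^{1/2}$, since these two exponents are precisely what fixes the threshold at $q=10$ rather than at $q=7$ or $q=12$. Once the Jacobi inequality is available in this sharp form, the summation above --- and the bookkeeping of the $(q-10)^{-1}$ factor --- is routine.
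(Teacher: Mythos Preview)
Your overall architecture is exactly the paper's: diagonalize $S_\theta$ in the Peter--Weyl basis, recognize the eigenvalues as $e^{i(n-2k)\theta}c_{n,k}$ with $c_{n,k}$ a value of a Jacobi polynomial at $0$, and then sum. Your identification of the caustic at $\rho_\ast=(2-\sqrt2)/4$ (equivalently $|p|\approx\ell/\sqrt2$), the rate $(1+n)^{-1/4}$ there, the window width $\asymp(1+n)^{1/2}$, and the square-root decay $\bigl||n-2k|-n/\sqrt2\bigr|^{-1/2}$ on the tail, are all correct and coincide with the paper's Proposition~\ref{prop=cpl}:
\[
|c_p^\ell|\le C\min\Bigl((1+\ell)^{-1/4},\ \bigl||p|-\tfrac{\ell}{\sqrt2}\bigr|^{-1/2}\Bigr).
\]
Given this, your summation (first over $k$ then over $n$) is equivalent to the paper's (first over $\ell$ then over $p$); both yield the threshold $q>10$ and the $(q-10)^{-1/q}$ dependence of the constant. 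So the reduction step and the endgame are fine.

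The one genuine gap is exactly the place you flag: you do not prove the Jacobi bound, only sketch a steepest-descent route and voice doubt about whether the caustic gives $(1+n)^{-1/4}$ rather than an Airy $(1+n)^{-1/3}$. The paper avoids this analysis entirely. Instead of a saddle-point argument on a fixed contour, it uses the integral representation
\[
c_p^\ell=2^{-\ell}\int_0^{2\pi}(1+r^{-1}e^{-i\varphi})^{\ell-p}(1-re^{i\varphi})^{\ell+p}\,\frac{d\varphi}{2\pi},
\]
valid for \emph{every} $r>0$, and follows Haagerup--Schlichtkrull: optimize over $r$, reduce to an integral of Gaussian type, and invoke an elementary real-variable lemma,
\[
\int_0^\pi e^{-(u-v\cos s)^2}\,\frac{ds}{\pi}\le\frac{C}{\sqrt{(|u+v|+1)(|u-v|+1)}}.
\]
It is precisely this two-factor denominator (a slight sharpening of the Haagerup--Schlichtkrull lemma, proved by a short case analysis) that produces the two-scale bound $\min\bigl((1+\ell)^{-1/4},\,||p|-\ell/\sqrt2|^{-1/2}\bigr)$, with no need to classify the degeneracy of a complex saddle. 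If you want to close the gap in your write-up, this is the cleanest way; your steepest-descent program may also succeed, but the elementary Gaussian-integral route gets there without the uniform-asymptotic machinery you are worried about.
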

The proof of this theorem is by computation: the operator $S_{\theta_1} - S_{\theta_2}$ can be explicitly diagonalized. Its eigenvalues are obtained from the spherical functions for the Gelfand pair $(\mathrm{U}(2),\mathrm{U}(1))$, which are the so-called disc polynomials (see \cite{koornwinder}). The proof relies on some careful estimates of the value of these polynomials at $0$. This is exactly the strategy of proof that was already applied in \cite{delaat1} and \cite{haagerupdelaat2}, using the estimates for the Jacobi polynomials obtained in \cite{HS}. Our only contribution is a slight improvement of the results of \cite{HS} (see Lemma \ref{lemma=dominate_integral} and the preceding remark).

Recall that the irreducible representations of $\SU$ are indexed by the non-negative half-integers $\ell = 0,\frac{1}{2},1,\frac{3}{2},\dots$. The corresponding irreducible representations $\pi_\ell$ on the complex vector spaces $\mathcal{H}_\ell$ of homogeneous polynomials of degree $2\ell$ in two complex variables $z_1,z_2$ are given by
\[
  \pi_\ell(u)P(z_1,z_2) = P\left(a z_1 + c z_2,b z_1 + d z_2\right), \qquad u = \begin{pmatrix} a &b \\c&d\end{pmatrix}.
\]
If $\mathcal{H}_\ell$ is equipped with a Hilbert space structure that makes $\pi_\ell$ into a unitary representation, then the family $\{z_1^m z_2^n \mid m+n = 2\ell\}$ is an orthogonal family. It is convenient to index this family by $p=-\ell,-\ell+1,\dots,\ell$ and to denote the polynomial $z_1^{\ell-p}z_2^{\ell+p}$ by $h_p^\ell$. For $\ell,p,p'$, let $f^\ell_{p,p'}:\SU \to \C$ be the matrix element given by $f^\ell_{p,p'}(g) = \langle \pi_\ell(g) h_{p'}^\ell, h_p^\ell\rangle$. The matrix formed by these elements is called the Wigner $D$-matrix. By the Peter-Weyl Theorem and the orthogonality of the family $\{h_p^\ell \mid p=-\ell,-\ell+1,\dots,\ell\}$ in $\mathcal{H}_\ell$, the family of functions $f^\ell_{p,p'}$ for $\ell \in \N/2$ and $p,p'=-\ell,-\ell+1,\dots,\ell$ form an orthogonal basis of $L^2(\SU)$. It turns out that the operators $S_\theta$ are all diagonal in this basis.
\begin{lemma}
For $\ell \in \N/2$ and $p=-\ell,-\ell+1,\dots,\ell$,
\[ S_\theta f^\ell_{p,p'} = e^{2i p \theta} c_p^\ell f^\ell_{p,p'}\]
where, for every $r>0$, 
\begin{equation}\label{eq=integral_formula_for_Jacobi_pol} c_p^\ell = 2^{-\ell}\int_0^{2\pi} (1+r^{-1}e^{-i\varphi})^{\ell -p}(1-r e^{i\varphi})^{\ell+p} \frac{d \varphi}{2\pi}.\end{equation}\end{lemma}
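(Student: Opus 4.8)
The plan is to diagonalize the operator $S_\theta$ directly in the orthogonal basis $\{f^\ell_{p,p'}\}$ of $L^2(\SU)$ coming from the Peter–Weyl theorem, and then identify the resulting eigenvalue with the contour integral \eqref{eq=integral_formula_for_Jacobi_pol}. First I would recall that $S_\theta$ is defined by averaging the left translations $\lambda(d_\varphi u_\theta d_{-\varphi})$ over $\varphi \in [0,2\pi)$. Using the defining property of the Wigner $D$-matrix, $\lambda(g) f^\ell_{p,p'} = \sum_{q} \overline{f^\ell_{q,p}(g)}\, f^\ell_{q,p'}$ (or the appropriate variant matching the conventions above, namely that left translation acts only on the first index $p$), so that the action of $\lambda$ on the span of $\{f^\ell_{q,p'} : q\}$ for fixed $\ell,p'$ is (conjugate to) the representation $\pi_\ell$. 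Since $d_\varphi = \mathrm{diag}(e^{i\varphi},e^{-i\varphi})$ acts diagonally on the weight basis $h^\ell_q$ with eigenvalue $e^{-2iq\varphi}$ (reading off the action of $\pi_\ell$ on $h^\ell_q = z_1^{\ell-q} z_2^{\ell+q}$), conjugating $\pi_\ell(u_\theta)$ by $\pi_\ell(d_\varphi)$ and integrating over $\varphi$ kills all off-diagonal matrix entries of $\pi_\ell(u_\theta)$ in the basis $\{h^\ell_q\}$ and leaves only the diagonal part. Thus $S_\theta$ is diagonal in the basis $\{f^\ell_{p,p'}\}$, with eigenvalue on $f^\ell_{p,p'}$ equal to the $(p,p)$ diagonal matrix entry $\langle \pi_\ell(u_\theta) h^\ell_p, h^\ell_p\rangle$ of $\pi_\ell(u_\theta)$.

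Next I would compute this diagonal matrix entry explicitly. Writing $u_\theta = \tfrac{1}{\sqrt 2}\begin{pmatrix} e^{i\theta} & -1 \\ 1 & e^{-i\theta}\end{pmatrix}$, the action of $\pi_\ell(u_\theta)$ on $h^\ell_p = z_1^{\ell-p} z_2^{\ell+p}$ sends it to $2^{-\ell}(e^{i\theta} z_1 + z_2)^{\ell-p}(-z_1 + e^{-i\theta} z_2)^{\ell+p}$, and the coefficient of $h^\ell_p = z_1^{\ell-p}z_2^{\ell+p}$ in this expansion, appropriately normalized by the Hilbert-space inner product on $\mathcal H_\ell$, is the desired eigenvalue. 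The factor $e^{2ip\theta}$ can be extracted by the substitution $z_1 \mapsto e^{i\theta}z_1$ (which multiplies $h^\ell_p$ by $e^{i(\ell-p)\theta}$ and its image correspondingly), reducing everything to the $\theta=0$ case times an explicit power of $e^{i\theta}$; keeping careful track of the exponents gives the factor $e^{2ip\theta}$. To obtain the contour-integral formula, I would write the coefficient extraction as a Cauchy integral: the coefficient of $z_1^{\ell-p}z_2^{\ell+p}$ in $(z_1+z_2)^{\ell-p}(-z_1+z_2)^{\ell+p}$ (after the normalization and the $e^{2ip\theta}$ reduction) is recovered, by homogeneity, from a single-variable residue — setting $z_2 = 1$, $z_1 = w$, it is $\oint (1+w)^{\ell-p}(w-1)^{\ell+p} w^{-(\ell-p)-1}\frac{dw}{2\pi i}$ over any circle $|w| = \rho$, which after writing $w = \rho e^{i\varphi}$ and matching signs and powers becomes exactly \eqref{eq=integral_formula_for_Jacobi_pol} with $r$ related to $\rho$ (and $r>0$ arbitrary precisely because the integrand's relevant pole is only at the origin, so the circle radius may be taken freely by Cauchy's theorem).

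The main obstacle I anticipate is bookkeeping rather than conceptual: getting the normalization of the $h^\ell_p$ right in the Hilbert-space inner product on $\mathcal H_\ell$ (the $z_1^m z_2^n$ are orthogonal but not orthonormal, with norms involving factorials), and making sure the off-diagonal-killing averaging argument is stated with the correct handedness — i.e., that the left-regular representation $\lambda$ really does act on $f^\ell_{p,p'}$ only through the index $p$ and via $\pi_\ell$ (or its conjugate), so that conjugating by $\pi_\ell(d_\varphi)$ and averaging isolates the diagonal. Once the conventions are pinned down, verifying that the diagonal entry equals the claimed integral is a routine residue computation, and the freedom to choose $r>0$ in \eqref{eq=integral_formula_for_Jacobi_pol} follows immediately from deforming the contour (the integrand is holomorphic in $w$ away from $w=0$, as the factor $(1-re^{i\varphi})^{\ell+p}$ is a polynomial in $re^{i\varphi}$). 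I would double-check the final formula against a small case, e.g. $\ell = \tfrac12$, where $c_p^\ell$ should reduce to $2^{-1/2}$ up to the phase, to catch sign errors.
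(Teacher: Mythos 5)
Your plan is correct and follows essentially the same route as the paper: both diagonalize $S_\theta$ in the Wigner basis $f^\ell_{p,p'}$ by averaging the conjugates $\pi_\ell(d_\varphi u_\theta d_{-\varphi})$ over $\varphi$, identify the eigenvalue on $f^\ell_{p,p'}$ as (the complex conjugate of) the $p$-th diagonal coefficient of $\pi_\ell(u_\theta)$, and then express that coefficient as an integral over a circle of arbitrary radius $r>0$. The only cosmetic difference is that the paper obtains the $r$-integral by evaluating the averaged polynomial identity at $(z_1,z_2)=(r,1)$ instead of via Cauchy's coefficient formula; just keep track of the conjugation coming from the left-regular action (which turns the diagonal entry's phase $e^{-2ip\theta}$ into $e^{2ip\theta}$, using that $c_p^\ell$ is real) and of the sign $(1-w)^{\ell+p}$ versus your $(w-1)^{\ell+p}$ in the residue step.
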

\begin{proof}
By the definition of $S_\theta$ and $f^\ell_{p,p'}$ we can write $S_\theta f^\ell_{p,p'}(g) = \langle \pi_\ell(g)h_{p'}^\ell, I_p^\ell(\theta)\rangle$, where $I_p^\ell(\theta) = \int_0^{2\pi} \pi_\ell(d_\varphi u_{\theta} d_{-\varphi})h_{p}^\ell \frac{d\varphi}{2\pi}$. By expanding
\[ \pi_\ell(d_\varphi u_{\theta} d_{-\varphi})h_{p}^\ell = \left(\frac{e^{i\theta}z_1 + e^{-2i\varphi} z_2}{\sqrt 2}\right)^{l-p} \left(\frac{- e^{2i\varphi} z_1+e^{-i\theta}z_2}{\sqrt 2}\right)^{\ell+p}\] in the basis of $(h_p^\ell)_{p = -\ell, \dots,\ell}$, a small computation yields the existence of $c_p^\ell \in \R$ such that $I_p^\ell(\theta) = e^{-2i p\theta} c_p^\ell h_p^\ell$. By substituting $\theta = 0$, $z_1=r$ and $z_2=1$, we get
\[ \int_0^{2\pi} \left(\frac{r + e^{-2i\varphi} }{\sqrt 2}\right)^{l-p} \left(\frac{- e^{2i\varphi}r+e^{-2i\theta}}{\sqrt 2}\right)^{\ell+p} = c_p^\ell r^{\ell - p}.\]
This is equation \eqref{eq=integral_formula_for_Jacobi_pol} after the change of variable $2\varphi \to \varphi$. This also implies that $S_\theta f^\ell_{p,p'} =  e^{2 i p \theta} c_p^\ell f^\ell_{p,p'}$.
\end{proof}
It follows from this Lemma and the above description of the unitary dual of $\SU$ that for $\theta_1,\theta_2 \in \R$ and $q >0$,
\begin{equation}\label{eq=expansion_Schatten_norm_Stheta} \| S_{\theta_1} - S_{\theta_2} \|_{S^q}^q = \sum_{\ell}\sum_{p=-\ell,\dots,\ell} (2\ell+1) \left| (e^{2i p\theta_1}-e^{2i p \theta_2}) c_p^\ell\right|^q.\end{equation}
An upper bound for $\| S_{\theta_1} - S_{\theta_2} \|_{S^q}$ will follow from an upper bound on $c_p^\ell$. For $p=p'=\ell = \frac{1}{2}$, we have $c_p^\ell = 2^{-\frac{1}{2}}$ and $S_\theta f_{p,p'}^\ell = 2^{-\frac{1}{2}} e^{i\theta} f_{p,p'}^\ell$. This implies the following ``obvious'' lower bound, used in the proof of Proposition \ref{prop:harmonic_ana_U1}:\begin{equation}\label{eq=lower_inequality_S_theta} \| S_{\theta_1} - S_{\theta_2} \|_{B(L^2(\SU))} \geq 2^{-\frac{1}{2}} |e^{i\theta_1} - e^{i \theta_2}|.
\end{equation}
\begin{rem}\label{rem}
For $p \geq 0$, the constant $c_p^\ell$ is, up to a factor $(-2)^{\ell-p}$, the value of the Jacobi polynomial $P_{\ell - p}^{(0,2p)}$ at $0$. For $p < 0$, we have $c_p^\ell = c_{-p}^\ell$. In \cite{HS}, it was proved that $|c_p^\ell| \leq C (1+l)^{-\frac{1}{4}}$. We will improve this estimate in Proposition \ref{prop=cpl}.
\end{rem}
The key lemma for the improvement given by Theorem \ref{thm=Hoelder_continuity_in_Schatten_classes} is a slight modification of \cite[Lemma 3.6]{HS}.
\begin{lemma}\label{lemma=dominate_integral}
There is a constant $C$ such that for every $u,v \in \R$,
\[ \int_0^\pi e^{-(u-v\cos s)^2} \frac{ds}{\pi} \leq \frac{C}{\sqrt{(|u+v|+1)(|u - v|+1)}}.\]
\end{lemma}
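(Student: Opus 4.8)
The plan is to reduce the statement to an elementary one-dimensional estimate by a change of variables, and then to run a short case analysis.

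First I would exploit the symmetries of the integrand: replacing $(u,v)$ by $(u,-v)$ and simultaneously $s$ by $\pi-s$ leaves the integral unchanged, and replacing $(u,v)$ by $(-u,-v)$ leaves the integrand unchanged, so the left-hand side depends only on $|u|$ and $|v|$; the right-hand side depends only on $|u|,|v|$ as well (it is symmetric under $|u+v|\leftrightarrow|u-v|$). Hence I may assume $u\ge 0$ and $v\ge 0$; if $v=0$ the integral is $e^{-u^2}\le C(|u|+1)^{-2}$, so I may assume $v>0$. Substituting $x=\cos s$ and then $t=u-vx$ turns the integral into
\[ \frac1\pi\int_A^B\frac{e^{-t^2}}{\sqrt{(t-A)(B-t)}}\,dt,\qquad A:=u-v,\ B:=u+v, \]
where $0\le|A|\le B$ and $B-A=2v$. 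Since $\frac1\pi\int_A^B\big((t-A)(B-t)\big)^{-1/2}\,dt=1$, this is a probability-weighted average of $e^{-t^2}\le 1$ over $[A,B]$, so it is always $\le 1$; consequently I may assume $B\ge 1$, as otherwise the right-hand side is bounded below and there is nothing to prove.

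Second, the case $A\ge 0$ (equivalently $u\ge v$). If $v\le A+1$, then $B+1\le 3(A+1)$, and since $e^{-t^2}\le e^{-A^2}$ on $[A,B]$ the average is $\le e^{-A^2}\le C(A+1)^{-1}\le C'\big((A+1)(B+1)\big)^{-1/2}$, using that $x\mapsto e^{-x^2}(x+1)$ is bounded. If $v>A+1$, then $B+1<3v$, and I split $[A,B]$ at its midpoint $A+v=u$. On $[A+v,B]$ one has $t\ge u\ge v$, so $e^{-t^2}\le e^{-v^2}$ and the contribution is $\lesssim e^{-v^2}\lesssim v^{-1}\lesssim\big((A+1)(B+1)\big)^{-1/2}$ (here $A+1\le B+1\lesssim v$). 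On $[A,A+v]$ one has $B-t\ge v$, so the contribution is $\le v^{-1/2}\int_A^\infty e^{-t^2}(t-A)^{-1/2}\,dt=v^{-1/2}\int_0^\infty e^{-(A+w)^2}w^{-1/2}\,dw$, and I bound the last integral by $\min\!\big(\int_0^\infty e^{-w^2}w^{-1/2}\,dw,\ e^{-A^2}\int_0^\infty e^{-2Aw}w^{-1/2}\,dw\big)\lesssim(A+1)^{-1/2}$ (the second term via $\int_0^\infty w^{-1/2}e^{-cw}\,dw=\sqrt{\pi/c}$ and boundedness of $x\mapsto e^{-x^2}x^{1/2}$), while $v^{-1/2}\lesssim(B+1)^{-1/2}$.

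Third, the case $A<0<B$ (equivalently $u<v$). Put $a:=-A=|A|\in[0,B]$ and split the integral at $0$. The piece over $[0,B]$ is $\int_0^B e^{-t^2}\big((t+a)(B-t)\big)^{-1/2}\,dt$, and the piece over $[A,0]$, after $t\mapsto-t$, is $\int_0^a e^{-r^2}\big((a-r)(B+r)\big)^{-1/2}\,dr$; both have the form $\int_0^\ell e^{-r^2}\big((\ell-r)(m+r)\big)^{-1/2}\,dr$ with $\ell=a\le m=B$. On $[0,\ell/2]$ I use $\ell-r\ge\ell/2$ and $m+r\ge m$, obtaining $\lesssim(\ell m)^{-1/2}\int_0^{\ell/2}e^{-r^2}\,dr\lesssim(\ell m)^{-1/2}\min(\ell,1)\lesssim\big((\ell+1)(m+1)\big)^{-1/2}$ (using $m=B\ge1$). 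On $[\ell/2,\ell]$ I use $e^{-r^2}\le e^{-\ell^2/4}$ and $m+r\ge m$, obtaining $\lesssim e^{-\ell^2/4}m^{-1/2}\int_{\ell/2}^\ell(\ell-r)^{-1/2}\,dr\lesssim e^{-\ell^2/4}\ell^{1/2}m^{-1/2}\lesssim\big((\ell+1)(m+1)\big)^{-1/2}$, now using boundedness of $x\mapsto e^{-x^2/4}\sqrt{x(x+1)}$ on $[0,\infty)$. Summing the four contributions and recalling $\{\ell,m\}=\{a,B\}$, $a\le B$ and $B\ge1$ gives the bound $\lesssim\big((a+1)(B+1)\big)^{-1/2}=\big((|A|+1)(B+1)\big)^{-1/2}$, as required.

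There is no single hard step; the proof is entirely elementary. The only point requiring care — and where the modification of \cite[Lemma 3.6]{HS} lies — is the bookkeeping: in each regime one must retain the Gaussian decay ($e^{-A^2}$, $e^{-v^2}$, or $e^{-\ell^2/4}$) coming from whichever of $t$, $v$, $\ell$ is large, while simultaneously retaining the $v^{-1/2}$ factor (equivalently $(B-t)^{-1/2}$ or $(m+r)^{-1/2}$) coming from the width of the interval, so that the product $\big((|A|+1)(B+1)\big)^{-1/2}$ is recovered in every case. The analytic inputs are just the boundedness of $x\mapsto e^{-x^2}(x+1)$ and $x\mapsto e^{-x^2/4}\sqrt{x(x+1)}$ on $[0,\infty)$, the identity $\int_0^\infty w^{-1/2}e^{-cw}\,dw=\sqrt{\pi/c}$, and finiteness of $\int_0^\infty w^{-1/2}e^{-w^2}\,dw$.
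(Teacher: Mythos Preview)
Your change of variables $t=u-v\cos s$, reducing to $\frac{1}{\pi}\int_A^B e^{-t^2}\big((t-A)(B-t)\big)^{-1/2}\,dt$ with $A=u-v$, $B=u+v$, is a genuinely different route from the paper's proof, which stays in the $s$-variable and uses the factorization $u-v\cos s=2v\sin\frac{s+\sigma}{2}\sin\frac{s-\sigma}{2}$ (with $\cos\sigma=u/v$) in the case $u\le v$ and the lower bound $u-v\cos s\ge(u-v)+\tfrac{2}{\pi^2}vs^2$ in the case $u\ge v$. Your transformation makes the target bound $\big((|A|+1)(B+1)\big)^{-1/2}$ appear more naturally, and the overall strategy is sound.

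There is, however, a concrete gap in your third case ($A<0<B$). You assert that both pieces have the form $\int_0^\ell e^{-r^2}\big((\ell-r)(m+r)\big)^{-1/2}\,dr$ with $\ell=a\le m=B$, but this holds only for the piece over $[A,0]$; for the piece over $[0,B]$ the roles are reversed, $\ell=B$ and $m=a$. Your subsequent bounds use $(m+r)^{-1/2}\le m^{-1/2}$ together with ``$m=B\ge1$'', and when $m=a$ this produces a factor $a^{-1/2}$ that blows up as $a\to0$ (for instance when $u=v$). Thus neither $(\ell m)^{-1/2}\min(\ell,1)$ on $[0,\ell/2]$ nor $e^{-\ell^2/4}\ell^{1/2}m^{-1/2}$ on $[\ell/2,\ell]$ delivers $\big((a+1)(B+1)\big)^{-1/2}$ for that piece. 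The repair is routine: for the piece with $(\ell,m)=(B,a)$, on $[0,B/2]$ keep $(t+a)^{-1/2}$ inside the integral and use $\int_0^\infty e^{-t^2}(t+a)^{-1/2}\,dt\lesssim(a+1)^{-1/2}$ (bounding $(t+a)^{-1/2}$ by $t^{-1/2}$ when $a<1$ and by $a^{-1/2}$ when $a\ge1$); on $[B/2,B]$ use $t+a\ge t\ge B/2$ rather than $t+a\ge a$, so the contribution is $\lesssim e^{-B^2/4}\lesssim B^{-1}\lesssim\big((a+1)(B+1)\big)^{-1/2}$ since $a\le B$.
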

\begin{proof}
By symmetry we can assume that $u,v \geq 0$. We can also assume $v \geq 1$, since for $v \in [0,1]$ and $u \geq 0$, the term $\int_0^\pi e^{-(u-v\cos s)^2} \frac{ds}{\pi}$ is less than $e^{1-(u-1)^2}$, which is less than $\frac{C}{\sqrt{(|u+v|+1)(|u - v|+1)}}$ for some $C$.

Firstly, assume that $0 \leq u \leq v$. The case of $|u-v|\leq 1$ was already covered in \cite{HS}. Hence, we additionally assume that $|u-v| \geq 1$. Let $\sigma \in [0,\frac{\pi}{2}]$ be such that
$\cos{\sigma}=\frac{u}{v}$. Note that $\sigma^2\geq 2\frac{v-u}{v}$, since $\cos \sigma \geq 1 - \frac{\sigma^2}{2}$. Then, as in \cite{HS},
\[ u-v\cos{s}=v(\cos\sigma-\cos s)=
2v\sin\left(\frac{s+\sigma}{2}\right)\sin\left(\frac{s-\sigma}{2}\right).\] 
Using the inequality $|\sin t| \geq \frac{2 \sqrt 2}{3\pi}|t|$ for $|t| \leq \frac{3\pi}{4}$, we conclude that
\[|u-v \cos s| \geq \frac{4 v}{9 \pi^2}(s+\sigma)|s-\sigma| \geq \frac{4 v\sigma}{9 \pi^2}|s-\sigma|.\]

Hence,
\begin{equation*}
\begin{split}
\frac{1}{\pi} \int_{0}^{\pi} e^{-(u-v\cos{s})^2} ds &\leq \frac{1}{\pi}
\int_{0}^{\pi} e^{-(\frac{4}{9} v \sigma \pi^{-2})^2 (s-\sigma)^2} ds
\\
& \leq \frac{1}{\pi}
\int_\R e^{-(\frac{4}{9} v \sigma \pi^{-2})^2 (s-\sigma)^2} ds
\\
&= \frac{9 \pi^{3/2}}{4 v \sigma} \leq \frac{10}{\sqrt{v(v-u)}}
\end{split}
\end{equation*}
where we used that $\sigma^2 \geq 2\frac{v-u}{v}$ and $\frac{9 \pi^{3/2}}{4 \sqrt 2} \leq 10$. By the assumptions that $v \geq v-u \geq 1$, we have $\frac{10}{\sqrt{v(v-u)}} \leq \frac{C}{\sqrt{(|u+v|+1)(|u-v|+1)}}$ for some universal constant $C$.

Now, assume that $0 \leq v \leq u$. Then $u-v\cos{s} = u-v + 2v\sin^2\left(\frac{s}{2}\right) \geq u-v + \frac{2}{\pi^2} v s^2$. Hence,
\begin{equation*}
\begin{split}
\frac{1}{\pi} \int_{0}^{\pi} e^{-(u-v\cos{s})^2}ds &\leq \frac{1}{\pi}
\int_{0}^{\pi} e^{-(u-v)^2 - \frac{4 v^2}{\pi^4} s^4} ds \\
&\leq \frac{1}{\pi} \int_0^{\infty} e^{-(u-v)^2 - \frac{4 v^2}{\pi^4} s^4} ds
\leq \frac{e^{-(u-v)^2}}{\sqrt{2v}}
\end{split}
\end{equation*}
using $\int_0^{\infty}e^{-t^4}\,dt\leq1$. Finally by our assumption that $v \geq 1$, we have $\frac{e^{-(u-v)^2}}{\sqrt{2v}} \leq \frac{C}{\sqrt{(|u+v|+1)(|u - v|+1)}}$ for some universal constant $C$.
\end{proof}
We now proceed to the crucial estimate.
\begin{prop} \label{prop=cpl}
There is a constant $C$ such that for every $\ell \in \N/2$, $p = -\ell,\ell+1,\dots,\ell$,
\begin{equation}\label{eq=domination_of_c} |c_p^\ell| \leq C \min\left((1+\ell)^{-\frac{1}{4}},\left| |p| - \frac{\ell}{\sqrt 2}\right|^{-\frac{1}{2}}\right).
\end{equation}
In words, we get strictly better estimates than $|c_p^\ell| \leq C(1+\ell)^{-\frac{1}{4}}$ except on an interval of size $\sqrt{1+\ell}$ around $\pm \ell/\sqrt 2$.
\end{prop}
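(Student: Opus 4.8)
The plan is to estimate the integral \eqref{eq=integral_formula_for_Jacobi_pol} for $c_p^\ell$ by a judicious choice of the free parameter $r>0$ together with a steepest-descent/Laplace-type analysis, reducing everything to the Gaussian integral bound of Lemma \ref{lemma=dominate_integral}. Since $c_p^\ell = c_{-p}^\ell$ we may assume $p \geq 0$. Write the integrand of \eqref{eq=integral_formula_for_Jacobi_pol} as $\exp\bigl(g(\varphi)\bigr)$ with
\[ g(\varphi) = (\ell-p)\log\bigl(1+r^{-1}e^{-i\varphi}\bigr) + (\ell+p)\log\bigl(1-re^{i\varphi}\bigr) - \ell \log 2,\]
so that $|c_p^\ell| \leq \int_0^{2\pi} e^{\mathrm{Re}\, g(\varphi)}\frac{d\varphi}{2\pi}$. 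The first step is to choose $r$ so as to make the modulus of the integrand as flat as possible near its maximum; the natural choice is the one for which the leading (in $\ell$) part of $g'(0)$ vanishes, i.e. $(\ell-p)\frac{r^{-1}}{1+r^{-1}} = (\ell+p)\frac{r}{1-r}$, which after simplification is a quadratic in $r$ with a root $r = r(p,\ell) \in (0,1)$ satisfying $r^2 = \frac{\ell-p}{\ell+p}$ roughly (the precise normalization will be fixed in the computation). With this $r$, expand $\mathrm{Re}\, g(\varphi) = \mathrm{Re}\, g(0) - a\varphi^2 + O(\varphi^4)$ near $0$; the key point is to identify the coefficient $a = a(p,\ell)$.

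The second step is the bookkeeping of $a$. A direct computation of $g''(0)$ shows that $\mathrm{Re}\, g''(0)$ is, up to a bounded factor, of the form $c_1 \ell - c_2 (p^2/\ell)$ for explicit positive constants, which one checks is comparable to $\bigl(\ell/\sqrt 2 - |p|\bigr)\bigl(\ell/\sqrt 2 + |p|\bigr)/\ell$ — this is exactly where the quantity $\bigl||p|-\ell/\sqrt 2\bigr|$ in \eqref{eq=domination_of_c} enters, and it is the algebraic heart of the improvement over \cite{HS}. After substituting the change of variables $\varphi = s/\sqrt{?}$ that matches the form $(u - v\cos s)^2$ appearing in Lemma \ref{lemma=dominate_integral} (the trigonometric identity $\mathrm{Re}\log(1-re^{i\varphi}) = \tfrac12\log(1 - 2r\cos\varphi + r^2)$ is what produces the $\cos s$), the integral $\int_0^{2\pi} e^{\mathrm{Re}\, g(\varphi)}\frac{d\varphi}{2\pi}$ is dominated by $e^{\mathrm{Re}\, g(0)}$ times a Gaussian integral. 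One must also control $e^{\mathrm{Re}\, g(0)}$, which, by the choice of $r$ as a critical point, simplifies drastically (Stirling-type cancellation): here I expect $\mathrm{Re}\, g(0) = O(1)$, uniformly in $p$ and $\ell$. Applying Lemma \ref{lemma=dominate_integral} with the appropriate $u,v$ then yields a bound of the form $C/\sqrt{(|u+v|+1)(|u-v|+1)}$, and translating $u \pm v$ back into $\ell$ and $p$ gives both $\min\bigl((1+\ell)^{-1/4}, \bigl||p|-\ell/\sqrt2\bigr|^{-1/2}\bigr)$ factors at once: the $(1+\ell)^{-1/4}$ is the worst case $|p| \approx \ell/\sqrt2$, and the $\bigl||p|-\ell/\sqrt2\bigr|^{-1/2}$ is the generic case.

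The main obstacle I anticipate is twofold. First, justifying that the contributions away from $\varphi = 0$ (and, by periodicity, from the other critical point near $\varphi = \pi$, if present) are genuinely negligible: the function $\mathrm{Re}\, g$ may have a second local maximum, and one needs the quartic remainder term and a crude bound on $\mathrm{Re}\, g$ on the complement of a small neighborhood of the maximum to be controlled uniformly in $\ell$ — this is the step where a clean estimate like Lemma \ref{lemma=dominate_integral}, valid for \emph{all} $u,v \in \R$ rather than just a local one, is essential. Second, the uniformity in the edge regimes $|p|$ close to $0$ or close to $\ell$, where $r \to $ an endpoint of $(0,1)$ and some of the logarithms degenerate; these cases should be handled either by the crude triangle-inequality bound $|c_p^\ell| \leq 2^{-\ell}\binom{?}{?}$-type estimates or by a separate, easier direct argument, and then patched with the main estimate. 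The remaining deduction of Theorem \ref{thm=Hoelder_continuity_in_Schatten_classes} from \eqref{eq=domination_of_c} via \eqref{eq=expansion_Schatten_norm_Stheta}, together with the uniform constant $C_q \leq C(q-10)^{-1/q}$, is then a routine summation: one splits the $p$-sum into the $O(\sqrt{1+\ell})$ indices near $\pm\ell/\sqrt2$, where one uses $(1+\ell)^{-1/4}$, and the rest, where one uses $\bigl||p|-\ell/\sqrt2\bigr|^{-1/2}$, uses $|e^{2ip\theta_1}-e^{2ip\theta_2}| \leq \min(2, 2|p|\,|\theta_1-\theta_2|)$, and checks that the resulting series in $\ell$ converges precisely when $q(\tfrac14 - \tfrac{5}{2q}) \cdot$(appropriate exponent) beats the $(2\ell+1)$ growth, i.e. for $q>10$, with the divergence as $q \downarrow 10$ producing the claimed $(q-10)^{-1/q}$ blow-up of the constant.
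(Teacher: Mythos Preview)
Your proposal is essentially correct and follows the same approach as the paper, whose proof of the proposition is a one-line reference to \cite[Theorem 1.1]{HS} with their Lemma 3.6 replaced by the improved Lemma \ref{lemma=dominate_integral}. You have correctly reconstructed the Laplace-type argument of \cite{HS} and identified that the extra factor $(|u-v|+1)^{-1/2}$ in Lemma \ref{lemma=dominate_integral} is precisely what yields the new $\bigl||p|-\ell/\sqrt 2\bigr|^{-1/2}$ bound; the tail and edge-regime issues you flag are already handled in \cite{HS}, so no new analytic work is needed beyond swapping in the sharper Gaussian-integral estimate.
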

The proof is the same as the proof of \cite[Theorem 1.1]{HS}, except that we use the result of Lemma \ref{lemma=dominate_integral} instead of \cite[Lemma 3.6]{HS}.
\begin{proof}[Proof of Theorem \ref{thm=Hoelder_continuity_in_Schatten_classes}]
We will use in the proofs that for all $\alpha>1$, $u \in \R$ and $x>0$,
\begin{equation}\label{eq=reste_serie} \sum_{k \in (u+\Z) \cap (x,\infty)} k^{-\alpha}  \leq \frac{\alpha}{\alpha - 1} x^{1-\alpha}.\end{equation}
This follows from the computation $\sum_{k \in (u+\Z) \cap (x,\infty)} k^{-\alpha} \leq x^{-\alpha} + \int_x^{\infty} y^{-\alpha}dy \leq \frac{\alpha}{\alpha-1}x^{1-\alpha}$ for $x\geq1$. For $0<x<1$, we have $\sum_{k \in (u+\Z) \cap (x,\infty)} k^{-\alpha} \leq 1 + \int_x^{\infty} y^{-\alpha}dy \leq \frac{\alpha}{\alpha-1}x^{1-\alpha}$.

\textbf{In the rest of the proof we assume} $q>10$. We will denote $A \lessapprox B$ if there exists a universal constant $K$ such that $A \leq K^q B$.

If we denote $\alpha_p = |e^{2i p\theta_1}-e^{2i p \theta_2}|$, then \eqref{eq=expansion_Schatten_norm_Stheta} becomes \[\| S_{\theta_1} - S_{\theta_2} \|_{S^q}^q = \sum_{p \in \frac 1 2 \Z} \alpha_p^q \sum_{\ell \in |p|+ \N} (2\ell+1) |c_p^\ell|^q.\]
If $p=0$, then $\alpha_p=0$. For $|p|=\frac{1}{2}$, using \eqref{eq=reste_serie}, we obtain
\[   \sum_{\ell \in \frac{1}{2} + \mathbb{N}} (2\ell+1)|c_{\frac{1}{2}}^{\ell}|^q \leq 2C^q\sum_{\ell \in \frac{1}{2} + \mathbb{N}} (\ell+1)^{1-\frac{q}{4}} \lessapprox 1.\]
For fixed $p$ different from $0$ and $\pm \frac{1}{2}$, we decompose the sum $\sum_{\ell \in |p|+ \N}$ as 
\[ \sum_{|p| \leq \ell < \sqrt 2 |p| - \sqrt{|p|}} + \sum_{\sqrt{2} |p| - \sqrt{|p|} \leq \ell \leq \sqrt{2} |p| + \sqrt{|p|}+1} + \sum_{\sqrt{2} |p| + \sqrt{|p|}+1 < \ell}.\]
For the first sum, by \eqref{eq=domination_of_c}, we obtain $(2\ell+1) |c_p^\ell|^q \leq 2 \sqrt{2} |p| C^q (|p| - \frac{\ell}{\sqrt{2}})^{-\frac{q}{2}}$, so that with the change of variable $k=\sqrt{2} |p|- \ell \in  (\sqrt{2}-1)|p| + \Z$, we obtain by \eqref{eq=reste_serie},
\[  \sum_{|p| \leq \ell < \sqrt 2 |p| - \sqrt{|p|}} (2\ell+1) |c_p^\ell|^q  \leq 2 \sqrt{2} |p| C^q \sum_{k > \sqrt{|p|}} \left(\frac{k}{\sqrt 2}\right)^{-\frac{q}{2}} \lessapprox |p|^{\frac{3}{2}-\frac{q}{4}}.\]

The second sum has at most $2 \sqrt{|p|}+1$ terms. By \eqref{eq=domination_of_c}, for each of these terms, we have $(2\ell+1) |c_p^\ell|^q \leq 2 C^q (1+\ell)^{1-\frac{q}{4}} \lessapprox |p|^{1 - \frac{q}{4}}$ so that we get 
\[ \sum_{\sqrt{2} |p| - \sqrt{|p|} \leq \ell \leq \sqrt{2} |p| + \sqrt{|p|}} (2\ell+1) |c_p^\ell|^q \lessapprox |p|^{\frac 3 2 - \frac q 4}.\]

For the third sum, we obtain $(2\ell+1) |c_p^\ell|^q \leq C^q (2\ell+1) (\frac{\ell}{\sqrt{2}} - |p|)^{-\frac{q}{2}}$. With the change of variable $k = \ell - \sqrt 2 |p| \in (1 - \sqrt 2)|p|+\Z \cap (\sqrt{|p|},\infty)$, this becomes $(2\ell+1) |c_p^\ell|^q \leq C^q (2k + 2 \sqrt 2 |p|+1) \left(\frac{k}{\sqrt{2}}\right)^{-\frac{q}{2}} \lessapprox (k+|p|) k^{-\frac q 2}$. Hence, by \eqref{eq=reste_serie},
 \[ \begin{split} \sum_{\sqrt{2} |p| + \sqrt{|p|} \leq \ell}  (2\ell+1) |c_p^\ell|^q &\lessapprox \sum_{k \in (1 - \sqrt 2)|p|+\Z \cap (\sqrt{|p|},\infty)} k^{1-\frac q 2} + |p|k^{-\frac q 2}
\\ 
& \lessapprox \left(\sqrt{|p|}^{2-\frac q 2} + |p| \sqrt{|p|}^{1-\frac q 2}\right) \lessapprox |p|^{\frac 3 2 - \frac q 4}.\end{split}\]

Adding the three sums above, we obtain for $|p| \geq 1$,
\[ \sum_{\ell \in |p|+ \N} (2\ell+1) |c_p^\ell|^q \lessapprox |p|^{\frac 3 2 - \frac q 4}.\]

All together, using $\alpha_{-p}=\alpha_{p}$, we obtain
\[ \| S_{\theta_1} - S_{\theta_2} \|_{S^q}^q \lessapprox \sum_{p \in \frac 1 2 \Z} \alpha_p^q |p|^{\frac{3}{2} - \frac{q}{4}} \lessapprox C^q \sum_{n \geq 1} \alpha_{\frac{n}{2}}^q n^{\frac{3}{2} - \frac{q}{4}}.\]

Denote $\varepsilon = | e^{2i\theta_1} - e^{2i\theta_2}| \in [0,2]$. We use the inequality $\alpha_p = |e^{2i p\theta_1}-e^{2 i p \theta_2}| \leq \min(2,p \varepsilon)$. Let $n_0$ be the least $n$ such that $n \varepsilon \leq 4$. Then
\[ \sum_{1 \leq n\leq n_0} \alpha_{\frac{n}{2}}^q  n^{\frac{3}{2} - \frac{q}{4}} \leq 2^{-q} \varepsilon^q  \sum_{1 \leq n\leq n_0} n^{\frac{3}{2} +\frac{3 q}{4}} \lessapprox \varepsilon^q (1+n_0)^{\frac{5}{2} +\frac{3 q}{4}}\lessapprox \varepsilon^{\frac{q}{4} - \frac{5}{2}},\]
where the last inequality holds because $1+n_0 \leq 1+\frac{4}{\varepsilon} \leq \frac{6}{\varepsilon}$. In the same way using that $n_0 \geq \frac{4}{\varepsilon} - 1 \geq \frac{1}{\varepsilon}$
\[ \begin{split} \sum_{n_0<n} \alpha_{\frac{n}{2}}^q  n^{\frac{3}{2} - \frac{q}{4}} & \leq 2^{q}  \sum_{n_0<n} n^{\frac{3}{2} -\frac{q}{4}}
%\\ & 
\lessapprox \frac{n_0^{\frac{5}{2} -\frac{ q}{4}}}{q-10}
%\\ & 
\lessapprox \frac{\varepsilon^{\frac{q}{4}-\frac{5}{2} }}{q-10}.
\end{split}\]

Together we get
\[ \| S_{\theta_1} - S_{\theta_2} \|_{S^q}^q \lessapprox \frac{q}{q-10} \varepsilon^{\frac{q}{4}-\frac{5}{2} }.\]
It remains to use that $\varepsilon \leq 2 |\theta_1 - \theta_2|$.
\end{proof}

\end{document}